\numberwithin{equation}{section}
\newtheorem{prop}{Proposition}[section]
\newtheorem{lem}[prop]{Lemma}
\newtheorem{ddd}[prop]{Definition}
\newtheorem{theorem}[prop]{Theorem}
\newtheorem{cor}[prop]{Corollary}
\newcommand{\ch}{\mathop{\mbox{\rm ch}}\nolimits}
\newcommand{\End}{{\rm End}}
\newcommand{\Det}{{\rm det}}
\newcommand{\Hom}{\mathop{\mbox{\rm Hom}}}
\newcommand{\vol}{\mathop{\mbox{\rm vol}}}
\newcommand{\ind}{\mathop{\mbox{\rm ind}}}
\newcommand{\dom}{\mathop{\rm dom}}
\newcommand{\spc}{\mathbf A}
\newcommand{\spcb}{\mathbf B}
\newcommand{\ad}{\mathfrak A}
\newcommand{\Pj}{{\mathcal P}}
\newcommand{\tr}{\mathop{\rm tr}\nolimits}
\newcommand{\Tr}{\mathop{\rm Tr}\nolimits}
\newcommand{\N}{{\mathcal N}}
\newcommand{\Ki}{{\mathfrak K}}
\newcommand{\Kappa}{{\mathcal K}}
\newcommand{\E}{{\mathcal E}}
\newcommand{\ideal}{{\mathcal I}}
\newcommand{\F}{{\mathcal F}}
\newcommand{\D}{{\mathcal D}}
\newcommand{\Li}{{\mathcal L}}
\newcommand{\Ca}{{\mathcal C}}
\newcommand{\A}{{\mathcal A}}
\newcommand{\B}{{\mathcal B}}
\newcommand{\Bi}{{\mathcal B}_{\infty}}
\newcommand{\Ai}{{\mathcal A}_{\infty}}
\newcommand{\C}{C^{\infty}}
\newcommand{\Ol}[1]{\hat \Omega_{\le {#1}}}
\newcommand{\Ok}{\hat \Omega_k}
\newcommand{\Oei}{\Ol{\mu}{\mathcal E}_i}
\newcommand{\Ei}{{\mathcal E}_{\infty}}
\newcommand{\Oi}{\hat \Omega_*}
\newcommand{\ra}{\partial}
\newcommand{\ten}{\otimes}
\newcommand{\pl}[1]{\varprojlim\limits_{#1}}
\newcommand{\ve}{\varepsilon}
\newcommand{\ov}{\overline}
\newcommand{\dira}{\partial\!\!\!/}
\newcommand{\dirac}{\partial\!\!\!/}
\DeclareMathOperator{\re}{Re}
\DeclareMathOperator{\supp}{supp}
\DeclareMathOperator{\Ran}{Ran}
\DeclareMathOperator{\Ker}{Ker}
\DeclareMathOperator{\di}{d}
\def\bbbr{{\rm I\!R}} 
\def\bbbn{{\rm I\!N}} 
\def\bbbc{{\rm I\!C}}
\def\bbbq{{\mathchoice {\setbox0=\hbox{$\displaystyle\rm Q$}\hbox{\raise
0.15\ht0\hbox to0pt{\kern0.4\wd0\vrule height0.8\ht0\hss}\box0}}
{\setbox0=\hbox{$\textstyle\rm Q$}\hbox{\raise
0.15\ht0\hbox to0pt{\kern0.4\wd0\vrule height0.8\ht0\hss}\box0}}
{\setbox0=\hbox{$\scriptstyle\rm Q$}\hbox{\raise
0.15\ht0\hbox to0pt{\kern0.4\wd0\vrule height0.7\ht0\hss}\box0}}
{\setbox0=\hbox{$\scriptscriptstyle\rm Q$}\hbox{\raise
0.15\ht0\hbox to0pt{\kern0.4\wd0\vrule height0.7\ht0\hss}\box0}}}}
\def\bbbz{{\mathchoice {\hbox{$\sf\textstyle Z\kern-0.4em Z$}}
{\hbox{$\sf\textstyle Z\kern-0.4em Z$}}
{\hbox{$\sf\scriptstyle Z\kern-0.3em Z$}}
{\hbox{$\sf\scriptscriptstyle Z\kern-0.2em Z$}}}}
\def\bbbc{{\mathchoice {\setbox0=\hbox{$\displaystyle\rm C$}\hbox{\hbox
to0pt{\kern0.4\wd0\vrule height0.9\ht0\hss}\box0}}
{\setbox0=\hbox{$\textstyle\rm C$}\hbox{\hbox
to0pt{\kern0.4\wd0\vrule height0.9\ht0\hss}\box0}}
{\setbox0=\hbox{$\scriptstyle\rm C$}\hbox{\hbox
to0pt{\kern0.4\wd0\vrule height0.9\ht0\hss}\box0}}
{\setbox0=\hbox{$\scriptscriptstyle\rm C$}\hbox{\hbox
to0pt{\kern0.4\wd0\vrule height0.9\ht0\hss}\box0}}}}
\title{The Atiyah--Patodi--Singer index theorem for Dirac operators over $C^*$-algebras}
\author{Charlotte Wahl}
\begin{document}
\begin{abstract}
We prove a higher Atiyah--Patodi--Singer index theorem for Dirac operators twisted by $C^*$-vector bundles. We use it to derive a general product formula for $\eta$-forms and to define and study new $\rho$-invariants generalizing Lott's higher $\rho$-form. The higher Atiyah--Patodi--Singer index theorem of Leichtnam--Piazza can be recovered by applying the theorem to Dirac operators twisted by the Mishenko--Fomenko bundle associated to the reduced $C^*$-algebra of the fundamental group.  
\end{abstract}
\maketitle
\markright{{\sc THE ATIYAH--PATODI--SINGER INDEX THEOREM OVER $C^*$-ALGEBRAS}}

\section*{Introduction}

In noncommutative geometry a compact space $X$ is generalized by a unital $C^*$-algebra $\A$. By applying a noncommutative concept to the commutative $C^*$-algebra $\A=C(X)$ one recovers its classical counterpart. An $\A$-vector bundle on a Riemannian manifold $M$ is a locally trivial bundle of projective $\A$-modules. Its classical counterpart is a (complex) vector bundle on $M\times X$. Thus the index theory of a Dirac operator on $M$ twisted by an $\A$-vector bundle is a variant of family index theory, where the base space, encoded by $\A$, is noncommutative. If $\A=C(X)$, one obtains a Dirac operator on $M$ twisted by a vector bundle on $M\times X$, which we can consider as a vertical operator on the fiber bundle $M \times X \to X$. In the realm of family index theory this situation is particularly simple since the fiber bundle is trivial and the metric on $M$ does not depend on the parameter.

Modelled on the family case, the superconnection formalism has been applied Dirac operators over $C^*$-algebras in \cite{lo1}\cite{lo2}. In the classical case, for the construction of a Bismut superconnection one needs that $X$ is a smooth manifold. In the noncommutative case this is encoded in the choice of a projective system of Banach algebras $(\A_i)_{i \in \bbbn_0}$ with $\A_0=\A$, and with injective structure maps whose images are closed under holomorphic functional calculus. By using the de Rham homology for algebras and the Chern character as defined in \cite{kar} from the $K$-theory of $\A$ to the de Rham homology of the projective limit $\Ai$ (which is assumed to be dense in $\A$)  one can formulate for closed $M$ an index theorem for Dirac operators twisted by $\A$-vector bundles in analogy to the Atiyah--Singer family index theorem. One gets numerical invariants from the index theorem by pairing the de Rham homology classes with reduced cyclic cocycles. 

By results of Lott \cite{lo2} the Atiyah--Singer index theorem for Dirac operators over $C^*$-algebras can be proven by adapting the superconnection proof of Bismut, as given in \cite{bgv}. While the construction of the heat semigroup is completely analogous to the construction in \cite{bgv}, the main difficulty lies in the study of the large time limit of the heat semigroup. In the classical case, where the Dirac operator is a selfadjoint operator on a Hilbert space of $L^2$-sections, the exponential decay of the heat semigroup on the complement of the kernel of the Dirac operator can be proven using the positivity of the square of the Dirac operator on the complement. Here however we deal with Banach spaces, namely subspaces of the $\A_i$-module $L^2(M,\A_i)^n$. The crucial ingredient for the proof of the large time asymptotics is a result in \cite[\S 6]{lo2}, which relates the spectrum of an  operator with integral kernel in $\C(M \times M,\A_i)$ on  $L^2(M,\A_i)$ to the spectrum of the corresponding operator on the Hilbert $\A$-module $L^2(M,\A)$. Then one uses that the spectrum of a holomorphic semigroup gives information on its large time behaviour. 

Using Lott's methods one can also define noncommutative $\eta$-forms and formulate an Atiyah--Patodi--Singer index theorem for Dirac operators over $C^*$-algebras. The proof of this theorem is the main result of the present paper.  (See Theorem \ref{indtheor} in the even case and Theorem \ref{indtheorodd} for the odd case. For the expressions used in its statement, we refer the reader to \S \ref{derham}, where noncommutative de Rham homology is explained as needed here, and \S \ref{condi}, where the geometric situation is introduced.) The proof is a generalization of the proof given in a  special situation in \cite{wa} and uses some of the concepts and results developed there.

One of the main motivations to study the index theory of Dirac operator twisted by $C^*$-vector bundles comes from higher index theory, pioneered by Connes--Moscovici with their higher index theorem  and relevant in connection with the Novikov conjecture \cite{cm}. 
Here the $\A=C_r^*\Gamma$ is the reduced group $C^*$-algebra of the fundamental group $\Gamma$ of $M$ and the $C_r^*\Gamma$-vector bundle is the Mishenko--Fomenko bundle $\tilde M \times_{\Gamma} C_r^*\Gamma$. An appropriate projective system of Banach algebras can be gained from the work of Connes--Moscovici \cite{cm}.  Eta-forms and the superconnection formalism in higher index theory were introduced  by Lott \cite{lo3}\cite{lo1}. Leichtnam--Piazza proved a higher Atiyah--Patodi--Singer index theorem \cite{lp1}\cite{lp2}\cite[\S 4]{lp3}. Many results from family and higher index theory carry directly over to our setting. So does part of the proof of the higher Atiyah--Patodi--Singer index theorem in \cite[\S 4]{lp3}. However the consideration of the large time limit there is based on the higher $b$-pseudodifferential calculus \cite[\S 12]{lp1}, which exploits the special situation given in higher index theory. 
The higher Atiyah--Patodi--Singer index theorem of Leichtnam--Piazza can be reproven by applying our main result to Dirac operators twisted by the Mishenko--Fomenko bundle. A nice introduction into the matter can be found in \cite{lp1}, for example.  

In the remainder of this introduction we discuss  our results in more detail and outline possible applications. Because of the length of the paper, the applications will not be treated in its body. 

The boundary conditions we consider are a generalization of those introduced by Melrose--Piazza \cite{mp1}\cite{mp2} for families and in higher index theory by Leichtnam--Piazza, see \cite{lp5}. They incorporate the product situations discussed in \cite{waprod}. The main application is an easy proof of a product formula for $\eta$-forms (see \S \ref{prodeta}), which generalizes and unifies several formulas proven before \cite[\S 2]{lpetpos} \cite[Theorem 6.1]{ps}\cite[Lemma 6]{mp2} (the latter when translated from family index theory to the present setting). It is used in the proof of the index theorem in the odd case. The approach should also work in family index theory. In our proof of the index theorem we follow a strategy of Bunke \cite{bk}\cite{bu}, which is inspired by, but more pedestrian than the  proof of Melrose--Piazza based on Melrose's $b$-calculus \cite{mp1}. It is also related to M\"uller's approach \cite{mu2}, but differs in the treatment of the large time limit. In particular we study an index problem for a manifold $M$ with cylindric ends. It is well-known that any  Atiyah--Patodi--Singer boundary value problem can be translated into such a problem (see \cite[Prop. 2.1]{waprod} for the result as needed here and references therein for corresponding earlier results in higher and family index theory). As in the closed case, the main additional difficulty arising in the noncommutative situation lies in the study of the behaviour of the heat semigroup for large times. One important step is an extension of Lott's method mentioned above, see \S \ref{parametrix} and \S \ref{regpar}. 

On the way we obtain a large time estimate (see \S \ref{liminf}) that seems to be new also in higher index theory (see \cite{mp1}\cite{bk} for the family case). It allows to define noncommutative $\eta$-forms for invertible Dirac operators on manifolds with cylindric ends in analogy to M\"uller's construction in \cite{mu1} and in the family case Bunke's definition in \cite{bu}. These are interesting in connection with the gluing problem for $\eta$-forms. 

As a special case of our main result one obtains an Atiyah--Patodi--Singer index theorem for Dirac operators twisted by the Mishenko--Fomenko bundle $\tilde M \times_{\Gamma} C^*\Gamma$ associated to the maximal group $C^*$-algebra $C^*\Gamma$. This is relevant in connection with $\rho$-invariants; see \cite{ps}, where the zero degree part of the theorem was proven. In \S \ref{rhoinvar} we define new $\rho$-invariants, motivated by Lott's higher $\rho$-form \cite{lo3} and the higher $\rho$-forms for the signature operator defined by Leichtnam--Piazza \cite{lpsign}, and study their properties.

In higher index theory our framework may be applied to the study of Lott's delocalized $L^2$-invariants \cite{lodeloc}, which can be defined in full generality by using the new projective systems of Banach algebras $(\A_i)_{i \in \bbbn_0}$ recently introduced by Puschnigg \cite{p}. (In the approach of Leichtnam--Piazza the algebra $\Ai$ seems to be implicitely fixed by compatibility with \cite[Def. 12.1(ii)]{lp1}, so their results do not seem to generalize directly to Puschnigg's systems.)

Possible applications of our results also arise from twisted higher index theory. A higher twisted Atiyah--Singer index theorem has been proven by Marcolli--Mathai \cite{mm}. In this situation the $C^*$-vector bundle playing the role of the Mishenko--Fomenko bundle is in general not flat.

Furthermore, our theorem can be applied to flat foliated bundles: Let $\Gamma$ act by diffeomorphisms on a closed manifold $T$ and consider the reduced crossed product $C(T) \times_r \Gamma$. An appropriate projective system of subalgebras has been defined in \cite[\S 4.1]{wa2}. Any invariant vector bundle on $\tilde M \times T$ descends to a $C(T) \times_r \Gamma$-vector bundle on $M$. Thus we get an index theorem for a Dirac operator on $M$ twisted by such a bundle. In contrast to the Mishenko--Fomenko bundle this $C^*$-vector bundle is in general not flat. The resulting theorem should be compared with the Atiyah--Patodi--Singer index theorem for \'etale groupoids by Leichtnam--Piazza \cite{lpetgroup}, which applies to more general foliated bundles under some additional assumptions, in particular that $\Gamma$ is virtually nilpotent and the boundary operator invertible. It would be interesting to find a strategy for the Atiyah--Patodi--Singer index theory for foliated bundles which at the same time overcomes the restrictions of the present method (the special geometric situation) and those of \cite{lpetgroup} (in particular the condition on the group).

\subsection*{Conventions}

All tensor products are graded and completed. When dealing with Hilbert $C^*$- modules they are tensor products of Hilbert $C^*$-modules, else they are projective tensor products (sometimes for clarity denoted by $\ten_{\pi}$). Commutators are graded. We use the following convention concerning $L^2$-spaces of Banach algebra valued functions: We use the Hilbert $C^*$-module completion if the Banach algebra is a $C^*$-algebra. The completion is given by the norm $\| f \|=| \int f(x)^*f(x) ~dx |^{1/2}$. (Here and in general in this paper $|\cdot |$ also denotes a norm.) If the algebra is not a $C^*$-algebra, the completion is with respect to the norm $\| f \|=(\int |f(x)|^2 ~dx)^{1/2}$. 
An analogous convention holds for $L^2$-spaces of sections of bundles whose fibres are projective modules over a Banach resp. $C^*$-algebra. 

In general, notation and conventions are as in \cite{wa}. 
We refer to \cite[Ch. 5]{wa} for the notions of Hilbert-Schmidt and trace class operators for $L^2$-spaces of Banach algebra valued functions and results guaranteeing that for our purposes they behave as in the classical case. 

\tableofcontents

\section{Differential algebras, connections and Chern character}
\label{derham}

\subsection{Differential algebras and de Rham homology} 
Let $\B$ be a involutive unital locally $m$-convex Fr\'echet algebra \cite{ma} which is in addition a local Banach algebra \cite{bl}. The $\bbbz$-graded universal differential algebra is defined as $\Oi\B= \prod_{k=0}^{\infty} \Ok \B$ with $\Ok \B := \B (\ten \B/\bbbc)^k, ~k \in \bbbn_0$ with differential $\di$ of degree one given by 
$\di(b_0 \ten b_1 \dots \ten b_k)=1 \ten b_0 \ten \dots b_k$ and linear extension. The product is determined by
$b_0 \ten b_1 \dots \ten b_k= b_0 \di b_1 \di \dots \di b_k$, the graded Leibniz rule and $\bbbc$-linearity. Differential and product are continuous.

The involution on $\B=\hat\Omega_0\B$ extends uniquely to an involution on $\Oi\B$ fulfilling $(\alpha \beta)^*= \beta^*\alpha^*$.

Let $\ideal \subset \Oi\B$ be a closed homogenous involutive ideal, closed under $\di$. Then $\Oi^{\ideal}\B:=\Oi\B/\ideal$ inherits the structure of an involutive differential algebra. 

The de Rham homology $H^{\ideal}_*(\B)$ is the topological homology of the complex $(\Oi^{\ideal}\B/\ov{[\Oi^{\ideal}\B,\Oi^{\ideal}\B]},\di)$, ``topological'' meaning that the closure of the range of $\di$ is divided out such that $H^{\ideal}_*(\B)$ is a Hausdorff space.

Let $V$ be a finitely generated projective right $\B$-module.

Let $V^*$ be the left $\B$-module of all right $\B$-module maps $s:V \to  \B$. If $V \subset \B^p$, we define for $s \in V$ with $s=\sum_{i=1}^p s_i e_i$
$$s^*:=\sum_{i=1}^p s_i^* e_i^* \in V^* \ ;$$ 
here $(e_1, \dots, e_p)$ is an orthonormal basis of $\bbbc^p$ and $(e_1^*, \dots, e_p^*)$ is its dual basis.  

We also define the right $\Oi^{\ideal}\B$-module $\Oi^{\ideal}V:=V \ten_{\B} \Oi^{\ideal}\B$ and the left $\Oi^{\ideal}\B$-module $\Oi^{\ideal}V^*=\Oi^{\ideal}\B \ten_{\B} V^*$.
The algebra $\Oi^{\ideal} V \otimes_{\Oi^{\ideal} \B} \Oi^{\ideal} V^*$ equals the algebra of $\Oi^{\ideal}\B$-linear homomorphisms on $\Oi^{\ideal}V$. It is endowed with a supertrace $$\tr:\Oi^{\ideal} V \otimes_{\Oi^{\ideal} \B} \Oi^{\ideal} V^* \to \Oi^{\ideal} \B/\ov{[\Oi^{\ideal} \B, \Oi^{\ideal} \B]} \ ,$$
which can be understood, for example, as induced by the supertrace 
$$\tr:M_p(\Oi^{\ideal} B) \to \Oi^{\ideal}\B/\ov{[\Oi^{\ideal}\B,\Oi^{\ideal}\B]}$$ 
defined, as usual, by adding up the elements on the diagonal.

\subsection{Connections and Chern character}
\label{connect}

In the following we study the product of a manifold with the ``noncommutative space'' $\B$. This can be considered as a special case of the previous situation applied to the algebra $\C(M,\B)$. However, we will use a different notation, since we want to keep the standard geometric notation as far as possible.  

Let $M$ be a manifold and $\Lambda^* T^*M$ the bundle of differential forms. The de Rham differential $d$ and  differential $\di$ make $\C(M,\Lambda^* T^*M \ten \Oi^{\ideal}\B)$ a double complex. As usual, the differential $\di_{tot}$ of the total complex  acts on a form $\alpha \beta$ with $\alpha \in \C(M,\Lambda^m T^*M)$ and $\beta \in \Ok^{\ideal}\B$ as $$
\di_{tot}(\alpha \beta)=(d+\di)\alpha \beta =(d \alpha)\beta+ (-1)^m\alpha(\di \beta) \ .$$ 

The topological homology of the total complex $H^*_{\B,\ideal}(M)$ is naturally isomorphic to $$\sum_{p+q=*} H^p(M,H^{\ideal}_q(\B)) \ .$$ 
Furthermore for $M$ closed
$$H^p(M,H^{\ideal}_q(\B)) \cong H^p(M) \ten H^{\ideal}_q(\B) \ .$$

Let $P \in \C(M,M_p(\B))$ be a (selfadjoint) projection and let $\F:=P(M\times \B^p)$. We call $\F$ a $\B$-vector bundle, since its fibers are projective finitely generated right $\B$-modules. The bundle $\F$ is endowed with a fiberwise $\B$-valued non-degenerated product (see \cite[Def. 5.27]{wa} for the terminology)
$$\langle s,t\rangle= \sum_{i=1}^p s_i^*t_i \ .$$
Therefore we call $\F$ a $\B$-hermitian bundle.

We have bundles $\Ol{\mu}\F=\F \ten_{\B} \Ol{\mu}\B=P(M \times (\Ol{\mu}\B)^p)$ and $\Ol{\mu}\F^*=\Ol{\mu}\B \ten_{\B} \F^*$. The fibers are right resp. left $\Ol{\mu}\B$-modules. We identify the bundle of Banach algebras $\Ol{\mu}\F \otimes_{\Ol{\mu}\B} \Ol{\mu}\F^*$ with $\{s \in M \times M_p(\Ol{\mu}\B)~|~PsP=s\}$. 

A connection on $\F$ in direction of $M$ is a $\B$-linear map $\nabla^{\F}:\C(M,\F)\to \C(M,T^*M \ten \F)$ such that for $s \in \C(M,\F)$ and $f \in \C(M,\B)$
$$\nabla^{\F}(sf)=\nabla^{\F}(s)f+ s df \ .$$

It extends to a map on $\C(M,\Lambda^* T^*M \ten\Oi^{\ideal}\F)$ such that for $s \in \C(M,\Lambda^k T^*M \ten\hat \Omega_m^{\ideal}\F)$ and $\alpha \in \C(M,\Lambda^* T^*M \ten\Oi^{\ideal}\B)$ 
$$\nabla^{\F}(s\alpha)=\nabla^{\F}(s)\alpha + (-1)^{k+m}s d\alpha \ .$$

A connection on $\F$ in direction of $\B$ is a $\C(M)$-linear map $\di_{\F}:\C(M,\F) \to \C(M,\hat \Omega_1^{\ideal}\F)$ such for $s \in \C(M,\F)$ and $f \in \C(M,\B)$
$$\di_{\F}(sf)=\di_{\F}(s)f + s\di(f)\ .$$

It extends to a map on $\C(M,\Lambda^* T^*M \ten\Oi^{\ideal}\F)$ such that for $s \in \C(M,\Lambda T^kM \ten \hat \Omega_m^{\ideal}\F)$ and $\alpha \in \C(M,\Lambda^* T^*M \ten\Oi^{\ideal}\B)$ 
$$\di_{\F}(s\alpha)=\di_{\F}(s)\alpha + (-1)^{k+m}s \di \alpha \ .$$

We call $\nabla^{\F}+\di_{\F}$ a total connection on $\F$.

A useful example is the total Grassmannian connection $PdP+ P\di P$.

As in the classical case one has:

\begin{lem}
\begin{enumerate}
\item Let $\nabla^{\F}, \tilde \nabla^{\F}$ be connections on $\F$ in direction of $M$. Then $\nabla^{\F}-\tilde \nabla^{\F} \in \C(M,\Lambda^1 T^*M \ten \F \ten_{\B} \F^*)$.
\item Let $\di_{\F}, \tilde \di_{\F}$ be connections on $\F$ in direction of $\B$. Then $\di_{\F}-\tilde \di_{\F} \in \C(M,\Oi^{\ideal} \F \otimes_{\Oi^{\ideal} \B} \Oi^{\ideal} \F^*)$. 
\item The curvature $(\nabla^{\F}+\di_{\F})^2$ is an element of $\C(M,\Lambda^* T^*M \ten \Oi\F \otimes_{\Oi\B} \Oi\F^*)$ of total degree 2. 
\end{enumerate}
\end{lem}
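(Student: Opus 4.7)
I would handle all three parts by the same template: establish an appropriate module-linearity of the operator in question, then invoke the identification --- valid because $\F = P(M \times \B^k)$ is a direct summand of a trivial $\B$-bundle --- of such endomorphisms with sections of $\F \ten_{\B} \F^*$ or its forms-valued variants (given concretely as matrix expressions conjugated by $P$).

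For part (1), subtracting the Leibniz rules for $\nabla^\F$ and $\tilde\nabla^\F$ on $sf$ cancels the common $s\,df$ term, so $A := \nabla^\F - \tilde\nabla^\F$ is $\C(M,\B)$-linear as a map $\C(M,\F) \to \C(M, \Lambda^1 T^*M \ten \F)$. The identification above places $A$ in $\C(M, \Lambda^1 T^*M \ten \F \ten_{\B} \F^*)$. Part (2) is formally identical: the subtraction kills the $s\,\di f$ term, and the extension of $\di_\F$ preserves right-$\Oi^\ideal\B$-linearity of differences, so $\di_\F - \tilde\di_\F$ lies in $\C(M, \Oi^\ideal\F \otimes_{\Oi^\ideal\B} \Oi^\ideal\F^*)$.

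For part (3), the total-degree assertion is immediate: $\nabla^\F$ raises the $\Lambda T^*M$-degree by one and $\di_\F$ raises the $\Oi^\ideal$-degree by one, so $(\nabla^\F + \di_\F)^2$ raises total degree by two. The substantive content is $\C(M, \Lambda^* T^*M \ten \Oi^\ideal\B)$-linearity; the endomorphism-bundle conclusion then follows as in (1)--(2). For $s \in \C(M,\F)$ and $\alpha \in \C(M, \Lambda^* T^*M \ten \Oi^\ideal\B)$ I apply the combined Leibniz rule for $\nabla^\F + \di_\F$ to $s\alpha$, then apply it once more to each resulting term. The pure second-derivative contribution $s\,(d+\di)^2\alpha$ vanishes because $d^2 = \di^2 = 0$ and $d\di + \di d = 0$ on the double complex; the two mixed cross terms of the form $(\nabla^\F + \di_\F)(s)\,(d+\di)\alpha$ arise once from each order, but acquire opposite signs at the second Leibniz step because $(\nabla^\F + \di_\F)(s)$ has total degree one higher than $s$, and therefore cancel. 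What survives is $(\nabla^\F + \di_\F)^2(s)\,\alpha$, which is the claimed linearity.

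The main obstacle is purely a matter of bookkeeping: the paper imposes two distinct sign conventions in the two Leibniz rules (signs based on $\Lambda$-degree for $\nabla^\F$, signs based on $\Oi^\ideal$-degree for $\di_\F$), and one must track these consistently through the double application in part (3) to see the cross-term cancellation. Beyond this sign-tracking, the argument is the standard double-complex curvature-is-tensorial computation, together with the elementary projective-module identification of linear operators with sections of the appropriate $\F \ten \F^*$-bundle.
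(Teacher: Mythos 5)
Your proposal is correct and takes essentially the same route as the paper: the paper's proof likewise reduces all three parts to showing that the differences and the curvature are $\C(M,\Lambda^* T^*M \ten \Oi^{\ideal}\B)$-linear and then uses the identification of such linear maps with sections of the endomorphism bundles, calling the linearity of the differences straightforward and the curvature case ``a little calculation'' (deferred to the similar computation in \S\ref{supcon}). Your double Leibniz computation with the cross-term cancellation is exactly that calculation made explicit.
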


\begin{proof}
It is enough to show that the differences and the curvature are $\C(M,\Lambda^* T^*M \ten \Oi^{\ideal} \B)$-linear. For the differences this is straightforward, for the curvature it follows from a little calculation, see \S \ref{supcon} for a similar calculation. 
\end{proof}

The Chern character form is defined as
$$\ch^M_{\B}(\nabla^{\F}+\di_{\F}):=\tr e^{-(\nabla^{\F}+ \di_{\F})^{2}}
=\sum_{n=0}^{\infty} \frac{(-1)^n}{n!}\tr (\nabla^{\F}+ \di_{\F})^{2n} \ .$$
Mostly, we will write $\ch(\F)$, thus suppressing the choice of the connections from the notation.

Let $\nabla^{\F},\di_{\F},\tilde \nabla^{\F},\tilde \di_{\F}$ be as is the statement of the previous lemma and define the linear interpolations $$\nabla^{\F,t}:=(1-t)\nabla^{\F} + t \tilde \nabla^{\F} \ ,$$
$$\di^t_{\F}:=(1-t)\di_{\F}+ t \tilde \di_{\F} \ .$$

The proof of the classical transgression formula, as given for example in \cite[\S 1.5]{bgv}, works here also, yielding
$$\ch^M_{\B}(\tilde \nabla^{\F} + \tilde \di_{\F})-\ch^M_{\B}(\nabla^{\F} +  \di_{\F})=\di_{tot}\int_0^1 \tr\bigl((\tilde \nabla^{\F}-\nabla^{\F} + \tilde \di_{\F}- \di_{\F})e^{-(\nabla^{\F,t}+ \di^t_{\F})^2}\bigl) ~dt\ .$$ 

The Chern character form is closed with respect to $\di_{tot}$: This follows from the equality
$$\di_{tot}\ch^M_{\B}(PdP + P\di P)=\tr[ Pd P + P\di P, e^{-(Pd P + P\di P)^2}]=0$$
in combination with the transgression formula.

If $M$ is closed, the class of $\ch^M_{\B}(\nabla^{\F} + \di_{\F})$ in $H_{\B,\ideal}^*(M)$ only depends on the class of $\F$ in $K_0(C(M,\B))$ and is called the Chern character of $\F$.

The connection $\di_{\F}$ resp. $\nabla^{\F}$ is called compatible with the $\B$-hermitian structure on $\F$ if for $s_1,s_2 \in \C(M,\F)$
$$\di \langle s_1,s_2 \rangle=\langle \di_{\F} s_1,s_2\rangle + \langle s_1,\di_{\F}s_2\rangle$$
resp.
$$d \langle s_1,s_2 \rangle=\langle \nabla^{\F} s_1,s_2\rangle + \langle s_1,\nabla^{\F}s_2\rangle \ .$$

In the following, if $M$ is a Riemannian manifold we introduce for $s \in \C(M,\F)$ the operator 
$$s^*:\C_c(M,\F) \to \B,~ f \mapsto \int_M \langle s(x), f(x)\rangle dx \ .$$ 

\subsection{Projective systems}
\label{prosys}

In general, we will work with projective systems of algebras:

We assume that $(\A_i,\iota_{i+1,i}:\A_{i+1} \to \A_i)_{i \in \bbbn_0}$ is a projective system of
involutive Banach algebras with unit satisfying the following conditions:
\begin{itemize} 
\item The algebra $\A:=\A_0$ is a $C^*$-algebra.
\item For any $i \in \bbbn_0$ the map $\iota_{i+1,i}:\A_{i+1} \to \A_i$ is injective.
\item For any $j \in \bbbn_0$ the map $\iota_j:\Ai:= \pl{i}\A_i \to \A_j$ has dense
range.
\item For any $i \in \bbbn_0$ the algebra $\A_i$ is stable with respect to the holomorphic functional calculus in
$\A$.  
\end{itemize}

The motivating example is $\A_j=C^j(B)$ for a closed manifold $B$. Another important example stems from the Connes--Moscovici algebra in higher index theory.

Furthermore we assume that for each $i \in \bbbn$ there is given a closed homogenous involutive ideal $\ideal_i \subset \Oi\A_i$, closed under $\di$, such that $\iota_{i+1,i}(\ideal_{i+1})$ is a dense subset of $\ideal_i$. We denote its projective limit by $\ideal_{\infty}$. 

Then Lemmata \cite[1.3.6-1.3.8]{wa} imply that $$\pl{i}\Oi^{\ideal_i}\A_i=\Oi^{\ideal_{\infty}}\Ai \ ,$$
$$\pl{i}\Oi^{\ideal_i}\A_i/\ov{[\Oi^{\ideal_i}\A_i,\Oi^{\ideal_i}\A_i]}=\Oi^{\ideal_{\infty}}\Ai/\ov{[\Oi^{\ideal_{\infty}}\Ai,\Oi^{\ideal_{\infty}}\Ai]} \ ,$$
$$\pl{i}H^{\ideal_i}_*(\A_i)=H^{\ideal_{\infty}}_*(\Ai) \ .$$

For simplification, we will in general omit the dependence on the ideal from the notation.

\section{Dirac operators over $C^*$-algebras}
\label{condi}

Let $M$ be an oriented Riemannian  manifold of even dimension with a cylindric end $Z^+$ isometric to $(-1,\infty)\times N$ for some closed Riemannian manifold $N$ such that $M_c:=M \setminus Z^+$ is a compact manifold with boundary $N$. We orient $N$ such that the isometry $Z^+ \cong (-1,\infty)\times N$ is orientation preserving. The coordinate defined by the projection $p_1:Z^+ \to (-1, \infty)$ is denoted by $x_1$. We also define the projection $p:Z^+ \to N$. Furthermore we write $Z=\bbbr \times N$ and identify $N$ with $\{0\} \times N$. We denote the Levi-Civit\`a connection of $M$ by $\nabla^M$. Using the Riemannian metric we identify tangent and cotangent bundle.

Let $E$ be a $\bbbz/2$-graded Clifford module
 on $M$ and assume that there is a vector bundle $E^N$ on $N$ such that $E|_{Z^+}=(\bbbc^+ \oplus \bbbc^-)\ten p^*E^N$. Assume that the hermitian structure of $E$ is of product type on $Z^+$ that the Clifford module structure extends to a translation invariant Clifford module structure on the bundle $(\bbbc^+ \oplus \bbbc^-)\ten p^*E^N$ over $Z$. 

We further twist $E$ with a $C^*$-vector bundle:

Let $P \in \C(M,M_p(\Ai))$ be a selfadjoint projection such that $P|_{Z^+}$ does not depend on $x_1$ and define the $\A_i$-vector bundle 
$\F_i=P(M \times \A_i^p)$, $i \in \bbbn \cup \{0,\infty\}$. We get a projective system of $\bbbz/2$-graded Clifford modules
$${\mathcal E}_i:=E \ten \F_i \ .$$ 
The bundles $\F_i$ inherit a $\A_i$-hermitian structure from the standard $\A_i$-valued scalar product on $\A_i^p$ (see \S \ref{derham}). 

In general we abbreviate $\F=\F_0$.

Note that, by tensoring with the identity, $P\di P$ defines a connection on $\Ei$ in direction of $\Ai$.
Let $P\di P + \gamma$ be a connection on $\E_{\infty}$ in direction of $\Ai$ compatible with the $\Ai$-hermitian structure. Thus $\gamma=-\gamma^* \in \C(M,\Oi \E_{\infty} \otimes_{\Oi \Ai} \Oi \E_{\infty}^*)$. We assume that $\gamma$ supercommutes with Clifford multiplication and that $\gamma|_{Z^+}$ is independent of $x_1$. Clearly, $P\di P+\gamma$ defines also a connection on $\E_i$ in direction of $\A_i$ for each $i$.

Let $\nabla^{\E}$ be a Clifford connection on $\E$ (i.e. a connection on $\E$ in direction of $M$ compatible with the $\A$-valued scalar product and fulfilling $c(\nabla^Mv)=[\nabla^{\E},c(v)]$). We assume that $\nabla^{\E}$ is of product type on $Z^+$. We write $\dirac_{{\mathcal E}}=: c
\circ \nabla^{{\mathcal E}}$ for the associated Dirac operator. 

Let $\nabla^E$ be a Clifford connection on $E$.
The Dirac operator $c \circ \nabla^{E}$ on $\C_c(M,E\ten \A^p)$ is denoted by $\dirac_E$. Then $\dirac_{{\mathcal E}}- P \dirac_EP$ is a bundle endomorphism.

We identify $E^N$ with $E^+|_N$ and write $\F^N:=\F|_N$ and ${\mathcal E}^N:={\mathcal E}^+|_N$. Hence ${\mathcal E}^N=E^N \ten \F^N$. The induced Clifford module structure on $\E^N$ is given by $c_N(v):=c(dx_1)c(v)$ for $v \in TN \subset TM$. We denote the Dirac operator associated to $\E^N$ by $\dira_N$. By identifying $\E^+$ with $\E^-$ on $Z^+$ via $ic(dx_1)$ we have fixed an isomorphism
$$\E|_{Z^+}\cong (\bbbc^+ \oplus \bbbc^-)  \ten (p^*\E^N) \ .$$ 
Let $\Gamma$ denote the grading operator on $\E$. On the cylindric end
$$\dira_{{\mathcal E}}= c(dx_1)(\ra_1 -\Gamma \dira_N) \ .$$

For an operator $B$ acting on the sections of ${\mathcal E}^N$ we write $\tilde B$ for the induced operator $\Gamma B$ acting on the sections of ${\mathcal E}|_{Z^+}$. 
Note that $c(dx_1)\tilde B+ \tilde B c(dx_1)=0$. 

The proofs of the following facts are as in the complex case. We use the notation of \cite[Prop. 3.43]{bgv}. By $\Delta$ we denote the scalar Laplacian.

The curvature of $\nabla^{\mathcal E}$ decomposes as a sum 
$$(\nabla^{\mathcal E})^2=R^{\E} +F^{\E/S}$$ 
where $F^{\E/S}$ is the relative curvature of the bundle $\E$ and $R^{\E}$ is the Riemannian curvature, which acts on $\E$ via the Clifford multiplication.
The operator $\dirac_{{\mathcal E}}^2$ is a generalized Laplacian.  
Define for $s \in \C(M,{\mathcal E})$
$$\Delta^{\mathcal E}s=-\Tr(\nabla^{T^*M \ten {\mathcal E}} \nabla^{\mathcal E}s) \ .$$

We have the following analogue of the Lichnerowitz formula:
$$\dirac_{{\mathcal E}}^2=\Delta^{\mathcal E}+ c(F^{\E/S})  + \frac{r_M}{4} $$
where $r_M$ is the scalar curvature of $M$ and for $X \in \Omega^2(M,\End(\E))$
$$c(X):=\sum_{i<j} X(e_i,e_j) c(e_i)c(e_j)\in \C(M,\End(\E))$$
for an orthonormal frame $(e_i)_{i=1, \dots, \dim M}$ of $TM$. 
A similar formula yields a map $c:\Omega^n(M,\End(\E)) \to \C(M,\End(\E))$.

\section{Fredholm properties}
\label{DirHilb}

Recall that a closed densely defined operator $D$ on a Hilbert $\A$-module $H$ is called regular if $(1+D^*D)$ is surjective. Furthermore we call a regular operator $D$ Fredholm if it is Fredholm as a bounded operator from $H(D)$ to $H$, where $H(D)$ is the Hilbert $\A$-module that coincides with $\dom D$ as a right $\A$-module and is endowed with the $\A$-valued scalar product $$\langle f,g\rangle_{H(D)}:=\langle Df,Dg \rangle + \langle f,g\rangle \ .$$ 
Here $\langle ~,~\rangle$ denotes the $\A$-valued scalar product on $H$.

In the following we denote by $B(H)$ the algebra of bounded adjointable operators and by $K(H)$ the ideal of compact operators on $H$.

\begin{prop}
The closure $\D_E$ of the operator $\dirac_{\E}$ with domain $\C_c(M,\E)$ is regular on $L^2(M,\E)$.
\end{prop}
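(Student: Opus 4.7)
The goal is to show that $\dirac_{\E}$ with domain $\C_c(M,\E)$ extends to a densely defined closed regular operator on the Hilbert $\A$-module $L^2(M,\E)$. The strategy mirrors the classical essential self-adjointness argument of Chernoff/Wolf for Dirac operators on complete Riemannian manifolds, with Hilbert-space inner products replaced by $\A$-valued ones; the cylindrical end structure makes $M$ geodesically complete so the argument applies.

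\textbf{Step 1 (symmetry).} The connection $\nabla^{\E}$ is compatible with the $\A$-hermitian structure on $\E$ and Clifford multiplication by a tangent vector is skew-adjoint with respect to the fibrewise $\A$-valued pairing. Together with the standard Green's-type formula on a Riemannian manifold, these imply that $\dirac_\E$ is formally self-adjoint, hence symmetric on $\C_c(M,\E)$ and the closure $\D_E$ satisfies $\D_E \subset \D_E^*$.

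\textbf{Step 2 (criterion for regularity).} I will use the Hilbert-module analogue of the classical criterion (as in Lance's book / Baaj--Julg): if a symmetric densely defined operator $T$ has the property that $T \pm i$ both have dense range in $L^2(M,\E)$, then its closure $\bar T$ is self-adjoint and regular. Thus it suffices to prove that $(\dirac_\E \pm i)\C_c(M,\E)$ is dense in $L^2(M,\E)$.

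\textbf{Step 3 (core argument).} Suppose $f \in L^2(M,\E)$ is orthogonal to $(\dirac_\E + i)\C_c(M,\E)$ with respect to the $\A$-valued pairing $\langle \cdot, \cdot\rangle$; then $(\dirac_\E - i)f = 0$ distributionally. By elliptic regularity (which holds verbatim for Dirac operators over $\C$-sections of $\A$-bundles, since the symbol is invertible), $f$ is smooth. Choose cutoff functions $\chi_n \in C_c^\infty(M)$ with $0 \le \chi_n \le 1$, $\chi_n \equiv 1$ on larger and larger compact sets (possible on the cylindrical end by using functions of $x_1$), and $|d\chi_n|_\infty \to 0$. Using the commutator $[\dirac_\E,\chi_n^2] = c(d\chi_n^2)$ and formal self-adjointness,
\begin{equation*}
\langle \chi_n^2 f, (\dirac_\E - i)f\rangle = \langle (\dirac_\E + i)(\chi_n^2 f), f\rangle - \langle c(d\chi_n^2) f, f\rangle.
\end{equation*}
The left side vanishes; the skew-adjointness of $c(d\chi_n^2)$ combined with a polarization/averaging with the complex-conjugate identity produces a bound $|\langle \chi_n f, \chi_n f\rangle| \le C|d\chi_n|_\infty \, |f|^2$. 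Letting $n \to \infty$ forces $\langle f,f\rangle = 0$, hence $f = 0$. The same argument applied to $(\dirac_\E - i)\C_c(M,\E)$ gives density of its range as well.

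\textbf{Main obstacle.} The delicate point is Step 3: one must verify that the $\A$-valued integration-by-parts manipulations yield a genuine norm estimate in $L^2(M,\E)$ as a Hilbert $\A$-module, not merely an estimate of $\A$-valued quantities. This is carried out by taking absolute values (in $\A$) carefully and using that for positive $\A$-valued inner products one still has $|\langle \chi_n f, \chi_n f\rangle| \to |\langle f,f\rangle|$ by dominated convergence; the skew-adjointness of Clifford multiplication ensures the boundary term is purely imaginary so its absolute value is controlled by $|d\chi_n|_\infty \cdot |\langle f,f\rangle|$. Once essential self-adjointness in the Hilbert-module sense is established, regularity of $\D_E$ follows from the stated criterion.
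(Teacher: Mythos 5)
Your Step 2 criterion is fine (for a closed symmetric operator on a Hilbert $\A$-module, density of $\Ran(T\pm i)$ does imply self-adjointness and regularity), but Step 3 does not verify it. You argue that any $f$ orthogonal to $(\dirac_{\E}+i)\C_c(M,\E)$ vanishes and then conclude density. In a Hilbert $C^*$-module this inference is invalid: a closed submodule can have trivial orthogonal complement without being dense (already for $\A=C[0,1]$ acting on itself, the submodule of functions vanishing at a point is proper and closed with zero orthocomplement). Since $\Ran(\bar T\pm i)$ is closed (the identity $\langle (T\pm i)x,(T\pm i)x\rangle=\langle Tx,Tx\rangle+\langle x,x\rangle$ bounds it below), what you actually need is surjectivity, and ``no orthogonal vectors'' does not give it. This is precisely the point where the classical Chernoff/Wolf argument breaks over $C^*$-algebras; the issue you flag as the ``main obstacle'' (turning $\A$-valued estimates into norm estimates) is comparatively harmless, while the real obstruction — non-complementability of submodules — is not addressed. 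Your argument could be salvaged, e.g.\ by the local--global principle of Kaad--Lesch/Pierrot (localize at a faithful family of states, where the classical cutoff argument applies in genuine Hilbert spaces, and then globalize), but as written the density step is a genuine gap.

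For comparison, the paper sidesteps all of this: the untwisted operator $\dirac_E$ on $L^2(M,E\ten\A^p)$ is regular because it is the classical (essentially self-adjoint, by completeness) Dirac operator tensored with the identity of $\A^p$; since $\dirac_{\E}-P\dirac_E P$ is a bundle endomorphism and the off-diagonal terms involve only $c(dP)$, the operator $\dirac_{\E}+(1-P)\dirac_E(1-P)$ is a bounded symmetric perturbation of $\dirac_E$, hence regular, and restricting to the complemented submodule $PL^2(M,E\ten\A^p)=L^2(M,\E)$ gives regularity of $\D_{\E}$. That reduction avoids any density-of-range argument in the module setting, which is exactly where your proof stalls.
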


\begin{proof}
The closure of $\dira_E$ is clearly regular on $L^2(M,E \ten \A^p)$, hence so is the closure $\D_{\E}$ of $\dirac_{{\mathcal E}}$ since the operator $\dirac_{{\mathcal E}} \oplus (1-P) \dirac_E (1-P)$ on $L^2(M,E \ten \A^p)$ is a bounded perturbation of $\dirac_E$. 
\end{proof}

By a similar argument the closure $\D_N$ of $\dirac_N$ with domain $\C(N,\E^N)$ is regular. 

The following definition goes back to \cite{mp1}. 

\begin{ddd}
\label{trivial}
A selfadjoint bounded operator $A$ on $L^2(N,{\mathcal E}^N)$ such that $\D_N+A$ is invertible, is called a {\it trivializing operator} for $\D_N$. 
\end{ddd}

The existence of trivializing operators for Dirac operators associated to $C^*$-vector bundles was claimed in \cite{lp5}, see the Remark at the beginning of \cite[\S 2.4]{lp5}. However, the Technical Lemma in \cite[\S 2.2]{lp5} seems to require that the fibers of the $C^*$-vector bundle are full Hilbert $C^*$-modules. This is no major restriction since one may always modify the problem by a stabilization construction such that the condition is fulfilled.

The following definition anticipates the existence of the heat semigroup $e^{-t\D_N^2}$, which is proven in \S \ref{heatm}.

Note that taking the adjoint needs not be continuous on the space of bounded operators on the Banach space $L^2(N,\Ol{\mu}\E_i^N)$ (see \cite[\S 5.2.3]{wa} for a discussion on adjoints in this context).

\begin{ddd}
\label{adapt}
A selfadjoint bounded operator $\ad$ on $L^2(N,{\mathcal E}^N)$ is called {\it adapted} to $\D_N$ if the following holds on $L^2(N,\Ol{\mu}\E_i^N)$ and on $C^k(N,\Ol{\mu}\E_i^N)$ for any $\mu,i,k$: 
\begin{enumerate}
\item The operator $\ad$ is densely defined (by restriction and $\Ol{\mu}\A_i$-linear extension) and bounded and there is $C>0$ such that for $0<t<1$ with $B=\ad \D_N+ \D_N \ad$ 
$$\|Be^{-t\D_N^2}\| \le Ct^{-1/2}\ , \quad  \|e^{-t\D_N^2}B\| \le Ct^{-1/2} \ .$$
\item The operator $[P\di P,\ad]$ is $\Ol{\mu}\A_i$-linear.
\end{enumerate}
\end{ddd}

For example, a selfadjoint integral operator with integral kernel in $\C(N \times N,\E^N_{\infty} \boxtimes_{\Ai} (\E^N_{\infty})^*)$ is adapted to $\D_N$.

Let $\chi \in \C(\bbbr)$ with $\supp(\chi) \subset [1,\infty)$ and $\chi|_{[2,\infty)}=1$. We consider $\chi$ as a function on $Z^+$, which we extend by zero to $M$. Let $A$ be a trivializing operator for $\D_N$. 

\begin{prop}
The closure $\D_{\E}(A)$ of the operator $\dirac_{{\mathcal E}}- \chi c(dx_1) \tilde A$ is a Fredholm operator from $H({\mathcal D}_{\E}(A))$ to $L^2(M,{\mathcal E})$. 
\end{prop}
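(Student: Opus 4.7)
The plan is to construct a two-sided parametrix for $\D_\E(A)$ modulo $\A$-compact operators (in the Mishchenko--Fomenko sense), by gluing an explicit inverse of the translation-invariant model operator on the cylindric end to an interior parametrix on the compact piece. Since the perturbation $\chi c(dx_1)\tilde A$ is bounded and selfadjoint up to sign, $\D_\E(A)$ is regular with $H(\D_\E(A))=H(\D_\E)$ as topological Hilbert $\A$-modules, so it suffices to produce the parametrix at the level of bounded operators on $L^2(M,\E)$.

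First I would analyze the full-cylinder model operator. On $Z=\bbbr\times N$ the formula $\dirac_\E=c(dx_1)(\ra_1-\Gamma\dira_N)$ together with $c(dx_1)\tilde A+\tilde A c(dx_1)=0$ gives
$$\dirac_\E-c(dx_1)\tilde A=c(dx_1)\bigl(\ra_1-\Gamma(\dira_N+A)\bigr),$$
whose square equals $-\ra_1^2+(\dira_N+A)^2$. Since $\D_N+A$ is invertible by the trivializing property, a spectral/Fourier argument in the $x_1$ variable on $L^2(Z,\E|_Z)=L^2(\bbbr)\otimes L^2(N,\E^N)$ shows that the closure $\D^Z(A)$ is invertible, and its inverse $G$ has the integral kernel given, in a spectral decomposition of $\D_N+A$, by the standard one-dimensional Green's function $\tfrac12\,\mathrm{sgn}(x_1-y_1)e^{-|x_1-y_1|\,|\D_N+A|}$ composed with projectors and $c(dx_1)^{-1}$. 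In particular $G$ is bounded from $L^2(Z,\E|_Z)$ to $H(\D^Z(A))$, and for any compactly supported cutoff $\phi\in C_c^\infty(\bbbr)$ the operators $\phi G$ and $G\phi$ have integral kernel in $\C(Z\times Z,\E_\infty\boxtimes_{\Ai}\E_\infty^*)$ that decay exponentially off the diagonal in the $x_1$-direction; by the standing assumption (\cite[Ch.~5]{wa}) such kernels define $\A$-compact operators on $L^2$.

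Next I would pick cutoffs $\phi_c,\phi_\infty\in C^\infty(M)$ with $\phi_c+\phi_\infty=1$, $\supp\phi_c\subset M_c\cup\{x_1<3\}$ and $\phi_\infty$ supported in $\{x_1>2\}$ (hence in the region where $\chi\equiv 1$), together with $\psi_c,\psi_\infty$ with $\psi_\bullet\equiv 1$ on $\supp\phi_\bullet$ and analogous support conditions. On the compact region I would use a standard interior elliptic parametrix $Q_c$ for $\D_\E$, obtained from local pseudodifferential parametrices (the $C^*$-vector bundle enters only as a coefficient bundle and does not affect the symbolic construction); here $\phi_\bullet Q_c\psi_c$ is bounded from $L^2$ to $H(\D_\E)$, and $[\D_\E,\phi_c]Q_c\psi_c$ is a compactly supported smoothing operator, hence $\A$-compact. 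Setting
$$Q:=\psi_cQ_c\phi_c+\psi_\infty G\phi_\infty,$$
one computes
$$\D_\E(A)Q=(\psi_c\phi_c+\psi_\infty\phi_\infty)+[\D_\E(A),\psi_c]Q_c\phi_c+[\D_\E(A),\psi_\infty]G\phi_\infty+\psi_c(\D_\E Q_c-1)\phi_c,$$
after using that $\D_\E(A)=\D_\E$ on $\supp\psi_c$ and $\D_\E(A)=\D^Z(A)$ on $\supp\psi_\infty$. The first term is $1$; each commutator $[\D_\E(A),\psi_\bullet]=c(d\psi_\bullet)$ is compactly supported, so the two commutator terms have kernels supported in a compact neighborhood of $\supp(d\psi_\bullet)\cap\supp\phi_\bullet$ and are smoothing, hence $\A$-compact; the last term is $\A$-compact by the interior parametrix construction. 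The same argument applied to $Q\D_\E(A)$ gives a left parametrix, so $\D_\E(A)$ is Fredholm.

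The main obstacle, and the step that deserves the most care, is the construction and mapping properties of the model inverse $G$ on the full cylinder in the Hilbert $\A$-module framework and the verification that its localizations by compactly supported cutoffs are $\A$-compact; this is where the invertibility of $\D_N+A$ and the identification of compact operators with integral-kernel operators from \cite[Ch.~5]{wa} are both essential, and where later sections of the paper (in particular \S\ref{parametrix} and \S\ref{regpar}) will refine the estimates needed for the heat-kernel analysis.
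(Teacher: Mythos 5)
Your overall strategy is the same as the paper's: the paper also proves Fredholmness by a cut-and-paste parametrix (its operator $Q(0)$ from \S\ref{res}) glued from model operators, with the invertible translation-invariant operator on the full cylinder --- coming from the invertibility of $\D_N+A$ --- supplying the parametrix at infinity, and with the remainders identified as integral operators with rapidly decaying kernels, hence compact on both $L^2(M,\E)$ and $H(\D_\E(A))$. However, as written your gluing does not go through: with $\phi_c+\phi_\infty=1$, $\supp\phi_c\subset M_c\cup\{x_1<3\}$ and $\phi_\infty$ supported in $\{x_1>2\}$, the function $\psi_c$ is $\equiv 1$ on a region where $\chi\neq 0$, so the step ``$\D_\E(A)=\D_\E$ on $\supp\psi_c$'' is false; and you cannot repair this by shrinking the supports, because the transition region $\{1<x_1<2\}$ where $\chi$ interpolates between $0$ and $1$ must be covered by one of your two pieces, on which neither $\D_\E$ nor the unperturbed-cutoff model $\D^Z(A)$ coincides with $\D_\E(A)$. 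Concretely, your identity for $\D_\E(A)Q$ acquires an extra term of the form $\chi c(dx_1)\tilde A\,\psi_cQ_c\phi_c$ (or a $(1-\chi)$-error on the cylinder side), whose compactness you would still have to argue --- for instance via the Mishenko--Fomenko fact that a negative-order interior parametrix with compactly supported kernel is already $\A$-compact on $L^2$, so that bounded$\circ$compact is compact. The paper sidesteps exactly this by using a three-piece decomposition: the transition region is transplanted to the closed manifold $Y=S_{d+1}\times N$, where the cutoff perturbation $\xi\tilde A$ is built into the model operator and the closed-manifold results apply, so no piece sees a mismatch between $\D_\E(A)$ and its model.

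Two further points deserve attention. First, the proposition only assumes $A$ is a bounded selfadjoint trivializing operator, so your claim that the localizations $\phi G$, $G\phi$ have kernels in $\C(Z\times Z,\E_\infty\boxtimes_{\Ai}\E_\infty^*)$ is unjustified in general (it would require $A$ adapted/smoothing); what you actually need, and what does hold, is compactness of the off-diagonal localizations such as $c(d\psi_\infty)G\phi_\infty$, which follows because $\D_N+A$ has $\A$-compact resolvent, hence $e^{-\delta|\D_N+A|}$ is compact for $\delta>0$. Second, Fredholmness in the Hilbert $C^*$-module setting requires the parametrix to be \emph{adjointable} from $L^2(M,\E)$ to $H(\D_\E(A))$, not merely bounded; the paper closes this with a one-line identity expressing $Q$ through $(\D_\E(A)+i)^{-1}$, and your proposal should include an analogous remark.
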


\begin{proof}
First note that any integral operator with integral kernel in the Schwartz space ${\mathcal S}(M \times M,\E\boxtimes_{\A} \E^*)$ is in $K(L^2(M,\E))$ and in $K(H(\D_{\E}(A)))$.
By standard cutting-and-pasting methods an operator $Q \in B(L^2(M,\E))$ that is also continuous from $L^2(M,\E)$ to $H(\D_{\E}(A))$ can be constructed such that $\D_{\E}(A)Q-1$ and $Q\D_{\E}(A)-1$ are integral operators with integral kernels in ${\mathcal S}(M \times M,\E\boxtimes_{\A} \E^*)$. (The operator $Q(0)$ defined in \S \ref{res} works.) The operator $Q$ is automatically adjointable from $L^2(M,\E)$ to $H(\D_{\E}(A))$ by
\begin{align*}
Q&=(\D_{\E}(A)+i)^{-1}(\D_{\E}(A)+i)Q\\
&=(\D_{\E}(A)+i)^{-1}(\D_{\E}(A)Q-1)+(\D_{\E}(A)+i)^{-1}(1+iQ) \ .
\end{align*}
\end{proof} 

By a homotopy argument the index of $\D_{\E}(A)$ does not depend on $\chi$. Furthermore it is invariant under  small perturbations of $A$.

As usual, we write $$\D_{\E}(A)=\left(\begin{array}{cc} 0 & \D_{\E}(A)^-\\ \D_{\E}(A)^+ & 0 \end{array}\right)$$ with respect to the decomposition $L^2(M,\E)=L^2(M,\E^+) \oplus L^2(M,\E^-)$.
 
In the following we define a perturbation of $\D_{\E}(A)$ with closed range by adapting a construction of Atiyah and Singer, see \cite[Ch. 9.5]{bgv}. Recall that the index of a Fredholm operator with closed range equals the difference of the $K$-theory classes of its kernel and cokernel.

\begin{prop}
\label{projmod}
There is a finite subset $\{f_1, \dots, f_{\N}\}$ of $\C_c(M,\Ei^-)$ generating a projective $\A$-module $Q$ 
such that $Q + \Ran {\mathcal D}_{\E}(A)^+=L^2(M,{\mathcal E}^-)$.
\end{prop}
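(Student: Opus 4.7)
The plan is to adapt to the Hilbert $\A$-module setting the Atiyah--Singer perturbation argument of \cite[Ch. 9.5]{bgv}, using the Fredholm property of $\D_\E(A)^+$ just established together with the density of $\C_c(M,\Ei^-)$ in $L^2(M,\E^-)$. The output will be a free $\A$-module $Q$ of rank $N$ rather than merely a projective one; the free structure is forced by a Gram matrix argument.

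First, from Fredholmness of $\D_\E(A)^+$ and the explicit parametrix $Q(0)$ constructed in the previous proof, standard Hilbert $C^*$-module arguments (perturb by a suitable finite-rank adjointable operator to obtain closed range with projective cokernel) produce a finitely generated projective $\A$-submodule $P\subset L^2(M,\E^-)$ with $P+\Ran\D_\E(A)^+=L^2(M,\E^-)$. After a trivial stabilization one may assume $P$ is free of rank $N$, with an $\A$-orthonormal basis $g_1,\dots,g_N\in L^2(M,\E^-)$; equivalently, $T_g\colon\A^N\to L^2(M,\E^-)$, $(a_i)\mapsto\sum g_i a_i$, is an isometry onto $P$ and $\D_\E(A)^+\oplus T_g$ is surjective from $H(\D_\E(A))\oplus\A^N$ onto $L^2(M,\E^-)$.

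Next, approximate each $g_i$ in $L^2$-norm by some $f_i\in\C_c(M,\Ei^-)$; this is possible since $\Ai$ is dense in $\A$ by the standing hypothesis of \S\ref{prosys}, from which one infers by the usual arguments that $\C_c(M,\Ei^-)$ is dense in $L^2(M,\E^-)$. For $\|f_i-g_i\|$ sufficiently small, the Gram matrix $(\langle f_i,f_j\rangle)\in M_N(\A)$ is norm-close to $1_N$ and hence invertible; this forces the $\A$-linear map $T_f\colon(a_i)\mapsto\sum f_i a_i$ to be injective with closed, orthogonally complemented range $Q\cong\A^N$, so $Q$ is free and in particular projective. Moreover, since surjective bounded adjointable operators between Hilbert $\A$-modules admit a bounded right inverse and hence form an open subset in the operator norm topology, for $f_i$ close enough to $g_i$ the operator $\D_\E(A)^+\oplus T_f$ remains surjective; this is precisely $Q+\Ran\D_\E(A)^+=L^2(M,\E^-)$.

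The main obstacle is the projectivity requirement on $Q$: a finitely generated $\A$-submodule of a Hilbert $\A$-module need not be projective, so one cannot start from arbitrary smooth sections. The Gram matrix argument circumvents this by starting from an $\A$-orthonormal system $g_i$, whose invertibility of Gram matrix is stable under small $L^2$-perturbation, forcing the approximating sections $f_i$ to span a free submodule.
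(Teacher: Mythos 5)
Your overall scheme (approximate a complementing module by elements of $\C_c(M,\Ei^-)$, then use a Gram-matrix argument and openness of surjectivity of adjointable operators) is in the same perturbative spirit as the paper, and those two technical ingredients are themselves correct. But there is a genuine gap at the ``trivial stabilization'' step, and it sits exactly at the point the proposition is designed to get around. You assume that the finitely generated projective module complementing $\Ran\D_{\E}(A)^+$ may be replaced by a free module admitting an $\A$-orthonormal basis $g_1,\dots,g_N$ \emph{inside} $L^2(M,\E^-)$. This is not available in general: the fibers $P(x)\A^p$ of $\F$ need not be full Hilbert $\A$-modules (the paper itself flags this fullness issue in \S 3 when discussing the Technical Lemma of \cite{lp5}), and then $L^2(M,\E^-)$ may contain no element $g$ with $\langle g,g\rangle=1$, indeed no complemented free submodule of positive rank at all. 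Concretely, take $\A=\bbbc\oplus\bbbc$ and $P$ the constant rank-one projection corresponding to $(1,0)\in\A$: every $f\in L^2(M,\E^-)$ satisfies $f\cdot(0,1)=0$, so no submodule isomorphic to $\A^k$ with $k\ge 1$ exists, while the projective module $\tilde Q$ complementing the range can be nonzero. Abstractly stabilizing $\tilde Q\oplus\A^k\cong\A^N$ does not help, because the extra free summand cannot be realized inside $L^2(M,\E^-)$. So your Gram-matrix step has nothing to start from, and your stronger assertion that $Q$ can always be taken free is false in general --- which is precisely why the proposition claims only projectivity.

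The paper's proof circumvents this by leaving the subbundle: it extends the projection $\tilde P$ onto $\tilde Q$ by zero to the trivial-bundle module $L^2(M,E\ten\A^p)$, where an orthonormal basis $\{e_i\}\subset\C_c(M,E\ten\bbbc^p)$ of the scalar $L^2$-space provides honest $\A$-orthonormal elements; by \cite[Prop. 5.1.21]{wa} there is $\N$ such that the compression to the span of $e_1,\dots,e_{\N}$ maps $\tilde Q$ isomorphically and still complements $\Ran\D_{\E}(A)^+$, and one then compresses back into $\E^-$ by setting $f_i:=Pe_i$, the key identity being $\tilde P=\tilde P P$. If you want to salvage your argument, you must build in this detour through $M\times\A^p$ (or an equivalent device); your remaining steps (invertibility of the Gram matrix yields a complemented copy of $\A^N$, and surjectivity of a bounded adjointable operator is an open condition) are fine but cannot replace it.
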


\begin{proof}
Since ${\mathcal D}_{\E}(A)^+$ is a Fredholm operator, there is a projective module $\tilde Q \subset
L^2(M,{\mathcal E}^-)$ such that $\tilde Q + \Ran {\mathcal D}_{\E}(A)^+=L^2(M,{\mathcal E}^-)$. Let $\tilde P:L^2(M,{\mathcal E}^-) \to \tilde Q$
be the (orthogonal) projection onto $\tilde Q$. Let $(e_i)_{i \in \bbbn} \subset
\C_c(M,E \ten \bbbc^p)$ be an orthonormal basis of $L^2(M,E \ten \A^p)$ and for $j \in \bbbn$
let $P_j$ be the orthogonal projection onto the $\A$-linear span
of the first $j$ basis vectors. We extend $\tilde P$ by zero to a projection on $L^2(M,E \ten \A^p)$. There is $\N \in \bbbn$ such that $\tilde P|_{P_{\N}(\tilde Q)}:P_{\N}(\tilde Q) \to \tilde Q$ is an
isomorphism and such that $P_{\N}(\tilde Q) + \Ran \D_{\E}(A)^+=L^2(M,{\mathcal E}^-)$ (see
\cite[Prop. 5.1.21]{wa}). Since $\tilde P=\tilde PP$, it follows that $\tilde P:PP_{\N}(\tilde Q) \to \tilde Q$ is an
isomorphism. Thus the set $\{f_i:=Pe_i~|~i=1, \dots, \N\}$ fulfills the conditions.
\end{proof}

Now the perturbation of ${\mathcal D}_{\E}(A)^+$ is defined as follows: Let $M'$ be the disjoint union of $M$ and a point $*$. On $M'$ we define the
$\A$-vector bundles
$({\mathcal E}')^+={\mathcal E}^+ \cup \A^{\N}$ with $\N$ as in the proposition, and $({\mathcal E}')^-={\mathcal E}^- \cup \{0\}$. 
Furthermore we define $E'$ on $M'$ by $E'|_M=E$ and $E'(*)=(\bbbc^+)^{\N}$, and $P'$ by $P'|_M=P$ and such that $P'(*)$ is a projection of rank one in $M_p(\bbbc)$. Then $\E'\cong E' \ten P'(M' \times \A^p)$. Furthermore we extend the noncommutative one-form $\gamma$ by zero to $M'$. 

Let $(v_i)_{i=1, \dots, \N}$ be the standard orthonormal basis of $\A^{\N}=\E'(*)$. We set
$$\Ki^+:= \sum\limits_{i=1}^{\N} f_i v_i^*:L^2(M',{\mathcal E}'^+) \to L^2(M',{\mathcal E}'^-) $$ and  
$$\Ki^-:=(\Ki^+)^* = \sum\limits_{i=1}^{\N} v_i f_i^*:L^2(M',{\mathcal E}'^-) \to L^2(M',{\mathcal E}'^+) \ .$$
Then $\Ki:=\left(\begin{array}{cc} 0 & \Ki^- \\ \Ki^+ & 0 \end{array}\right)$ is a selfadjoint and compact operator on $L^2(M',\E')$. 

For any $\rho
\neq 0$ the operator $(\D_{\E}(A)+ \rho \Ki)^+$ is surjective and
\begin{align*}
\ind {\mathcal D}_{\E}(A)^+ &= \ind (\D_{\E}(A)+ \rho \Ki)^+ - [\A^{\N}] \\
& =[\Ker (\D_{\E}(A)+ \rho \Ki)]-[\A^{\N}] \ .
\end{align*}

For later purposes we note that we may enlarge $\N$ without changing the operator by setting $f_i=0$ for $i$ large.

In the following we drop the indices and write for example ${\mathcal E}$ and $M$ for ${\mathcal E}'$ and $M'$, respectively.

\section{Heat semigroup and superconnection for a closed manifold}
\label{closed}

In preparation for the general case we restrict our attention to the case where $M$ is closed. In this section $M$ may be odd-dimensional, then the bundle $E$ is assumed ungraded. Let $\ad$ be a symmetric bounded operator on $L^2(M,\E)$ that is adapted to $\D_{\E}$ (see Def. \ref{adapt}). Then $r \ad$ is also adapted for $r \in [-1,1]$. We study the properties of $\dirac_{{\mathcal E}}+ r\ad$ on $L^2(M,\Ol{\mu}{\mathcal E}_i)$ and on $C^k(M,\Ol{\mu}\E_i)$.

In general we demand that unbounded operators on $L^2(M,\Ol{\mu}{\mathcal E}_i)$ resp. $C^k(M,\Ol{\mu}{\mathcal E}_i)$ have $\C(M,\Ol{\mu}{\mathcal E}_i)$ as a core of their domain. The operators considered here are closable on $L^2(M,\Ol{\mu}{\mathcal E}_i)$ resp. $C^k(M,\Ol{\mu}{\mathcal E}_i)$ by \cite[Lemma 5.2.10]{wa} since they have a densely defined adjoint. We define $(\D_{\E} + r\ad)^m$ as the closure of the operator $(\dirac_{{\mathcal E}} +  r\ad)^m$ on $L^2(M,\Oei)$ resp. $C^k(M,\Ol{\mu}{\mathcal E}_i)$, thus we first take powers, then closures.

We often consider the family $\D_{\E}+r\ad$ as an operator on $C([-1,1],L^2(M,\Ol{\mu}{\mathcal E}_i))$ resp. $C([-1,1],C^k(M,\Ol{\mu}{\mathcal E}_i))$.

\subsection{Heat semigroup and resolvents}
\label{heatm}

\begin{prop}
\label{semigroupm}
\begin{enumerate}
\item The family $-(\D_{\E} + r\ad)^2$ generates a holomorphic semigroup on $C([-1,1],L^2(M,\Ol{\mu}{\mathcal E}_i))$ and on $C([-1,1],C^k(M,\Ol{\mu}{\mathcal E}_i))$.

\item For $t>0$ the operator $e^{-t(\D_{\E} + r\ad)^2}$ is an integral operator with integral
kernel $k(r)_t \in \C(M \times M, \Ei \boxtimes_{\Ai} \Ei^*)$ depending smoothly on $(t,r)$.

\item Let $m> \frac{\dim M + k}{2}$. There is $C>0$ such that for $|r|\le 1$ and  $0 < t\le 1$ 
$$\|k(r)_t\| \le Ct^{-m} \ , \quad \|\frac{d}{dr}k(r)_t\| \le Ct^{-m}$$ 
in $C^k(M \times M,\E_i \boxtimes_{\A_i} \E_i^*)$. 

\item Let $B$ be a first order differential operator resp. $B=\D_{\E} \ad+ \ad \D_{\E}$. There is $C>0$ such that for $|r|\le 1$ and  $0 < t\le 1$ 
$$\|B e^{-t(\D_{\E} + r\ad)^2}\| \le C t^{-\frac 12} \ , \quad \| \frac{d}{dr} B e^{-t(\D_{\E} + r\ad)^2}\| \le C \ ,$$
$$\|e^{-t(\D_{\E} + r\ad)^2}B\| \le C t^{-\frac 12} \ , \quad \| \frac{d}{dr}  e^{-t(\D_{\E} + r\ad)^2}B\| \le C $$
on $L^2(M,\Ol{\mu}{\mathcal E}_i)$. Furthermore for $B=\D_\E \ad+ \ad \D_\E$ these estimates hold also on $C^k(M,\Ol{\mu}{\mathcal E}_i)$.

\item Let $\phi, \psi \in \C(M)$ with disjoint support. Assume that there is a sequence $(\xi_n)_{n \in \bbbn} \subset \C(M)$ with $\xi_1=\phi$, $\supp (1-\xi_{n+1}) \cap \supp \xi_n = \emptyset$, $\supp \xi_n \cap \supp \psi = \emptyset$ and such that $[\xi_n, \ad]$ is an integral operator with integral kernel in $\C(M \times M,\E_i \boxtimes_{\A_i} \E_i^*)$. Then for $|r|\le 1,~ 0<t \le 1$ $$\|\phi(x)k(r)_t(x,y)\psi(y)\| \le Ct \ , \quad \|\phi(x)\frac{d}{dr} k(r)_t(x,y)\psi(y)\| \le Ct$$ in $C^k(M \times M,\E_i \boxtimes_{\A_i} \E_i^*)$.
\end{enumerate}
\end{prop}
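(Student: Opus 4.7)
The plan is to adapt the standard Levi parametrix construction of the heat kernel of a generalized Laplacian on a closed manifold to this projective Banach-algebra-valued setting, following Lott \cite{lo2} and \cite[Ch. 5]{wa}. By the Lichnerowicz formula $\D_\E^2 = \Delta^{\E} + c(F^{\E/S}) + \frac{r_M}{4}$ is a generalized Laplacian, so a classical parametrix plus Volterra iteration produces the unperturbed semigroup $e^{-t\D_\E^2}$ on $L^2(M,\Ol{\mu}\E_i)$ and $C^k(M,\Ol{\mu}\E_i)$, holomorphic in $t$, with smooth integral kernel in $\C(M\times M,\Ei\boxtimes_{\Ai}\Ei^*)$ satisfying the $C^k$-bound $\|k_t\|_{C^k}\le C t^{-m}$ for $m>(\dim M+k)/2$. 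All assertions of the proposition thus hold in the case $r=0$.

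To include the adapted perturbation I would expand $(\D_\E + r\ad)^2 = \D_\E^2 + rB + r^2\ad^2$ with $B := \D_\E\ad + \ad\D_\E$ and define
$$
e^{-t(\D_\E+r\ad)^2} := \sum_{k=0}^\infty (-1)^k \int_{\Delta_k(t)} e^{-s_0\D_\E^2}(rB+r^2\ad^2) \cdots (rB+r^2\ad^2) e^{-s_k\D_\E^2}\, ds,
$$
where $\Delta_k(t)=\{s_0+\cdots+s_k=t,\ s_i\ge 0\}$. The decisive inputs from Definition \ref{adapt} are the estimates $\|Be^{-s\D_\E^2}\|\le Cs^{-1/2}$ and $\|e^{-s\D_\E^2}B\|\le Cs^{-1/2}$, which together with the boundedness of $\ad^2$ give uniform convergence of the series on $|r|,t\le 1$; this proves (1). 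Termwise differentiation in $(t,r)$ then yields the smoothness and bounds of (2) and (3). For (4), apply $B$ on the appropriate side of each term of the Volterra series and reuse the same estimates; the $r$-derivatives in every case reduce to finite sums of analogous integrals.

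For (5) the plan is an iterative Duhamel/commutator argument. Using $\phi\psi=0$,
$$
\phi\, e^{-t(\D_\E+r\ad)^2}\psi = -\int_0^t e^{-(t-s)(\D_\E+r\ad)^2}\bigl[(\D_\E+r\ad)^2,\xi_1\bigr] e^{-s(\D_\E+r\ad)^2}\psi\, ds;
$$
the commutator $[(\D_\E+r\ad)^2,\xi_n]$ is a first-order differential operator supported in $\supp d\xi_n$, plus smoothing contributions from $r[\xi_n,\ad]$ and $r^2[\xi_n,\ad^2]$, which by hypothesis have integral kernel in $\C(M\times M,\E_i\boxtimes_{\A_i}\E_i^*)$. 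Iterating the identity with $\xi_2,\xi_3,\dots$ each step gains a factor of $t$ and moves the support disjointness one notch farther from $\supp\psi$, until after enough iterations the a priori singular $t^{-m}$ bound becomes the claimed linear bound $Ct$ in the $C^k$-norm.

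The hardest part is this last step: while the single Duhamel step is routine, controlling the iterated non-local commutators $[\xi_n,\ad]$ in the $C^k$-norm through many iterations requires careful bookkeeping, and is precisely what motivates the hypothesis on the sequence $(\xi_n)$ in the statement.
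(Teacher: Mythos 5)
Your treatment of (1)--(4) is essentially the paper's: the unperturbed heat kernel is built as in \cite[Ch.~2]{bgv}/\cite[\S 3]{wa}, and the perturbation is handled by the Volterra series for $R=(\D_\E+r\ad)^2-\D_\E^2$ together with the adaptedness estimates $\|Re^{-s\D_\E^2}\|\le Cs^{-1/2}$, $\|e^{-s\D_\E^2}R\|\le Cs^{-1/2}$; the only things you gloss over are minor (holomorphy of the perturbed semigroup is obtained in the paper from the relative-perturbation criterion, and the $C^k$-kernel bound of each Volterra term needs the standard ``some $u_i\ge 1/n$'' trick to get summable bounds $C^n(t/n)^{-m}t^{-n/2}$, not just termwise differentiation).

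For (5), however, your sketch has a genuine gap. The commutator $[(\D_\E+r\ad)^2,\xi_n]$ is \emph{not} ``a first-order differential operator supported in $\supp d\xi_n$ plus smoothing terms'': expanding it produces, besides $[\D_\E^2,\xi_n]$ and the smoothing terms $\D_\E[\ad,\xi_n]$, $[\ad,\xi_n]\D_\E$, $r^2[\ad^2,\xi_n]$, also the cross terms $r\bigl(c(d\xi_n)\ad+\ad c(d\xi_n)\bigr)$, which are bounded but neither local nor (by hypothesis) smoothing, since $\ad$ itself is only an adapted bounded operator. These are exactly the terms that carry the off-diagonal interaction, and your iteration cannot dispose of them by ``moving the support one notch farther'': to do so you would need off-diagonal decay of the \emph{perturbed} semigroup $e^{-s(\D_\E+r\ad)^2}$, which is precisely what (5) asserts, so the argument as stated is circular. (Also, each Duhamel step against a first-order factor gains only $t^{1/2}$, not $t$.) The paper avoids this by staying inside the Volterra series: because of the $t^n$ prefactors and the kernel bounds only finitely many terms $S_n(t)$ matter; one then proves by induction on $n$ that $\phi S_n(t)\psi$ has a limit as $t\to 0$, writing $\xi_m S_n(t)\psi$ in terms of $\xi_m S_0(u_0t)R\,\xi_{m+2}S_{n-1}$ and $\xi_m S_0(u_0t)R(1-\xi_{m+2})S_{n-1}$, computing $[R,\xi_{m+2}]\sim r(1-\xi_{m+1})(\ad c(d\xi_{m+2})+c(d\xi_{m+2})\ad)$ modulo smoothing, and killing the remaining piece with the off-diagonal estimate (\cite[Theorem 2.30]{bgv}) for the \emph{unperturbed} factor $S_0$ between $\xi_m$ and $1-\xi_{m+1}$. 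If you want to keep your Duhamel-style formulation you must likewise reduce to the unperturbed semigroup (or set up an induction in which the off-diagonal bound being proved is only used with strictly fewer perturbation factors); as written, the key step fails.
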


\begin{proof}
For $t>0$ the integral kernel $k_t(x,y)$ associated of $e^{-t\D_{\E}^2}$ can be constructed as in \cite[Ch. 2]{bgv}. Then one shows as in \cite[\S 3]{wa} that the operator $-\D_{\E}^2$ generates a strongly continuous semigroup on $C^k(M,\Ol{\mu}\E_i)$ and a holomorphic semigroup on $L^2(M,\Ol{\mu}\E_i)$. Assume that  $B$ is a first order differential operator. Then on $L^2(M,\Ol{\mu}\E_i)$ one get as in \cite[\S 3.2]{wa} the estimates $\|Be^{-t\D_{\E}^2}\| \le Ct^{-1/2}$ and $\|e^{-t\D_{\E}^2}B\| \le Ct^{-1/2}$ for small $t$.

It follows from the asymptotics of the heat kernel by using that $-(\D_{\E}^2)_x k_t(x,y)=\frac{d}{dt}k_t(x,y)$ that on $C^k(M,\Ol{\mu}\E_i)$
$$\|\D_{\E}^2 e^{-t\D_{\E}^2}\| \le Ct^{-1}$$ 
for small $t>0$. Thus, by Prop. \ref{exthol} the semigroup extends to a holomorphic semigroup on $C^k(M,\Ol{\mu}\E_i)$. For $r=0$ the first estimate of (3) is \cite[Lemma 2.39]{bgv}. 

(1) follows now from Prop. \ref{relpert}.

We use Volterra development in order to prove (2) and (3). We set $R:=(\D_{\E}+r\ad)^2-\D_{\E}^2$. Since $r\ad$ is adapted to $\D_{\E}$ there is $C>1$ such that $\|Re^{- t \D_{\E}^2}\| \le Ct^{-1/2}$ and $\|e^{- t \D_{\E}^2}R\| \le Ct^{-1/2}$ for  $0<t\le 1,~|r| \le 1$.

The series $$e^{-t({\mathcal D}_{\E}^2+R)}=\sum_{n=0}^{\infty} (-1)^n t^n S_n(t)$$ with
$$S_n(t)=\int_{\Delta^n} e^{-u_0t\D_{\E}^2}  R e^{-u_1 t \D_{\E}^2}
 \dots  R e^{-u_n t \D_{\E}^2} ~du_0 \dots du_n$$ converges for $t < (\frac 1C)^2$ uniformly in $r \in [-1,1]$.

By \cite[Lemma 2.39]{bgv} the integral kernel of $e^{-t{\mathcal D}_{\E}^2}$ is bounded in $C^k(M \times M, \E_i \boxtimes_{\A_i} \E_i^*)$ by $C t^{-m}$ for $m> \frac{\dim M + k}{2}$ and $|r|\le 1$,  $0 < t\le 1$ . By a standard argument (see the proof of \cite[Theorem 9.48]{bgv}), which uses that for $(u_0, \dots, u_n) \in \Delta_n$ there is $u_i\ge\frac 1n$, the operator $S_n(t)$ is an integral operator and there is $C>0$, independent of $n$, such that for $|r|\le 1$, $0 < t\le 1$  its integral kernel is bounded by $C^n\left(\frac{t}{n}\right)^{-m}t^{-n/2}$ in $C^k(M \times M, \E_i \boxtimes_{\A_i} \E_i^*)$. Similar estimates hold for the derivatives with respect to $r$ and $t$. It follows that the series of integral kernels converges in $C^k(M \times M, \E_i \boxtimes_{\A_i} \E_i^*)$ for $0<t< (\frac 1C)^2$ and that the limit depends smoothly on $r,t$. Estimate (3) also follows. For $t\ge(\frac 1C)^2$ the integral kernel is constructed using the semigroup law.

(4) The estimates also follow from the Volterra development: One interchanges the operator $B$ with the summation and integration and uses that $\|Be^{-u_0t\D_{\E}^2}\| \le C(u_0t)^{-1/2}$ and $\|e^{-u_0t\D_{\E}^2}B\| \le C(u_0t)^{-1/2}$ for $t$ small.

(5)  The integral kernel of $\xi_m S_0(t)\psi$ is bounded by $Ct$ in $C^k(M \times M,\E_i \boxtimes_{\A_i} \E_i^*)$ by \cite[Theorem 2.30]{bgv}. We show by induction on $n$ that the integral kernel of $\phi S_n(t)\psi$ converges in $C^k(M \times M, \E_i \boxtimes_{\A_i} \E_i^*)$ for $t \to 0$ uniformly in $r \in [-1,1]$. By the estimate of the integral kernel of $S_n(t)$ from above we need to prove this only for finitely many $n$. Then the assertion follows.

It holds that
\begin{align*}
\xi_m S_n(t) \psi &= \int_0^1  (1-u_0)^{n-1} \xi_m S_0(u_0t)R\xi_{m+2} S_{n-1}((1-u_0)t)\psi ~du_0\\
&\quad +   \int_0^1 (1-u_0)^{n-1} \xi_m S_0(u_0t) R(1- \xi_{m+2}) S_{n-1}((1-u_0)t)\psi~du_0 \ .
\end{align*}

The integral kernel of the first term on the right hand side has a limit for $t \to 0$, since the integral kernel of $\xi_{m+2} S_{n-1}((1-u_0)t)\psi$ has one by induction.

We denote by $\sim$ equality up to integral operators with smooth integral kernels that have a limit for $t \to 0$.

We have that
\begin{align*}
[R,\xi_{m+2}]&=r[\ad\D_{\E} + \D_{\E}\ad + r\ad^2,\xi_{m+2}] \\
&=r([\ad,\xi_{m+2}]\D_{\E} + \ad (1-\xi_{m+1})c(d\xi_{m+2})+ (1-\xi_{m+1})c(d\xi_{m+2})\ad +  \D_{\E}[\ad,\xi_{m+2}]) \\
& \quad + r^2(\ad[\ad, \xi_{m+2}] + [\ad,\xi_{m+2}]\ad)\\
&\sim r(1-\xi_{m+1})(\ad c(d\xi_{m+2})+c(d\xi_{m+2})\ad) \ .
\end{align*}  
Thus 
\begin{align*}
\lefteqn{\xi_m S_0(u_0t) R(1- \xi_{m+2})}\\
&\sim r\xi_m S_0(u_0t)(1-\xi_{m+1})((1-\xi_{m+2})R- \ad c(d\xi_{m+2})- c(d\xi_{m+2})\ad)\\
& \sim 0 \ .
\end{align*}  
Thus also the smooth integral kernel of the second term has a limit for $t \to 0$.

The proof for the derivative with respect to $r$ is analogous.
\end{proof}

We collect some further estimates, in particular concerning the large time behaviour of the heat semigroup.

Since $e^{-t(\D_{\E} + r\ad)^2}$ is an integral operator with smooth integral kernel, its spectral radius on $L^2(M,\Ol{\mu}{\mathcal E}_i)$ smaller than or equal to $1$, see Prop. \ref{invspec}. Thus the spectral radius of $e^{-t(\D_{\E} + r\ad)^2}$ on $C([-1,1],L^2(M,\Ol{\mu}{\mathcal E}_i))$ is smaller than or equal to $1$. From Prop. \ref{specsem} we infer that for any $\ve >0$ there is $C>0$ such that on $L^2(M,\Ol{\mu}{\mathcal E}_i)$ for all $t$ and $r \in [-1,1]$
\begin{align}
\label{decM}\|e^{-t(\D_{\E} + r\ad)^2}\| &\le Ce^{\ve t} \ .
\end{align}

From
$$k(r)_t(x,y)= \int_M k(r)_{1/2}(x,z)\bigr(e^{-(t-1)(\D_{\E} + r\ad)^2}k(r)_{1/2}(\cdot,y)\bigl)(z) ~dz$$ 
it follows that there is $C>0$ such that for $t>1$ and $r \in [-1,1]$ in $C^k(M \times M,\E_i \boxtimes_{\A_i} \E_i^*)$
\begin{align}
\label{estimhkcm}
\|k(r)_t\| &\le Ce^{\ve t} \ .
\end{align}
This in turn implies that the inequality \ref{decM} holds also on $C^k(M,\Ol{\mu}\E_i)$.

Furthermore by Duhamel's formula (see Prop. \ref{duhform})
$$\frac{d}{dr}e^{-t(\D_{\E} + r\ad)^2}=-\int_0^t e^{-(t-s)(\D_{\E} + r\ad)^2}(\D_{\E} \ad + \ad \D_{\E}+2 r\ad^2) e^{-s(\D_{\E} + r\ad)^2}~ds \ .$$
It follows that there is $C>0$ such that for $t>1$ and $r \in [-1,1]$ in $C^k(M \times M,\E_i \boxtimes_{\A_i} \E_i^*)$
\begin{align}
\label{estimhkcmder}
\|\frac{d}{dr}k(r)_t\| &\le Ce^{\ve t} \ .
\end{align}

By Duhamel's formula we can improve Prop. \ref{semigroupm} (4): There is $C>0$ such that for $t$ small and $r \in [-1,1]$ on $C^k(M,\Oei)$ as well as on $L^2(M,\Oei)$
$$\|\frac{d}{dr} e^{-t(\D_{\E} + r\ad)^2}\| \le C t^{1/2} \ .$$

Our estimates allow us to obtain information about the resolvents of $\D_\E + r \ad$:

For $\re \lambda^2<0$ and $n \in \bbbn$ the integral
$$(\D_{\E} + r\ad -\lambda)^{-n}=\frac{1}{(n-1)!} \int_0^{\infty} t^{n-1} (\D_{\E} + r\ad+\lambda)^n e^{-t((\D_{\E} + r\ad)^2-\lambda^2)} ~dt \ ,$$ which converges as a bounded operator on $L^2(M,\Ol{\mu}{\mathcal E}_i)$, is of class $C^1$ in $r$.

\begin{prop}
\label{Mregul}

Let $m >\frac{\dim M + k+1}{2}$ and $\re \lambda < 0$.

The operator $((\D_{\E} + r\ad)^2 -\lambda)^{-m}$ is an integral operator with  integral kernel in $C^k(M \times M,\E_i \boxtimes_{\A_i} \E_i^*)$, which is of class $C^1$ in $r$. In particular $((\D_{\E} + r\ad)^2 -\lambda)^{-m}$ is a bounded operator from $L^2(M,\Ol{\mu}\E_i)$ to $C^k(M,\Ol{\mu}\E_i)$, which is of class $C^1$ in $r$.
\end{prop}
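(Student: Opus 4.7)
The plan is to use the integral representation
\begin{equation*}
((\D_\E + r\ad)^2 - \lambda)^{-m} = \frac{1}{(m-1)!} \int_0^\infty t^{m-1} e^{\lambda t}\, e^{-t(\D_\E + r\ad)^2}\, dt,
\end{equation*}
which converges as a bounded operator on $L^2(M, \Ol{\mu}\E_i)$ thanks to $\re \lambda < 0$ together with the large-time bound \ref{decM}. The strategy is then to show that this same integral, now read as an integral of $C^k$-valued kernels, converges in $C^k(M\times M, \E_i\boxtimes_{\A_i}\E_i^*)$ and is of class $C^1$ in $r$.

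By Prop.~\ref{semigroupm}~(2) each operator $e^{-t(\D_\E+r\ad)^2}$ has an integral kernel $k(r)_t \in \C(M\times M, \Ei \boxtimes_{\Ai}\Ei^*)$, so I would split the $t$-integral into $(0,1]$ and $[1,\infty)$. On the small-time piece, Prop.~\ref{semigroupm}~(3), applied with an auxiliary exponent $m'$ chosen so that $\frac{\dim M + k}{2} < m' < m$ (a non-empty range by the hypothesis $m > \frac{\dim M+k+1}{2}$), gives $\|k(r)_t\|_{C^k} \le C t^{-m'}$ uniformly for $|r|\le 1$, so the integrand is bounded by $C t^{m-1-m'}$, which is integrable at $0$. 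On the large-time piece, estimate \ref{estimhkcm}, applied with some $\varepsilon \in (0,-\re\lambda)$, gives $\|k(r)_t\|_{C^k} \le Ce^{\varepsilon t}$, so the integrand is dominated by $C t^{m-1}e^{(\re\lambda+\varepsilon)t}$ and decays exponentially. Hence the integral of kernels converges absolutely in $C^k$ and must coincide with the integral kernel of $((\D_\E + r\ad)^2 - \lambda)^{-m}$.

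For the $C^1$-dependence on $r$ I would differentiate under the integral sign. The bounds on $\tfrac{d}{dr}k(r)_t$ supplied by Prop.~\ref{semigroupm}~(3) on $(0,1]$ and by \ref{estimhkcmder} on $[1,\infty)$ are of exactly the same form as those for $k(r)_t$ itself, uniformly in $r\in[-1,1]$, so the same splitting shows that the differentiated integral converges in $C^k$ uniformly in $r$; this justifies the interchange of $\tfrac{d}{dr}$ with the integral.

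The final assertion, that $((\D_\E + r\ad)^2 - \lambda)^{-m}$ maps $L^2(M,\Ol{\mu}\E_i)$ into $C^k(M,\Ol{\mu}\E_i)$ and is of class $C^1$ in $r$, is then a standard consequence of the existence of a jointly continuous $C^k$-valued integral kernel depending $C^1$ on $r$, combined with the $L^2$-norm conventions of the introduction. I do not expect any substantive obstacle here; the only delicate point is the exponent bookkeeping near $t=0$, which is precisely what dictates the hypothesis $m > \frac{\dim M + k + 1}{2}$.
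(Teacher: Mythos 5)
Your proposal is correct and follows essentially the same route as the paper: the same Laplace-transform representation of the resolvent power, with Prop.~\ref{semigroupm}~(3) controlling the kernel near $t=0$ and the estimates \ref{estimhkcm}, \ref{estimhkcmder} giving the exponentially damped bound for large $t$, uniformly in $r$, so that one may also differentiate under the integral. Your explicit choice of an auxiliary exponent $m'$ between $\frac{\dim M+k}{2}$ and $m$ is just a slightly more detailed write-up of the bookkeeping the paper leaves implicit.
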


\begin{proof}
We have that
$$((\D_{\E} + r\ad)^2 -\lambda)^{-m}= \frac{1}{(m-1)!}\int_0^{\infty} t^{m-1} e^{-t((\D_{\E} + r\ad)^2- \lambda)} ~dt \ .$$ 
On the level of integral kernels the convergence of the integral follows for large $t$ from  eq. \ref{estimhkcm} and \ref{estimhkcmder} and for small $t$ from Prop. \ref{semigroupm} (3). 
\end{proof}

\begin{prop}
\label{offdiag}
Let $\phi, \psi \in \C(M)$ with disjoint support. Assume that there is a sequence $(\xi_n)_{n \in \bbbn} \in \C(M)$ with $\xi_1=\phi,~ \supp (1-\xi_{n+1}) \cap \supp \xi_n = \emptyset$ and $\supp \xi_n \cap \supp \psi = \emptyset$ and such that $[\xi_n, \ad]$ is an integral operator with integral kernel in $\C(M \times M,\E_i \boxtimes_{\A_i} \E_i^*)$.

For $\re \lambda<0$ and $k \in \bbbn$ the operator 
$$\phi((\D_{\E} + r\ad)^2 - \lambda)^{-k}\psi$$ is an integral operator with integral kernel in $\C(M\times M,\E_i\boxtimes_{\A_i} \E_i^*)$, which is of class $C^1$ in $r$.
\end{prop}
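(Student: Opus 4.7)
The plan is to adapt the proof of Prop.\ \ref{Mregul}: write
$$((\D_{\E} + r\ad)^2 - \lambda^2)^{-k}=\frac{1}{(k-1)!}\int_0^{\infty} t^{k-1} e^{t\lambda^2} e^{-t(\D_{\E} + r\ad)^2} ~dt$$
and establish convergence of the corresponding kernel-level integral, after sandwiching with $\phi$ and $\psi$, in the Fr\'echet topology of $\C(M\times M,\E_i\boxtimes_{\A_i}\E_i^*)$, that is, in every $C^l$-norm.

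I would split the integral at $t=1$. For $t\in (0,1]$, the hypotheses on $\phi,\psi$ and on the sequence $(\xi_n)$ are precisely those of Prop.\ \ref{semigroupm}(5), which yields $\|\phi(x) k(r)_t(x,y) \psi(y)\|_{C^l} \le C_l t$, uniformly in $r\in [-1,1]$ for every $l$. Since $\re\lambda^2<0$ implies $|e^{t\lambda^2}|\le 1$ for $t\ge 0$, the small-time integrand is bounded by $C_l t^{k}$, which is integrable on $(0,1]$ because $k\ge 1$. For $t>1$, estimate \ref{estimhkcm} gives $\|k(r)_t\|_{C^l}\le C_l e^{\ve t}$ for any $\ve>0$, and since $\phi,\psi$ act by multiplication continuously on $C^l(M\times M,\E_i\boxtimes_{\A_i}\E_i^*)$, the same bound (with a different constant) holds for $\phi(x) k(r)_t(x,y)\psi(y)$. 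Picking $\ve<-\re\lambda^2$ makes $t^{k-1}|e^{t\lambda^2}|\cdot e^{\ve t}$ integrable on $[1,\infty)$. Together these show that the integral converges in $\C(M\times M,\E_i\boxtimes_{\A_i}\E_i^*)$, proving the first assertion.

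For the $C^1$-dependence on $r$, I would differentiate the heat-kernel integral under the integral sign. The required uniform-in-$r$ bounds on $\frac{d}{dr}\bigl(\phi(x) k(r)_t(x,y)\psi(y)\bigr)$ in every $C^l$-norm are supplied by the second estimate of Prop.\ \ref{semigroupm}(5) for $t\le 1$ and by estimate \ref{estimhkcmder} for $t\ge 1$. The identical splitting gives absolute convergence of the differentiated integral in $\C(M\times M,\E_i\boxtimes_{\A_i}\E_i^*)$, justifying the interchange of $\frac{d}{dr}$ with the $t$-integral and producing the claimed $C^1$-regularity. The only delicate point is the off-diagonal smallness of the heat kernel (and its $r$-derivative) as $t\to 0$, but this is exactly the content of Prop.\ \ref{semigroupm}(5) under the present hypotheses, so no further work is needed there.
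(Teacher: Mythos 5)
Your proposal is correct and is essentially the paper's own argument: the paper proves this proposition exactly by combining the resolvent-power heat-kernel integral formula from Prop.~\ref{Mregul} with the off-diagonal small-time estimates of Prop.~\ref{semigroupm}(5), together with the large-time bounds \ref{estimhkcm} and \ref{estimhkcmder}. Your write-up merely spells out the splitting at $t=1$ and the differentiation under the integral, which the paper leaves implicit.
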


\begin{proof}
The assertion follows from Prop. \ref{semigroupm} (5) by the above integral formula.
\end{proof}

The previous proposition implies an analogous statement for $\phi(\D_{\E} + r\ad - \lambda)^{-1}\psi$ if $\re \lambda^2 <0$ by 
\begin{align*}
\lefteqn{\phi(\D_{\E} + r\ad - \lambda)^{-1}\psi}\\
 &=\phi(\D_{\E} + r\ad + \lambda)((\D_{\E} + r\ad)^2 - \lambda^2)^{-1}\psi\\
&=(\D_{\E} + r\ad + \lambda)\phi((\D_{\E} + r\ad)^2 - \lambda^2)^{-1}\psi+[\phi,\D_{\E}+r\ad]((\D_{\E} + r\ad)^2 - \lambda^2)^{-1}\psi
\end{align*}

In the following proposition we fix $r=1$. 

\begin{prop}
\label{Mresolv}
Assume that $M$ is even-dimensional (and closed). 
For $\N$ as in \S \ref{condi} big enough there is $\omega>0$ and a symmetric integral operator $\Kappa$ with integral kernel in $\C(M \times M,\E_{\infty} \boxtimes_{\Ai} \E_{\infty}^*)$ such that
 $\D_{\E} + \ad+ \Kappa -\lambda$ is invertible on $L^2(M,\Ol{\mu}{\mathcal E}_i)$ for $\re \lambda^2<\omega$.
 
For $\re \lambda^2<\omega$ an analogue of Prop. \ref{Mregul} holds for $(\D_{\E} + \ad+ \Kappa)^2 -\lambda^2$ and an analogue of Prop. \ref{offdiag} for $\phi((\D_{\E} + \ad +  \Kappa)^2 - \lambda^2)^{-k}\psi$.
\end{prop}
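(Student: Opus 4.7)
My plan is first to produce a self-adjoint smoothing perturbation $\Kappa$ making $\D_\E + \ad + \Kappa$ invertible on $L^2(M,\E)$, then to use a spectral gap argument to extend invertibility to all $\lambda$ with $\re\lambda^2<\omega$ on both $L^2(M,\Ol{\mu}\E_i)$ and $C^k(M,\Ol{\mu}\E_i)$, and finally to recover the kernel analogues of Propositions \ref{Mregul} and \ref{offdiag} by reusing the Volterra argument of Proposition \ref{semigroupm}.

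\emph{Construction of $\Kappa$.} Since $M$ is closed and even-dimensional, $\D_\E + \ad$ is regular self-adjoint and Fredholm on the Hilbert $\A$-module $L^2(M,\E)$, with graded projective kernel lying in $\C(M,\Ei)$ by elliptic regularity. Applying Proposition \ref{projmod} to $(\D_\E + \ad)^+$ supplies sections $f_1,\ldots,f_{\N}\in\C_c(M,\Ei^-)$ for which the smoothing $\rho\Ki$ from \S\ref{condi} renders $(\D_\E + \ad + \rho\Ki)^+$ surjective, and hence $(\D_\E + \ad + \rho\Ki)^-$ injective by self-adjointness. The cokernel of $(\D_\E + \ad + \rho\Ki)^-$ coincides with $\Ker(\D_\E + \ad + \rho\Ki)^+$, a projective $\A$-submodule of $\C(M,\Ei^+)$ by elliptic regularity. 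Enlarging $\N$ as allowed by the remark following Proposition \ref{projmod}, I add a symmetric finite-rank smoothing piece $\Kappa'$ built from generators of this kernel paired against suitably chosen auxiliary sections in $\C_c(M,\Ei^-)$, so that $(\D_\E + \ad + \rho\Ki + \Kappa')^+$ also becomes injective. Setting $\Kappa:=\rho\Ki+\Kappa'$, one obtains a symmetric operator with integral kernel in $\C(M\times M,\Ei\boxtimes_{\Ai}\Ei^*)$, and $\D_\E + \ad + \Kappa$ is bijective by the self-adjoint Fredholm alternative, hence invertible on $L^2(M,\E)$.

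\emph{Spectral gap and invertibility on all spaces.} Self-adjointness and invertibility together yield $(\D_\E + \ad + \Kappa)^2 \ge \omega$ on $L^2(M,\E)$ for some $\omega>0$, so that $\D_\E + \ad + \Kappa - \lambda$ is invertible on $L^2(M,\E)$ whenever $\re\lambda^2<\omega$. The invertibility descends to $L^2(M,\Ol{\mu}\E_i)$ and $C^k(M,\Ol{\mu}\E_i)$ exactly as in the proof of Proposition \ref{Mregul}: ellipticity of $\D_\E+\ad$, the smoothness of $\Kappa$, and compatibility with the projective system of Banach subalgebras ensure that the $L^2$-inverse preserves each of these spaces.

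\emph{Kernel analogues and the main obstacle.} The heat kernel of $(\D_\E + \ad + \Kappa)^2$ is constructed by the Volterra development of Proposition \ref{semigroupm} with remainder $R=(\D_\E + \ad + \Kappa)^2-\D_\E^2$; its first-order, bounded and smoothing summands all satisfy $\|Re^{-t\D_\E^2}\|,\,\|e^{-t\D_\E^2}R\|\le Ct^{-1/2}$ for $t\le 1$, so the series argument carries over verbatim. The spectral gap improves the large-$t$ bounds to $\|k_t\|\le Ce^{-\omega' t}$ for any $\omega'<\omega$, and similarly off-diagonally, since $[\xi_n,\Kappa]$ has integral kernel in $\C(M\times M,\E_i\boxtimes_{\A_i}\E_i^*)$ by construction. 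Substituting into the integral formula of Proposition \ref{Mregul} yields its analogue for $\re\lambda^2<0$, while the combination of off-diagonal small-$t$ decay with the spectral gap extends the off-diagonal version of Proposition \ref{offdiag} to the larger region $\re\lambda^2<\omega$. The delicate step is the first: arranging $\N$ and the pairing sections so that a single symmetric smoothing perturbation kills both kernel and cokernel, overcoming the $K_0(\A)$-valued index obstruction of $\D_\E$.
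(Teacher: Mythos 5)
There is a genuine gap, and it sits exactly where you say it does. Your construction of $\Kappa$ tries to make the chiral part invertible: after arranging $(\D_{\E}+\ad+\rho\Ki)^+$ surjective via Prop.~\ref{projmod}, you add an \emph{odd} finite-rank piece $\Kappa'$ (kernel generators in $\E^+$ paired with sections of $\E^-$) to make the $+$ part injective, and then invoke a ``self-adjoint Fredholm alternative''. This cannot work in general: for an odd perturbation, invertibility of the full self-adjoint operator is equivalent to invertibility of its $+$ part, which forces $\ind(\D_{\E}+\ad+\rho\Ki)^+=[\Ker(\D_{\E}+\ad+\rho\Ki)^+]$ to vanish in $K_0(\A)$ --- and this is precisely the index class the paper is computing; in the application in \S\ref{res} it is nonzero in general. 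If $\Kappa'$ only achieves injectivity of the $+$ part, the full operator still has kernel $\Ker(\,\cdot\,)^-=(\Ran(\,\cdot\,)^+)^\perp\neq 0$, so no bijectivity follows. You acknowledge this obstruction in your last sentence but do not resolve it. The paper's resolution is to drop the requirement that the perturbation be odd: the \emph{full} self-adjoint operator $\D_{\E}+\ad$ has vanishing index, and after a \S\ref{DirHilb}-type symmetric perturbation $\Kappa_1$ produces closed range, its kernel is a projective submodule; adding the (even, symmetric) projection $\Pj$ onto this kernel makes $\D_{\E}+\ad+\Kappa_1+\Pj$ invertible on $L^2(M,\E)$. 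By Prop.~\ref{proj}, $\Pj$ is an integral operator with kernel in $L^2(M\times M,\E\boxtimes_{\A}\E^*)$, and density of $\C(M\times M,\E_{\infty}\boxtimes_{\Ai}\E_{\infty}^*)$ in that space allows one to replace $\Pj$ by a nearby smooth symmetric kernel operator without losing invertibility; $\Kappa$ is the resulting sum.

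A secondary point: the descent of invertibility to $L^2(M,\Ol{\mu}\E_i)$ and $C^k(M,\Ol{\mu}\E_i)$ is not ``exactly as in Prop.~\ref{Mregul}'' by ellipticity and compatibility with the projective system. In the paper it goes through the heat semigroup: since $e^{-t(\D_{\E}+\ad+\Kappa)^2}$ has smooth integral kernel, Prop.~\ref{invspec} bounds its spectral radius on the Banach spaces by the Hilbert-module spectrum, Prop.~\ref{specsem} converts this into $\|e^{-t(\D_{\E}+\ad+\Kappa)^2}\|\le Ce^{-(\omega-\ve)t}$, and Prop.~\ref{semires} then yields the resolvent for $\re\lambda^2<\omega$. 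Your third paragraph essentially contains this mechanism, and once $\Kappa$ is constructed correctly the rest of your argument (Volterra development, small-time and off-diagonal estimates combined with the spectral gap) does match the paper's treatment of the kernel analogues.
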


\begin{proof}
We first show that for $\N$ big enough there is an integral operator $\Kappa$ with integral kernel in $\C(M \times M,\E_{\infty} \boxtimes_{\Ai} \E_{\infty}^*)$ such that $\D_{\E} + \ad+ \Kappa$ is invertible on $L^2(M,\E)$.

The operator $\D_{\E} + \ad$ is a Fredholm operator with vanishing index. By a construction as in \S \ref{DirHilb} for $\N$ big enough there is a symmetric integral operator $\Kappa_1$ with integral kernel in $\C(M \times M,\E_{\infty} \boxtimes_{\Ai} \E_{\infty}^*)$ such that $\D_{\E} + \ad + \Kappa_1$ has closed range as an operator on $L^2(M,\E)$. Hence the projection $\Pj$ onto the kernel of $\D_{\E} + \ad+ \Kappa_1$ is well-defined and $\D_{\E} + \ad + \Kappa_1 + \Pj$ has a bounded inverse on $L^2(M,\E)$. By Prop. \ref{proj} the operator $\Pj$ is an integral operator with integral kernel in $L^2(M \times M,\E \boxtimes_{\A} \E^*)$. Now the claim from the beginning of the proof follows from the fact that $\C(M \times M,\E_{\infty} \boxtimes_{\Ai} \E_{\infty}^*)$ is dense in $L^2(M \times M,\E \boxtimes_{\A} \E^*)$.

There is $\omega>0$ such that the spectrum of the semigroup $e^{-t(\D_{\E} + \ad + \Kappa)^2}$ on $L^2(M,\E)$ is contained in the interval $[0,e^{-\omega t}]$ for all $t$. As above by Prop. \ref{invspec} and Prop. \ref{specsem} for any $\ve >0$ there is $C>0$ such that for all $t>0$ on $L^2(M,\Ol{\mu}{\mathcal E}_i)$ and on $C^k(M,\Ol{\mu}\E_i)$
$$\|e^{-t(\D_{\E} + \ad + \Kappa)^2}\| \le Ce^{-(\omega-\ve) t} \ .$$
Now the assertion follows as above.
\end{proof}

\subsection{Heat kernel for the superconnection}

Recall the definition of the connection $P\di P + \gamma$ in direction of $\A_i$ from \S \ref{condi}.

We define the superconnection associated to $\D_{\E} + r\ad$ by 
$$\spc(r):=P\di P+ \gamma + \D_{\E} + r\ad \ .$$

The curvature of $\spc(r)$ is the $\Ol{\mu}\A_i$-linear map
$$\spc(r)^2=({\mathcal D}_{\mathcal E}+r\ad)^2+ [P\di P+ \gamma,\D_{\E} + r\ad] + P\di P \di P \ .$$ 

The operator $\spc(0)^2$ is a generalized Laplacien (see \S \ref{supcon} for the calculation of the supercommutator), hence the construction of \cite[Ch. 2]{bgv} applies here as well. Then one can construct and study the semigroup generated by $-\spc(r)^2$ as in the proof of Prop. \ref{semigroupm}.

Since $\spc(r)^2$ is a nilpotent bounded perturbation of $({\mathcal D}_{\mathcal E} + r\ad)^2$ on $L^2(M,\Ol{\mu}{\mathcal E}_i)$ resp. on $C^k(M,\Ol{\mu}{\mathcal E}_i)$, analogues of the estimates \ref{decM} and \ref{estimhkcm} hold.

We conclude:

\begin{prop}
\label{intkersupcon}
The operator $-\spc(r)^2$ generates a holomorphic semigroup on $C([-1,1],L^2(M,\Ol{\mu}{\mathcal E}_i))$ resp. on $C([-1,1],C^k(M,\Ol{\mu}{\mathcal E}_i))$, and $e^{-t\spc(r)^2}$ is an integral operators with integral kernel $p(r)_t \in \C(M \times M,\Ol{\mu}\E_i\boxtimes_{\Ol{\mu}\A_i} \Ol{\mu}\E_i^*)$ depending smoothly on $(t,r)$. The following estimates hold:
\begin{enumerate}
\item Let $B$ be a first order differential operator resp. $B=\D_{\E} \ad+\ad \D_{\E}$. There is $C>0$ such that for $|r|\le 1$ and  $0 < t\le 1$
$$\|B e^{-t\spc(r)^2}\| \le C t^{-\frac 12} \ , \quad \| \frac{d}{dr} B e^{-t \spc(r)^2}\| \le C \ ,$$
$$\|e^{-t\spc(r)}B\| \le C t^{-\frac 12} \ , \quad \| \frac{d}{dr}  e^{-t \spc(r)^2}B\| \le C $$
on $L^2(M,\Ol{\mu}{\mathcal E}_i)$. Furthermore for $B=\D_\E \ad+\ad \D_\E$ these estimates hold also on $C^k(M,\Ol{\mu}{\mathcal E}_i)$.
\item For $\ve>0$ there is $C>0$ such that on $L^2(M,\Ol{\mu}\E_i)$ and on $C^k(M,\Ol{\mu}\E_i)$ for all $r \in [-1,1]$ and $t>1$
$$\|e^{-t\spc(r)^2}\| \le C e^{\ve t} \ , \quad \|\frac{d}{dr}e^{-t\spc(r)^2}\| \le C e^{\ve t} \ .$$  
\item For $\ve>0$ there is $C>0$ such that for all $t>1$ and $r \in [-1,1]$
$$\|p(r)_t\| \le C e^{\ve t}, \quad \|\frac{d}{dr}p(r)_t\| \le  C e^{\ve t} \ ,$$
 where the norm is taken in $C^k(M \times M, \Ol{\mu}\E_i \boxtimes_{\Ol{\mu}\A_i} \Ol{\mu}\E_i ^*)$.
\item Let $m>\frac{\dim M+k}{2}$. There is $C>0$ such that for for $|r|\le 1$ and  $0 < t\le 1$ 
$$\|p(r)_t\| \le Ct^{-m},\quad \|\frac{d}{dr}p(r)_t\| \le Ct^{-m}$$ 
in $C^k(M \times M, \Ol{\mu}\E_i \boxtimes_{\Ol{\mu}\A_i} \Ol{\mu}\E_i^*)$. 
\item Let $\phi, \psi \in \C(M)$ with disjoint support. Assume that there is a sequence $(\xi_n)_{n \in \bbbn}\subset \C(M)$ with $\xi_1=\phi,~ \supp (1-\xi_{n+1}) \cap \supp \xi_n = \emptyset$ and $\supp \xi_n \cap \supp \psi = \emptyset$ and such that $[\xi_n, \ad]$ is an integral operator with integral kernel in $\C(M \times M,\E_i \boxtimes_{\A_i} \E_i^*)$. Then there is $C>0$ such that for $|r|\le 1$ and  $0 < t\le 1$ 
$$\|\phi(x)p(r)_t(x,y)\psi(y)\| \le Ct,\quad\|\phi(x)\frac{d}{dr}p(r)_t(x,y)\psi(y)\| \le Ct$$
 in $C^k(M \times M,\Ol{\mu}\E_i \boxtimes_{\Ol{\mu}\A_i} \Ol{\mu}\E_i^*)$.
\end{enumerate}
\end{prop}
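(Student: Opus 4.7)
The plan is to exploit the observation flagged just before the proposition: $\spc(r)^2 = (\D_\E + r\ad)^2 + R(r)$ with
$$R(r) := [P\di P + \gamma, \D_\E + r\ad] + P\di P \di P \ ,$$
an operator of strictly positive noncommutative form degree. After truncation at $\Ol{\mu}$ every form of degree exceeding $\mu$ vanishes, so $R(r)$ is nilpotent with index at most $\mu+1$, and the Volterra expansion
$$e^{-t\spc(r)^2} = \sum_{n=0}^{\mu}(-t)^n \int_{\Delta^n} e^{-u_0 t(\D_\E + r\ad)^2} R(r) e^{-u_1 t(\D_\E + r\ad)^2} \cdots R(r) e^{-u_n t(\D_\E + r\ad)^2} ~du_0 \dots du_n$$
is a finite sum. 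This finite sum manifestly satisfies the heat equation for $-\spc(r)^2$ with the correct initial value and hence equals the semigroup. Joint holomorphy in $(t,r)$ on both $C([-1,1],L^2(M,\Ol{\mu}\E_i))$ and $C([-1,1],C^k(M,\Ol{\mu}\E_i))$ then follows from the analogous holomorphy for the unperturbed semigroup established in Prop.\ \ref{semigroupm}(1), composed with finitely many bounded or first-order factors $R(r)$.

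The estimates (1), (4), (5) for $|r|,t\le 1$ reduce to termwise application of the corresponding parts of Prop.\ \ref{semigroupm}. The key point is that each factor of $R(r)$ is either a first-order differential operator on $M$ (the part $[P\di P, \D_\E]$) or a bounded operator on the relevant function spaces: $[\gamma, \D_\E]$ is a bundle endomorphism since $\gamma$ supercommutes with Clifford multiplication, $[P\di P + \gamma, r\ad]$ is bounded by Def.\ \ref{adapt}(2), and $P\di P\di P$ is obviously bounded. Hence Prop.\ \ref{semigroupm}(4) applies to each pair of neighbouring semigroup factors. The $C^n(t/n)^{-m}t^{-n/2}$ integral kernel estimate from the proof of Prop.\ \ref{semigroupm}(3), now summed over only finitely many $n$, yields (4); iterating the off-diagonal argument from the proof of Prop.\ \ref{semigroupm}(5) gives (5); and inserting $B$ on the outermost semigroup factor and invoking Prop.\ \ref{semigroupm}(4) gives (1).

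For the large-time estimates (2) and (3) I would proceed exactly as in the derivation of \ref{decM} and \ref{estimhkcm} for the unperturbed semigroup: the spectral radius bound \ref{decM} for $e^{-t(\D_\E + r\ad)^2}$ transfers to $e^{-t\spc(r)^2}$ by finiteness of the Volterra sum (each $R(r)e^{-u(\D_\E + r\ad)^2}$ is bounded by $Cu^{-1/2}$ for small $u$ and by $Ce^{\ve t}$ for large $u$), yielding the $L^2$ and $C^k$ bounds of (2). Estimate (3) for the integral kernel then follows from the semigroup identity $p(r)_t(x,y) = \int p(r)_{1/2}(x,z)\bigl(e^{-(t-1)\spc(r)^2} p(r)_{1/2}(\cdot,y)\bigr)(z)\,dz$ combined with the small-time bound of (4).

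The main obstacle I foresee is the uniform control of the $r$-derivative. Differentiating the Volterra sum termwise produces two kinds of contributions: derivatives of the explicit $R(r)$ factors, which remain bounded or first-order, and (via Duhamel, Prop.\ \ref{duhform}) additional integrals in which each semigroup factor acquires an insertion of $\D_\E \ad + \ad \D_\E + 2r\ad^2$. The latter insertion is precisely the operator $B$ of Prop.\ \ref{semigroupm}(4), so the same semigroup bounds apply uniformly in $r \in [-1,1]$. After adding up these two contributions, the termwise bookkeeping used for $p(r)_t$ yields the stated bounds on $\frac{d}{dr}p(r)_t$ in every part of the proposition.
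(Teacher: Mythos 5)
Your proposal is correct and takes essentially the same route as the paper, which constructs $e^{-t\spc(r)^2}$ by the same (finite, by nilpotency after truncation at $\Ol{\mu}$) Volterra development around $e^{-t(\D_{\E}+r\ad)^2}$ as in the proof of Prop.~\ref{semigroupm} and transfers the estimates \ref{decM} and \ref{estimhkcm} exactly as you describe. One harmless simplification: by the computation in \S~\ref{supcon}, $[\D_{\E},P\di P+\gamma]$ is a bundle endomorphism, so every factor of your $R(r)$ is in fact bounded rather than first order.
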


\subsection{Rescaled superconnection and limit for $t \to 0$}
\label{supcon}

In the following we write $$\spc:=\spc(0)=P\di P + \gamma + \D_{\E}$$ and define the rescaled superconnection $$\spc_t=P \di P + \gamma + \sqrt t \D_{\E} \ .$$ 

We determine the limit of the pointwise supertrace of the heat kernel of $e^{-\spc_t^2}$ for $t \to 0$, closely following \cite[Ch. 4]{bgv}. 

Assume $n=\dim M$ even.

We adapt the Getzler rescaling to our situation:

Let $U$ be a geodesic coordinate patch centered at $x_0 \in M$. In the following we denote by $x$ the coordinate. Let $V:=T_{x_0}U\cong \bbbr^n,~S_V:=S_{x_0}$ and $W:=\Hom_{C(V)}(S_V,E_{x_0})$. Via parallel transport along geodesics we identify $TU$ with $U \times V$, furthermore the spinor bundle $S|_U$ with $U\times S_V$, and $\Hom_{C(TU)}(S|_U,E|_U)$ with $U \times W$. There is an induced isomorphism $E|_U \cong U \times (S_V \ten W)$. As before ${\mathcal F}|_U= P(U\times \A^p)$. We consider ${\mathcal E}_i|_U$ as a subbundle of the trivial bundle $U\times (S_V \ten W \ten \A_i^p) \cong U\times S_V \ten \A^m_i$ for $m=p\dim W$.  

For $\alpha \in \C(\bbbr^+ \times U, \Lambda^* V^* \ten M_m(\Ol{\mu}\A_i))$ we rescale
$$(\delta_u \alpha)(t,x):= \sum_{k,l} u^{-\frac{k+l}{2}} \alpha(ut, u^{\frac 12} x)_{[k][l]} \ .$$
Here  $\alpha(t, x)_{[k][l]}$ is the homogenous component of $\alpha(t,x)$ that is of degree $k$ with respect to the grading on $\Lambda^*V^*$ and of degree $l$ with respect to $\Ol{\mu}\A_i$.

We identify the Clifford algebra $C(V^*)$ with the exterior algebra $\Lambda V^*$ via the symbol map 
$$\sigma(a):= c(a)1$$ with $1 \in \Lambda^0(V^*)$. The restriction of the integral kernel of $e^{-t\spc^2}$ to $U \times U$ is denoted by $k(t,x,y)$ and the restriction of the kernel of $e^{-\spc_t^2}$ by $\hat k(t,x,y)$. We have that $$\bigl((t,x) \to k(t,x,x_0)\bigr) \in \C(\bbbr^+ \times U, \Lambda V^* \ten M_m(\Ol{\mu}\A_i)) \ .$$
Set
$$r(u,t,x):=u^{n/2} \delta_u k(t,x, x_0) \ .$$

Then 
\begin{align*}
r(u,1,x) &= \sum_{k,l}u^{(n-k-l)/2}  k(u,u^{\frac 12}x,x_0)_{[k][l]} \\
&=\sum_{k,l}u^{(n-k)/2}  \hat k(u,u^{\frac 12}x,x_0)_{[k][l]} \ .
\end{align*}

Hence
$$\lim\limits_{u \to 0} r(u,1,x_0)_{[n][l]}=\lim\limits_{u \to 0}  \hat k(u,x_0,x_0)_{[n][l]} \ .$$
  
We consider $\spc^2$ as an operator on the space $\C(U, \Lambda V^* \ten M_m(\Ol{\mu}\A_i))$. Note that 
$$(\ra_t - u \delta_u \spc^2 \delta_u^{-1})r(u,t,x)=0 \ .$$ 

In the following we determine the limit of $u \delta_u \spc^2 \delta_u^{-1}$ for $u \to 0$. 

For that aim we first calculate $[\D_{\E},P\di P+ \gamma]=[\D_\E, P\di P] + c(d\gamma)$.

Let $\{e_j\}_{j=1,\dots,n}$ be an orthonormal basis of $V^*$. We denote the dual basis vector of $e_j$ by $\ra_j,~j=1, \dots , n$.

We use that there is $\alpha \in \C(U, \Lambda^1 V^* \ten \Lambda^{ev} V^* \ten M_m(\A_i))$ such that $\nabla^{\E}=PdP+\alpha$. Thus 
$${\mathcal D}_{\mathcal E}=\sum\limits_{j=1}^n c(e_j)\nabla^{\E}_{\ra_j}=\sum\limits_{j=1}^n c(e_j)P\ra_j P+ c(\alpha) \ .$$ Furthermore $P(\ra_j P)P=0$ and $P(\di P)P=0$. Then
\begin{align*}
\lefteqn{[\sum\limits_{j=1}^{n} c(e_j)P\ra_jP,P\di P]}\\
&= \sum\limits_{j=1}^{n} c(e_j)\bigl(P\ra_j P \di P -P \di P \ra_j  P\bigr) \\
&= \sum\limits_{j=1}^{n} c(e_j) \bigl( P(\ra_j P) \di P+ P\ra_j \di P    -P(\di P)\ra_j P - P\di  \ra_j P\bigr) \\
&= \sum\limits_{j=1}^{n} c(e_j) \bigl( P(\ra_j P)(\di P)+  P(\ra_j P)P \di -P(\di P)(\ra_j P) - P(\di P)P \ra_j \bigr) \\
&= \sum\limits_{j=1}^{n} c(e_j) \bigl( P(\ra_j P)(\di P) -P(\di P)(\ra_j P) )\bigr) \\
&= Pc(d P)(\di P) +P(\di P)c(d P) \ .
\end{align*}

We decompose $\alpha=\sum_{j=1}^n e_j \beta_j$ with $\beta_j \in \C(U,\Lambda^{ev} V^*\ten M_m(\A_i))$. Then
\begin{align*}
[c(\alpha),P\di P] &= \sum_{j=1}^n c(e_j) (\beta_j \di P- P \di \beta_j)\\
&= \sum_{j=1}^n c(e_j) (\beta_j (\di P)+ \beta_j \di - P(\di \beta_j)-  \beta_j \di)\\
&= \sum_{j=1}^n c(e_j) (\beta_j (\di P) - P(\di \beta_j)) \\
&= c(\alpha) (\di P) + P(\di c(\alpha)) \ .
\end{align*}

Summarizing, $$[\D_{\E},P\di P + \gamma]=Pc(d P)(\di P) +P(\di P)c(d P)+ c(d\gamma) + c(\alpha) (\di P) + P(\di c(\alpha)) \ .$$
It holds that
$$u \delta_u (P\di P + \gamma)^2 \delta_u^{-1}= (P\di P+ \gamma)^2 \ ,$$
and
$$u \delta_u [{\mathcal D}_{\mathcal E},P\di P + \gamma] \delta_u^{-1}=\sum_{j=1}^n (\ve(e_j)+ u \iota(e_j))(P(\ra_j P)(\di P) -P(\di P)(\ra_j P) +  \beta_j (\di P) - P(\di \beta_j) + \ra_j\gamma) \ ,$$
which converges for $u \to 0$ to $$Q:=P(d P)(\di P) +P(\di P)(d P)+ \alpha (\di P) + P(\di\alpha)+ d\gamma \ .$$

Denote by $R(x_0)_{ij} \in \Lambda^2V^*$ the matrix coefficients of the curvature tensor of $M$ at $x_0$ and let $K$ be the differential operator
$$K=-\sum_{i=1}^n \bigl(\ra_i - \frac 14 \sum_{j=1}^n R(x_0)_{ij}x_j\bigr)^2+F^{\E/S}(x_0) \ .$$  
From the previous calculations and \cite[Prop. 4.19]{bgv} it follows that
$$\lim\limits_{u \to 0} u \delta_u \spc^2 \delta_u^{-1} = 
K +  Q  +  (P\di P+ \gamma)^2 \ .$$

Let $\vol_M$ be the volume form of $M$. As in \cite[\S 4.3]{bgv} one concludes that $\lim\limits_{u \to 0}  (\tr_s \hat k(u,x_0,x_0))\vol_M$ (in this expression we do not apply the symbol map to $\hat k(u,x_0,x_0)$) equals 
the $\Lambda^*V^*$-homogenous component of degree $n$ of
\begin{align}
\label{heatlim} \lefteqn{(2 \pi i)^{-n/2} \Det^{1/2}\left(\frac{R/2}{\sinh(R/2)}\right)  \tr_s e^{-(F^{\E/S}+ Q + (P\di P + \gamma)^2)}|_{x_0}}\\
\nonumber &= (2 \pi i)^{-n/2} \hat A(M) \tr_s e^{-(F^{\E/S}+ Q + (P\di P + \gamma)^2)}|_{x_0} \ .
\end{align}
In order to compare $\tr_s e^{-(F^{\E/S}+ Q + (P\di P+ \gamma)^2)}$ with the Chern character of $\nabla^{\E}+ (P\di P+ \gamma)$ we calculate the curvature of $\nabla^{\E}+ (P\di P + \gamma)$:
\begin{eqnarray*}
(\nabla^{\E}+ P\di P+ \gamma)^2 &=& (\nabla^\E)^2 + [\nabla^{\E},P\di P + \gamma]+ (P\di P+ \gamma)^2\\
&=&(\nabla^{\E})^2 + [Pd P + \alpha,P\di P+ \gamma] + (P\di P + \gamma)^2\\
&=&R^{\E} + F^{\E/S} + Q + (P\di P + \gamma)^2 \ ,
\end{eqnarray*}
where the equality $$[\nabla^{\E}, P\di P + \gamma]=[Pd P + \alpha,P\di P+ \gamma]=Q$$ is proved by replacing $c(e_i)$ with $e_i$ in the above calculations. 

In particular, if $\nabla^{\E}$ is of the form $\nabla^E \ten 1 + 1 \ten \nabla^{\F}$, then 
$$\tr_s (e^{-(F^{\E/S}+ Q + (P\di P + \gamma)^2)})=\tr_s (e^{-F^{E/S}})\ch(\nabla^{\F}+ (P\di P + \gamma)) \ .$$
In general, we will be sloppy and write $\ch(\E/S)$ for the left hand side, thus suppressing the dependence on the connection.

\section{The $\eta$-form}
\label{eta}

We apply the results of the previous sections to the odd-dimensional manifold $N$ of \S \ref{condi} and construct the $\eta$-form. The construction works also if $N$ is not a boundary. 

Let $C_1$ be the Clifford algebra of $\bbbr$ with odd generator $\sigma$ with $\sigma^2=1$. Define the superconnection $$\spc^N=P\di P + \sigma \D_N$$ and the rescaled superconnection $$\spc^N_t=P \di P + \sqrt t \sigma \D_N \ .$$ Both act as odd operators on sections of the $\bbbz/2$-graded bundle $C_1 \ten \Ol{\mu}\E^N_i$.

Let $\tr_{\sigma}(a+\sigma b):= \tr(a)$ and define analogously the operator trace $\Tr_{\sigma}$.

\begin{prop}
\label{etasmallt}
The expression $t^{-1/2}\Tr_{\sigma}( {\mathcal D}_N e^{-(\spc^N_t)^2})$ converges for $t \to 0$ in $\Ol{\mu}\A_i/\ov{[\Ol{\mu}\A_i,\Ol{\mu}\A_i]}$.
\end{prop}

\begin{proof} We adapt \cite[pp. 49-50]{bc}.

Let $S^1=\bbbr/\bbbz$ endowed with the euclidian metric and let $p:S^1 \times N \to N$ be the projection.

Define a Clifford module $\hat \E_i=(\bbbc^+ \oplus \bbbc^-)  \ten (p^*\E_i^N)$ on $S^1 \times N$ with Clifford multiplication $$c(dx_1):=\left(\begin{array}{cc} 0 & -i \\ -i & 0 \end{array} \right)$$
and $c(v):=-c(dx_1)c_N(v)$ for $v \in T^*N$, and endow $\hat \E_i$ with the product $\A_i$-hermitian structure and connection. The projection $P|_N \circ p$ is denoted by $P$ again, and $\gamma|_N \circ p$ is also denoted by $\gamma$. Thus $P\di  P + \gamma$ is a connection on $\hat \E_{\infty}$ in direction of $\Ai$.  

Let $\dira_{\hat \E}$ denote the Dirac operator associated to $\hat \E$; thus $$\dira_{\hat \E}=c(dx_1)(\ra_1 - \tilde \dira_N) \ ,$$
with $\tilde \dira_N=\left(\begin{array}{cc} \dira_N & 0 \\ 0 & -\dira_N \end{array}\right)$ as in \S \ref{condi}.

Let $f\in \C(S^1)$ be a positive function such that $f(x)=x+ \frac 12$ for $x \in [\frac 14,\frac 34]$.
 
For $s >0$ we define a new metric $$g_s= s^{-1} dx_1^2 + f(x_1)^2g_N$$ on $S^1 \times N$. We write $X^* \in T^*N$ for the dual of $X \in TN$ with respect to $f(x_1)^2g_N$. As before,  $\nabla^N$ is the Levi-Civit\`a connection on $N$ with respect to $g_N$. Furthermore $P_{TN}: T(S^1 \times N) \to TN \subset T(S^1 \times N)$ denotes the projection.

Using \cite[(1.18)]{bgv} one checks that the Levi-Civit\`a connection with respect to $g_s$ fulfills $$\nabla_{\ra_1}^{g_s}= \ra_1+ \frac{f'}{2f} P_{TN} \ ,$$
and for $X \in TN$
$$\nabla_{X}^{g_s}=\nabla_X^N -s \frac{f'}{f} \ra_1\ten X^* \ .$$
Let $c_s:T^*(S^1 \times N) \to \End (\hat \E)$ denote the Clifford multiplication with respect to $g_s$. A Clifford connection $\nabla^{\hat \E}$ on $\hat \E$ with respect to $g_s$ is given by 
$$\nabla_{\ra_1}^{\hat \E}v= \ra_1v \ ,$$ 
$$\nabla_X^{\hat \E}v=\nabla_X^{\E^N}v- \frac{f'}{2f}s^{\frac 12} c_s(s^{-\frac 12}dx_1)c_s(X^*)v \ .$$ 
We write $D_N:=-c(dx_1)\tilde \dira_N$. The Dirac operator associated to $\nabla^{\hat \E}$ is
\begin{align*}
\dira_f (s)&= c(dx_1)\bigl(s^{1/2}(\ra_1 + \frac{\dim N}{2}\frac{f'}{f}) - \frac 1f \tilde \dira_N \bigr) \\
&=s^{1/2} c(dx_1)(\ra_1 + \frac{\dim N}{2}\frac{f'}{f})+ \frac 1f D_N \ .
\end{align*}
The rescaled superconnection associated to $\dira_f (s)$ is $$\spcb(s)_t:=  P \di   P +  \gamma + \sqrt t \dira_f (s)$$
with curvature
\begin{align*}
\spcb(s)_t^2&=-t s (\ra_{x_1} + \frac{\dim N}{2}\frac{f'}{f})^2\\
&\quad - tc(dx_1)s^{1/2}\frac{f'}{f^2}D_N +t f^{-2}D_N^2\\
&\quad  +\sqrt t f^{-1}[ P\di  P + \gamma,D_N] +  ( P\di  P + \gamma)^2 \ .
\end{align*}
Similar to \S \ref{supcon} we consider the action of $e^{-\spcb(s)_t^2}$ on $\C(S^1 \times N,(\Lambda^*T^*S^1 \boxtimes \End(E^N)) \ten M_p(\Ol{\mu}\A_i))$.

We introduce the coordinate $r=x_1-\frac 12$ near $x_1=\frac 12$ and rescale it: Let $\alpha \in \C(S^1 \times N,(\Lambda^jT^*S^1 \boxtimes \End(E^N)) \ten M_p(\Ol{\mu} \A_i))$. For $|r|$ small and $u>0$ define
$$\delta_u\alpha(r,x_2)=u^{-\frac{j}{2}}\alpha(u^{1/2}r,x_2) \ .$$ 

Then $$\delta_u c(dr) \delta_u^{-1}=u^{-1/2}\ve(dr) + u^{1/2}\iota(dr) \ ,$$ and we have that $$\delta_u\ra_r\delta_u^{-1}=u^{-1/2}\ra_r \ .$$  
We get that
\begin{align*} \lim_{u \to 0} \delta_u \spcb(u)_t^2 \delta_u^{-1}
&= -t\ra_r^2- t dr  D_N + t D_N^2- \sqrt t [  P\di  P +  \gamma,D_N] -  (P\di  P +  \gamma)^2 \ .
\end{align*}

We represent $\delta_u e^{- \spcb(u)_t^2}\delta_u^{-1}$ by an integral kernel on $S^1 \times S^1$ with values in the integral operators on $\Lambda^*\bbbr \ten(\bbbc^+\oplus \bbbc^-) \ten L^2(N,\Ol{\mu}\E_i^N)$. We write $\bigl(\delta_ue^{- \spcb(u)_t^2} \delta_u^{-1}\bigr)(r)$ for its value at $(\frac 12+r,\frac 12)$, and similarly for other integral operators. By $\Tr_s$ we mean in the following the supertrace of integral operators on $\Lambda^*\bbbr \ten(\bbbc^+\oplus \bbbc^-) \ten L^2(N,\Ol{\mu}\E_i^N)$.
Thus 
\begin{align*}
\lim_{u \to 0}\Tr_s\bigl(\delta_ue^{- \spcb(u)_t^2 }\delta_u^{-1}\bigr)(0)
&= \frac{1}{\sqrt{4 \pi t}}\Tr_s\bigl((1+ t  dr D_N)e^{-( P\di P +  \gamma + \sqrt t D_N)^2}\bigr)(0) \\
&=\frac{\sqrt t}{4\sqrt{ \pi}} \Tr_s \bigl(dr D_Ne^{-( P\di P +  \gamma + \sqrt t D_N)^2}\bigr)(0)
\end{align*}
Thus for some $C>0$
$$\lim_{u \to 0} \Tr_s \bigl(\delta_ue^{-\spcb(u)_t^2 }\delta_u^{-1}\bigr)(0)= C  t^{\frac 12}\Tr_{\sigma}( {\mathcal D}_N e^{-(\spc^N_t)^2}) \ .$$ 
 
Near $r=0$ for $u=1,~t \le 1$ there is an asymptotic development
$$\Tr_s \bigl(\delta_u e^{-\spcb(u)_t^2}\delta_u^{-1}\bigr)(r)\sim \sum_{j \in \bbbn_0} t^{j-n/2}k_{j}(r)$$ 
with $n=\dim S^1\times N$.

By eq. \ref{heatlim} the supertrace  $\Tr_s \bigl(\delta_u e^{-\spcb(u)_t^2} \delta_u^{-1}\bigr)(0)$ converges for $t \to 0$ and $u=1$ to the $dr$-component 
$$(2 \pi i)^{-n/2} \int_N \hat A(S^1 \times N) \ch(\E^N/S_N)|_{r=0} \ .$$ 
One checks that this expression does not involve $dr$. It follows that $k_{j}(r)=0$ if $j \le \frac n2$. This implies the assertion.
\end{proof}

Let $A$ be an adapted trivializing operator for $\D_N$ and let $\psi\in \C(\bbbr)$ be a function with $\psi(r)=0$ for $r\le 1$ and $\psi(r)=1$ for $r\ge 2$.

We define the superconnection $\spc^N(r)=P\di P + \gamma+ \sigma(\D_N +\psi(r)A)$. Its curvature is 
$$\spc^N(r)^2=(\D_N+\psi(r)A)^2 + (P\di P + \gamma)^2 - \sigma R$$ with $R=[P \di P + \gamma, \D_N +  \psi(r)A]$ bounded and $\Ol{\mu}\A_i$-linear. 

The rescaled superconnection is given by $\spc^N(r)_t=P\di P+\gamma +   \sqrt t \sigma(\D_N +\psi(r) A)$.

Set 
$$\eta(\dira_N,A):= \frac{1}{\sqrt{\pi}} \int_0^{\infty}\Tr_{\sigma} \sigma \frac{d\spc^N(t)_t}{dt} e^{-\spc^N(t)_t^2}~dt \in \Oi\Ai/\ov{[\Oi\Ai,\Oi\Ai]} \ .$$
For $t \to 0$ the convergence follows from the previous proposition, for $t \to \infty$ it is implied by the results in \S \ref{heatm}. Note that in general $\eta(\dira_N,A)$  depends  on the choice of $\psi$, $P$ and $\gamma$, although this is not reflected in the notation.

\section{Heat semigroup and superconnection on the cylinder}

\subsection{Resolvents and heat kernel} 
\label{cyl}

The discussion in this section is similar to \cite[\S\S 3.4, 4.2]{wa}.

Let $Z=\bbbr \times N$. Denote by $\E^Z$ the bundle $(\bbbc^+ \oplus \bbbc^-) \ten p^*(\E^N)$ with the Clifford module structure as in the proof of Prop. \ref{etasmallt} and let $\dira_Z$ be the Dirac operator associated to $\E^Z$.

Let $\ad$ be a selfadjoint operator adapted to $\D_N$. Recall that $\tilde \ad$ denotes the translation invariant operator $\left(\begin{array}{cc} \ad & 0 \\ 0 & -\ad \end{array}\right)$ on $L^2(Z,\Ol{\mu}\E_i^Z)$.

Fourier transform is continuous on the Fr\'echet space of rapidly decaying functions ${\mathcal S}(Z,\Ol{\mu}\E^Z_i)$ since ${\mathcal S}(\bbbr)$ is nuclear.

We fix the convention that closed operators have ${\mathcal S}(Z,\Ol{\mu}\E_i)$ as a core for their domain. In particular we take first powers, then closures. 

Let $\D_Z(r)$ be the closure of $\dira_Z- c(dx_1)r \tilde \ad$.

Denote by $\D_N+r\ad$ the closure of $\dira_N+ r\ad$, which acts on ${\mathcal S}(Z,\Ol{\mu}\E_i)$ in a translation invariant way, and denote by $\Delta_{\bbbr}$ the closure of the Laplace operator $- \ra_1^2$. Both, $(\D_N+r\ad)^2$ and $\Delta_{\bbbr}$  generate holomorphic semigroups on $L^2(Z,\Ol{\mu}\E_i)$ by the fact that $L^2(Z,\Ol{\mu}\E_i)=L^2(\bbbr,L^2(N,\Ol{\mu}\E^N_i))$. It follows that $$e^{-t(\D_N+r\ad)^2}e^{-t\Delta_{\bbbr}}=e^{-t{\mathcal D}_Z(r)^2}$$ is a holomorphic semigroup on $L^2(Z,\Ol{\mu}\E_i)$. The strong limit for $t \to 0$ is uniform in $r\in [-1,1]$.

Furthermore $e^{-t{\mathcal D}_Z(r)^2}$ acts also as a holomorphic semigroup on the spaces $C_0^k(\bbbr,\bbbc^2 \ten C^l(N,\Ol{\mu}\E^N_i)),~k,l \in \bbbn_0$, and the strong limit for $t \to 0$ is uniform in $r\in [-1,1]$ as well. Hence this also holds for $C_0^n(Z,\Ol{\mu}\E^Z_i)$. 

Eq. \ref{decM} and the subsequent remarks imply that for any $\ve>0$ there is $C>0$ such that for all $t \ge 1$ and $r \in [-1,1]$
\begin{align}
\label{estimcyl}
\|e^{-t{\mathcal D}_Z(r)^2}\|  \le Ce^{\ve t} \ , &\quad \|\frac{d}{dr}e^{-t{\mathcal D}_Z(r)^2}\|  \le Ce^{\ve t}
\end{align}
on $L^2(Z,\Ol{\mu}\E^Z_i)$ resp. on $C_0^k(Z,\Ol{\mu}\E^Z_i)$.

\begin{lem}
\label{diffopcyl}
\begin{enumerate}
\item Let $B$ be a differential operator of order $k$ with coefficients in $\C(Z,\Ol{\mu}\E^Z_i \ten_{\Ol{\mu}\A_i}(\Ol{\mu}\E^Z_i)^*)$. Let $m> \frac{\dim Z + k}{2}$. Then there is $C>0$ such that for $|r|\le 1$ and  $0 < t\le 1$  on $L^2(Z,\Ol{\mu}\E^Z_i)$
$$\|Be^{-t{\mathcal D}_Z(r)^2}\| \le Ct^{-m} \ , \quad \|\frac{d}{dr}Be^{-t{\mathcal D}_Z(r)^2}\| \le Ct^{-m} \ $$ 
$$\|e^{-t{\mathcal D}_Z(r)^2}B\| \le Ct^{-m} \ , \quad \|\frac{d}{dr}e^{-t{\mathcal D}_Z(r)^2}B\| \le Ct^{-m} \ $$ 
\item Let $B$ be a differential operator of order $1$ with coefficients in $\C(Z,\Ol{\mu}\E^Z_i \ten_{\Ol{\mu}\A_i}(\Ol{\mu}\E^Z_i)^*)$ resp. let $B=\D_N\ad+ \ad\D_N$. Then for $|r|\le 1$ and  $0 < t\le 1$ on $L^2(Z,\Ol{\mu}\E_i)$
$$\|Be^{-t{\mathcal D}_Z(r)^2}\| \le C t^{-\frac 12} \ , \quad \| \frac{d}{dr}Be^{-t{\mathcal D}_Z(r)^2}\| \le C \ ,$$
$$\|e^{-t{\mathcal D}_Z(r)^2B}\| \le C t^{-\frac 12} \ , \quad \| \frac{d}{dr}e^{-t{\mathcal D}_Z(r)^2}B\| \le C \ .$$ 
For $B=\D_N\ad+\ad\D_N$ these estimates hold also on $C_0^k(Z,\Oei)$. 
\end{enumerate}
\end{lem}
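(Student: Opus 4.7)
The plan is to exploit the product decomposition
$$e^{-t\D_Z(r)^2}=e^{-t(\D_N+r\ad)^2}\,e^{-t\Delta_\bbbr}$$
established at the start of this subsection, which reduces every estimate to Proposition~\ref{semigroupm} applied on the compact manifold $N$ combined with explicit bounds for the scalar heat semigroup on $\bbbr$.

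For part (1), working in a local trivialisation one writes $B=\sum_{j+|\alpha|\le k}a_{j,\alpha}(x_1,y)\,\ra_{x_1}^j\,\ra_y^\alpha$. Since $\ra_{x_1}$ commutes with $e^{-t(\D_N+r\ad)^2}$ and $\ra_y^\alpha$ commutes with $e^{-t\Delta_\bbbr}$, each summand factorises as
$$a_{j,\alpha}\cdot\bigl(\ra_y^\alpha e^{-t(\D_N+r\ad)^2}\bigr)\cdot\bigl(\ra_{x_1}^j e^{-t\Delta_\bbbr}\bigr).$$
The first factor is bounded on $L^2(N)$ via Hilbert--Schmidt from the $C^{|\alpha|}$-kernel estimate of Proposition~\ref{semigroupm}(3) (using the compactness of $N$), yielding $Ct^{-m_N}$ with $m_N>(\dim N+|\alpha|)/2$. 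The second satisfies $\|\ra_{x_1}^j e^{-t\Delta_\bbbr}\|_{L^2(\bbbr)}\le Ct^{-j/2}$ by an elementary Fourier-side computation. Summing over $j+|\alpha|\le k$ and taking $m_N$ arbitrarily close to $(\dim N+|\alpha|)/2$, the combined exponent can be made smaller than any prescribed $m>(\dim Z+k)/2=(\dim N+k+1)/2$. The estimate for $e^{-t\D_Z(r)^2}B$ follows by taking adjoints, and the $\frac{d}{dr}$-version is identical since differentiation acts only on the $N$-factor and the corresponding derivative bound is supplied by Proposition~\ref{semigroupm}(3).

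For part (2) with $B$ a first-order differential operator, the same decomposition produces a zeroth-order piece bounded by the uniform boundedness of $e^{-t\D_Z(r)^2}$ for $|r|,t\le 1$, a $\ra_y$-piece bounded by $Ct^{-1/2}$ via Proposition~\ref{semigroupm}(4) on $N$, and a $\ra_{x_1}$-piece bounded by $Ct^{-1/2}$ via the Gaussian estimate. The $\frac{d}{dr}$-estimate gains a factor $t^{1/2}$ since differentiation only hits the $N$-factor and the bounds following Proposition~\ref{semigroupm}(4) come with precisely this gain. For $B=\D_N\ad+\ad\D_N$, which acts purely in the $N$-variable, the tensor-product structure gives $Be^{-t\D_Z(r)^2}=(Be^{-t(\D_N+r\ad)^2})\otimes e^{-t\Delta_\bbbr}$; one quotes Proposition~\ref{semigroupm}(4) on $N$ (valid on both $L^2(N,\Ol{\mu}\E^N_i)$ and $C^k(N,\Ol{\mu}\E^N_i)$) together with the contractivity of $e^{-t\Delta_\bbbr}$ on $L^2(\bbbr)$ and its uniform boundedness on $C_0^k(\bbbr)$, giving the claim on both $L^2(Z,\Oei)$ and $C_0^k(Z,\Oei)$. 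The $\frac{d}{dr}$-bound on the $N$-factor comes from Duhamel's formula, the first-order estimate of Proposition~\ref{semigroupm}(4) absorbing the integration over $[0,t]$ into a uniform constant.

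The main technical point is the book-keeping in part (1): the Hilbert--Schmidt transfer from $C^{|\alpha|}$-kernel bounds on $N$ to $L^2$ operator-norm bounds, combined with the $\bbbr$-factor exponents, must add up to something strictly better than $t^{-(\dim Z+k)/2}$. This succeeds because the $\bbbr$-factor contributes only $t^{-j/2}$ rather than the $t^{-(j+1)/2}$ that the analogue of Proposition~\ref{semigroupm}(3) on $\bbbr$ would produce; the absence of the usual half-dimension overhead for the scalar heat kernel on the line is exactly what makes the cylinder estimate come out at $(\dim Z+k)/2$.
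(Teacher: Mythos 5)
Your proposal is correct and follows essentially the same route as the paper: the factorization $e^{-t\D_Z(r)^2}=e^{-t(\D_N+r\ad)^2}e^{-t\Delta_{\bbbr}}$, a splitting of $B$ into $N$-differential operators and powers of $\ra_{x_1}$, the estimates of Prop.~\ref{semigroupm} (3) and (4) for the $N$-factor, and the elementary bound $\|\ra_{x_1}^je^{-t\Delta_{\bbbr}}\|\le Ct^{-j/2}$ for the $\bbbr$-factor, with the same bookkeeping showing the exponents stay below $m$. One small caveat: obtaining the $e^{-t\D_Z(r)^2}B$ estimates ``by taking adjoints'' is not literally available on the Banach spaces $L^2(Z,\Ol{\mu}\E^Z_i)$ (the paper itself notes that taking adjoints need not be continuous there); instead apply the same factorization with the factors in reverse order, which the kernel estimates of Prop.~\ref{semigroupm} (3) and the two-sided first-order estimates of Prop.~\ref{semigroupm} (4) cover directly.
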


\begin{proof}
Without loss of generality we may restrict to the case where $B$ is translation invariant.
 
We decompose $B=\sum_j B_j (i \ra_{x_1})^j$ with $B_j$ a differential operator on $\C(N,\Ol{\mu}\E^N_i)$ of order at most $k-j$ and write
$$B_j(i \ra_{x_1})^j e^{-t{\mathcal D}_Z(r)^2}=B_j e^{-t(\D_N+ r\ad)^2}  (i \ra_{x_1})^j e^{-t\Delta_{\bbbr}} \ .$$
Now the assertion follows since by Prop. \ref{semigroupm} (3) for $|r|\le 1$ and  $0 < t\le 1$
$$\|B_j e^{-t(\D_N+ r\ad)^2}\| \le C t^{-m+\frac j2} \ , \quad \|\frac{d}{dr}B_j e^{-t(\D_N+ r\ad)^2}\| \le C t^{-m+ \frac j2}$$ and since furthermore
$$\|(i \ra_1)^j e^{-t\Delta_{\bbbr}}\| \le  C t^{-\frac{j}{2}} \ . $$

The second claim follows analogously by using Prop. \ref{semigroupm} (4).
\end{proof}

\begin{prop}
\label{cylresolv}
Let $\lambda \in \bbbc$ with $\re \lambda^2 < 0$.
Then ${\mathcal D}_Z(r)-\lambda$ has a bounded inverse on ${\mathcal S}(Z,\Ol{\mu}\E^Z_i)$ as well as on $L^2(Z,\Ol{\mu}\E^Z_i)$. On $L^2(Z,\Ol{\mu}\E^Z_i)$ the inverse is of class $C^1$ in $r$ (with respect to the norm topology). Furthermore its derivative is bounded on ${\mathcal S}(Z,\Ol{\mu}\E^Z_i)$. If  $f \in {\mathcal S}(Z,\Ol{\mu}\E^Z_i)$, then $\bigl(r \mapsto ({\mathcal D}_Z(r)-\lambda)^{-1}f\bigr) \in C^1([-1,1],{\mathcal S}(Z,\Ol{\mu}\E^Z_i))$.
\end{prop}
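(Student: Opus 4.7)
The strategy is to construct $(\mathcal{D}_Z(r)-\lambda)^{-1}$ via the squared operator, using the factorisation
$$(\mathcal{D}_Z(r)-\lambda)^{-1}=(\mathcal{D}_Z(r)+\lambda)(\mathcal{D}_Z(r)^2-\lambda^2)^{-1},$$
and to represent the right-hand side by heat semigroup integrals, so that all the regularity and differentiability estimates of \S\ref{cyl} can be brought to bear.

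First I would handle the $L^2$-case. Fixing $\varepsilon\in(0,-\re\lambda^2)$ and invoking (\ref{estimcyl}), the integral
$$R(r,\lambda):=\int_0^\infty e^{-t(\mathcal{D}_Z(r)^2-\lambda^2)}\,dt$$
converges in $B(L^2(Z,\Ol{\mu}\E_i^Z))$ uniformly in $r\in[-1,1]$, because the integrand is bounded in norm by $Ce^{(\varepsilon+\re\lambda^2)t}$ for $t\ge 1$ and by $Ce^{\re\lambda^2 t}$ for $t\le 1$ (by Prop.\ \ref{semigroupm}(3) applied on the cylinder, or equivalently Lemma \ref{diffopcyl}(1) with $B=1$). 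The semigroup identity then yields $(\mathcal{D}_Z(r)^2-\lambda^2)R(r,\lambda)=R(r,\lambda)(\mathcal{D}_Z(r)^2-\lambda^2)=1$ on the respective domains. Applying $\mathcal{D}_Z(r)+\lambda$ under the integral,
$$(\mathcal{D}_Z(r)+\lambda)R(r,\lambda)=\int_0^\infty(\mathcal{D}_Z(r)+\lambda)e^{-t\mathcal{D}_Z(r)^2}e^{\lambda^2 t}\,dt,$$
is well-defined as a bounded operator on $L^2$ because $\|\mathcal{D}_Z(r)e^{-t\mathcal{D}_Z(r)^2}\|\le Ct^{-1/2}$ for $t\le 1$ by Lemma \ref{diffopcyl}(2) and $\|\mathcal{D}_Z(r)e^{-t\mathcal{D}_Z(r)^2}\|\le Ce^{\varepsilon t}$ for $t\ge 1$ (split the semigroup as $e^{-\mathcal{D}_Z(r)^2/2}\cdot e^{-(t-1/2)\mathcal{D}_Z(r)^2}$ and use (\ref{estimcyl})). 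A direct check on the dense subspace $\mathcal{S}(Z,\Oei^Z)$ shows that this bounded operator is the two-sided inverse of $\mathcal{D}_Z(r)-\lambda$.

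Next I would establish $C^1$-dependence in $r$ on $L^2$. By Duhamel's formula (Prop.\ \ref{duhform})
$$\frac{d}{dr}e^{-t\mathcal{D}_Z(r)^2}=-\int_0^t e^{-(t-s)\mathcal{D}_Z(r)^2}\bigl(-(C\mathcal{D}_Z+\mathcal{D}_Z C)+2rC^2\bigr)e^{-s\mathcal{D}_Z(r)^2}\,ds$$
with $C=c(dx_1)\tilde\ad$; here $C\mathcal{D}_Z+\mathcal{D}_Z C$ is a first-order differential operator with coefficients involving $\dira_N\ad+\ad\dira_N$ and $C^2$ is bounded, so Lemma \ref{diffopcyl}(2) supplies $\|(C\mathcal{D}_Z+\mathcal{D}_Z C)e^{-t\mathcal{D}_Z(r)^2}\|\le Ct^{-1/2}$ for small $t$ and exponential bounds for large $t$. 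These combine with the estimates above so that differentiation under the integral defining $(\mathcal{D}_Z(r)-\lambda)^{-1}$ is justified, proving norm-continuity of the $r$-derivative on $L^2$.

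For the $\mathcal S$-statement I would use that $\mathcal{D}_Z(r)$ is translation invariant in $x_1$ and pass to Fourier transform $\mathcal{F}$ in that variable; continuity of $\mathcal{F}$ on $\mathcal{S}(Z,\Ol{\mu}\E_i^Z)$ was recorded at the beginning of \S\ref{cyl}. A direct computation using $c(dx_1)^2=-1$ and the (anti)commutation relations among $c(dx_1)$, $\Gamma$, $\dira_N$, $\ad$ gives
$$\mathcal{F}\mathcal{D}_Z(r)^2\mathcal{F}^{-1}=\xi^2+(\D_N+r\ad)^2,$$
so that the symbol $\bigl(\xi^2+(\D_N+r\ad)^2-\lambda^2\bigr)^{-1}$ is a smooth bounded family of operators on $L^2(N,\Ol{\mu}\E_i^N)$ (with the $\xi$-derivatives dropping at rate $\xi^{-2}$), and multiplication by $\mathcal{F}(\mathcal{D}_Z(r)+\lambda)\mathcal{F}^{-1}=c(dx_1)(i\xi-\Gamma(\D_N+r\ad))+\lambda$ is of symbol class growing like $\xi$. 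Composition gives a Fourier multiplier of symbol class bounded plus $\xi^{-1}$, which preserves $\mathcal{S}(Z,\Ol{\mu}\E_i^Z)$ and depends smoothly on $r$ by resolvent identities applied to the $\xi$-symbol. Transferring back proves boundedness of $(\mathcal{D}_Z(r)-\lambda)^{-1}$ and of its $r$-derivative on $\mathcal{S}$, and that $r\mapsto(\mathcal{D}_Z(r)-\lambda)^{-1}f$ is $C^1$ into $\mathcal{S}$ for every $f\in\mathcal{S}$. The main obstacle is controlling \emph{all} Schwartz seminorms simultaneously and uniformly in $r$; the Fourier multiplier viewpoint handles this cleanly because the symbol estimates factor through the already-proven $L^2(N)$-estimates for $(\D_N+r\ad)^2$ and its $r$-derivatives.
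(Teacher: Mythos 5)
Your overall route coincides with the paper's: on $L^2$ the inverse is produced by the integral $({\mathcal D}_Z(r)+\lambda)\int_0^{\infty}e^{-t({\mathcal D}_Z(r)^2-\lambda^2)}\,dt$ together with the estimates of Lemma \ref{diffopcyl} (the paper likewise gets $C^1$-dependence in $r$ from those estimates, and your Duhamel argument is just a more explicit version of this), and for the Schwartz statement both you and the paper Fourier transform in $x_1$ and reduce to the resolvent family of $(\D_N+r\ad)^2$ shifted by the dual variable, with $C^1$-dependence in $r$ coming from the resolvent identity.

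There is, however, one point where your argument as written falls short: you claim that all Schwartz seminorms are controlled because "the symbol estimates factor through the already-proven $L^2(N)$-estimates." The topology of ${\mathcal S}(Z,\Ol{\mu}\E^Z_i)$ involves sup-norms of derivatives in the $N$-directions as well, so uniform boundedness of the operator-valued symbol $\bigl(\xi^2+(\D_N+r\ad)^2-\lambda^2\bigr)^{-1}$ merely in $B\bigl(L^2(N,\Ol{\mu}\E^N_i)\bigr)$ does not yield boundedness on the Schwartz space; since $\ad$ need not commute with $\D_N$ or be a differential operator, you cannot simply commute $N$-derivatives through the resolvent to upgrade the $L^2(N)$-bounds. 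The paper closes exactly this point by invoking the results of \S\ref{closed} on the closed manifold $N$, which give the resolvent of $(\D_N+r\ad)^2$ as a bounded operator on $C^k(N,\Ol{\mu}\E^N_i)$ for every $k$, uniformly in $r\in[-1,1]$ and with the decay $C(|\xi|+1)^{-2}$ coming from the standard resolvent estimate for generators of holomorphic semigroups; with these $C^k(N)$-bounds (and the corresponding bounds after composing with $\D_N+r\ad$ and after differentiating in $r$ via the resolvent identity) your Fourier-multiplier argument goes through verbatim. So the fix is to replace your $L^2(N)$-symbol estimates by the $C^k(N,\Ol{\mu}\E^N_i)$-estimates that \S\ref{closed} already supplies.
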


\begin{proof}
We first consider the action on ${\mathcal S}(Z,\Ol{\mu}\E^Z_i)$.

Fourier transform intertwines the operator ${\mathcal D}_Z(r)-\lambda=c(dx_1)(\ra_1 - (\tilde \D_N + r\tilde \ad))-\lambda$ with the operator 
$c(dx_1)(i x_1- (\tilde \D_N + r\tilde \ad))-\lambda$. It is enough to study the inverse of  $(-\lambda^2+x_1^2+(\D_N+ r\ad)^2)$.

The resolvent $(-\lambda^2+x_1^2+(\D_N+ r\ad)^2)^{-1}$ exists on $C^k(N,\Ol{\mu}\E_i^N)$ by the results in \S \ref{closed} and is bounded by $C(|x_1|+1)^{-2}$, with $C$ independent of $r \in [-1,1]$. The bound follows from the standard estimate for resolvents of generators of holomorphic semigroups. Thus $(-\lambda^2+x_1^2+(\D_N+ r\ad)^2)^{-1}$ defines a bounded operator on ${\mathcal S}(Z,\Ol{\mu}\E_i)$. The usual resolvent formula implies that $(-\lambda^2+x_1^2+(\D_N+ r\ad)^2)^{-1}f$ is continuous in $r$ and that  
$$\frac{d}{dr}(-\lambda^2+x_1^2+(\D_N+ r\ad)^2)^{-1}f$$
$$=-(-\lambda^2+x_1^2+(\D_N+ r\ad)^2)^{-1}(\ad \D_N + \D_N\ad+2r\ad^2)(-\lambda^2+x_1^2+(\D_N+ r\ad)^2)^{-1}f \ ,$$
which is also continuous in $r$.

On $L^2(Z,\Ol{\mu}\E_i)$ the integral
$$({\mathcal D}_Z(r)-\lambda)^{-1}=\frac{1}{(k-1)!} \int_0^{\infty}({\mathcal D}_Z(r)+\lambda)e^{-t({\mathcal D}_Z(r)^2-\lambda^2)} ~ dt$$
converges as a bounded operator. It is $C^1$ in $r$ by the estimates of the previous Lemma. 
\end{proof}

It follows that the powers of $\D_Z(r)-\lambda$ with $\re \lambda^2 < 0$ are closed.

\begin{prop}
\label{cylregul}
Let $m \in \bbbn,k \in \bbbn_0$ with $m >\frac{\dim Z + k+1}{2}$.
Let $\lambda \in \bbbc$ with $\re \lambda < 0$.

The operator $({\mathcal D}_Z(r)^2-\lambda)^{-m}$ is a bounded operator from $L^2(Z,\Ol{\mu}\E_i)$ to $C^k(Z,\Ol{\mu}\E_i)$ that is of class $C^1$ in $r$.
\end{prop}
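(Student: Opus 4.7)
The plan is to mimic the strategy of Prop.~\ref{Mregul}: start from the integral representation
$$({\mathcal D}_Z(r)^2-\lambda)^{-m} = \frac{1}{(m-1)!}\int_0^{\infty} t^{m-1} e^{t\lambda}\, e^{-t{\mathcal D}_Z(r)^2}\, dt,$$
valid on $L^2(Z,\Ol{\mu}\E^Z_i)$ by the holomorphic-semigroup theory recalled in \S\ref{cyl}, and to upgrade the convergence to the topology of $B(L^2(Z,\Ol{\mu}\E^Z_i), C^k(Z,\Ol{\mu}\E^Z_i))$ with the result being of class $C^1$ in $r$. The crucial simplification on the cylinder is the product factorization $e^{-t\D_Z(r)^2} = e^{-t\Delta_{\bbbr}} \cdot e^{-t(\D_N+r\ad)^2}$, which holds because $c(dx_1)$ anticommutes with $\tilde{\D}_N+r\tilde{\ad}$ and therefore $\D_Z(r)^2 = \Delta_{\bbbr} + (\D_N+r\ad)^2$. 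The integral kernel of $e^{-t\D_Z(r)^2}$ on $Z\times Z$ thus factorizes as the product of the one-dimensional Gauss kernel $G_t(x_1-y_1)$ and the heat kernel $k^N(r)_t(x_2,y_2)$ of $(\D_N+r\ad)^2$ on the closed manifold $N$.

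For small $t\in(0,1]$ I would bound $\|e^{-t\D_Z(r)^2}\|_{L^2\to C^k}$ by combining the two factors: Prop.~\ref{semigroupm}(3) applied to $N$ gives $\|k^N(r)_t\|_{C^k(N\times N,\E_i^N\boxtimes_{\A_i} \E_i^{N*})} \le Ct^{-m_1}$ for any $m_1 > (\dim N + k)/2$, together with the analogous bound for $\frac{d}{dr}k^N(r)_t$, while elementary scaling yields $\|\partial_{x_1}^{j}G_t\|_{L^2(\bbbr)} \le Ct^{-1/4-j/2}$. Writing $f \in L^2(Z,\Ol{\mu}\E^Z_i) \cong L^2(\bbbr, L^2(N,\Ol{\mu}\E^N_i))$ and applying a Cauchy-Schwarz estimate in the $y_1$-variable pointwise in $x_2$, followed by the $C^k(N)$ bound in the $N$-variable, produces a small-time estimate $\|e^{-t\D_Z(r)^2}\|_{L^2\to C^k}\le Ct^{-\alpha}$ (with the same bound for $d/dr$), where the hypothesis $m > (\dim Z + k + 1)/2$ (using $\dim Z=\dim N+1$) ensures $\alpha < m$, so that $\int_0^1 t^{m-1}|e^{t\lambda}|\,\|e^{-t\D_Z(r)^2}\|\,dt$ converges in the required operator norm.

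For large $t$ I would decompose $e^{-t\D_Z(r)^2} = e^{-(t-1)\D_Z(r)^2}\,e^{-\D_Z(r)^2}$. The factor $e^{-\D_Z(r)^2}$ maps $L^2(Z,\Ol{\mu}\E^Z_i)$ into $C^k(Z,\Ol{\mu}\E^Z_i)$ by the small-time estimate at $t=1$, and the factor $e^{-(t-1)\D_Z(r)^2}$ is bounded on $C^k$ with norm $\le Ce^{\ve t}$ by eq.~\ref{estimcyl}. Choosing $\ve$ with $\ve+\re\lambda<0$ makes $\int_1^\infty t^{m-1}|e^{t\lambda}|\|e^{-t\D_Z(r)^2}\|_{L^2\to C^k}\,dt$ convergent, and the corresponding $\frac{d}{dr}$-bound from eq.~\ref{estimcyl} allows $\frac{d}{dr}$ to be moved inside the uniformly convergent integral, establishing the $C^1$-dependence on $r$.

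The main obstacle I expect is the rigorous verification of the small-time $L^2 \to C^k$ estimate in the noncommutative, Banach-valued setting: one must ensure that Cauchy-Schwarz in the $y_1$-variable and pointwise estimation in the $N$-variable combine coherently for sections valued in the projective $\Ol{\mu}\A_i$-modules, with an exponent $\alpha$ sharp enough to match the hypothesis on $m$. This is, however, entirely parallel to the integral-kernel arguments already used in Prop.~\ref{Mregul} and Prop.~\ref{semigroupm}(3), once the product factorization of the heat kernel on the cylinder is exploited.
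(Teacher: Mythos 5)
Your proposal is correct and follows essentially the same route as the paper: the integral representation of $({\mathcal D}_Z(r)^2-\lambda)^{-m}$ via the semigroup, the factorization $e^{-t\D_Z(r)^2}=e^{-t(\D_N+r\ad)^2}e^{-t\Delta_{\bbbr}}$, small-time $L^2\to C^k$ bounds obtained from Prop.~\ref{semigroupm}(3) on $N$ combined with the one-dimensional Gauss kernel (your Cauchy--Schwarz bound on $\partial_{x_1}^jG_t$ is exactly the paper's estimate $\|(i\ra)^je^{-t\Delta_{\bbbr}}\|_{L^2\to C}\le Ct^{-(j+1)/2}$), and convergence at infinity from $\re\lambda<0$ together with the $e^{\ve t}$ growth bound, with $C^1$-dependence on $r$ by differentiating under the integral. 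The only cosmetic difference is that the paper phrases the $C^k$ estimate by composing with a differential operator $B=\sum_jB_j(i\ra_1)^j$ and bounding each factor as an operator $L^2\to C$, which is the same computation.
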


\begin{proof}
For a differential operator $B= \sum_j B_j(i\ra_1)^j$ of order smaller or equal to $k$, where the $B_j$ are differential operators on $\C(N,\Ol{\mu}\E_i)$ of order at most $k-j$, the operator $B({\mathcal D}_Z(r)^2-\lambda)^{-m}$ is well-defined on ${\mathcal S}(Z,\Ol{\mu}\E_i)$ and 
$$B({\mathcal D}_Z(r)^2-\lambda)^{-m}=\frac{1}{(m-1)!} \int_0^{\infty} t^{m-1} \sum_j B_j e^{-t(\D_N+ r\ad)^2}(i\ra_1)^je^{-t\Delta_{\bbbr}}e^{\lambda t}dt \ .$$
The following estimates hold for small $t$ and $r \in [-1,1]$, where the norm is the operator norm for bounded operators from $L^2(Z, \Ol{\mu}\E_i)$ to $C(Z,\Ol{\mu}\E_i)$:
$$\|B_je^{-t(\D_N+ r\ad)^2}\|_{L^2 \to C} < Ct^{-m-1+\frac{j}{2}} \ , \quad \|B_j\frac{d}{dr}e^{-t(\D_N+ r\ad)^2}\|_{L^2 \to C} < Ct^{-m-1 +\frac{j}{2}} \ ,$$   
$$\|(i\ra)^je^{-t\Delta_{\bbbr} }\|_{L^2 \to C} \le Ct^{-\frac{j+1}{2}} \ .$$
Hence the integral converges as a bounded operator from $L^2(Z, \Ol{\mu}\E_i)$ to $C(Z,\Ol{\mu}\E_i)$ and is $C^1$ in $r$.
\end{proof}

\begin{prop}
\label{cyloffdiagsmooth} Let $k \in \bbbn$ and $\lambda \in \bbbc$ with $\re \lambda < 0$.

Let $\phi,\psi \in \C(\bbbr)$ with disjoint support and with $\phi$ (resp. $\psi$) compactly supported. Then $\phi ({\mathcal D}_Z(r)^2-\lambda)^{-k}\psi$ is an integral operator with integral kernel in $C^1([-1,1],{\mathcal S}(Z \times Z,\E^Z_i \boxtimes_{\A_i} (\E^Z_i)^*))$.
\end{prop}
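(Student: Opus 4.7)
The plan is to use the heat-semigroup representation of the resolvent together with the product structure of the cylinder. Namely, for $\re\lambda<0$ one has on $\mathcal S(Z,\Ol{\mu}\E_i^Z)$
$$({\mathcal D}_Z(r)^2-\lambda)^{-k}=\frac{1}{(k-1)!}\int_0^{\infty} t^{k-1}e^{\lambda t}\,e^{-t(\D_N+r\ad)^2}\,e^{-t\Delta_{\bbbr}}\,dt,$$
and the integral kernel of the second semigroup is the Gaussian $g_t(x_1-y_1)=(4\pi t)^{-1/2}e^{-(x_1-y_1)^2/(4t)}$ (tensored with the identity on the $\bbbc^+\oplus\bbbc^-$ factor). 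The kernel of the first semigroup is, by Proposition \ref{semigroupm}(2), a family $k_t^N(r)\in\C(N\times N,\E_i^N\boxtimes_{\A_i}(\E_i^N)^*)$ depending smoothly on $(t,r)$.

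First I would write down the candidate kernel
$$K(r,x,y)=\frac{1}{(k-1)!}\int_0^\infty t^{k-1}e^{\lambda t}\,\phi(x_1)\,k_t^N(r)(x_2,y_2)\,g_t(x_1-y_1)\,\psi(y_1)\,dt$$
and show convergence in every $C^l$-norm on $Z\times Z$. Let $d=\mathrm{dist}(\supp\phi,\supp\psi)>0$. Differentiating $g_t(x_1-y_1)$ any number of times in $x_1,y_1$ yields, on the support of $\phi(x_1)\psi(y_1)$, factors of the form $p\bigl(t^{-1/2}(x_1-y_1)\bigr)g_t(x_1-y_1)$ with $p$ a polynomial, hence bounded uniformly by $C_\alpha t^{-|\alpha|/2}e^{-d^2/(8t)}$. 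Combined with the small-time bound $\|k_t^N(r)\|_{C^l}\le C t^{-m}$ from Proposition \ref{semigroupm}(3) and the $N$-smoothness of $k_t^N(r)$, the integrand decays faster than any polynomial as $t\to 0$; for large $t$, $\|k_t^N(r)\|\le Ce^{\varepsilon t}$ from estimate \eqref{estimhkcm}, applied to $N$, is absorbed by $e^{\lambda t}$ upon choosing $\varepsilon<-\re\lambda$. Because $\phi,\psi$ are compactly supported and $N$ is compact, $K(r,\cdot,\cdot)$ has compact support in $Z\times Z$, so Schwartz class reduces to smoothness, and the smoothness in $(x,y)$ follows from differentiating under the integral using the bounds just listed.

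To identify $K$ with the kernel of $\phi({\mathcal D}_Z(r)^2-\lambda)^{-k}\psi$, I would apply the above integral formula to a test section $f\in\mathcal S(Z,\Ol{\mu}\E_i^Z)$: the resolvent identity of Proposition \ref{cylresolv} and Fubini justify interchanging the $t$-integral with the spatial pairing, and the factorization $e^{-t{\mathcal D}_Z(r)^2}=e^{-t(\D_N+r\ad)^2}e^{-t\Delta_{\bbbr}}$ identifies the $(x_2,y_2)$- and $(x_1,y_1)$-parts. The $C^1$-dependence on $r$ is obtained by differentiating under the integral: by Proposition \ref{semigroupm}(3) and estimate \eqref{estimhkcmder}, $\frac{d}{dr}k_t^N(r)$ satisfies the same small- and large-time bounds as $k_t^N(r)$, the Gaussian factor is $r$-independent, and the dominated-convergence argument goes through unchanged, yielding continuity of $r\mapsto K(r,\cdot,\cdot)$ and of its $r$-derivative in each $C^l$-norm on the (fixed) compact support.

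The main technical point is the trade-off between the heat-kernel small-time singularity $t^{-m}$ on $N$ and the Gaussian off-diagonal decay $e^{-d^2/(8t)}$ in the $\bbbr$-direction; the disjoint-support hypothesis on $\phi,\psi$ is precisely what makes this trade-off work and is the only obstacle to the estimates—once it is in place, everything else reduces to straightforward dominated-convergence bookkeeping using the results already established in \S\ref{heatm} and Lemma \ref{diffopcyl}.
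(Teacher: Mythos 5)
Your small-time analysis (trading the Gaussian off-diagonal decay of $e^{-t\Delta_{\bbbr}}$ against the $t^{-m}$ singularity of the heat kernel on $N$), your identification of the candidate kernel, and your treatment of the $r$-derivative are fine, and using the integral formula for $(\D_Z(r)^2-\lambda)^{-k}$ directly handles all $k$ at once, which is tidier than an induction. But there is a genuine gap: you assume that \emph{both} $\phi$ and $\psi$ are compactly supported and use this to claim that ``Schwartz class reduces to smoothness''. The proposition only assumes that \emph{one} of the two functions is compactly supported (the paper proves the case where $\psi$ is, the other case being analogous), and this is precisely the case needed later: in \S\ref{res} the proposition is applied with $\phi$ a cutoff equal to $1$ on the unbounded part of the cylinder. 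In that situation the kernel has unbounded support in $x_1$, and the rapid decay in $x_1$, together with all derivatives, is exactly the content that has to be proved; it cannot be obtained from compactness of the support.

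The decay can be recovered along your own lines, but it needs a step you did not carry out. For $t$ in a bounded interval the Gaussian $e^{-(x_1-y_1)^2/(4t)}$ gives rapid decay in $|x_1-y_1|$, and since $y_1$ stays in the compact set $\supp\psi$ this yields decay in $x_1$; for large $t$, however, the Gaussian degenerates and must be traded against the exponential factor, e.g. $e^{-(x_1-y_1)^2/(4t)}e^{(\re\lambda+\ve)t}\le e^{-c|x_1-y_1|}$ after optimizing over $t$, with $\ve<-\re\lambda$ chosen so that the $e^{\ve t}$ growth of the $C^l$-norms of the heat kernel on $N$ (and of its $r$-derivative) is absorbed. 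The paper instead avoids large-time kernel estimates: it splits the $t$-integral at a finite $T$, handles $[0,T]$ by the off-diagonal Gaussian estimate, rewrites the tail as $\phi(\D_Z(r)^2-\lambda)^{-1}e^{-T(\D_N+r\ad)^2}e^{-T\Delta_{\bbbr}}e^{T\lambda}\psi$ and invokes the boundedness of the resolvent on ${\mathcal S}(Z,\Ol{\mu}\E^Z_i)$ from Prop.~\ref{cylresolv}, and then treats $k>1$ by induction after inserting a cutoff between resolvent factors. With the missing decay estimate supplied, your argument goes through; as written it only establishes the much weaker statement in which both cutoffs are compactly supported.
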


\begin{proof}
We only prove the case where $\psi$ is compactly supported. The other case is analogous. We use induction. First assume that $k=1$.

For $T>0$ the integral kernel of $$\phi \int_0^T e^{-t(\D_N+ r\ad)^2}  e^{-t\Delta_{\bbbr}}e^{t\lambda} ~dt~ \psi$$ 
is in $C^1([-1,1],{\mathcal S}(Z \times Z,\E^Z_i \boxtimes_{\A_i} (\E^Z_i)^*))$. 

Furthermore $$\phi \int_T^{\infty} e^{-t(\D_N+ r\ad)^2}  e^{-t\Delta_{\bbbr}}e^{t\lambda} ~dt~ \psi=\phi ({\mathcal D}_Z(r)^2-\lambda)^{-1}e^{-T(\D_N+ r\ad)^2}  e^{-T\Delta_{\bbbr}}e^{T\lambda} \psi \ ,$$
and the integral kernel of $e^{-T(\D_N+ r\ad)^2}  e^{-T\Delta_{\bbbr}}e^{T\lambda} \psi$ is in $C^1([-1,1],{\mathcal S}(Z \times Z,\E^Z_i \boxtimes_{\A_i} (\E^Z_i)^*))$. We conclude from Prop. \ref{cylresolv} that the integral kernal of $\phi \int_T^{\infty} e^{-t(\D_N+ r\ad)^2}  e^{-t\Delta_{\bbbr}}e^{-t\lambda} ~dt~ \psi$ is in $C^1([-1,1],{\mathcal S}(Z \times Z,\E^Z_i \boxtimes_{\A_i} (\E^Z_i)^*))$.

For $k>1$ let $\xi \in \C_c(\bbbr)$ be such that $\supp(1- \xi) \cap \supp \psi =\emptyset$ and $\supp\xi \cap \supp \phi =\emptyset$. By induction and by Prop. \ref{cylresolv} each of the terms on the right hand side of the following equation is an integral operator with integral kernel in $C^1([-1,1],{\mathcal S}(Z \times Z,\E^Z_i \boxtimes_{\A_i} (\E^Z_i)^*))$:
\begin{align*}
\phi ({\mathcal D}_Z(r)^2-\lambda)^{-k}\psi&= \phi ({\mathcal D}_Z(r)^2-\lambda)^{-k+1}\xi ({\mathcal D}_Z(r)^2-\lambda)^{-1}\psi\\
&+ \phi ({\mathcal D}_Z(r)^2-\lambda)^{-k+1}(1-\xi) ({\mathcal D}_Z(r)^2-\lambda)^{-1}\psi \ .
\end{align*}
\end{proof}

If $\ad$ is an adapted trivializing operator for $\D_N$ then there is $\omega >0$ such that on $L^2(Z,\Ol{\mu}\E^Z_i)$
$$\|e^{-t(\D_N+ \ad)^2}\| \le C e^{-\omega t} \ .$$

Analogues of the previous three propositions hold for the resolvents $({\mathcal D}_Z(1)^2-\lambda)^{-1}$ with $\re \lambda < \omega$ resp. $({\mathcal D}_Z(1)-\lambda)^{-1}$ with $\re \lambda^2 <\omega$.

\subsection{The superconnection}

A superconnection associated to $\D_Z(r)$ is defined by $$\spc^Z(r):= P \di P + \gamma +  {\mathcal D}_Z(r)=P\di P + \gamma + c(dx_1)\ra_1 -c(dx_1)(\tilde \D_N + r \tilde \ad) \ ,$$
and the corresponding rescaled superconnection by
$\spc^Z(r)_t:= P \di P + \gamma + \sqrt t {\mathcal D}_Z(r)$
It holds that
\begin{align*}
\spc^Z(r)^2 &={\mathcal D}_Z(r)^2 + (P\di P + \gamma)^2 - c(dx_1)[P\di P + \gamma, \tilde \D_N+ r\tilde \ad] \\
&= \Delta_{\bbbr} +  (P \di P + \gamma)^2 + (\D_N + r\ad)^2 - c(dx_1) \tilde R \ .
\end{align*}

Note that $c(dx_1)\tilde R=\tilde Rc(dx_1)$.

There is a linear map $$\Li:a+\sigma b \mapsto a+ c(dx_1) \tilde b \ ,$$
with $\sigma$ as in \S\ref{eta}.

Here, for example, $a,b$ are homomorphisms on $\Ol{\mu}\E^N_i$ and the image is a homomorphism on $\Ol{\mu}\E^Z_i$. If $a,b$ are operators on $L^2(N,\Ol{\mu}\E^N_i)$, then $\Li(a+\sigma b)$ is a translation invariant operator on $L^2(Z,\Ol{\mu}\E^Z_i)$. Note that for homomorphisms the supertrace vanishes on the image of $\Li$.

Using Volterra development with respect to $c(dx_1) \tilde R$ one verifies that 
\begin{align}
\label{cyleq1}
e^{-t\spc^Z(r)^2}&=e^{-t \Delta_{\bbbr}} \Li(e^{-t\spc^N(r)^2}) \ .
\end{align}

We conclude from Prop. \ref{intkersupcon}:
\begin{prop}
\label{kersupconcyl} 
The operator $e^{-t\spc^Z(r)^2}$ is an integral operator with smooth integral kernel $p^Z(r)_t$ depending smoothly on $(r,t)$. For any $\ve_1,\ve_2>0$ and $c>4$ there is $C>0$ such that for $|x_1-y_1| >\ve_2$ for all $t>0$ and $r \in [-1,1]$
$$|p^Z(r)_t(x,y)| \le Ce^{\ve_1 t-\frac{(x_1-y_1)^2}{ct}} \ .$$
Analogous estimates hold for the first derivatives with respect to $r,t$ and the partial derivatives in $x,y$.
\end{prop}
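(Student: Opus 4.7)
The starting point is the factorization \eqref{cyleq1}, which gives
$$e^{-t\spc^Z(r)^2}=e^{-t\Delta_{\bbbr}}\,\Li(e^{-t\spc^N(r)^2}).$$
The operator $\Li(e^{-t\spc^N(r)^2})$ is translation invariant in the $x_1$-direction with ``kernel'' $\Li(p^N(r)_t(x_2,y_2))\,\delta(x_1-y_1)$, where $p^N(r)_t$ is the smooth kernel on $N\times N$ furnished by Proposition \ref{intkersupcon}. Convolving with the scalar heat kernel $\frac{1}{\sqrt{4\pi t}}e^{-(x_1-y_1)^2/(4t)}$ of $e^{-t\Delta_{\bbbr}}$ in the $x_1$-variable, I obtain the explicit formula
$$p^Z(r)_t(x,y)=\frac{1}{\sqrt{4\pi t}}\,e^{-(x_1-y_1)^2/(4t)}\,\Li(p^N(r)_t(x_2,y_2)).$$
Smoothness in $(t,r)$ and in the spatial variables is then inherited from the corresponding smoothness of $p^N(r)_t$ (Proposition \ref{intkersupcon}) and from the obvious smoothness of the Gaussian factor for $t>0$.

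For the exponential estimate, fix $\varepsilon_1>0$, $\varepsilon_2>0$ and $c>4$. I split the Gaussian as
$$e^{-(x_1-y_1)^2/(4t)}=e^{-(x_1-y_1)^2/(ct)}\cdot e^{-(x_1-y_1)^2(c-4)/(4ct)}.$$
For $|x_1-y_1|\ge\varepsilon_2$ the second factor is bounded by $\exp\!\bigl(-\varepsilon_2^2(c-4)/(4ct)\bigr)$, which decays faster than any polynomial in $t$ as $t\to 0$. Thus for small $t$ the singular behaviour $\|p^N(r)_t\|\le Ct^{-m}$ from Proposition \ref{intkersupcon}(4) together with the factor $t^{-1/2}$ from the Gaussian prefactor are absorbed into the second factor above, yielding a uniform bound $Ce^{-(x_1-y_1)^2/(ct)}\le Ce^{\varepsilon_1 t-(x_1-y_1)^2/(ct)}$. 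For $t\ge 1$ I use Proposition \ref{intkersupcon}(3) to bound $\|p^N(r)_t\|\le Ce^{\varepsilon t}$ and absorb the prefactor $t^{-1/2}\le 1$ trivially; choosing $\varepsilon\le\varepsilon_1$ gives the claim.

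The derivatives with respect to $r$ affect only the $N$-factor, and the same case split applies to $\frac{d}{dr}p^N(r)_t$ using Proposition \ref{intkersupcon}. The derivative with respect to $t$ produces, by the product rule, one extra power of $t^{-1}$ from the prefactor and a factor $(x_1-y_1)^2/t^2$ from the Gaussian, both of which are controlled after a further infinitesimal enlargement of $c$ by the same reservoir argument. Spatial derivatives in $x_2,y_2$ hit only $p^N(r)_t$ (losing a power of $t^{-1/2}$ per derivative by Proposition \ref{intkersupcon}(4)), while derivatives in $x_1,y_1$ produce polynomial factors $((x_1-y_1)/t)^k$ from the Gaussian; the latter are bounded by $t^{-k/2}$ times $e^{-(x_1-y_1)^2/(c't)}$ for some $c'\in(4,c)$, so again the same absorption works.

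The one delicate point is precisely this small-$t$ absorption: one has to keep strict inequality $c>4$ to have a genuine reservoir of Gaussian decay left over after pulling out $e^{-(x_1-y_1)^2/(ct)}$, and the bound $|x_1-y_1|\ge\varepsilon_2$ is what makes the residual Gaussian bound any negative power of $t$. Granted this, all the remaining estimates are routine bookkeeping based on Proposition \ref{intkersupcon} and the explicit form of the Euclidean heat kernel.
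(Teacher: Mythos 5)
Your proposal is correct and takes essentially the same route as the paper, which derives the factorization of eq.\ \ref{cyleq1} by Volterra development and then simply concludes the proposition from Prop.\ \ref{intkersupcon}; your explicit product-kernel formula and the Gaussian-absorption argument for $|x_1-y_1|\ge\ve_2$, $c>4$ merely spell out the details left implicit there. The only bookkeeping point to add is that the $t$-derivative also hits the factor $\Li(p^N(r)_t)$, whose $t$-derivative satisfies the same small-time ($Ct^{-m-1}$) and large-time ($Ce^{\ve t}$) bounds on the closed manifold $N$, so the identical absorption applies.
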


The pointwise supertrace of the integral kernel of $e^{-t\spc^Z(r)^2}$ vanishes. The same holds true for the composition of $e^{-t\spc^Z(r)^2}$ with even differential operators commuting with $c(dx_1)$. Moreover the pointwise supertrace of the integral kernel of $\D_Z(r)e^{-t\spc^Z(r)^2}$ vanishes on the diagonal. Analogous statements hold for the integral kernel of the rescaled operator $e^{-\spc^Z(r)_t^2}$. 

For later use note also that for homomorphisms $a,b,c$ on $\Ol{\mu}\E^N_i$
\begin{align}
\label{cyleq2}
\tr_s \tilde c \Li(a+ \sigma b)&=\tr_s(\tilde ca + c(dx_1)cb)=2 \tr(ca)=2\tr_{\sigma}c(a+\sigma b) \ .
\end{align}

\section{Resolvents and heat semigroup on $M$}
\label{cutpaste}

Now we return to the situation introduced in \S \ref{condi} and \S\ref{DirHilb}. In particular, $M$ is the union of a manifold with cylindric end and an isolated point. We assume that $A$ is an adapted trivializing operator for $\D_N$. Recall $\chi, \Ki$ from \S \ref{DirHilb} and the function $\psi$ defined in the end of \S \ref{eta}.

We set
$$\D(r,\rho):={\mathcal D}_{\mathcal E} - \psi(r)c(dx_1)\chi\tilde A+\rho \Ki \ .$$

For the definition of a parametrix and for the study of the heat semigroup patching arguments will be used, which we prepare now.

We assume $\N \in \bbbn$ defined in \S \ref{DirHilb} large enough for the constructions in the following sections to work.

Let $d\ge 4$ be such that the support of the integral kernel of $\Ki$ is contained in $(M_c \cup \{x \in Z^+~|~x_1 < d-1\})^2$.

Define an open covering ${\mathcal U}:= (U_0,U_1,U_2)$ of $M$ by
\begin{itemize}
\item $U_0:=M_c \cup \{x \in Z^+~|~x_1 < \frac 12\} \cup *$ \ ,
\item $U_1:=\{x \in Z^+~|~ x_1 \in (0,d)\}$ \ , 
\item $U_2:=\{x \in Z^+~|~ x_1> d-1\}$ \ .
\end{itemize}
  
We have that $\supp \chi \cap \ov{U_0}=\emptyset$, $\supp \chi' \subset U_1$ and $\supp \chi' \cap \ov{U_2}= \emptyset$.
Furthermore the intersection of the support of the integral kernel of $\Ki$ with $M \times U_2 \cup U_2 \times M$ is empty.

Let $(\phi_0,\phi_1,\phi_2)$ be a smooth partition of unity subordinate to ${\mathcal U}$ and let $(\zeta_0,\zeta_1,\zeta_2)$ with $\zeta_j \in \C(M)$ be such that $\supp \zeta_j \subset U_j$ and $\supp(1-\zeta_j) \cap \supp \phi_j=\emptyset$. Note that the derivatives of $\phi_j,\zeta_j$ are supported on the cylindric end. We assume that $\phi_j,\zeta_j$ only depend on the variable $x_1$ on the cylindric end.  

Let $W$ be the union of a closed manifold and an isolated point $*_W$ and let $E_W$ be a $\bbbz/2$-graded Clifford module on $W$. Assume  that there is an isomorphism $E|_{U_0} \to
E_W$ of Clifford modules whose base map is an isometric embedding mapping $*$ to $*_W$. We identify $E|_{U_0}$ with its image in $E_W$. Let $P_0 \in \C(W,M_p(\Ai))$ be a projection such that $P_0|_{U_0}=P|_{U_0}$. We set $\E^W=E^W \ten P_0(W \times \A^p)$ and choose a Clifford connection on $\E^W$ which agrees on $U_0$ with  $\nabla^{\E}$.  Let $\D_0(r):=\D_W$ be the
Dirac operator associated to $\E^W$. \\
Furthermore choose a connection $P_0\di P_0 + \gamma_0$ on $\Ei^W$ in direction of $\Ai$ with $\gamma_0|_{U_0}=\gamma|_{U_0}$. Furthermore we assume that $\gamma_0$ supercommutes with Clifford multiplication and that $\gamma_0=-\gamma_0^*$.   
 
Let $\xi \in \C_c(U_1)$ be such that $\xi$ equals $\chi$ on a neighbourhood of $\supp \zeta_1$. Let $S_{d+1}=\bbbr/(d+1)\bbbz$ be the circle with circumference $d+1$. We define $Y$ as the union of $S_{d+1}\times N$ and an isolated point $*_Y$. Let $p:Y\setminus *_Y \to N$ be the projection and let $\E^Y_{Y \setminus *_Y}=(\bbbc^+ \oplus \bbbc^-)\ten p^*\E^N$ and $\E^Y(*_Y) = \E(*)$. We endow $(\bbbc^+ \oplus \bbbc^-) \ten p^*\E^N$ with the product Clifford module structure and the Clifford connection of product type induced by the connection $\nabla^{\E^N}$. Furthermore we identify $\E|_{U_1}$ with the restriction of $\E^Y$ to $(0,d) \times N \subset Y$.\\
Let $\D_Y$ be the Dirac operator associated to $\E^Y$ (which is assumed to vanish on the isolated point). Then the operator 
$$\D_1(r):={\mathcal D}_Y - \psi(r) c(dx_1)\xi \tilde A$$ 
is a closed operator on $L^2(Y,\Ol{\mu}\E^Y_i)$. A straightforward calculation exploiting the product structure shows that $c(dx_1)\xi \tilde A$ is adapted to $\D_Y$. Hence the results of \S \ref{closed} apply.\\
We define $P_1,\gamma_1$ by $P_1(x)=P(p(x)), \gamma_1(x)=\gamma(p(x))$ for $x\in Y \setminus *_Y$ and $P_1(*_Y)=P(*), \gamma_1(*_Y)=\gamma(*)$ and thus get a connection $P_1 \di P_1 + \gamma_1$ on $\Ei^Y$ in direction of $\Ai$.

Furthermore we set
$$\D_2(r):={\D}_Z - \psi(r)c(dx_1)\tilde A$$
and $P_2(x)=P(p(x))$, $\gamma_2(x)=\gamma(p(x))$. 
Here $p:Z \to N$ is the projection.

\subsection{The resolvents}
\label{res}

In the following we define a parametrix of $\D(r,\rho)-\lambda$, which will be used to apply the results of \S \ref{regpar}.

By Prop. \ref{Mresolv} for $\N$ large enough there is a symmetric integral operator $\Kappa_0$ with integral kernel in $\C(W \times W,\E^W_i \boxtimes_{\A_i} (\E_i^W)^*)$ and $\omega_0>0$ such that $$Q^0_{\lambda}:=(\D_0(2)+ \Kappa_0 -\lambda)^{-1}$$ 
exists for $ \re\lambda^2 < \omega_0$. 

Similarly, assuming $\N$ large enough there is $\omega_1>0$ and an symmetric integral operator $\Kappa_1$ with integral kernel in $\C(Y \times Y,\E^Y_i \boxtimes (\E_i^Y)^*)$ such that $Q^1_{\lambda}:=(\D_1(2) + \Kappa_1 -\lambda)^{-1}$ exists for $\re\lambda ^2 < \omega_1$.  
 
Furthermore we can choose $\omega_2>0$ such that $Q^2_{\lambda}=(\D_2(2)-\lambda)^{-1}$ is well-defined for $\re \lambda^2 < \omega_2$.

For $\re \lambda^2 < \min(\omega_0,\omega_1,\omega_2)$ we set
$Q_{\lambda}:=\sum_{j=0}^2\phi_j Q^j_{\lambda} \zeta_j$.

In order to study the $r$-dependence we define 
$Q^j_{\lambda}(r)=(\D_j(r) -\lambda)^{-1}, j=0,1,2$ for $\re \lambda^2 < 0$ 
and set
$$Q_{\lambda}(r):=\sum_{j=0}^2\phi_j Q^j_{\lambda}(r) \zeta_j \ .$$ 

We verify that $Q_{\lambda}(r)$ is a regular left parametrix of $\D(r,\rho) - \lambda$ in the sense of \S \ref{regpar}.

We denote equality up to integral operators with integral kernel in $C^1(\bbbr \times [0,1],{\mathcal S}(M \times M,\E_i \boxtimes_{\A_i} \E_i^*))$ by $\sim$.

We have that
\begin{align*} 
\lefteqn{Q_{\lambda}(r)(\D(r,\rho) - \lambda)-1}\\
&\sim   Q_{\lambda}(r)(\D(r,0) - \lambda)-1\\
&=\sum_{j=0}^2 \phi_j Q^j_{\lambda}(r)(\zeta_j\D(r,0)-\D_j(r)\zeta_j) +\sum_{j=0}^2\phi_j \bigl(Q^j_{\lambda}(r)(\D_j(r)-\lambda)-1 \bigr) \zeta_j\\
&\sim -\sum_{j=0}^2 \phi_j Q^j_{\lambda}(r)c(d\zeta_j) \\
&\sim 0 \ .
\end{align*}
The last equivalence holds by the off-diagonal properties of the resolvents, which follow from Prop. \ref{offdiag} and Prop. \ref{cyloffdiagsmooth} (see the remark after Prop. \ref{offdiag}).

It follows that $Q_{\lambda}(r)$ is a regular left parametrix of $\D(r,\rho)-\lambda$ as defined in \S \ref{regpar}.

Analogously for $\re \lambda^2 < \min(\omega_0,\omega_1,\omega_2)$ and $r\ge2$ the operator $Q_{\lambda}$ is a regular left parametrix of $\D(r,\rho)-\lambda$.

We also need parametrices of higher order. We only carry out the construction in the case $\re \lambda^2<0$, the case $\re \lambda^2 < \min(\omega_0,\omega_1,\omega_2)$ and $r\ge 2$ is similar.

Set
$$R^m_{\lambda}(r):=\sum_{j=0}^2\phi_j (\D_j(r)^2 -\lambda^2)^{-m} \zeta_j \ .$$ 

\begin{lem}
The operator $R^m_{\lambda}(r)$ is a regular left parametrix of $(\D(r,\rho)^2-\lambda^2)^m$.
\end{lem}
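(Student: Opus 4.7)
The plan is to proceed by induction on $m$, mirroring the cut-and-paste argument just given for $Q_\lambda(r)$. Throughout I abbreviate $\D := \D(r,\rho)$ and $\D_j := \D_j(r)$ and use $\sim$ in the sense just introduced, namely equality modulo integral operators with kernel in $C^1(\bbbr\times[0,1],{\mathcal S}(M\times M,\E_i\boxtimes_{\A_i}\E_i^*))$.

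For the base case $m=1$, the same splitting employed for $Q_\lambda(r)$, together with $\sum_j\phi_j\zeta_j=1$ and $(\D_j^2-\lambda^2)^{-1}(\D_j^2-\lambda^2)=1$, yields
\[ R^1_\lambda(r)(\D^2-\lambda^2)-1 \sim -\sum_{j=0}^{2}\phi_j(\D_j^2-\lambda^2)^{-1}[\D_j^2,\zeta_j]. \]
Since $[\D_j^2,\zeta_j]=\D_j c(d\zeta_j)+c(d\zeta_j)\D_j$ is a first-order differential operator supported on $\supp(d\zeta_j)$, which by construction is disjoint from $\supp\phi_j$ (because $\zeta_j\equiv 1$ on $\supp\phi_j$), the off-diagonal smoothness of the resolvents --- Prop.~\ref{offdiag} for the closed models $W$ and $Y$, Prop.~\ref{cyloffdiagsmooth} for the cylinder $Z$ --- shows that each summand is equivalent to $0$ under $\sim$.

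For the inductive step, assume the statement at level $m-1$. The idea is to commute a single factor of $\D^2-\lambda^2$ past the cutoff $\zeta_j$ and collapse one resolvent:
\begin{align*}
R^m_\lambda(r)(\D^2-\lambda^2)^m &= \sum_j \phi_j(\D_j^2-\lambda^2)^{-m}\zeta_j(\D^2-\lambda^2)^m \\
&\sim \sum_j \phi_j(\D_j^2-\lambda^2)^{-m}(\D_j^2-\lambda^2)\zeta_j(\D^2-\lambda^2)^{m-1} \\
&= R^{m-1}_\lambda(r)(\D^2-\lambda^2)^{m-1} \sim 1.
\end{align*}
The first $\sim$ absorbs two contributions: the commutator $[\zeta_j,\D^2-\lambda^2]$, which produces a factor $\phi_j(\D_j^2-\lambda^2)^{-m}[\D_j^2,\zeta_j](\D^2-\lambda^2)^{m-1}$ that is smoothing by the same off-diagonal argument as in the base case (the right-hand differential-operator factor $(\D^2-\lambda^2)^{m-1}$ preserves smoothness), and the replacement of $\D^2$ by $\D_j^2$ on the range of $\zeta_j$, which differs only by the smoothing contribution of $\rho\Ki$. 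The second $\sim$ is the inductive hypothesis applied to $R^{m-1}_\lambda(r)$.

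The main difficulty I anticipate is not the algebra of commutators but the bookkeeping required to verify \emph{regularity}: one must check that every smoothing error retains $C^1$-dependence on $r$ with kernel values in ${\mathcal S}(M\times M,\E_i\boxtimes_{\A_i}\E_i^*)$. This requires propagating the $C^1$-uniform kernel estimates of Prop.~\ref{offdiag} and Prop.~\ref{cyloffdiagsmooth}, together with the derivative bounds of Prop.~\ref{semigroupm}, through each composition with the differential-operator factors $(\D^2-\lambda^2)^{k}$, $k\le m-1$, that appear in the inductive unwinding. No conceptually new input beyond the off-diagonal argument is needed, but the uniformity in $r$ has to be carried along at every step.
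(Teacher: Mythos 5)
Your proof is correct and rests on exactly the same inputs as the paper's proof of this lemma (the off-diagonal smoothness of the model resolvents from Prop.~\ref{offdiag} and Prop.~\ref{cyloffdiagsmooth}, the agreement $\zeta_j\D(r,0)=\zeta_j\D_j(r)$ forced by the choices of $\chi$ and $\xi$, and the smoothing character of the $\Ki$-terms), but it organizes the induction differently. The paper never forms the second-order commutator $[\D_j(r)^2,\zeta_j]$: it splits one resolvent into first-order factors $(\D_j(r)+\lambda)^{-1}(\D_j(r)-\lambda)^{-1}$, inserts an auxiliary cutoff $\xi_j$ with $\supp\phi_j\subset\{\xi_j=1\}$ and $\supp\xi_j\subset\{\zeta_j=1\}$ (the insertion error being killed by off-diagonal smoothness), and peels off one first-order factor per step, so its induction runs through parametrices of the odd-power operators $(\D(r,0)+\lambda)(\D(r,0)^2-\lambda^2)^{m-1}$ of the form $\sum_j\phi_j(\D_j(r)^2-\lambda^2)^{-(m-1)}(\D_j(r)+\lambda)^{-1}\xi_j$, bottoming out at the already established parametrix $Q_\lambda(r)$ of $\D(r,\rho)-\lambda$. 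Your induction stays within the family $R^m_\lambda(r)$, removes a whole factor $\D(r,\rho)^2-\lambda^2$ per step, and treats $m=1$ directly, which avoids the auxiliary cutoffs and the mixed even/odd bookkeeping at no real extra cost. Two points you should make explicit in the bookkeeping you flag: to quote Prop.~\ref{offdiag} and Prop.~\ref{cyloffdiagsmooth} insert a cutoff $\eta$ equal to $1$ near $\supp d\zeta_j$ and with support disjoint from $\supp\phi_j$, writing $[\D_j(r)^2,\zeta_j]=\eta\,[\D_j(r)^2,\zeta_j]$; and the right-hand factors $(\D(r,\rho)^2-\lambda^2)^{m-1}$ and $[\D_j(r)^2,\zeta_j]$ are not purely differential (they contain $\psi(r)c(dx_1)\chi\tilde A$ and, for $\D(r,\rho)$, the operator $\Ki$), but they still preserve kernels in $C^1(\bbbr\times[0,1],{\mathcal S}(M\times M,\E_i\boxtimes_{\A_i}\E_i^*))$ under composition on the right, since $A$ is adapted (hence bounded on all $C^k(N,\cdot)$) and acts slice-wise while $\Ki$ has a Schwartz kernel --- the same facts the paper uses tacitly in its own concluding step.
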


\begin{proof}
We have
\begin{align*}
\lefteqn{\sum_{j=0}^2\phi_j (\D_j(r)^2 -\lambda^2)^{-m} \zeta_j (\D(r,\rho)^2-\lambda^2)^m}\\
&\sim  \sum_{j=0}^2\phi_j(\D_j(r)^2 -\lambda^2)^{-m+1}(\D_j(r) +\lambda)^{-1}\zeta_j(\D(r,0) +\lambda)(\D(r,0)^2-\lambda^2)^{m-1}\\
& \quad + \sum_{j=0}^2\phi_j(\D_j(r)^2 -\lambda^2)^{-m}(\zeta_j \D(r,0)-\D_j(r)\zeta_j)  (\D(r,0) +\lambda)(\D(r,0)^2-\lambda^2)^{m-1} \ .
\end{align*}

By induction $\sum_{j=0}^2\phi_j(\D_j(r)^2 -\lambda^2)^{-m+1}(\D_j(r) +\lambda)^{-1}\zeta_j$ is a regular left parametrix of $(\D(r,0) +\lambda)(\D(r,0)^2-\lambda^2)^{m-1}$. This shows that the first term is equivalent to 1. In the second term $\phi_j(\D_j(r)^2 -\lambda^2)^{-m}(\zeta_j \D(r,0)-\D_j(r)\zeta_j)$ is equivalent to zero by the off-diagonal behavior of $(\D_j(r)^2 -\lambda^2)^{-m}$.
\end{proof}

\begin{prop}
\label{smooth}
Assume that $\D(r,\rho)-\lambda$ has a bounded inverse on $L^2(M,\E)$. 

\begin{enumerate}
\item If $\re \lambda^2 < 0$, then $\D(r,\rho)-\lambda$ has a bounded inverse on  $L^2(M,\Ol{\mu}\E_i)$ that is of class $C^1$ in $(r,\rho)$. The inverse also acts continuously on ${\mathcal S}(M,\Ol{\mu}\E_i)$ and for $f \in {\mathcal S}(M,\Ol{\mu}\E_i)$ it holds that
$((r,\rho) \mapsto (\D(r,\rho)-\lambda)^{-1}f) \in C^1(\bbbr \times [-1,1],{\mathcal S}(M,\Ol{\mu}\E_i))$.
\item Let $\re \lambda^2 < 0$ and let $m,k \in \bbbn$ with $m >\frac{\dim M + k+1}{2}$.
Then $(\D(r,\rho)^2-\lambda^2)^{-m}$ maps
 $L^2(M,\Ol{\mu}\E_i)$ continuously to $C^k(M, \Ol{\mu}\E_i)$ and is of class $C^1$ in $(r,\rho)$. 
 
\item Analogues of (1) and (2) hold for $r\ge 2$ and $\re \lambda^2 < \min(\omega_0,\omega_1,\omega_2)$, 
\end{enumerate}
\end{prop}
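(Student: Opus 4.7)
The plan is to bootstrap invertibility from $L^2(M,\E)$ to $L^2(M,\Ol{\mu}\E_i)$ and then to ${\mathcal S}(M,\Ol{\mu}\E_i)$, using the regular left parametrices constructed in \S \ref{res}: $Q_\lambda(r)$ for part (1), and $R^m_\lambda(r)$ for part (2). For (1), set $K_\lambda(r,\rho):=Q_\lambda(r)(\D(r,\rho)-\lambda)-1$. By the parametrix analysis of \S \ref{res}, $K_\lambda(r,\rho)$ is an integral operator with kernel in $C^1(\bbbr\times[-1,1],{\mathcal S}(M\times M,\E_i\boxtimes_{\A_i}\E_i^*))$, so in particular compact on each $L^2(M,\Ol{\mu}\E_i)$; meanwhile $Q_\lambda(r)$ itself is bounded and $C^1$ in $r$ on every $L^2(M,\Ol{\mu}\E_i)$, since each summand $\phi_j Q^j_\lambda(r)\zeta_j$ is, by Prop.\ \ref{Mresolv} applied to $W$ and $Y$ and by Prop.\ \ref{cylresolv} on $Z$.

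Composing $Q_\lambda(r)(\D(r,\rho)-\lambda)=1+K_\lambda(r,\rho)$ on the right with the hypothesized $L^2(M,\E)$-inverse of $\D(r,\rho)-\lambda$ gives $Q_\lambda(r)=(1+K_\lambda(r,\rho))(\D(r,\rho)-\lambda)^{-1}$ on $L^2(M,\E)$. A Fredholm-alternative argument shows $1+K_\lambda(r,\rho)$ is invertible on $L^2(M,\E)$: since $K_\lambda(r,\rho)$ is compact it suffices to rule out nontrivial $u$ with $(1+K)u=0$, but such $u$ would satisfy $u=-Ku\in{\mathcal S}(M,\E)$, and testing against a symmetric right-parametrix built from the $(Q^j_\lambda(r))^*$ forces $u=0$. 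Invoking the Lott-type spectral-comparison principle (the mechanism behind Prop.\ \ref{invspec}, used already in Prop.\ \ref{Mresolv}), invertibility of $1+K_\lambda(r,\rho)$ propagates from $L^2(M,\E)$ to $L^2(M,\Ol{\mu}\E_i)$. Thus $(\D(r,\rho)-\lambda)^{-1}=(1+K_\lambda(r,\rho))^{-1}Q_\lambda(r)$ on $L^2(M,\Ol{\mu}\E_i)$. The $C^1$-dependence in $r$ is inherited from both factors, and in $\rho$ from $\D(r,\rho)-\D(r,0)=\rho\Ki$ with $\Ki$ a fixed smoothing operator. Continuity on ${\mathcal S}(M,\Ol{\mu}\E_i)$ follows by expanding $(1+K_\lambda(r,\rho))^{-1}$ as a Neumann series and using that $Q_\lambda(r)$ preserves Schwartz decay via the off-diagonal estimates in Prop.\ \ref{offdiag} and Prop.\ \ref{cyloffdiagsmooth}.

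Part (2) proceeds identically with $R^m_\lambda(r)$ in place of $Q_\lambda(r)$, using Prop.\ \ref{Mregul} and Prop.\ \ref{cylregul} to see that each local piece $\phi_j(\D_j(r)^2-\lambda^2)^{-m}\zeta_j$ sends $L^2(M,\Ol{\mu}\E_i)$ continuously into $C^k(M,\Ol{\mu}\E_i)$ once $m>(\dim M+k+1)/2$; the formula $(\D(r,\rho)^2-\lambda^2)^{-m}=(1+K^m_\lambda(r,\rho))^{-1}R^m_\lambda(r)$ then delivers the mapping property with $C^1$-dependence in $(r,\rho)$. Part (3) is the same argument applied to the parametrix constructed in \S \ref{res} for $r>2$ without the $\psi(r)$-cutoff, valid for $\re\lambda^2<\min(\omega_0,\omega_1,\omega_2)$. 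The main analytic obstacle is the spectral-comparison step, namely transferring invertibility of $1+K_\lambda(r,\rho)$ from the Hilbert $\A$-module $L^2(M,\E)$ to the Banach-module completions $L^2(M,\Ol{\mu}\E_i)$; this is precisely the Lott-style input that underpins the entire superconnection approach over $C^*$-algebras. Everything else---the $C^1$-regularity in $(r,\rho)$, the Schwartz continuity, and the $C^k$-mapping property---reduces to routine manipulation of the parametrix identities and the estimates already established in \S \ref{closed}, \S \ref{cyl} and \S \ref{res}.
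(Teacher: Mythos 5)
Your overall strategy --- glue a parametrix $Q_\lambda(r)=\sum_j\phi_jQ^j_\lambda(r)\zeta_j$ from the model operators, write $Q_\lambda(r)(\D(r,\rho)-\lambda)=1+K_\lambda(r,\rho)$ with $K_\lambda$ smoothing, and transfer invertibility from $L^2(M,\E)$ to $L^2(M,\Ol{\mu}\E_i)$ by the Lott-type comparison of Prop.~\ref{invspec} --- is exactly the mechanism the paper packages in Prop.~\ref{boundinv} and feeds with the parametrices of \S~\ref{res}. But the pivotal step of your argument, the invertibility of $1+K_\lambda(r,\rho)$ on $L^2(M,\E)$ by a ``Fredholm alternative'', has a genuine hole. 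First, the Fredholm alternative is simply false on Hilbert $C^*$-modules: an injective compact perturbation of the identity need not be invertible (on the module $\A=C([0,1])$ over itself every multiplication operator is compact, and multiplication by $1-t$ is injective and adjointable but not invertible). Second, even the injectivity you try to establish is not available: since $\D(r,\rho)-\lambda$ is invertible on $L^2(M,\E)$, one has $\Ker(1+K_\lambda(r,\rho))=(\D(r,\rho)-\lambda)^{-1}\Ker Q_\lambda(r)$, and a left parametrix glued from cutoffs has no reason whatsoever to be injective --- adding any finite-rank operator with kernel in ${\mathcal S}(M\times M,\E_i\boxtimes_{\A_i}\E_i^*)$ to $Q_\lambda(r)$ produces another regular left parametrix and can create kernel at will. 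Testing a putative kernel element against a right parametrix built from the $(Q^j_\lambda(r))^*$ only yields identities modulo smoothing operators and cannot force $u=0$. So ``$(\D(r,\rho)-\lambda)^{-1}=(1+K_\lambda(r,\rho))^{-1}Q_\lambda(r)$'' is unjustified as it stands, and in general $1+K_\lambda(r,\rho)$ will not be invertible for the uncorrected parametrix.

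The paper's proof of Prop.~\ref{boundinv} avoids this by \emph{correcting the parametrix before inverting}: using the invertibility of $\D(r,\rho)-\lambda$ (and of its adjoint) on the Hilbert module together with the density of kernels in ${\mathcal S}(M\times M,\E_i\boxtimes_{\A_i}\E_i^*)$, one chooses a smoothing operator $S(r)$ with such a kernel so that $\|(Q_\lambda(r)+S(r))(\D(r,\rho)-\lambda)-1\|\le \frac 12$ in $B(L^2(M,\E))$; then the error term is invertible by Neumann series, Prop.~\ref{invspec} transfers this invertibility to $L^2(M,\Ol{\mu}\E_i)$, and $(\D(r,\rho)-\lambda)^{-1}=(1+K'(r))^{-1}(Q_\lambda(r)+S(r))$. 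Note also that a regular left parametrix for the \emph{adjoint} is an explicit hypothesis there: it is what lets one upgrade the difference $(\D(r,\rho)-\lambda)^{-1}-Q_\lambda(r)$ to an operator with Schwartz kernel, which is precisely what your part (2) needs to get the $L^2\to C^k$ mapping property and what gives the Schwartz continuity in (1); your proposal uses this implicitly but never secures it. Once the correction step is inserted (or Prop.~\ref{boundinv} is invoked directly, which is how the paper proves the statement), the remaining ingredients of your proposal --- the $C^1$-dependence of the local resolvents on $W$, $Y$, $Z$, the identity $(1+K)^{-1}=1-K(1+K)^{-1}$ for preservation of ${\mathcal S}(M,\Ol{\mu}\E_i)$, part (2) with $R^m_\lambda(r)$, and part (3) with the $r>2$ parametrix built from $\D_j(2)+\Kappa_j$ --- do go through and coincide with the paper's intended argument.
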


\begin{cor}
If $\rho \neq 0$ and $r \ge 2$, then there is $\omega>0$ such that the operator $\D(r,\rho)-\lambda$ has a bounded inverse on  $L^2(M,\Ol{\mu}\E_i)$ for $\lambda \neq 0$ and $\re \lambda^2 < \omega$.
\end{cor}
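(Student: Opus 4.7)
The strategy is to first establish invertibility of $\D(r,\rho)-\lambda$ on the ambient Hilbert $\A$-module $L^2(M,\E)$ in a punctured neighborhood of $0$, and then invoke Prop.~\ref{smooth}(3) to transport this to $L^2(M,\Ol{\mu}\E_i)$. The hypothesis $r>2$ gives $\psi(r)=1$, so $\D(r,\rho)=\D_\E(A)+\rho\Ki$ in the notation of \S\ref{DirHilb}.

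The key observation is that $\D(r,\rho)$ is a regular self-adjoint Fredholm operator on $L^2(M,\E)$: self-adjointness follows because $c(dx_1)^*=-c(dx_1)$ anticommutes with $\tilde A=\tilde A^*$, while $\Ki=\Ki^*$ by construction; the Fredholm property is inherited from $\D_\E(A)$ since $\rho\Ki$ is compact. Consequently $\sigma(\D(r,\rho))\subset\bbbr$. The crucial extra input, built into the definition of $\Ki$ in \S\ref{DirHilb}, is that for $\rho\neq 0$ the operator $(\D(r,\rho))^+$ is surjective. By self-adjointness $(\D(r,\rho))^-=((\D(r,\rho))^+)^*$ is injective with closed range, and the Fredholm/closed-range property yields $\omega'>0$ such that $(\D^+)(\D^+)^*$ is bounded below by $\omega'$, while $(\D^+)^*(\D^+)$ has spectrum in $\{0\}\cup[\omega',\infty)$ (its kernel coinciding with $\Ker\D^+$). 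Hence
$$\sigma(\D(r,\rho)^2)\subset\{0\}\cup[\omega',\infty),\qquad \sigma(\D(r,\rho))\subset\{0\}\cup\{\mu\in\bbbr:|\mu|\ge\sqrt{\omega'}\}.$$

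Set $\omega:=\min(\omega',\omega_0,\omega_1,\omega_2)$. For $\lambda\in\bbbc\setminus\{0\}$ with $\re\lambda^2<\omega$, either $\lambda\notin\bbbr$, in which case $\lambda$ avoids the real spectrum of the self-adjoint operator; or $\lambda\in\bbbr\setminus\{0\}$ with $\lambda^2<\omega\le\omega'$, placing $\lambda$ in the spectral gap $(-\sqrt{\omega'},\sqrt{\omega'})\setminus\{0\}$. In both cases $(\D(r,\rho)-\lambda)^{-1}$ exists as a bounded operator on $L^2(M,\E)$. Prop.~\ref{smooth}(3) then upgrades this to a bounded inverse on $L^2(M,\Ol{\mu}\E_i)$, which is the desired statement.

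The main obstacle I foresee is making the spectral-gap argument rigorous in the Hilbert $C^*$-module setting: I use implicitly that a regular adjointable Fredholm operator has closed range, and that a surjective bounded adjointable map admits a bounded right inverse, so that $\D^+(\D^+)^*$ is genuinely invertible rather than merely injective with dense range. These facts are part of the regular-Fredholm framework used throughout \cite{wa}, and can be replaced by a direct construction of a two-sided parametrix of $\D(r,\rho)$ modulo the projection onto its (finitely generated projective) kernel if a more self-contained verification is desired.
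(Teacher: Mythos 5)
Your proof is correct and follows essentially the same route as the paper: selfadjointness together with the closed range forced by the Atiyah--Singer perturbation $\Ki$ (equivalently, surjectivity of $\D(r,\rho)^+$ for $\rho\neq 0$) gives a spectral gap for $\D(r,\rho)$ on the Hilbert $\A$-module $L^2(M,\E)$, and Prop.~\ref{smooth}(3) then transports the bounded inverse to $L^2(M,\Ol{\mu}\E_i)$. One minor remark: in your closing caveat, the assertion that a regular adjointable Fredholm operator has closed range is false in general for Hilbert $C^*$-modules (this failure is exactly why $\Ki$ is introduced), but your argument does not actually rely on it, since the surjectivity of $\D(r,\rho)^+$ is what yields the invertibility of $\D^+(\D^+)^*$ and hence the gap.
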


\begin{proof}
Since the selfadjoint operator $\D(r,\rho)$ has closed range for $\rho \neq 0$ and $r \ge 2$, there is $\omega>0$ such that the operator $\D(r,\rho)-\lambda$ has a bounded inverse on  $L^2(M,\E)$ for $\lambda \neq 0$ and $\re \lambda^2 < \omega$.
\end{proof}

\begin{cor}
Let $\rho \neq 0$ and $r \ge 2$.
The kernel of $\D(r,\rho)$ on $L^2(M,\Ol{\mu}\E_i)$ consists of elements of ${\mathcal S}(M,\Ol{\mu}\E_i)$. 
\end{cor}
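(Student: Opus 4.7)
Let $f\in L^2(M,\Ol{\mu}\E_i)$ solve $\D(r,\rho)f=0$. The plan is to first upgrade $f$ to smoothness via the resolvent, then to produce rapid decay on the cylindric end via the translation-invariant ODE satisfied there.

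Fix $\lambda\ne0$ with $\re\lambda^2<\omega$; by the previous corollary $\D(r,\rho)^2-\lambda^2$ is invertible on $L^2(M,\Ol{\mu}\E_i)$. Iterating $\D(r,\rho)f=0$ yields $f=(-\lambda^2)^m(\D(r,\rho)^2-\lambda^2)^{-m}f$ for every $m\in\bbbn$, and Prop.~\ref{smooth}(2)(3) with $m>(\dim M+k+1)/2$ places the right hand side in $C^k(M,\Ol{\mu}\E_i)$ for arbitrary $k$; hence $f$ is smooth. On $\{x_1>d\}$, $\D(r,\rho)$ coincides with the translation-invariant operator $\D_Z(1)$ with square $\D_Z(1)^2=-\ra_{x_1}^2+(\dira_N+A)^2$, so squaring $\D_Z(1)f=0$ gives the Banach-valued ODE $\ra_{x_1}^2 f=(\dira_N+A)^2 f$. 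Since $A$ is a trivializing operator, $\dira_N+A$ is invertible on $L^2(N,\E^N)$, and this invertibility transfers to $L^2(N,\Ol{\mu}\E_i^N)$ through the stability of the projective system under holomorphic functional calculus; writing $D:=|\dira_N+A|$, the semigroup $e^{-tD}$ satisfies $\|e^{-tD}\|\le Ce^{-(\delta-\varepsilon)t}$ for some $\delta>0$, obtained as in the proof of Prop.~\ref{Mresolv}. The $L^2$-hypothesis on $f$ eliminates the exponentially growing mode of the ODE, leaving $f(x_1,\cdot)=e^{-(x_1-d)D}b$ with $b\in L^2(N,\Ol{\mu}\E_i^N)$ and therefore $\|f(x_1,\cdot)\|_{L^2(N,\Ol{\mu}\E_i^N)}\le Ce^{-(\delta-\varepsilon)(x_1-d)}\|b\|$. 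Combining this $L^2(N)$-exponential decay with the smoothness of Step~1 via Sobolev embedding on compact $N$ and the standard elliptic-regularity bounds for $(\dira_N+A)^2$ upgrades the estimate to exponential decay of all $x_1$- and $N$-derivatives of $f$, so $f\in\mathcal{S}(M,\Ol{\mu}\E_i)$.

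The main obstacle is the Banach-space functional-calculus argument, since the Hilbert-space spectral theorem for $\dira_N+A$ on $L^2(N,\Ol{\mu}\E_i^N)$ is unavailable. One route is to work through the heat semigroup estimate on $L^2(N,\E^N)$ and transfer it to the projective system as in Prop.~\ref{Mresolv}; an alternative that bypasses functional calculus is to plug $f$ into the parametrix identity $R^m_\lambda(r)(\D(r,\rho)^2-\lambda^2)^m=1+K$ with $K$ of Schwartz kernel, and then extract Schwartz decay from the exponential off-diagonal behaviour of the cylinder resolvent $(\D_Z(1)^2-\lambda^2)^{-m}$, itself a consequence of the positivity $\D_Z(1)^2\ge\delta^2$.
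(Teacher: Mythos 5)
Your opening step is exactly the paper's: iterating $\D(r,\rho)f=0$ through the resolvent and invoking Prop.~\ref{smooth} gives $f\in C^k(M,\Ol{\mu}\E_i)$ for every $k$. The decay step, however, contains a genuine gap, and it is the one you yourself flag. The representation $f(x_1,\cdot)=e^{-(x_1-d)D}b$ with $D=|\dira_N+A|$, and the claim that the $L^2$-hypothesis ``eliminates the exponentially growing mode,'' require a spectral splitting of $\dira_N+A$ (or at least a square root together with a decomposition of $L^2(N,\Ol{\mu}\E_i^N)$ into the ranges of two spectral projections), and nothing of this kind is available on these Banach modules: what transfers from the Hilbert-module level via Prop.~\ref{invspec} is invertibility and hence decay of the semigroup $e^{-t(\D_N+A)^2}$, not a positive/negative spectral decomposition, so your route (a) does not convert an $x_1$-$L^2$ solution of $\ra_{x_1}^2f=(\D_N+A)^2f$ into a decaying exponential. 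Your route (b) also fails as stated: applying the parametrix identity to $f$ leaves the term $\phi_2(\D_2(r)^2-\lambda^2)^{-m}\zeta_2 f$, and since $\zeta_2 f$ is merely $L^2$-bounded and not compactly supported, the exponential off-diagonal decay of the cylinder kernel cannot be integrated against it to produce any decay of the output.

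The paper closes exactly this gap with a short commutator trick that avoids functional calculus altogether. Once $f$ is known to be smooth, one computes $\D_2(r)\phi_2 f=\phi_2\,\D(r,\rho)f+c(d\phi_2)f=c(d\phi_2)f$ (on $\supp\phi_2$ the operator $\D(r,\rho)$ agrees with the translation-invariant cylinder operator and $\Ki$ does not act there); since $d\phi_2$ is compactly supported, $c(d\phi_2)f$ is smooth and compactly supported, hence lies in ${\mathcal S}(Z,\Ol{\mu}\E^Z_i)$. Because $\D_2(r)$ is invertible on ${\mathcal S}(Z,\Ol{\mu}\E^Z_i)$ in the trivialized case $r>2$ (the extension of Prop.~\ref{cylresolv} to $\re\lambda^2<\omega$, in particular $\lambda=0$, proved by Fourier transform in $x_1$), one gets $\phi_2 f=\D_2(r)^{-1}c(d\phi_2)f\in{\mathcal S}(Z,\Ol{\mu}\E^Z_i)$, and hence $f\in{\mathcal S}(M,\Ol{\mu}\E_i)$. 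If you wish to keep your ODE picture you would in effect have to prove this Schwartz-space invertibility of the cylinder operator first, so the cutoff argument is both the shorter route and the one that actually works in this Banach-module setting.
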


\begin{proof}
Let $f$ be an element of the kernel of $\D(r,\rho)$ on $L^2(M,\Ol{\mu}\E_i)$. Then by the previous proposition for $m >\frac{\dim M + k+1}{2}$
$$f=(\D(r,\rho)^2+1)^{-m}f \in C^{k}(M,\Ol{\mu}\E_i) \ .$$ Thus $f \in \C(M,\Ol{\mu}\E_i)$. This implies that 
$$\D_2(r)\phi_2 f=(\D_2(r)\phi_2-\phi_2\D(r,\rho))f= c(d\phi_2)f \in {\mathcal S}(Z,\Ol{\mu}\E^Z_i) \ .$$ 
Hence by Prop. \ref{cylresolv} 
$$\phi_2f =\D_2(r)^{-1}c(d\phi_2)f \in {\mathcal S}(Z,\Ol{\mu}\E^Z_i) \ .$$ Thus $f \in {\mathcal S}(M,\Ol{\mu}\E_i)$.
\end{proof}

For $\rho \neq 0$ and $r\ge 2$ let $\Pj$ be the projection onto the kernel of $\D(r,\rho)$ on $L^2(M,\E)$. It exists since the range of $\D(r,\rho)$ is closed. Furthermore $\D(r,\rho)+\Pj$ is invertible on $L^2(M,\E)$.

\begin{prop}
\label{kerhs}
Let $\rho \neq 0$ and $r \ge 2$.
\begin{enumerate}
\item It holds that $\Pj=\sum_{j=1}^k g_j h_j^*$ for appropriate elements $g_j,h_j \in {\mathcal S}(M,\E_{\infty})$ in the kernel of ${\mathcal D}(r,\rho)$. 
\item An analogue of Prop. \ref{smooth} holds for $\D(r,\rho)+\Pj$. In particular the operator $\D(r,\rho)+\Pj$ has a bounded inverse on  $L^2(M,\Ol{\mu}\E_i)$.
\end{enumerate}
\end{prop}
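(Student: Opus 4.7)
The plan divides along the two parts of the proposition.

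For part~(1), since $\D(r,\rho)$ is self-adjoint and Fredholm with closed range on $L^2(M,\E)$, its kernel $V$ is a complemented finitely generated projective Hilbert $\A$-submodule, and by the preceding corollary $V \subset {\mathcal S}(M,\Ei)$. Fixing an isometric identification of $V$ with a direct summand $P_0 \A^k$ for some projection $P_0 = P_0^* = P_0^2 \in M_k(\A)$, and letting $g_j \in V$ be the images of $P_0 e_j$, we get $g_j \in {\mathcal S}(M,\Ei)$ automatically, and the Parseval identity for this tight frame yields
\begin{equation*}
\Pj f \;=\; \sum_{j=1}^k g_j \langle g_j,f\rangle \;=\; \sum_{j=1}^k g_j g_j^* f,
\end{equation*}
so the decomposition holds with $h_j = g_j$.

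For part~(2), observe that the integral kernel of $\Pj=\sum_j g_j g_j^*$ is $\sum_j g_j(x) g_j(y)^* \in {\mathcal S}(M\times M,\Ei\boxtimes_{\Ai}\Ei^*)$, so $\Pj$ is a smoothing operator, is independent of $(r,\rho)$, and acts continuously on each of $L^2(M,\Oei)$, ${\mathcal S}(M,\Oei)$ and $C^k(M,\Oei)$ as an $\Ol{\mu}\A_i$-linear map. Adding $\Pj$ to $\D(r,\rho)$ is therefore a smoothing perturbation, which preserves the parametrix construction of \S\ref{res}: the same operators $Q_\lambda(r)$ and $R^m_\lambda(r)$ remain regular left parametrices of $(\D(r,\rho)+\Pj)-\lambda$ and of $((\D(r,\rho)+\Pj)^2-\lambda^2)^m$ respectively, because the extra error terms $Q_\lambda(r)\Pj$ and $R^m_\lambda(r)\Pj$ have Schwartz integral kernels. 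Invertibility of $\D(r,\rho)+\Pj$ on $L^2(M,\E)$ is built in (it is self-adjoint with trivial kernel and closed range); the arguments of \S\ref{parametrix}--\S\ref{regpar} then transport this invertibility together with the $C^1$-dependence on $r$ and the mapping properties into $C^k(M,\Oei)$ to the full scale of spaces appearing in Prop.~\ref{smooth}.

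The only delicate point I expect is the compatibility of the frame $\{g_j\}$ across the projective system, namely that the $g_j$ lie in ${\mathcal S}(M,\Ei)=\pl{i}{\mathcal S}(M,\E_i)$ rather than merely in ${\mathcal S}(M,\E)$. This is precisely the content of the preceding corollary, whose proof uses the parametrix-at-infinity estimates of Prop.~\ref{cylresolv} uniformly in~$i$; once this is granted, the finite-rank smoothing nature of $\Pj$ makes part~(2) a mechanical rerun of the parametrix argument of \S\ref{res}.
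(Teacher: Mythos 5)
There is a genuine gap, and it sits exactly where you locate your ``delicate point'': the claim that the frame elements $g_j$ lie in ${\mathcal S}(M,\E_{\infty})$ does \emph{not} follow from the preceding corollary. That corollary is a regularity statement for elements already known to lie in $L^2(M,\Ol{\mu}\E_i)$: if $f\in L^2(M,\Ol{\mu}\E_i)$ and $\D(r,\rho)f=0$, then $f\in{\mathcal S}(M,\Ol{\mu}\E_i)$. Your frame is produced inside the kernel of $\D(r,\rho)$ on the Hilbert $\A$-module $L^2(M,\E)$, so the corollary (with $i=0$, $\mu=0$) only gives $g_j\in{\mathcal S}(M,\E)$, i.e.\ Schwartz sections with values in the $C^*$-algebra $\A$ — it says nothing about membership in $L^2(M,\E_i)$ for $i>0$, hence nothing about values in $\Ai$. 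This is not a cosmetic issue: if the $g_j,h_j$ only take values in $\A$, the operator $\Pj=\sum_j g_j h_j^*$ does not even extend to a bounded $\Ol{\mu}\A_i$-linear operator on $L^2(M,\Ol{\mu}\E_i)$, so your part (2) (adding $\Pj$ as a ``smoothing perturbation'' and rerunning the parametrix argument of \S\ref{res} and Prop.~\ref{boundinv}) collapses as well.

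The paper closes this gap by a different mechanism, which your proposal is missing: since by (the analogue of) Prop.~\ref{smooth} zero is an isolated point of the spectrum of $\D(r,\rho)$ on each $L^2(M,\Oei)$, the Riesz projection $\frac{1}{2\pi i}\int_{|\lambda|=\epsilon}(\D(r,\rho)-\lambda)^{-1}\,d\lambda$ is bounded on $L^2(M,\Oei)$ and agrees with $\Pj$; this verifies the hypotheses of Prop.~\ref{projker}, whose proof (a finite-rank cutoff $P_m$ plus holomorphic functional calculus, as in Prop.~\ref{proj}) produces the decomposition $\Pj=\sum_j g_jh_j^*$ with $g_j,h_j$ in $\Ran_\infty\Pj$, and only then does the corollary (applied on each $L^2(M,\E_i)$) give $\Ran_\infty\Pj\subset{\mathcal S}(M,\E_{\infty})$. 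Your Parseval-frame identity on the $C^*$-module level is fine as a way to write $\Pj=\sum_j g_jg_j^*$ with $g_j$ in the kernel, but to repair the argument you must first establish, as the paper does, that $\Pj$ acts boundedly on every $L^2(M,\Oei)$ and that the generators can be chosen in the intersection $\Ran_\infty\Pj$; once that is in place, your treatment of part (2) (observing that $Q_\lambda(r)$ remains a regular left parametrix of $\D(r,\rho)+\Pj-\lambda$ and invoking the machinery of \S\ref{regpar}) coincides with the paper's.
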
  

\begin{proof}
By Prop. \ref{smooth} zero is an isolated point in the spectrum of $\D(r,\rho)$ on $L^2(M,\Oei)$ for $\rho \neq 0$ and $r \ge 2$. Hence for $\ve>0$ small enough
$${\mathcal P}=\frac{1}{2 \pi i}\int_{|\lambda|=\ve} (\D(r,\rho)-\lambda)^{-1} d \lambda$$ is well-defined and bounded on $L^2(M,\Oei)$. 

For $\mu=0$ we have that $\Ran_{\infty} {\mathcal P} \subset {\mathcal S}(M,\E_{\infty})$ by the previous lemma. (Here we use the notation of Prop. \ref{projker}). Since ${\mathcal S}(M,\Ai) \subset L^2(M,\bbbc) \ten_{\pi} \Ai$, and since the bundle $\Ei$ can be isometrically embedded in a trivial bundle, the first assertion follows from Prop. \ref{projker}. The proof of the second assertion is as the proof of Prop. \ref{smooth}. Note that $Q_{\lambda}$ is also a left regular parametrix of  $\D(r,\rho)+\Pj-\lambda$.
\end{proof}

\subsection{The heat semigroup}

We prove the existence of the heat semigroup on $C_0(\bbbr  \times [-1,1],L^2(M,\Oei))$ generated by $-\D(r,\rho)^2$ (considered as a family parametrized by $(r,\rho) \in \bbbr \times [-1,1]$) by defining an approximation and then using Volterra development as in \cite[\S 2.4]{bgv}.

We set $$E(r)_t:= \sum_{j=0}^2\zeta_j e^{-t\D_j(r)^2} \phi_j \ .$$

\begin{prop} 
\label{heatsemest}
Let $m,k \in \bbbn_0$ with $m >\frac{\dim M + k+1}{2}$. 
\begin{enumerate}
\item
The operator $-\D(r,\rho)^2$  generates a holomorphic semigroup on 
$C_0(\bbbr\times [-1,1],L^2(M,\Ol{\mu}\E_i))$. For $t>0$ the operator $e^{-t\D(r,\rho)^2}$ is of class $C^1$ in $(r,\rho,t)$ on $L^2(M,\Oei)$.
\item
For $t>0$ the operator $e^{-t\D(r,\rho)^2}$ is a continuous operator from $C_c^{2m}(M,\Ol{\mu}\E_i)$ to $C^k(M, \Ol{\mu}\E_i)$, which is of class $C^1$ in $(r,\rho,t)$.
\item Let $B$ be a first order differential operator. On $C_0(\bbbr \times [-1,1],L^2(M,\Ol{\mu}\E_i))$ for $0<t<1$ 
$$\|Be^{-t\D(r,\rho)^2}\| \le Ct^{-\frac 12}, \quad \|e^{-t\D(r,\rho)^2}B\| \le Ct^{- \frac 12}\ ,$$
$$\|\frac{d}{dr} Be^{-t\D(r,\rho)^2}\| \le C, \quad \|\frac{d}{dr}e^{-t\D(r,\rho)^2}B\| \le C\ .$$
\item
For $\ve>0$ there is $C>0$ such that for all $t \ge 0, r \in \bbbr, \rho \in [-1,1]$
$$\|e^{-t\D(r,\rho)^2}\| \le Ce^{\ve t} \ ,\quad \|\frac{d}{dr}e^{-t\D(r,\rho)^2}\| \le Ct^{1/2}e^{\ve t}  $$ 
as an operator on $L^2(M,\Ol{\mu}\E_i)$ resp. as an operator from $C_c^{2m}(M,\Ol{\mu}\E_i)$ to $C^k(M, \Ol{\mu}\E_i)$.  
\item
Let $\rho \neq 0$ and $r \ge 2$. Then the estimate in (4) holds with $\ve =0$. Let $\Pj$ be the projection onto the kernel of $\D(r,\rho)$. There is $\omega>0$ and $C>0$ such that for all $t\ge 0$
$$\|e^{-t(\D(r,\rho)+\Pj)^2}\| \le Ce^{-\omega t}$$
as an operator on $L^2(M,\Ol{\mu}\E_i)$ resp. as an  operator from $C_c^{2m}(M,\Ol{\mu}\E_i)$ to $C^k(M, \Ol{\mu}\E_i)$. 
\end{enumerate}
\end{prop}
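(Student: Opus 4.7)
My plan is to construct the heat semigroup by Volterra development from the patched approximation $E(r)_t = \sum_j \zeta_j e^{-t\D_j(r)^2}\phi_j$, and to transfer each of the five estimates from the corresponding estimates on the model pieces $W$, $Y$, and $Z$ established in \S\ref{closed} and \S\ref{cyl}.

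First I would compute the defect $e(r,\rho)_t := (\partial_t + \D(r,\rho)^2)E(r)_t$. Since the cutoffs and $\Ki$ were arranged so that $\zeta_j \D(r,\rho) = \zeta_j(\D_j(r) + \rho\Ki)$ for $j=0,1$ and $\zeta_2\D(r,\rho) = \zeta_2\D_2(r)$ (using $\supp\Ki \cap U_2 = \emptyset$ and $\chi|_{U_2}=1$), the defect takes the form
\[
e(r,\rho)_t = \sum_{j=0}^{2}[\D_j(r)^2,\zeta_j]\,e^{-t\D_j(r)^2}\phi_j \;+\; \rho\,\Ki\,E(r)_t \;+\; \text{lower order in }\rho\Ki.
\]
The commutators $[\D_j(r)^2,\zeta_j]$ are first order differential operators whose coefficients are supported in $\supp d\zeta_j$, disjointly from $\supp \phi_j$. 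By the off-diagonal estimates of Prop. \ref{semigroupm}(5), Prop. \ref{intkersupcon}(5) and the Gaussian bound Prop. \ref{kersupconcyl}, together with the smoothness of the integral kernel of $\Ki$, the integral kernel of $e(r,\rho)_t$ lies in $C^k(M\times M,\E_i\boxtimes_{\A_i}\E_i^*)$ and is $O(t)$ uniformly for $t\le 1$, $|r|\le R$, $|\rho|\le 1$, with the same bound for $\partial_r$. The Volterra series
\[
e^{-t\D(r,\rho)^2} := E(r)_t + \sum_{n\ge 1}(-1)^n\int_{\Delta^n_t} E(r)_{t_0}\,e(r,\rho)_{t_1}\cdots e(r,\rho)_{t_n}\,dt_0\cdots dt_n
\]
then converges in the $L^2$ and $C_c^{2m}\!\to\! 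C^k$ operator norms, as in \cite[\S 2.4]{bgv}, yielding a strongly continuous semigroup solving the heat equation with initial condition the identity; holomorphy and $C^1$-dependence on $(r,\rho,t)$ are inherited term by term. This gives (1); (2) follows by applying Prop. \ref{Mregul} and Prop. \ref{cylregul} to each $\D_j(r)$; and (3) comes from the same expansion together with Prop. \ref{semigroupm}(4) and Lemma \ref{diffopcyl}(2) applied to the leading term $B E(r)_t = \sum_j B\zeta_j e^{-t\D_j(r)^2}\phi_j$.

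For (4) and (5) I would exploit the semigroup law $e^{-t\D(r,\rho)^2} = e^{-\D(r,\rho)^2}e^{-(t-1)\D(r,\rho)^2}$ for $t\ge 1$. Selfadjointness of $\D(r,\rho)$ on the Hilbert $\A$-module $L^2(M,\E)$ yields spectral radius at most $1$ for $e^{-\D(r,\rho)^2}$, which via Prop. \ref{invspec} and Prop. \ref{specsem} passes to $L^2(M,\Oei)$ and $C^k(M,\Oei)$ exactly as in the derivation of \eqref{decM} in the closed case. When $\rho\neq 0$ and $r>2$, the corollary to Prop. \ref{smooth} together with Prop. \ref{kerhs} yield a strict spectral gap for $(\D(r,\rho)+\Pj)^2$ on $L^2(M,\E)$, transferred again by Prop. \ref{specsem}, giving the exponential decay of (5). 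Uniformity in $r\in\bbbr$ is automatic because $\psi$ stabilizes for $r\ge 2$ and $r\le 1$, so $\D(r,\rho)$ is eventually independent of $r$; the derivatives in $r$ in (4) are then controlled by Duhamel as in \eqref{estimhkcmder}, with the extra factor $t^{1/2}$ coming from the Duhamel integral of $(\D_\E\ad+\ad\D_\E+2r\ad^2)$.

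The main obstacle will be the first step: verifying that the off-diagonal estimates from Prop. \ref{semigroupm}(5), Prop. \ref{intkersupcon}(5) and Prop. \ref{kersupconcyl} assemble consistently at the seams $U_0\cap U_1$ and $U_1\cap U_2$ to give $O(t)$ bounds for $e(r,\rho)_t$ in the $C^k(M\times M,\E_i\boxtimes_{\A_i}\E_i^*)$ topology uniformly in $(r,\rho)$, and that the $\rho\Ki$ cross-terms enter only as tame, $C^1$ perturbations of the Volterra kernels. Once that check is done, the remainder of the argument is a routine adaptation of the standard patching construction combined with the spectral transfer results of \S\ref{prosys}.
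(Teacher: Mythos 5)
Your overall architecture for (1)--(3) — patching the model semigroups into $E(r)_t=\sum_j\zeta_j e^{-t\D_j(r)^2}\phi_j$, computing the defect and summing a Volterra series — is essentially the paper's construction (one inaccuracy: for $\rho\neq 0$ the defect is \emph{not} $O(t)$, since the terms $\rho(\Ki\D(r,0)+\D(r,0)\Ki)+\rho^2\Ki^2$ do not vanish as $t\to 0$; they are merely bounded, which still suffices for convergence). The genuine gap is in your treatment of (4). You propose to transfer the spectral radius bound for $e^{-\D(r,\rho)^2}$ from the Hilbert $\A$-module to $L^2(M,\Oei)$ and $C^k(M,\Oei)$ ``via Prop.~\ref{invspec} and Prop.~\ref{specsem} \dots exactly as in the derivation of \eqref{decM} in the closed case''. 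Prop.~\ref{invspec} applies only to $1-K$ with $K$ a Hilbert--Schmidt operator, i.e.\ with kernel in $L^2(M\times M)\ten_{\pi}M_n(\A_i)$ (Schwartz kernels in the noncompact setting). On a closed manifold the heat operator itself has such a kernel, which is exactly why \eqref{decM} follows there; on the manifold with cylindric ends it does not: along the cylinder the kernel of $e^{-\D(r,\rho)^2}$ is essentially translation invariant, bounded but non-decaying as both variables run off to infinity, hence not Hilbert--Schmidt, and comparing with $E(r)_t$ does not repair this because the difference kernel (cf.\ \eqref{compheatker}) decays only in the $y$-variable. So the step ``passes to $L^2(M,\Oei)$ exactly as in the closed case'' fails, and with it your proof of the large-time estimate (4).

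This is precisely where the paper's parameter-dependent parametrix machinery (\S\ref{regpar}, \S\ref{res}) is indispensable: one patches the \emph{resolvents} of the model operators into a regular left parametrix $Q_\lambda(r)$ of $\D(r,\rho)-\lambda$, whose defect \emph{is} a Schwartz-kernel operator, applies Prop.~\ref{invspec}/Prop.~\ref{boundinv} to that defect to obtain invertibility of the resolvents on $L^2(M,\Oei)$ (Prop.~\ref{smooth}), and only then deduces the spectral radius bound for the semigroup and estimate (4) via Prop.~\ref{specsem}. You invoke this route for (5) (through the corollary to Prop.~\ref{smooth} and Prop.~\ref{kerhs}) but bypass it for (4), where it is equally needed; it is also what the paper uses for (2), via $e^{-t\D(r,\rho)^2}=(\D(r,\rho)^2+\lambda)^{-m}e^{-t\D(r,\rho)^2}(\D(r,\rho)^2+\lambda)^m$, whereas your appeal to the model results Prop.~\ref{Mregul} and Prop.~\ref{cylregul} alone does not explain how the $C_c^{2m}\to C^k$ mapping property transfers to the glued operator (a termwise argument in the Volterra series runs into the non-integrable singularity of the leftmost factor unless one first inserts the resolvent identity on the models). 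Likewise, holomorphy is not ``inherited term by term''; the paper obtains it from the bound $\|\D(r,\rho)^2e^{-t\D(r,\rho)^2}\|\le Ct^{-1}$ together with Prop.~\ref{exthol}. Until the resolvent-level transfer to $L^2(M,\Oei)$ and $C^k(M,\Oei)$ is supplied, parts (2) and (4) of your argument are not established.
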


\begin{proof}
For $t>0$ let 
\begin{align*}
R(r,\rho)_t &:=(\frac{d}{dt}+ \D(r,\rho)^2)E(r)_t\\
&=\sum_{j=0}^2[\D_{\E},c(d\zeta_j)]e^{-t\D_j(r)^2}\phi_j +\bigl(\rho [\Ki, \D(r,0)]+ \rho^2 \Ki^2\bigr)E(r)_t 
\end{align*}
and set $R(r,\rho)_0=\rho [\Ki, \D(r,0)] + \rho^2 \Ki^2$.

Note that $R(r,\rho)_t$ is a bounded operator from $L^2(M,\Oei)$ to $C^k_c(M,\Oei)$ for any $k \in \bbbn$ and is of class $C^1$ in $(r,\rho,t)$. 

Thus (see \cite[\S 2.4]{bgv}) the series $\sum_{n=0}^{\infty}(-1)^nQ^n(r,\rho)_t$ with
$$Q^n(r,\rho)_t:=\int_{t\Delta_n}E(r)_{t-u_1}R(r,\rho)_{u_1-u_2} \dots R(r,\rho)_{u_{n-1}-u_n}R(r,\rho)_{u_n}du_1 \dots du_n$$
converges and defines the strongly continuous semigroup $e^{-t\D(r,\rho)^2}$ on $L^2(M,\Oei)$. (The semigroup is unique since on $L^2(M,\E_i)$ it has to agree with the one defined on the Hilbert $\A$-module $L^2(M,\E)$ by the functional calculus for selfadjoint operators.) One checks that for $t>0$ the limit is of class $C^1$ in $(r,\rho,t)$. 

By a similar calculation as above for $0<t<1$ on $L^2(M,\Oei)$
$$\|\frac{d}{dt} E(r)_t+E(r)_t\D(r,\rho)^2\| \le Ct^{-\frac 12} \ .$$ Thus, since $R(r,\rho)_t$ is uniformly bounded for small $t$ 
$$\|\D(r,\rho)^2E(r)_t+E(r)_t\D(r,\rho)^2\|\le Ct^{-\frac 12} \ .$$ 
Using that $\D(r,\rho)^2R(r,\rho)_t$ is uniformly bounded as well for $t$ small we get that $\|\D(r,\rho)^2Q^n(r,\rho)_t\| \le C^nt^{n-\frac 12}$ for small $t$ and $n\ge 1$. Since $\|\D(r,\rho)^2E(r)_t\| \le Ct^{-1}$, this implies that $\|\D(r,\rho)^2 e^{-\D(r,\rho)^2}\| \le Ct^{-1}$. Hence the semigroup extends to a holomorphic one, see Prop. \ref{exthol}.

This implies (1). The first two estimates in (3) follow since $\|BE(r)_t\| \le Ct^{-\frac 12}$ and $\|E(r)_tB\| \le Ct^{-\frac 12}$ by Prop. \ref{semigroupm} and Prop. \ref{diffopcyl}.

(2) follows from Prop. \ref{smooth} and 
$$e^{-t\D(r,\rho)^2}=(\D(r,\rho)^2+ \lambda)^{-m}e^{-t\D(r,\rho)^2}(\D(r,\rho)^2+ \lambda)^m \ .$$

From the above series expansion one infers that $e^{-t\D(r,\rho)^2}$ preserves the space ${\mathcal S}(M,\Oei)$. Thus we can apply Duhamel's formula (Prop. \ref{duhform}) and get
$$\frac{d}{dr}e^{-t\D(r,\rho)^2}=-\int_0^t e^{-s\D(r,\rho)^2}(\frac{d}{dr}\D(r,\rho)^2)e^{-(t-s)\D(r,\rho)^2}~ds \ .$$

The first estimate of (4) follows from Prop. \ref{smooth} and Prop. \ref{specsem}, the second from the first in combination with the previous formula. We also get the remaining estimates of (3).

The proof of (5) is analogous and uses Prop. \ref{kerhs} (2) and, for the exponential decay, the fact that $e^{-t(\D(r,\rho)+\Pj)^2}=e^{-t\D(r,\rho)^2}(1-\Pj)$.
\end{proof}

\subsection{The heat kernel}
\label{heatkernel}
  
In the following we construct and study the heat kernel of $e^{-t\D(r,\rho)^2}$ by  comparison with $E(r)_t$. 

By Duhamel's principle
\begin{align}
\nonumber \lefteqn{E(r)_t-e^{-t\D(r,\rho)^2}}\\
\label{duhprin} &= \sum_{j=0}^2 \int_0^t e^{-s\D(r,\rho)^2}[\D_{\E},c(d\zeta_j)]e^{-(t-s)\D_j(r)^2}\phi_j ~ds \\
\nonumber & \quad +\int_0^t e^{-s\D(r,\rho)^2}\bigl(\rho [\Ki, \D(r,0)] + \rho^2\Ki^2 \bigr) E(r)_{t-s} ~ds \ .
\end{align}

Using  the heat kernel estimates of \S \ref{heatm} and Prop. \ref{kersupconcyl} we infer that the operator $\sum_{j=0}^2[\D_{\E},c(d\zeta_j)]e^{-t\D_j(r)^2}\phi_j$ has a smooth integral kernel $g_t(x,y)$, which is supported in $U_1\times M$. For any $\ve>0,~c>4$ there is $C>0$ such that for $t > 0, r\in \bbbr$ 
$$| g_t(x,y)| \le Ce^{\ve t}1_{U_1}(x)te^{-\frac{d(U_0 \cup U_1,y)^2}{ct}} \ .$$
Analogous estimates hold for the partial derivatives and the first derivatives with respect to $r$ and $t$.  

The term $\bigl(\rho [\Ki, \D(r,0)] + \rho^2 \Ki^2\bigr)E(r)_t$ is an integral operator with smooth integral kernel supported on $(U_0 \cup U_1) \times (U_0 \cup U_1)$. The integral kernel converges in $\C_c(M \times M,\E_i \boxtimes_{\A_i} \E_i^*)$ for $t \to 0$ uniformly in $r \in \bbbr,~ \rho \in [-1,1]$. Furthermore for any $\ve>0$ there is $C>0$ such that the integral kernel is bounded by $Ce^{\ve t}1_{U_0 \cup U_1}(x)1_{U_0 \cup U_1}(y)$ for $t\ge 0,r \in \bbbr,\rho \in [-1,1]$. Analogous estimates hold for the partial derivatives and the first derivatives with respect to $r,\rho$ and $t$.  

Let $m,k \in \bbbn_0$ with $m >\frac{\dim M + k+1}{2}$. 
Since $e^{-s\D(r,\rho)^2}:C^{2m}_c(M,\Ol{\mu}\E_i) \to C^k(M,\Ol{\mu}\E_i)$ is bounded by Prop. \ref{heatsemest}, these arguments imply that  $E(r)_t-e^{-t\D(r,\rho)^2}$ is an integral operator with smooth integral kernel, which furthermore is of class $C^1$ in $(r,\rho,t)$. Hence $e^{-t\D(r,\rho)^2}$ is an integral operator with integral kernel $k(r,\rho)_t \in \C(M \times M,\Ol{\mu}\E_i \boxtimes_{\Ol{\mu}\A_i} \Ol{\mu}\E_i^*)$, which is of class $C^1$ in $(r,\rho,t)$. 

We denote by $e(r)_t$ the integral kernel of $E(r)_t$.

From the previous considerations and Prop. \ref{heatsemest} we obtain the following estimate on the integral kernel:

For $\ve>0,~c>4$ there is $C>0$ such that for all $t>0,~\rho \in [-1,1],~r \in \bbbr$
\begin{align}
\label{compheatker}
|k(r,\rho)_t(x,y)- e(r)_t(x,y)| &\le Ce^{\ve t}te^{-\frac{d(U_0 \cup U_1,y)^2}{ct}} \ .
\end{align}
Analogous estimates hold for the first derivatives with respect to $r,\rho,t$ and the partial derivatives with respect to $x,y$.

\begin{prop} 
\label{heatsemC}
The operator $-\D(r,\rho)^2$ generates a holomorphic semigroup on $C_0(\bbbr \times [-1,1],C_0^k(M,\Ol{\mu}\E_i))$. For $t>0$ the operator $e^{-t\D(r,\rho)^2}$  is of class $C^1$ in $(r,\rho,t)$. There is $C>0$ such that on $C_0(\bbbr \times [-1,1],C_0^k(M,\Ol{\mu}\E_i))$ for $t$ small
$$\|\frac{d}{dr}e^{-t\D(r,\rho)^2}\| \le Ct^{1/2}$$

For any $\ve>0$ there is $C>0$ such that for all $t>0$  on $C_0(\bbbr\times [-1,1],C_0^k(M,\Ol{\mu}\E_i))$ 
$$\| e^{-t\D(r,\rho)^2} \| \le Ce^{\ve t} \ .$$
\end{prop}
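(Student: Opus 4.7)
The plan is to adapt the Volterra development of Proposition~\ref{heatsemest} from the $L^2$ setting to the $C_0^k$ setting, complemented by the spectral radius argument used in the closed-manifold case of \S\ref{closed}. The three local models carry the required holomorphic semigroups on $C_0^k$: on the closed manifolds $W$ and $Y$ by Proposition~\ref{semigroupm}(1), and on the cylinder $Z$ by the discussion in \S\ref{cyl} (product decomposition of $e^{-t\D_2(r)^2}$ combined with Lemma~\ref{diffopcyl}). Consequently $E(r)_t=\sum_j\zeta_j e^{-t\D_j(r)^2}\phi_j$ is uniformly bounded on $C_0^k(M,\Ol{\mu}\E_i)$ for small $t$, while the remainder
\[R(r,\rho)_t = \sum_j[\D_{\E},c(d\zeta_j)]e^{-t\D_j(r)^2}\phi_j+\bigl(\rho(\Ki\D(r,0)+\D(r,0)\Ki)+\rho^2\Ki^2\bigr)E(r)_t\]
has smooth integral kernel with $y$-support in the precompact set $U_0\cup U_1$, hence is uniformly bounded on $C_0^k$ as well. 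The Volterra series of Proposition~\ref{heatsemest} therefore converges on $C_0(\bbbr\times[-1,1],C_0^k(M,\Ol{\mu}\E_i))$, and uniqueness of strongly continuous extensions identifies the limit with the semigroup already built on $L^2$.

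For holomorphy the closed-case argument carries over: $\|\D_j(r)^2 e^{-t\D_j(r)^2}\|_{C_0^k}\le Ct^{-1}$ follows for small $t$ from Proposition~\ref{semigroupm}(3) on $W,Y$ and from the product decomposition combined with Lemma~\ref{diffopcyl}(1) on $Z$. Together with uniform boundedness of $R(r,\rho)_t$ this gives $\|\D(r,\rho)^2 e^{-t\D(r,\rho)^2}\|_{C_0^k}\le Ct^{-1}$, and Proposition~\ref{exthol} upgrades strong continuity to holomorphy. The $C^1$-dependence on $(r,\rho,t)$ for $t>0$ is inherited from the smoothness of the integral kernel $k(r,\rho)_t$ established in \S\ref{heatkernel}.

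The derivative estimate follows from Duhamel's formula
\[\frac{d}{dr}e^{-t\D(r,\rho)^2}=-\int_0^t e^{-s\D(r,\rho)^2}\bigl(\tfrac{d}{dr}\D(r,\rho)^2\bigr)e^{-(t-s)\D(r,\rho)^2}\,ds,\]
where $\tfrac{d}{dr}\D(r,\rho)^2$ is a first-order differential operator supported on the cylinder. The short-time bounds $\|Be^{-s\D(r,\rho)^2}\|_{C_0^k}\le Cs^{-1/2}$ for such first-order $B$ follow from Proposition~\ref{semigroupm}(4) and Lemma~\ref{diffopcyl}(2) via the same Volterra expansion, and integration over $s\in(0,t)$ produces the $Ct^{1/2}$ bound. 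For the exponential bound I would imitate the reasoning behind \eqref{decM} in the closed case: since $e^{-t\D(r,\rho)^2}$ is an integral operator with smooth kernel, Proposition~\ref{invspec} identifies its spectrum on $C_0^k$ with its spectrum on the Hilbert $\A$-module $L^2(M,\E)$, whose spectral radius is $\le 1$ by Proposition~\ref{heatsemest}(4); Proposition~\ref{specsem} then yields $\|e^{-t\D(r,\rho)^2}\|_{C_0^k}\le Ce^{\ve t}$ for any $\ve>0$.

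The main obstacle is to ensure that the $C_0$-decay at infinity is genuinely preserved, since the Hilbert $\A$-module arguments do not transfer directly: $C_0^k$ is not contained in $L^2$ and carries no $\A$-valued inner product. This is where the comparison estimate~\eqref{compheatker} is essential — it reduces the $C_0$-property of $k(r,\rho)_t$ to that of $e(r)_t$, whose cylindric summand has explicit off-diagonal decay via Proposition~\ref{kersupconcyl} while the closed-manifold summands are trivially $C_0^k$-preserving; the compact $y$-support of the integral kernel of $R(r,\rho)_t$ then prevents the Volterra iterates from spreading mass off to infinity.
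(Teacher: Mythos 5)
Your route is genuinely different from the paper's: you propose to re-run the Volterra construction of Prop.~\ref{heatsemest} directly on $C_0(\bbbr\times[-1,1],C_0^k(M,\Ol{\mu}\E_i))$, whereas the paper never rebuilds the semigroup on $C_0^k$ at all. It only observes that $E(r)_t$ is strongly continuous there, satisfies all the bounds in the statement (Prop.~\ref{semigroupm} on the closed models, estimate \eqref{estimcyl} on the cylinder) together with $\|\frac{d}{dt}E(r)_t\|\le Ct^{-1}$ for small $t$, and that by the kernel estimate \eqref{compheatker} the difference $E(r)_t-e^{-t\D(r,\rho)^2}$ and its first derivatives define operators on $C_0^k$ bounded by $Cte^{\ve t}$; this at once gives strong continuity, the stated bounds, the bound $\|\frac{d}{dt}e^{-t\D(r,\rho)^2}\|\le Ct^{-1}$, and then holomorphy via Prop.~\ref{exthol}. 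Most of your version could be pushed through, but your argument for the crucial long-time estimate $\|e^{-t\D(r,\rho)^2}\|\le Ce^{\ve t}$ on $C_0^k$ has a genuine gap: Prop.~\ref{invspec} is a statement about invertibility on $L^2(M,\Ol{\mu}\A_i^n)$ for operators with kernel in $L^2(M\times M)\ten_{\pi}M_n(\A_i)$; it says nothing about spectra on $C_0^k$, and, more seriously, on a manifold with cylindric ends the heat kernel of $\D(r,\rho)^2$ is \emph{not} in that class (it has no decay along the diagonal of the infinite cylinder; only Schwartz kernels are covered by the remark following Prop.~\ref{invspec}), so Lott's truncation argument is not available for $e^{-t\D(r,\rho)^2}$ at all. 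Even in the closed case the passage from the $L^2(M,\Ol{\mu}\E_i)$ spectral radius to a $C^k$ bound needs the extra kernel-factorization step preceding \eqref{estimhkcm}, which you omit. The correct fix is exactly the comparison you already invoke for the $C_0$-decay: $E(r)_t$ inherits the $Ce^{\ve t}$ bound on $C_0^k$ from the local models, and \eqref{compheatker} controls the remainder by $Cte^{\ve t}$; use that and drop the spectral-radius step.

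Two smaller inaccuracies: the kernel of $\sum_j[\D_{\E},c(d\zeta_j)]e^{-t\D_j(r)^2}\phi_j$ is compactly supported in the \emph{first} variable (in $U_1$), not in $y$ (for $j=2$ the factor $\phi_2$ lives on the infinite end), so boundedness on $C_0^k$ and the confinement of the Volterra iterates rest on compact $x$-support plus Gaussian off-diagonal decay in $y$ (Prop.~\ref{kersupconcyl}), not on compact $y$-support. Also $\frac{d}{dr}\D(r,\rho)^2$ is not a first-order differential operator: it is of the form $\ad\D+\D\ad$ with $\ad=-\psi'(r)c(dx_1)\chi\tilde A$, and the $s^{-1/2}$ bound on $C^k$ is available precisely because $A$ is adapted (Prop.~\ref{semigroupm}(4), Lemma~\ref{diffopcyl}(2)); finally, since $C_0^k\not\subset L^2$ on the noncompact $M$, the identification of your $C_0^k$-semigroup with the one on $L^2(M,\Ol{\mu}\E_i)$ should be made on a common core such as ${\mathcal S}(M,\Ol{\mu}\E_i)$ (or on the level of integral kernels), not by ``uniqueness of extensions''.
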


\begin{proof}
The operator $E(r)_t$ is strongly continuous in $t$. It fulfills the bounds in the assertion by the estimates in \S \ref{semigroupm} and by estimate \ref{estimcyl}. Furthermore, since $E(r)_t$ is defined from the holomorphic semigroups on a closed manifold and the cylinder, on $C_0(\bbbr \times [-1,1],C_0^k(M,\Ol{\mu}\E_i))$ for $t$ small
$$\|\frac{d}{dt} E(r)_t\| \le Ct^{-1} \ .$$
The estimate \ref{compheatker} shows that $E(r)_t-e^{-t\D(r,\rho)^2}$ is well-defined as a bounded operator on $C_0(\bbbr \times [-1,1],C_0^k(M,\Ol{\mu}\E_i))$. Furthermore it is of class $C^1$ with respect $(r,\rho,t)$ and it and its first derivatives are bounded by $C te^{\ve t}$ for $t>0$. It follows that $e^{-t\D(r,\rho)^2}$ is a strongly continuous semigroup fulfilling the estimates. This and the previous estimate imply that for $t$ small 
$$\|\frac{d}{dt}e^{-t\D(r,\rho)^2} \| \le Ct^{-1} \ .$$
Thus, by Prop. \ref{exthol}, the semigroup is holomorphic.
\end{proof}

\section{The semigroup associated to the superconnection}
\label{Msupcon}

We define the superconnection associated to $\D(r,\rho)$ by $$\spc(r,\rho):=P\di P+ \gamma + \D(r,\rho)$$ and the rescaled superconnection by $$\spc(r,\rho)_t:=P\di P + \gamma  + \sqrt t \D(r,\rho) \ .$$ Later we will set $r=t$, however the following calculations are more transparent with both variables kept separate.

The curvature $$\spc(r,\rho)^2=\D(r,\rho)^2+ [P\di P + \gamma,\D(r,\rho)] + (P\di P + \gamma)^2$$  is a nilpotent perturbation of $\D(r,\rho)^2$, and the difference $\spc(r,\rho)^2-\D(r,\rho)^2$ is bounded on $C_0(\bbbr \times [-1,1],L^2(M,\Ol{\mu}\E_i))$ resp. on $C_0(\bbbr \times [-1,1],C^k_0(M,\Ol{\mu}\E_i))$. It follows that $e^{-t\spc(r,\rho)^2}$ is a holomorphic semigroup on $C_0(\bbbr \times [-1,1],L^2(M,\Ol{\mu}\E_i))$ and on $C_0(\bbbr \times [-1,1],C^k_0(M,\Ol{\mu}\E_i))$. If $N$ is the operator that multiplies a homogenous element (with respect to the $\bbbz$-grading on $\Ol{\mu}\A_i$) of order $m$ by $m$, then $$e^{-\spc(r,\rho)_t^2}= t^{-N/2} e^{-t\spc(r,\rho)^2}t^{N/2} \ .$$

The strategy for the study of $e^{-\spc(r,\rho)_t^2}$ is as in the previous section the comparison with an appropriate approximation. 

For $j=0,1,2$ set $\spc_j(r)=P_j \di P_j + \gamma_j + \D_j(r)$ and $\spc_j(r)_t:=P_j \di P_j +  \gamma_j + \sqrt t \D_j(r)$. 
We define $$H(r)_t:=\sum_{j=0}^2 \zeta_j e^{-\spc_j(r)_t^2} \phi_j$$
and write $h(r)_t$ for its integral kernel.

Since $e^{-\spc(r,\rho)_t^2}$ is not a semigroup, we cannot directly use Duhamel's principle as in the previous section. This makes the proof of the following proposition more technical.

\begin{prop}
\label{compare}
The operator $e^{-\spc(r,\rho)_t^2}$ is an integral operator with integral kernel $p(r,\rho)_t \in \C(M \times M,\Oei \boxtimes_{\Ol{\mu}\A_i} (\Oei)^*)$, which is of class $C^1$ in $(r,\rho,t)$. The following estimates hold:
\begin{enumerate}
\item For any $\ve>0,~c>4$ there is $C>0$ such that for $t>\ve,~\rho \in [-1,1],~r\in \bbbr$ 
$$|p(r,\rho)_t(x,y)-h(r)_t(x,y)|  \le C e^{\ve t}e^{ -\frac{d(y,U_0 \cup U_1)^2}{ct}} \ .$$
\item For $T>0$ and $c>4$ there is $C>0$ such that for $0<t<T$ 
$$|p(0,0)_t(x,y)-h(0)_t(x,y)| \le C\sum_{j=0}^2 e^{-\frac{d(y,\supp d\zeta_j)^2}{ct}}1_{\supp \phi_j}(y) \ .$$
\end{enumerate}
Analogous estimates hold for the partial derivatives with respect to $x,y$. The first estimate also holds for the first derivatives with respect to $r, \rho$.
\end{prop}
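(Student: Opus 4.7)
The strategy mirrors \S \ref{heatkernel}, except that $\spc(r,\rho)_t$ is not a semigroup because of the explicit $t$-dependence in the rescaling. My first step would be to pass to the genuine semigroup $e^{-t\spc(r,\rho)^2}$ via the identity $e^{-\spc(r,\rho)_t^2}=t^{-N/2}e^{-t\spc(r,\rho)^2}t^{N/2}$ noted just before the proposition, and establish the existence and smoothness of its integral kernel by comparing it with the parametrix
\[
\tilde H(r)_t := \sum_{j=0}^2 \zeta_j e^{-t\spc_j(r)^2}\phi_j.
\]
Each summand has a smooth integral kernel: for $j=0,1$ by Prop. \ref{intkersupcon}, and for $j=2$ on the cylinder by combining eq. \ref{cyleq1} with Prop. \ref{kersupconcyl}.

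Next I would apply Duhamel's principle in complete analogy with \S \ref{heatkernel}, using that $\spc(r,\rho)^2-\D(r,\rho)^2$ and $\spc_j(r)^2-\D_j(r)^2$ are bounded nilpotent perturbations of the corresponding Dirac squares:
\begin{align*}
\tilde H(r)_t - e^{-t\spc(r,\rho)^2}
&= \sum_{j=0}^2 \int_0^t e^{-s\spc(r,\rho)^2}[\spc(r,\rho)^2,\zeta_j]e^{-(t-s)\spc_j(r)^2}\phi_j\,ds\\
&\quad + \int_0^t e^{-s\spc(r,\rho)^2}\bigl(\rho(\Ki\D(r,0)+\D(r,0)\Ki)+\rho^2\Ki^2\bigr)\tilde H(r)_{t-s}\,ds.
\end{align*}
The commutator $[\spc(r,\rho)^2,\zeta_j]$ is a first-order differential operator supported in $\supp d\zeta_j\subset U_1$, while the $\Ki$-term is compactly supported in $(U_0\cup U_1)\times M$ and vanishes when $\rho=0$.

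To convert this into pointwise kernel estimates I would combine (i) the off-diagonal Gaussian bounds for the model heat kernels on $W$, $Y$ and $Z$, obtained from \cite[Thm. 2.30]{bgv} in the closed and circle-bundle cases and from Prop. \ref{kersupconcyl} in the cylinder case, transplanted to the superconnection setting via Volterra expansion in the bounded nilpotent part, with (ii) the uniform $C_0^k$-bound $\|e^{-s\spc(r,\rho)^2}\|\le Ce^{\ve s}$ inherited from Prop. \ref{heatsemC} by the same nilpotent-perturbation argument used for Prop. \ref{intkersupcon}. Because the $y$-variable is forced into $\supp d\zeta_j$ or $U_0\cup U_1$ when passing through $[\spc(r,\rho)^2,\zeta_j]$ or through $\Ki$, the Gaussian factor $e^{-d(y,U_0\cup U_1)^2/(ct)}$ emerges from the right-hand factor $e^{-(t-s)\spc_j(r)^2}$. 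Rescaling back by $t^{N/2}$ only alters the constants (the nilpotent degree being finite), which yields statement (1). For statement (2), setting $r=\rho=0$ eliminates the $\Ki$-integral entirely, the $y$-variable is now localised in $\supp d\zeta_j$, and since $e^{-s\spc(0,0)^2}$ is uniformly bounded for $s\in[0,T]$, integration in $s\in[0,t]$ yields the prefactor $Ct$ together with the sharper Gaussian $e^{-d(y,\supp d\zeta_j)^2/(ct)}1_{\supp\phi_j}(y)$.

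The derivative estimates in $(r,\rho,t)$ are then obtained by differentiating under the Duhamel integral and invoking the corresponding derivative bounds in Prop. \ref{semigroupm} (4), Prop. \ref{intkersupcon} and Prop. \ref{kersupconcyl}. The main obstacle I anticipate is propagation of the Gaussian off-diagonal bound through the left-hand convolution with $e^{-s\spc(r,\rho)^2}$: this operator has no a priori Gaussian kernel bound on $M$, so all of the decay must come from the adjacent model factor $e^{-(t-s)\spc_j(r)^2}$ evaluated at $(z,y)$ with $z\in\supp d\zeta_j$, together with a $z$-uniform $L^\infty$-bound on the left factor that is provided by Prop. \ref{heatsemC}.
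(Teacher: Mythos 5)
Your overall route is the same as the paper's: you compare with the same patched parametrix $\sum_j \zeta_j e^{-t\spc_j(r)^2}\phi_j$, use the same Duhamel identity (with the commutator reducing to $\D_{\E}c(d\zeta_j)+c(d\zeta_j)\D_j$ plus the $\Ki$-term), and draw the Gaussian decay from the model factors via Prop. \ref{kersupconcyl} and the closed-manifold asymptotics, with the left factor controlled only in operator norm. That part is sound and matches the paper.

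The genuine gap is in how you dispose of the rescaling, and it hits exactly statement (2). Conjugation by $t^{N/2}$ multiplies the component of the kernel of noncommutative-form degree $m$ by $t^{-m/2}$; for $t>\ve$ this is a bounded factor, so your remark that rescaling ``only alters the constants'' is fine for statement (1), but for $0<t<T$ it is false, and your small-time argument only bounds the \emph{unrescaled} difference $W(r,t)-e^{-t\spc(0,0)^2}$ by $Ct$ times the Gaussian. After rescaling, the degree-$m$ part of that bound becomes $Ct^{1-m/2}$, which is not $O(t)$ for $m\ge 2$ and blows up for $m\ge 3$, so statement (2) does not follow as written. What is missing is the bookkeeping showing that the degree-$n$ part of the unrescaled Duhamel term itself vanishes to order $t^{n/2}$: this is precisely why the paper runs the Volterra development first, writing $e^{-t\spc(r,\rho)^2}=\sum_n t^{n/2}J_n(r,\rho,t)$ with $J_n$ uniformly bounded for small $t$ and decomposing the model factors into homogeneous pieces $V_m^j$, so that the rescaled degree-$n$ contribution carries the factor $t^{-n/2}s^{(n-m)/2}(t-s)^{m/2}\le 1$ inside the $s$-integral and integration over $[0,t]$ still produces the prefactor $Ct$. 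You need to add this homogeneous-degree/power-of-$t$ tracking (or an equivalent argument) to close statement (2); once it is in place, the same decomposition also delivers the $C^1$-in-$(r,\rho,t)$ statements and the derivative estimates you assert at the end.
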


\begin{proof}
In order to keep track of the rescaling process we use Volterra development. 

With $B=[P\di P+ \gamma,\D(r,\rho)] + (P\di P + \gamma)^2$ and 
$$I_n(r,\rho,t)=\int_{\Delta^n} e^{-u_0t\D(r,\rho)^2}Be^{-u_1t\D(r,\rho)^2} \dots Be^{-u_nt\D(r,\rho)^2}~du_0du_1 \dots du_n$$
we have that
$$e^{-t\spc(r,\rho)^2}= \sum\limits_{n=0}^{\infty} (-1)^n t^n I_n(r,\rho,t) \ .$$
Only the first $\mu +1$ terms of the series are non-trivial.

The operators $I_n(r,\rho,t)$ are bounded operators on $L^2(M,\Ol{\mu}\E_i)$ resp. on $C^k_0(M,\Ol{\mu}\E_i)$, which are of class $C^1$ in $(r,\rho,t)$ for $t>0$. The derivatives with respect to $r$ and $\rho$ converge for $t \to 0$. 
 
Prop. \ref{heatsemest} and \ref{heatsemC} imply that for any $\ve>0$ there is $C>0$ such that for all $t\ge 0$ on $C_0([-1,1]\times \bbbr,L^2(M,\Ol{\mu}\E_i))$ as well as on $C_0([-1,1]\times \bbbr,C_0^k(M,\Ol{\mu}\E_i))$
$$\| I_n(r,\rho,t) \| \le Ce^{\ve t} \ .$$  
In the following homogenous components are taken with respect to the grading on $\Ol{\mu}\A_i$.
We define $I_n(r,\rho,t)_m$ to be the homogenous component of degree $m$. It holds that $I_n(r,\rho,t)_m=0$ for $m<n$ or $m>2n$. 

Furthermore we implicitely define operators $J_m(r,\rho,t)$, homogenous of degree $m \in \bbbn_0$, by
$$e^{-\spc(r,\rho)_t^2}= \sum_{m=0}^{\infty} J_m(r,\rho,t) \ .$$ 
From $$J_m(r,\rho,t)=\sum_{n=[(m+1)/2]}^m (-1)^nt^{n-m/2} I_n(r,\rho,t)_m$$ it follows
that for any $\ve>0$ there is $C>0$ such that $\|J_m(r,\rho,t)\| < Ce^{\ve t}$ for $t\ge 0$ on $C_0([-1,1] \times \bbbr, L^2(M,\Ol{\mu}\E_i))$ resp. on $C_0([-1,1]\times \bbbr,C^k_0(M,\Ol{\mu}\E_i))$. 

Let $W(r,t):=\sum_{j=0}^2 \zeta_j e^{-t\spc_j(r)^2} \phi_j$
and for $m \in \bbbn_0$ let $V_m^j(r,t)$ be the unique homogenous operator of degree $m$ such that
$$e^{-t\spc_j(r)^2}=\sum_{m=0}^{\infty} V^j_m(r,t) \ .$$

For $t>0$ the operators $W(r,t)$, $V^j_m(r,t)$ are integral operators with smooth integral kernels, which are of class $C^1$ in $(r,t)$.

Duhamel's principle yields that
\begin{align*}
\lefteqn{W(r,t)- e^{-t \spc(r,\rho)^2}}\\
&= \int_0^t e^{-s \spc(r,\rho)^2}(\frac{d}{dt}+ \spc(r,\rho)^2)W(r,t-s)~ds\\
&=\sum_{j=0}^2 \int_0^t e^{-s \spc(r,\rho)^2}\bigl(-\zeta_j \spc_j(r)^2 + \\
&\qquad (\spc(r,0)^2 + \rho [\D(r,0)+P\di P+ \gamma, \Ki] + \rho^2 \Ki^2) \zeta_j\bigr)e^{-(t-s)\spc_j(r)^2}\phi_j~ds\\
&=\sum_{j=0}^2 \int_0^t e^{-s\spc(r,\rho)^2}\bigl(\D_{\E}c(d \zeta_j)+ c(d\zeta_j) \D_j- \\
& \qquad  \zeta_j[P_j \di P_j + \gamma_j,\D_j(r)]+
 [P\di P + \gamma,\D(r,0)]\zeta_j \bigr)e^{-(t-s)\spc_j(r)^2}\phi_j~ds \\
& \quad + \rho \sum_{j=0}^2 \int_0^t e^{-s\spc(r,\rho)^2}([\D(r,0)+P\di P+ \gamma, \Ki]  +\rho \Ki^2) \zeta_je^{-(t-s)\spc_j(r)^2}\phi_j~ds \\
&=\sum_{j=0}^2 \int_0^t e^{-s\spc(r,\rho)^2}(\D_{\E} c(d \zeta_j)+ c(d\zeta_j)\D_j)e^{-(t-s)\spc_j(r)^2}\phi_j ~ds \\
& \quad + \rho \sum_{j=0}^2 \int_0^t e^{-s\spc(r,\rho)^2}([\D(r,0)+P\di P+ \gamma, \Ki] + \rho \Ki^2) \zeta_je^{-(t-s)\spc_j(r)^2}\phi_j~ds \ . 
\end{align*}

The rescaled operator 
$$t^{-N/2}\sum_{j=0}^2 \int_0^t e^{-s\spc(r,\rho)^2}([\D(r,0)+P\di P+ \gamma, \Ki] + \rho \Ki^2) \zeta_je^{-(t-s)\spc_j(r)^2}\phi_j ~ds ~t^{N/2}$$ 
is an integral operator whose integral kernel is supported in $M\times (U_0\cup U_1)$ and bounded in by $Ce^{\ve t}$ uniformly in $(\rho,r) \in [-1,1] \times \bbbr$ for $t> \ve$.

By using that $$e^{-t \spc(r,\rho)^2}=\sum_{n=0}^{\infty} t^{n/2}J_n(r,\rho,t)$$
one gets that the degree $n$ component of the second before last line equals, after rescaling,
$$\sum_{m=0}^n\sum_{j=0}^2 t^{-n/2}\int_0^t s^{(n-m)/2}J_{n-m}(r,\rho,s)(\D_{\E} c(d \zeta_j)+ c(d\zeta_j)\D_j)V_m^j(r,t-s)\phi_j~ds \ .$$

Now the assertions follow from off-diagonal estimates applied to $(\D_{\E} c(d \zeta_j)+ c(d\zeta_j)\D_j)V_m^j(r,t-s)\phi_j$:
The first assumption follows from the off-diagonal estimates on the cylinder, see Prop. \ref{kersupconcyl}.  For the second assumption we use in addition the off-diagonal estimates for the heat kernel on a closed manifold, which follow from the asymptotic development \cite[Ch. 2]{bgv}. 
\end{proof}

\section{The index theorem}
\label{secindtheor}

We prove the Atiyah--Patodi--Singer index theorem using the approach of \cite{bk}. The notions of Hilbert--Schmidt operators and trace class operators used in the following have been defined in \cite{wa}. 

At this point we set $r=t$.

Let $\alpha:\bbbr \to [0,1]$ be a smooth function with $\alpha(x)=1$ for $x \le 0$ and $\alpha(x)=0$ for $x \ge 1$. For $R\ge 0$ set $\alpha_R(x)=\alpha(x-R)$. Then $\alpha_R$ defines a function on $M$, again denoted by $\alpha_R$, that equals $1$ on $M_c$ and $\alpha_R(x_1)$ on the cylindric end. 

Assume that $K$ is a bounded operator on $L^2(M,\Ol{\mu}\E_i)$ such that $\alpha_R K \alpha_R$ is trace class for any $R\ge 0$. Define 
$$\Tr_s' K:= \lim\limits_{R \to \infty} \Tr_s(\alpha_R K \alpha_R)$$ provided the limit exists. The definition does not depend of the choice of $\alpha$.

The following properties are easy to verify.
\begin{enumerate}
\item If $K$ is a trace class operator, then $\Tr_s'K=\Tr_s K$.
\item Let $K$ be an integral operator. If $\Tr_s' K$ exists, then
$\Tr_s' [P\di P+ \gamma,K]$ exists as well and equals $\di \Tr_s' K$.
\item Let $K_1,K_2$ be integral operators with continuous integral kernels $k_1,k_2$. Assume that one of the following conditions holds:
\begin{enumerate}
\item  The function $(x,y)\mapsto \tr_s (k_1(x,y)k_2(y,x))$ is compactly supported on $M \times M$.
\item  There are $f,g \in L^1(M), h \in L^1(\bbbr)$ such that $|k_1(x,y)| \le f(y)+ h(d(x,y))$ and $|k_2(x,y)| \le g(y)$ for all $x,y \in M$.
\end{enumerate} 
Then $\Tr_s'[K_1,K_2]=0$. 
\end{enumerate}

\begin{prop}
\label{uniflim}
The expressions
$$\Tr_s' e^{-\spc(t,\rho)_t^2}$$ and $$\Tr_s' \frac{d\spc(t,\rho)_t}{dt}e^{-\spc(t,\rho)_t^2}$$ are well-defined for $t>0$.

Furthermore there are $C,\delta>0$ such that for all $t>1$
$$| \Tr_s' e^{-\spc(t,\rho)_t^2}- \Tr_s \alpha_t e^{-\spc(t,\rho)_t^2}\alpha_t | \le Ce^{-\delta t} \ ,$$
$$| \Tr_s' \frac{d\spc(t,\rho)_t}{dt} e^{-\spc(t,\rho)_t^2}- \Tr_s \alpha_t    \frac{d\spc(t,\rho)_t}{dt} e^{-\spc(t,\rho)_t^2}\alpha_t | \le Ce^{-\delta t} \ .$$
\end{prop}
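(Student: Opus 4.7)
The plan is to compare $e^{-\spc(t,\rho)_t^2}$ with the model operator $H(t)_t$ via Proposition \ref{compare}, and to exploit the fact that the cylindric component of $h(t)_t$ has vanishing pointwise supertrace on the diagonal. Concretely, the remark after Proposition \ref{kersupconcyl} says that the pointwise supertrace of the integral kernel of $e^{-\spc^Z(r)_t^2}$ vanishes on the diagonal; since $\spc_2(t)=\spc^Z(t)$ on $U_2$, this gives $\tr_s h(t)_t(x,x)=0$ for $x\in U_2$, so the diagonal of $\tr_s h(t)_t$ is compactly supported in $U_0\cup U_1$, while Proposition \ref{compare}(1) controls the tail of $p(t,\rho)_t-h(t)_t$.

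For well-definedness, I would combine these two facts to get the pointwise bound
\begin{equation*}
|\tr_s p(t,\rho)_t(x,x)|\le |\tr_s h(t)_t(x,x)|+Ce^{\ve t}e^{-d(x,U_0\cup U_1)^2/(ct)}
\end{equation*}
for any $\ve>0$, $c>4$. The right-hand side is integrable over $M$, so by dominated convergence $\lim_{R\to\infty}\Tr_s(\alpha_R e^{-\spc(t,\rho)_t^2}\alpha_R)=\int_M\tr_s p(t,\rho)_t(x,x)\,dx$ exists, showing that $\Tr_s' e^{-\spc(t,\rho)_t^2}$ is well-defined.

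For the quantitative bound at $t>1$, the difference equals $\int_M(1-\alpha_t(x)^2)\tr_s p(t,\rho)_t(x,x)\,dx$, an integral over $\{x_1>t\}$ which lies inside $U_2$ once $t$ is large, so only the Gaussian piece from Proposition \ref{compare}(1) contributes. Using $d(x,U_0\cup U_1)\ge x_1-d_0$ for some constant $d_0$, a standard Gaussian tail estimate yields
\begin{equation*}
\int_t^\infty e^{-(u-d_0)^2/(ct)}\,du\le C_1 e^{-t/(4c)}
\end{equation*}
for $t$ large. Combined with the factor $e^{\ve t}$ and the finite volume of $N$, this gives a bound $Ce^{(\ve-1/(4c))t}$; choosing $\ve<1/(4c)$ and setting $\delta=1/(4c)-\ve>0$ produces the claimed exponential decay.

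The second expression is handled by the same scheme. The key algebraic input is that the pointwise supertrace of $\D_Z(r)e^{-\spc^Z(r)_t^2}$ on the diagonal vanishes as well (noted in \S\ref{cyl}), so the cylindric model again contributes nothing pointwise. Writing $\frac{d\spc(t,\rho)_t}{dt}=\frac{1}{2\sqrt t}\D(t,\rho)+\sqrt t\frac{d\D(t,\rho)}{dt}$, the derivative $\frac{d\D(t,\rho)}{dt}=-\psi'(t)c(dx_1)\chi\tilde A$ is compactly supported in $x_1$ and creates no difficulty. The main obstacle I anticipate is establishing the analogue of Proposition \ref{compare}(1) with a $\D$-insertion on the left; I expect this to follow by repeating its Volterra development argument, using the first-order differential-operator estimates of Proposition \ref{intkersupcon}(1) to preserve the Gaussian off-diagonal decay after insertion.
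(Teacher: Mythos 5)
Your proposal follows essentially the same route as the paper's proof: compare $e^{-\spc(t,\rho)_t^2}$ with the model $H(t)_t$ via Prop.~\ref{compare}, use the vanishing of the pointwise supertrace of the cylindrical heat kernel (and of $\D_Z(r)e^{-\spc^Z(r)_t^2}$) on the diagonal so that only the Gaussian tail of the difference survives far out on the cylinder, and conclude with a Gaussian tail estimate after choosing $\ve$ smaller than the resulting decay rate, the derivative term being treated analogously. The only slip is the claim that $\psi'(t)c(dx_1)\chi\tilde A$ is compactly supported in $x_1$ (it is not, since $\chi=1$ for $x_1\ge 2$); this term is nevertheless harmless because $\psi'(t)=0$ for $t>2$, and on the cylinder $c(dx_1)\tilde A\,e^{-\spc^Z(t)_t^2}$ lies in the image of $\Li$, so its pointwise supertrace on the diagonal vanishes as well.
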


\begin{proof}
By Prop. \ref{compare} and since $\tr_s h(t)_t(x,x)=0$ for $x \in Z^+$, for any $\ve>0, ~c>4$ there is $C>0$ such that  
$$|\tr_s p(t,\rho)_t(x,x)| \le Ce^{\ve t}e^{-\frac{d(x,U_0\cup U_1)^2}{ct}}$$
for all $x \in M,~t>\ve$.

In particular $\Tr_s' e^{-\spc(t,\rho)_t^2}$ is well defined.

Furthermore there are $C,\delta>0$ such that for $t>1$ 
$$\int_t^{\infty} e^{-\frac{s^2}{ct}}~ds < Ce^{-\delta t} \ .$$
Choose $\ve<\delta$.
Then the previous estimates imply that there is $C>0$ such that for all $t>1$
$$|\Tr_s' (1-\alpha_t^2)e^{-\spc(t,\rho)_t^2}| < Ce^{(\ve-\delta)t} \ .$$

The proof for $\Tr_s' \frac{d\spc(t,\rho)_t}{dt}e^{-\spc(t,\rho)_t^2}$ is analogous.
\end{proof} 

\subsection{The limit $t \to \infty$}
\label{liminf}

In the following we fix $\rho \neq 0$. Recall that $\Pj$ is the projection onto the kernel of $\D(r,\rho)$ for $r \ge 2$. (Clearly, $\Pj$ does not depend on $r \ge 2$). 
The second estimate of the following theorem is not used in the proof of the index theorem, however it is useful, for example, for the definition of $\eta$-forms for Dirac operators on manifold with cylindric ends and therefore is included here.

\begin{theorem}
\label{liminftheo}
There is $C>0$ such that for all $t>2$
$$|\Tr_s' e^{-\spc(t,\rho)^2_t} - \Tr_s e^{-(\Pj_0 \di\Pj_0 +\Pj_0\gamma \Pj_0)^2}| \le C  t^{-1/2} \ $$
and 
$$|\Tr_s' \frac{d\spc(t,\rho)_t}{dt}e^{-\spc(t,\rho)^2_t}| \le C t^{-\frac 32} \ .$$
\end{theorem}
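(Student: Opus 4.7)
The plan is to use Schur-complement / adiabatic-limit analysis. First I would restrict to $t > 2$, where $\psi(t) = 1$ makes $\D := \D(t,\rho)$ independent of $t$, with kernel projection $\Pj$ from Prop. \ref{kerhs} and gap $\|e^{-s(\D+\Pj)^2}\| \le Ce^{-\omega s}$ from Prop. \ref{heatsemest}(5); in particular $\D^2 \ge \omega$ on $\Pj_\perp L^2$, where $\Pj_\perp := 1-\Pj$. Writing $\mathbb{A} := P\di P + \gamma$, one has
$$\spc(t,\rho)_t^2 = t\D^2 + \sqrt{t}[\mathbb{A},\D] + \mathbb{A}^2,$$
and since $\Pj\D = \D\Pj = 0$, the block structure with respect to $\Pj \oplus \Pj_\perp$ is given by
$$a := \Pj\mathbb{A}^2\Pj, \quad d := \Pj_\perp\spc(t,\rho)_t^2\Pj_\perp, \quad b := \Pj\mathbb{A}\spc(t,\rho)_t\Pj_\perp, \quad c := \Pj_\perp\spc(t,\rho)_t\mathbb{A}\Pj;$$
in particular $a$ is $t$-independent while $d \ge \omega t$ (after absorbing the bounded subleading pieces) on $\Pj_\perp$. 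By Prop. \ref{uniflim}, $\Tr_s'$ may be replaced by $\Tr_s\,\alpha_t(\cdot)\alpha_t$ up to exponentially small errors.

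Next I would apply the Schur complement to compute the $\Pj$-$\Pj$ block of the resolvent,
$$\Pj(\spc(t,\rho)_t^2 - \lambda)^{-1}\Pj = \bigl(a - \lambda - b(d-\lambda)^{-1}c\bigr)^{-1},$$
for $\re\lambda$ sufficiently negative. Expanding $(d-\lambda)^{-1}$ in powers of $\sqrt{t}\,\Pj_\perp[\mathbb{A},\D]\Pj_\perp$ around $(t\D^2|_{\Pj_\perp} - \lambda)^{-1}$ and using the key algebraic identity $\sqrt{t}\,\Pj\mathbb{A}\D\cdot(t\D^2)^{-1}\cdot\sqrt{t}\,\D\mathbb{A}\Pj = \Pj\mathbb{A}\Pj_\perp\mathbb{A}\Pj$, one obtains
$$b(d-\lambda)^{-1}c = \Pj\mathbb{A}\Pj_\perp\mathbb{A}\Pj + O(t^{-1/2}),$$
so that $a - b(d-\lambda)^{-1}c = (\Pj\mathbb{A}\Pj)^2 + O(t^{-1/2})$. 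Converting back to exponentials via the contour representation $e^{-X} = (2\pi i)^{-1}\int_\Gamma e^{-\lambda}(\lambda - X)^{-1}d\lambda$, and observing that $\Tr_s\,\Pj_\perp e^{-\spc(t,\rho)_t^2}\Pj_\perp = O(e^{-\omega t})$ by the spectral gap, yields $\Tr_s'\,e^{-\spc(t,\rho)_t^2} = \Tr_s e^{-(\Pj\mathbb{A}\Pj)^2} + O(t^{-1/2})$. Finally, $(\Pj\mathbb{A}\Pj)^2$ is the curvature of the connection $\Pj\mathbb{A}\Pj$ on the finitely generated projective $\A$-module $\Ran\Pj$ (cf. Prop. \ref{kerhs}(1)), which reduces to $\Pj\di\Pj\di\Pj$ in $\Oi\Ai/\ov{[\Oi\Ai,\Oi\Ai]}$ since $\gamma$ is antihermitian and enters only through graded commutators killed under the trace.

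For the second estimate, $\psi'(t) = 0$ for $t > 2$ gives $\frac{d\spc(t,\rho)_t}{dt} = (2\sqrt{t})^{-1}\D$. Because $\Pj\D = \D\Pj = 0$, the $\Pj$-diagonal part of $\D\,e^{-\spc(t,\rho)_t^2}$ drops out of $\Tr_s'$, and a Schur-type computation analogous to the above (or equivalently a two-insertion Duhamel estimate exploiting the extra $\D$ to force passage through $\Pj_\perp$) shows $\Tr_s'\,\D e^{-\spc(t,\rho)_t^2} = O(t^{-1})$, which yields the desired $t^{-3/2}$ bound after the prefactor. The main obstacle is to justify the Schur-complement and resolvent manipulations rigorously in the setting of $\Oi\Ai$-valued unbounded operators on the $L^2$-modules: existence and $C^1$-dependence of $d^{-1}$, convergence of the expansion of $(d-\lambda)^{-1}$ in the $\sqrt{t}$-perturbation, and compatibility of the contour integral representation with the $\alpha_t$-cutoff and termwise supertrace. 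The off-diagonal integral-kernel bounds of Prop. \ref{compare} together with the regular-parametrix constructions of \S \ref{cutpaste} will provide the trace-class estimates needed to legitimize each step.
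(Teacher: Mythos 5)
Your Feshbach/Schur-complement reduction with respect to $\Pj\oplus\Pj_\perp$ is a genuinely different route from the paper's proof, which instead conjugates $\spc(t,\rho)^2$ by an element $g\in 1+{\mathcal N}_1$ (adapting BGV Lemma 9.21) so that the curvature becomes $(\Pj\di\Pj)^2+\Pj_\perp\D(t,\rho)^2\Pj_\perp$ plus controlled nilpotent errors, and then uses rescaling, Volterra development and a cutoff-commutator argument. Your skeleton is reasonable, but it has a concrete gap at the identification of the limit. The Schur complement converges to $\bigl(\Pj(P\di P+\gamma)\Pj\bigr)^2$, i.e.\ the curvature of the projected connection on $\Ran\Pj$. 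Since $P\Pj=\Pj P=\Pj$ one does have $\Pj(P\di P)\Pj=\Pj\di\Pj$, but the remaining term $\Pj\gamma\Pj$ has no reason to vanish, and your justification that $\gamma$ ``enters only through graded commutators killed under the trace'' is not valid: $\Tr_s e^{-(\Pj\di\Pj+\Pj\gamma\Pj)^2}$ and $\Tr_s e^{-\Pj\di\Pj\di\Pj}$ are Chern character forms of the same projective module for two different connections, so they agree only modulo exact forms, and this $t$-independent discrepancy cannot be absorbed into the $O(t^{-1/2})$ error. To obtain the statement as written you must gauge this term away, which is precisely what the paper's conjugation lemma accomplishes (pushing it into $B_0\in\Pj{\mathcal N}_3\Pj$); proving the estimate with the projected connection instead would still suffice for Theorem \ref{indtheor}, which lives in homology, but it is not the stated inequality.

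A second, more technical gap is your treatment of the $\Pj_\perp$-block and of the second estimate. On the manifold with cylindric ends $\Pj_\perp e^{-\spc(t,\rho)_t^2}\Pj_\perp$ is not trace class, and ``$O(e^{-\omega t})$ by the spectral gap'' does not follow from operator-norm decay alone: the regularized supertrace $\Tr_s\,\alpha_t(\cdot)\alpha_t$ integrates the diagonal of the kernel over a region whose size grows with $t$, so one needs pointwise kernel decay (via Prop. \ref{heatsemest}(5) and smoothing factorizations), the vanishing of the pointwise supertrace of the cylindrical model kernel, and the commutation $[\alpha_R,[P\di P+\gamma,\D(t,\rho)]]=0$ for large $R$; this is exactly the content of the Lemma 4.4.5-type estimate from \cite{wa} invoked in the paper's proof, and the same bookkeeping is needed to justify interchanging your contour integral and Schur expansion with $\Tr_s'$ uniformly in $\lambda$. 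Finally, the bound $\Tr_s'\,\D(t,\rho)e^{-\spc(t,\rho)_t^2}=O(t^{-1})$ underlying the second inequality is only asserted; in the paper it requires the additional structural input $g_t\D(t,\rho)g_t^{-1}\in\D(t,\rho)+t^{-1/2}({\mathcal N}_1\D(t,\rho)+\D(t,\rho){\mathcal N}_1)+t^{-1}{\mathcal N}_1$ together with $\Pj\D(t,\rho)=0$, and your sketch would need an analogous quantitative argument.
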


\begin{proof}
We set $\Pj_0=\Pj$ and $\Pj_1=1-\Pj_0$.

For the first estimate the proof of \cite[Theorem 9.19]{bgv} works in slightly modified form. We sketch the argument. Let ${\mathcal N}_m,~ m\in \bbbn$ denote the nilpotent algebra of trace class operators on $L^2(M,\Ol{\mu}\E_i)$ that increase the degree with respect to the $\bbbz$-grading on $\Ol{\mu}\A_i$ by at least $m$ and that are of the form $\sum_j f_j g_j^*$ with $f_j,g_j \in {\mathcal S}(M,\Ol{\mu}\E_i)$. Then, by an adaption of \cite[Lemma 9.21]{bgv}, there is $g \in 1+{\mathcal N}_1$, such that 
$$g \spc(t,\rho)^2 g^{-1}  =(\Pj_0 \di\Pj_0 +\Pj_0\gamma \Pj_0)^2+\Pj_1\D(t,\rho)^2\Pj_1+B_0+B_1$$
with $B_0 \in \Pj_0 {\mathcal N}_3\Pj_0$ and $B_1 \in \Pj_1[P\di P + \gamma, \D(t,\rho)]\Pj_1 + \Pj_1{\mathcal N_1} \Pj_1$.

Recall that $\spc(t,\rho)_t^2=t^{-N/2+1}\spc(t,\rho)^2t^{N/2}$. Define $g_t=t^{-N/2}g t^{N/2}$. Then $$g_t\spc(t,\rho)_t^2 g_t^{-1}=t^{-N/2+1}g\spc(t,\rho)^2g^{-1} t^{N/2} \ .$$

This implies that 
\begin{align*}
\Tr_s' e^{-\spc(t,\rho)_t^2}&=\Tr_s'\exp(t^{-N/2+1}g\spc(t,\rho)^2g^{-1} t^{N/2})\\
&=\Tr_s \exp(- (\Pj_0 \di \Pj_0+\Pj_0\gamma\Pj_0)^2- t^{-N/2+1}B_0t^{N/2}) \\
&\quad + \Tr_s'\exp(- t \Pj_1\D(t,\rho)^2- t^{-N/2+1}B_1t^{N/2}) \ .
\end{align*}

By Volterra development there is $C>0$ such that for $t>2$
$$|\Tr_s \exp(- (\Pj_0 \di \Pj_0+\Pj_0\gamma\Pj_0)^2- t^{-N/2+1}B_0t^{N/2})-\Tr_s e^{- (\Pj_0 \di \Pj_0+\Pj_0\gamma\Pj_0)^2}|\le Ct^{-1/2} \ .$$ 
Furthermore
\begin{align*}
\lefteqn{\exp(- t \Pj_1\D(t,\rho)^2-t^{-N/2+1}B_1t^{N/2})}\\
&=t^{-N/2}\sum_{n=0}^{\infty}(-1)^nt^n \int_{\Delta^n} \Pj_1e^{-u_0t\D(t,\rho)^2}B_1\Pj_1e^{-u_1t\D(t,\rho)^2} \\
&\qquad \dots B_1\Pj_1e^{-u_nt\D(t,\rho)^2}~du_0du_1 \dots du_n~ t^{N/2}\ .
\end{align*}
We can decompose the right hand side further by decomposing $B_1=B_{11}+B_{12}$ with $B_{11}=\Pj_1[P\di P+ \gamma, \D(t,\rho)]\Pj_1$ and $B_{12} \in \Pj_1{\mathcal \N_1} \Pj_1$. Any term in the development having a factor $B_{12}$ is a trace class operator whose supertrace vanishes exponentially for $t \to \infty$ by Prop. \ref{heatsemest} (5).

As in the proof of \cite[Lemma 4.4.5]{wa} one shows that there are $C,\delta>0$ such that for $t>2$
$$| \Tr_s \alpha_t^2 \int_{\Delta^n} \Pj_1e^{-u_0t\D(t,\rho)^2} B_{11} \Pj_1e^{-u_1t\D(t,\rho)^2} \dots $$
$$ \dots B_{11} \Pj_1e^{-u_nt\D(t,\rho)^2}~du_0du_1 \dots du_n | \le C e^{-\delta t} \ .$$

By Prop. \ref{uniflim} this implies the assertion.

The second estimate is the analogue of \cite[Theorem 9.23]{bgv}. For $t>2$ we have that
$$\frac{d\spc(t,\rho)_t}{dt}=\frac 12 t^{-1/2} \D(t,\rho) \ .$$ The assertion follows as above by using that 
\begin{align*}
g_t\D(t,\rho)g_t^{-1} &= t^{-N/2}g\D(t,\rho)g^{-1}t^{N/2}\\
&\in \D(t,\rho)+t^{-N/2}(\N_1\D(t,\rho)+ \D(t,\rho)\N_1)t^{N/2}+ t^{-N/2}{\mathcal N}_2t^{N/2}
\end{align*} 
and that any operator in $t^{-N/2}\N_m t^{N/2}$ is bounded by $Ct^{-m/2}$ in the trace class norm for $t>2$. We also use that $\Pj_0\D(t,\rho)=0$.
\end{proof}

\subsection{Variation formulas}

Recall the definition of $\spc^N(r)_t$ from the end of \S \ref{eta}. 

\begin{prop}
\label{varfor}
We have that
$$\frac{d}{dt} \Tr_s' e^{-\spc(t,\rho)_t^2}= -\frac{1}{\sqrt{\pi }} \Tr_{\sigma}\sigma\frac{d\spc^N(t)_t}{dt} e^{-\spc^N(t)_t^2}- \di \Tr_s' \frac{d\spc(t,\rho)_t}{dt}e^{-\spc(t,\rho)_t^2}$$
and
$$\frac{d}{d \rho} \Tr_s' e^{-\spc(t,\rho)_t^2} = - \di \Tr_s \frac{d\spc(t,\rho)_t}{d\rho} e^{-\spc(t,\rho)^2_t} \ .$$
\end{prop}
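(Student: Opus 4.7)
The plan is to apply Duhamel's formula (Prop.~\ref{duhform}) to differentiate $e^{-\spc(t,\rho)_t^2}$ in each variable and then analyse the resulting graded commutator in the supertrace. The $\rho$-variation is essentially local, since $\frac{d\spc(t,\rho)_t}{d\rho}=\sqrt t\,\Ki$ has compactly supported integral kernel, whereas the $t$-variation produces a genuine boundary contribution on the cylindrical end that must be identified with the $\eta$-type integrand on $N$.

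For the $\rho$-formula, Duhamel together with cyclic rearrangement (legitimate because $\sqrt t\,\Ki\,e^{-\spc(t,\rho)_t^2}$ is trace class) gives
$$\frac{d}{d\rho}\Tr_s e^{-\spc(t,\rho)_t^2}=-\Tr_s\Bigl[\spc(t,\rho)_t,\,\tfrac{d\spc(t,\rho)_t}{d\rho}e^{-\spc(t,\rho)_t^2}\Bigr].$$
Splitting $\spc(t,\rho)_t=(P\di P+\gamma)+\sqrt t\D(t,\rho)$, the commutator with $P\di P+\gamma$ yields $\di\Tr_s$ of the same operator (property~(2) of $\Tr_s'$, applied here to a trace-class operator), while the commutator with the differential operator $\sqrt t\D(t,\rho)$ contributes zero: both $\sqrt t\D(t,\rho)X$ and $X\sqrt t\D(t,\rho)$ are trace class for the compactly supported smoothing operator $X=\tfrac{d\spc(t,\rho)_t}{d\rho}e^{-\spc(t,\rho)_t^2}$, so graded cyclicity of the supertrace applies directly and the two pieces cancel.

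For the first formula, set $B=\frac{d\spc(t,\rho)_t}{dt}e^{-\spc(t,\rho)_t^2}$. The same Duhamel-cyclicity manipulation, now carried out inside the double cut-off $\alpha_R\cdot\alpha_R$ and justified in the limit by the estimates of Prop.~\ref{compare} and Prop.~\ref{kersupconcyl}, reduces the claim to evaluating $\Tr_s'[\spc(t,\rho)_t,B]$. The $(P\di P+\gamma)$-part again contributes $-\di\Tr_s' B$. For the commutator with $\sqrt t\D(t,\rho)$ I would push the differential operator past the cut-offs using $[\D,\alpha_R]=c(d\alpha_R)$ and invoke graded cyclicity on the trace-class pieces $\alpha_R B\alpha_R\cdot\sqrt t\D(t,\rho)$; the bulk terms cancel and only the boundary expression $-2\sqrt t\,\Tr_s\alpha_R c(d\alpha_R)B$, supported in $x_1\in[R,R+1]$, survives.

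The heart of the proof is the identification of the $R\to\infty$ limit of this boundary term with $\tfrac{1}{\sqrt\pi}\Tr_\sigma\sigma\tfrac{d\spc^N(t)_t}{dt}e^{-\spc^N(t)_t^2}$. For $R$ large one is past the supports of $\psi'$, $\chi'$ and of the integral kernel of $\Ki$, so on the relevant region $\D(t,\rho)$ agrees with the cylinder operator $\D_Z(\psi(t))$ and $B$ decomposes as $\tfrac{1}{2\sqrt t}\D_Z(\psi(t))e^{-\spc^Z(\psi(t))_t^2}$ plus the correction $-\sqrt t\,\psi'(t)c(dx_1)\tilde A\,e^{-\spc^Z(\psi(t))_t^2}$. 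Each piece is analysed using $c(d\alpha_R)=\alpha_R' c(dx_1)$, $c(dx_1)^2=-1$, the factorisation~(\ref{cyleq1}), $e^{-\spc^Z(\psi(t))_t^2}=e^{-t\Delta_\bbbr}\Li(e^{-\spc^N(t)_t^2})$, and the vanishing of the Gaussian $x_1$-derivative on the diagonal; the elementary integral $\int_R^{R+1}\alpha_R\alpha_R'\,dx_1=-\tfrac12$ collapses the $x_1$-integration, and the identity~(\ref{cyleq2}) converts $\Tr_s^N\tilde c\,\Li(\,\cdot\,)$ into $2\Tr_\sigma c(\,\cdot\,)$. A short computation of $\sigma\tfrac{d\spc^N(t)_t}{dt}=\tfrac{1}{2\sqrt t}(\D_N+\psi(t)A)+\sqrt t\,\psi'(t)A$ together with the scalar prefactor $\tfrac{2}{\sqrt{4\pi}}=\tfrac{1}{\sqrt\pi}$ shows that the two contributions combine into precisely $\tfrac{1}{\sqrt\pi}\Tr_\sigma\sigma\tfrac{d\spc^N(t)_t}{dt}e^{-\spc^N(t)_t^2}$. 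The main obstacle is making the cyclic manoeuvres and the $R\to\infty$ limit fully rigorous; this is where the Gaussian off-diagonal decay from Prop.~\ref{kersupconcyl} ensures that the bulk pieces genuinely cancel and the boundary expression converges uniformly in $R$.
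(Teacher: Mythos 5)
Your proposal is correct and follows essentially the same route as the paper: Duhamel's formula plus the kernel estimates of Prop.~\ref{compare} and Prop.~\ref{kersupconcyl} to justify the cyclicity/limit interchanges, the split of the supercommutator into the $(P\di P+\gamma)$-part (giving the exact term) and the Dirac part (giving a cut-off boundary term), and the identification of that boundary term via eq.~\ref{cyleq1}, eq.~\ref{cyleq2}, the vanishing of the Gaussian $x_1$-derivative on the diagonal and the elementary $\alpha_R$-integral. Your bookkeeping $-2\sqrt t\,\Tr_s\alpha_R c(d\alpha_R)B$ with $\int\alpha_R\alpha_R'=-\tfrac12$ is equivalent to the paper's single $c(d\alpha_R)$ with $\int_0^\infty\alpha_R'=-1$, and the scalar prefactors agree.
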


\begin{proof}
Since by \S \ref{cyl} the supertrace of $\frac{d}{dt} h(t)_t(x,x)$ vanishes for $x \in M\setminus (U_0 \cup U_1)$, it follows from Prop. \ref{compare} that there are $c,C>0$ such that
$$|\frac{d}{dt}\tr_s p(t,\rho)_t(x,x)| \le Ce^{-cd(x,U_0 \cup U_1)^2} $$
uniformly for $t$ in a compact subset of $(0,\infty)$. Hence 
$$\frac{d}{dt} \Tr'_s e^{-\spc(t,\rho)_t^2}= \Tr'_s \frac{d}{dt} e^{-\spc(t,\rho)_t^2} \ .$$

As in the proof of \cite[Lemma 4.4.9]{wa}, chain rule and Duhamel's formula (see Prop. \ref{duhform}) imply that
$$\frac{d}{dt} e^{-\spc(t,\rho)_t^2} = - \int_0^1 e^{- (1-s)\spc(t,\rho)_t^2}\frac{d \spc(t,\rho)^2_t}{dt} e^{- s\spc(t,\rho)_t^2}~ds$$
and
$$\frac{d}{d\rho} e^{-\spc(t,\rho)_t^2}
= - \int_0^1 e^{- (1-s)\spc(t,\rho)_t^2}\frac{d \spc(t,\rho)^2_t}{d\rho} e^{- s\spc(t,\rho)_t^2}~ds \ .$$

In the following $t>0$ is fixed.

We set $$A(s)=e^{-s\spc(t,\rho)_t^2}- \sum_{j=0}^2 \zeta_j e^{- s \spc_j(t)_t^2}\phi_j$$
and $$B(s)=\sum_{j=0}^2 \zeta_j e^{- s\spc_j(t)_t^2}\phi_j \ .$$
Then 
\begin{align*}
\lefteqn{ \int_0^1 e^{- (1-s)\spc(t,\rho)_t^2}\frac{d \spc(t,\rho)^2_t}{dt} e^{- s\spc(t,\rho)_t^2}~ds}\\
  &=  \int_0^1 e^{- (1-s)\spc(t,\rho)_t^2}\frac{d \spc(t,\rho)^2_t}{dt}A(s)~ds \\
& \quad +\int_0^1 A(1-s)\frac{d \spc(t,\rho)^2_t}{dt}B(s)~ds \\
&\quad + \int_0^1 B(1-s)\frac{d \spc(t,\rho)^2_t}{dt}B(s)~ds \ .
\end{align*}
We treat the first term on the right hand side of the equation. We denote the integral kernel of $A(s)$ by $A(s)(x,y)$ and similarly for other integral operators.

The proof of Prop. \ref{compare} can be modified easily to show that for $s\in [0,1]$ $$\|A(s)(\cdot,y)\|_{C_0^k}\le Ce^{-cd(y,U_0 \cup U_1)^2} \ .$$
This and the fact that $e^{-(1-s)\spc(t,\rho)_t^2}$ is uniformly bounded on $C_0^k(M,\Ol{\mu}\E_i)$ imply that there are $C,c>0$ such that for all $s \in [0,1]$ and $x \in M$
$$|\bigl( e^{- (1-s)\spc(t,\rho)_t^2}\frac{d \spc(t,\rho)^2_t}{dt}A(s)\bigr)(x,x) | \le Ce^{-cd(x,U_0 \cup U_1)^2} \ .$$
Thus we can interchange $\Tr_s'$ and integration over $s$.

Furthermore for fixed $s,t$ there are $C,c>0$ such that
$$|e^{- (1-s)\spc(t,\rho)_t^2}(x,y)| \le C(e^{-cd(x,y)^2}+e^{-cd(y,U_0 \cup U_1)^2}) \ .$$
Thus property (3) of $\Tr'_s$ (see the beginning of \S \ref{secindtheor}) implies that
$$\Tr_s'[e^{- (1-s)\spc(t,\rho)_t^2},\frac{d \spc(t,\rho)^2_t}{dt}A(s)]=0 \ .$$
Simllar arguments hold for the second term.

For the third term one checks that for all $x,y \in M\setminus (U_0 \cup U_1)$
$$\tr_s B(1-s)(x,y)(\frac{d \spc(t,\rho)^2_t}{dt}B(s))(y,x)=0 \ .$$
It follows that $\Tr_s'$ and integration on $s$ can be interchanged.
Again, property (3) of $\Tr_s'$ implies that 
$$\Tr_s'[ B(1-s),\frac{d \spc(t,\rho)^2_t}{dt}B(s)]=0 \ .$$
 
Summarizing and evaluating, we get that
\begin{align*}
\frac{d}{dt} \Tr_s' e^{-\spc(t,\rho)_t^2}&=-\int_0^1 \Tr_s' e^{- (1-s)\spc(t,\rho)_t^2}\frac{d \spc(t,\rho)^2_t}{dt} e^{- s\spc(t,\rho)_t^2}~ds \\
&=- \Tr_s' \frac{d \spc(t,\rho)^2_t}{dt} e^{- \spc(t,\rho)_t^2} \\
&= -\Tr'_s[\spc(t,\rho)_t, \frac{d\spc(t,\rho)_t}{dt}e^{-\spc(t,\rho)_t^2}]  \\
&= - \Tr'_s [P \di P+ \gamma,\frac{d\spc(t,\rho)_t}{dt}e^{-\spc(t,\rho)_t^2}] - \sqrt t \Tr'_s [\D(t,\rho),\frac{d\spc(t,\rho)_t}{dt}e^{-\spc(t,\rho)_t^2}] \\
&=- \di \Tr_s' \frac{d\spc(t,\rho)_t}{dt}e^{-\spc(t,\rho)_t^2}+\sqrt t \lim\limits_{R \to \infty} \Tr_s c(d\alpha_R)\frac{d\spc_2(t)_t}{dt}e^{-\spc_2(t)_t^2} \ .
\end{align*}
See the proof of \cite[Lemma 4.4.10]{wa} for omitted details.

Now
\begin{align*}
\Tr_s c(d\alpha_R)\frac{d\spc_2(t)_t}{dt}e^{-\spc_2(t)_t^2}&=\Tr_s\alpha'_R\frac{d\bigl(\sqrt t (\tilde \D_N + \psi(t) \tilde A)\bigr)}{dt}e^{-\spc_2(t)_t^2} \\
& \qquad - \frac 12 t^{-1/2} \Tr_s (\alpha'_R \ra_1 e^{-\spc_2(t)_t^2}) \ .
\end{align*}
The second term on the right hand side vanishes since $\ra_1 e^{-(x_1-y_1)^2/4t}$ vanishes on the diagonal.
By using eq. \ref{cyleq1} and \ref{cyleq2} and  
$$\int_0^{\infty} \alpha_R'(x_1)~dx_1 =-1$$ 
one sees that the first term on the right hand side equals
$$-\frac{2}{\sqrt{4\pi t}}\Tr_{\sigma}\frac{d\bigl(\sqrt t (\D_N + \psi(t) A)\bigr)}{dt}e^{-\spc^N(t)_t^2} \ .$$

This implies the first assertion of the proposition.

The equation 
$$\frac{d}{d \rho} \Tr_s' e^{-\spc(t,\rho)_t^2} = - \Tr_s' \frac{d\spc(t,\rho)^2_t}{d \rho} e^{-\spc(t,\rho)_t^2} $$
is proven by similar but simpler arguments as above. 

Hence
\begin{align*}
\frac{d}{d \rho} \Tr'_s e^{-\spc(t,\rho)_t^2}&= -\Tr'_s [\spc(t,\rho)_t, \frac{d\spc(t,\rho)_t}{d\rho}e^{-\spc(t,\rho)_t^2}] \\
&=-\sqrt t \Tr_s [P \di P+ \gamma, \Ki e^{-\spc(t,\rho)_t^2}] - t \Tr_s [\D(t,\rho),  \Ki e^{-\spc(t,\rho)_t^2}] \\
&=-\sqrt t \di \Tr_s  \Ki e^{-\spc(t,\rho)_t^2} \ .
\end{align*}
We used that
$$\Tr_s[\D(t,\rho),  \Ki e^{-\spc(t,\rho)_t^2}]= \Tr_s[\D(t,\rho)\Ki,  e^{-\spc(t,\rho)_t^2}] = 0 \ .$$
\end{proof}

\subsection{The theorem}

The proof of the index theorem is now as usual. Recall the definition of $\D_{\E}(A)$ from \S \ref{DirHilb}. In the following $M$ denotes the manifold without the isolated point, as in the beginning of \S \ref{DirHilb}.

\begin{theorem}
\label{indtheor}
In $H^{\ideal_{\infty}}_*(\Ai)$ it holds that
$$\ch(\ind \D_{\E}(A)^+)=(2 \pi i)^{-n/2} \int_M \hat A(M)\ch(\E/S) - \eta(\dira_N,A) \ .$$
\end{theorem}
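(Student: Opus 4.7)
The plan is the standard McKean--Singer/superconnection argument, using the ingredients assembled in Sections~\ref{Msupcon}--\ref{secindtheor}. Fix any $\rho \neq 0$ (say $\rho=1$) and consider, for $t>0$, the function
$$\omega(t):= \Tr_s' e^{-\spc(t,\rho)_t^2} \ \in \ \Oi \Ai/\ov{[\Oi\Ai,\Oi\Ai]} \ ,$$
which is well-defined by Prop.~\ref{uniflim}. The strategy is to compare $\omega(\infty)$ with $\omega(0^+)$, using Prop.~\ref{varfor} to control $\omega'(t)$ and produce the $\eta$-form as the difference.

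First I would analyse the large time limit. By Theorem~\ref{liminftheo}, $\omega(t) \to \Tr_s e^{-(\Pj \di \Pj)^2}$ as $t\to\infty$. As in \cite[Prop. 5.1.26]{wa}/standard higher index theory, the right hand side is exactly $\ch\bigl(\Ker(\D_\E(A)+\rho\Ki)\bigr) \in H^{\ideal_\infty}_*(\Ai)$, since $\Pj$ is a selfadjoint projection in $M_k(\Ai)$ by Prop.~\ref{kerhs}. Together with the identity $\ind \D_\E(A)^+ = [\Ker(\D_\E(A)+\rho\Ki)]-[\A^{\N}]$ from \S\ref{DirHilb}, this gives
$$\lim_{t\to\infty}\omega(t) = \ch(\ind\D_\E(A)^+) + \N \ .$$

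Next I would analyse the small time limit. The key input is Prop.~\ref{compare}(2), which says that on compact subsets the integral kernel $p(0,0)_t(x,x)$ differs from the model kernel $h(0)_t(x,x)$ by $O(t)$ uniformly. On each piece $U_j$ of the patching, $h(0)_t(x,x)$ is the diagonal of a heat kernel of a rescaled superconnection on a \emph{closed} manifold or the cylinder. By the Getzler rescaling performed in \S\ref{supcon} (formula \eqref{heatlim}), the supertrace of $h(0)_t(x,x)$ converges on $M_c$ to the local index density $(2\pi i)^{-n/2}\hat A(M)\ch(\E/S)$. On the cylindric end the supertrace of $h(0)_t(x,x)$ vanishes identically (remark after Prop.~\ref{kersupconcyl}). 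Finally, the isolated point $*$ contributes $\tr_s \Id_{\E'(*)}=\N$. Taking $\Tr'_s$ and using the truncation estimate of Prop.~\ref{uniflim} together with the Gaussian off-diagonal decay of Prop.~\ref{compare}(2) to justify exchange of limit and integration, one obtains
$$\lim_{t\to 0}\omega(t) = (2\pi i)^{-n/2}\int_M \hat A(M)\ch(\E/S) + \N \ .$$

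To connect the two limits, I integrate the first variation formula of Prop.~\ref{varfor} from $\ve$ to $T$ and let $\ve\to 0$, $T\to\infty$. Convergence of the $\eta$-integral is Prop.~\ref{etasmallt} at $t\to 0$ and Prop.~\ref{intkersupcon}(2) together with exponential convergence of $\psi(t)A$ at $t\to\infty$; convergence of
$$-\di \int_0^\infty \Tr_s' \frac{d\spc(t,\rho)_t}{dt}e^{-\spc(t,\rho)_t^2}\,dt$$
at $t\to\infty$ is provided by the $O(t^{-3/2})$ estimate of Theorem~\ref{liminftheo}, and at $t\to 0$ by the Getzler rescaling argument (the supertrace of $\D_\E\,e^{-\spc_t^2}$ has a limit, which forces integrability after multiplication by $t^{-1/2}$ combined with one extra Clifford degree of vanishing). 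Passing to homology classes $\di$-exact terms disappear, and one obtains
$$\ch(\ind\D_\E(A)^+) + \N \ - \ \Bigl[(2\pi i)^{-n/2}\int_M \hat A(M)\ch(\E/S) + \N\Bigr] \ = \ -\,\eta(\dira_N,A) \ ,$$
from which the theorem follows after cancelling $\N$.

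The main obstacle is the passage to the limit $t\to 0$ in $\omega(t)$: one must simultaneously verify (i) Getzler-type local convergence of $h(0)_t(x,x)$ on the interior, (ii) vanishing of the cylindrical contribution, and (iii) the uniform error estimate from Prop.~\ref{compare}(2) needed to commute $\lim_{t\to 0}$ with $\Tr_s'$. The same estimates feed into justifying the convergence and independence of $\rho$ of the interchange of integration over $t$ and $\Tr_s'$ used in the integrated variation formula; once these are secured the algebraic identification is immediate.
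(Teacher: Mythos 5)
Your overall strategy is the same as the paper's: transgress $\Tr_s'$ of the rescaled superconnection heat operator between $t\to 0$ and $t\to\infty$, identify the large-time limit with $\ch(\Ker(\D_\E(A)+\rho\Ki))$ via Theorem \ref{liminftheo} and Prop. \ref{projker}, identify the small-time limit with the local index density plus $\N$ via Prop. \ref{compare} and \S\ref{supcon}, and read off $\eta(\dira_N,A)$ from the boundary term of Prop. \ref{varfor}. However, there is a genuine gap in your small-time step. You fix $\rho=1$ for \emph{all} $t$ and then invoke Prop. \ref{compare}(2) to compute $\lim_{t\to0}\Tr_s' e^{-\spc(t,1)_t^2}$; but Prop. \ref{compare}(2) is proved only for $(r,\rho)=(0,0)$, i.e. without the $\Ki$-perturbation (part (1) covers general $\rho$ only for $t>\ve$, which is useless at $t\to0$). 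So the estimate you cite does not justify the limit you need. The paper avoids exactly this problem by choosing a path in the $(t,\rho)$-plane: it takes the small-time limit along $\spc(s,0)_s$ (where $\psi(s)=0$ and $\rho=0$, so Prop. \ref{compare}(2) applies), switches $\rho$ from $0$ to $1$ at $t=1$ using the second formula of Prop. \ref{varfor} (the $\rho$-variation is $\di$-exact, hence invisible in homology), and only then lets $t\to\infty$ at $\rho=1$. Your version could be repaired by a separate Duhamel expansion in the $\Ki$-terms showing that the supertrace of $e^{-\spc(t,1)_t^2}-e^{-\spc(t,0)_t^2}$ is $O(\sqrt t)$ as $t\to0$ (using that $\Ki$ and $\D_\E\Ki+\Ki\D_\E$ are smoothing with compactly supported kernels), but as written this step is unsupported.

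A secondary point: you require convergence of the full transgression integral $\int_0^\infty \di\,\Tr_s'\frac{d\spc(t,\rho)_t}{dt}e^{-\spc(t,\rho)_t^2}\,dt$, and your justification near $t=0$ ("one extra Clifford degree of vanishing") is vague and corresponds to an even-dimensional analogue of Prop. \ref{etasmallt} that the paper never proves. The paper does not need this: it applies Prop. \ref{varfor} only on finite intervals $[s,1]$, $[1,t]$ (plus the $\rho$-segment), where the exact terms are honest exact forms, and then passes to the limits $s\to0$, $t\to\infty$ inside $H^{\ideal_\infty}_*(\Ai)$, where the \emph{closure} of the range of $\di$ is divided out. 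Adopting that bookkeeping would remove the need for any small-time integrability of the exact term from your argument as well.
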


\begin{proof}
By the remarks at the end of \S \ref{DirHilb} for $r\ge 2,~\rho=1$
$$\ch (\ind \D_{\E}(A)^+)=\ch(\Ker \D(r,\rho))-\N$$ 
By Theorem \ref{liminftheo} and Prop. \ref{projker}
$$\ch (\Ker\D(3,1))=\lim_{t \to \infty} \Tr_s' e^{-\spc(t,1)_t^2} \ .$$
For $0 < s<1<t$ it holds that
\begin{align*}
\Tr_s' e^{-\spc(t,1)_t^2}-\Tr_s' e^{-\spc(s,0)_s^2}&=\int_s^1\frac{d}{du} \Tr_s' e^{-\spc(u,0)_u^2}~du \\
&\quad + \int_0^1 \frac{d}{d\rho}\Tr_s' e^{-\spc(1,\rho)_1^2}~d\rho \\
&\quad + \int_{u=1}^t\frac{d}{du} \Tr_s' e^{-\spc(u,1)_u^2}~du \ .
\end{align*}
Now it follows from Prop. \ref{varfor} that modulo exact forms
$$\Tr_s' e^{-\spc(t,1)_t^2}-\Tr_s' e^{-\spc(s,0)_s^2}=-\frac{1}{\sqrt{\pi }} \int_s^t \Tr_{\sigma}\sigma\frac{d\spc^N(t)_t}{dt} e^{-\spc^N(t)_t^2} \ .$$

Furthermore, by Prop. \ref{compare} and by the small time limit from \S \ref{supcon}
\begin{align*}
\lim_{s\to 0}\Tr_s' e^{-\spc(s,0)_s^2}&=\lim_{s \to 0}\Tr_s'H(s)_s \\
&=(2 \pi i)^{-n/2} \int_M \hat A(M)\ch(\E/S) + \N \ .
\end{align*}
Here we also used that the supertrace of the integral kernel of $H(s)$ vanishes on the cylindric end. The summand $\N$ comes from the isolated point.
\end{proof}

We can now define the $\eta$-form for trivializing operators not necessarily adapted to $\D_N$. It is well-defined up to (the closure of the space of) exact forms.

\begin{ddd}
\label{apprtriv}
Let $A$ be a trivializing operator of $\D_N$. A symmetric operator $\ad_A$ adapted to $\D_N$ is called an {\rm adapted approximation} of $A$ if on $L^2(N,\E^N)$
$$\|(A-\ad_A)(\D_N+A)^{-1}\| < \tfrac 12 \ .$$
\end{ddd}

There is always an adapted approximation $\ad_A$. We can even demand that $\ad_A$ is an integral operator with integral kernel in $\C(N \times N,\E_{\infty} \boxtimes_{\Ai} \E_{\infty}^*)$. Note that by the Neumann series $\D_N+s\ad_A+(1-s)A$ is invertible for $s \in [0,1]$. 

If $A$ is a trivializing operator of $\D_N$ which is not adapted, we may set
$$\eta(\D_N,A):=\eta(\D_N,\ad_A) \ .$$
The Atiyah--Patodi--Singer index theorem on the cylinder implies that modulo the closure of the space of exact forms the definition does not depend on the choice of $\ad_A$, and also not on the choice of $\psi$ (see the end of \S \ref{eta} for that choice).

With this definition the index theorem holds also if $A$ is not adapted since 
$\ind \D_{\E}(A)^+=\ind \D_{\E}(\ad_A)^+$.

\section{Product formula for $\eta$-forms}
\label{prodeta}

In the following we prove a product formula for $\eta$-forms. Such a formula was derived in \cite[\S 2]{lpetpos} under the assumption that the Dirac operator is invertible. Also the equality established in \cite[Lemma 6]{mp2} (when translated from family index theory to the present setting) concerning suspended $\eta$-forms and the product formulas proven in \cite[\S 4]{ps} are special cases of our formula.

We assume that $\A=\B \ten \Ca$ for unital $C^*$-algebras $\B,\Ca$.  Here we use the spatial tensor product. Let $\A_i$ be as before and endow $\B_i :=\A_i \cap \B$ and $\Ca_i:=\A_i \cap \Ca$ with the subspace topology of $\A_i$. Note that $\B_i$ resp. $\Ca_i$ are closed under holomorphic functional calculus in $\B$ resp. $\Ca$. We assume that the projective limit $\B_{\infty}$ resp. $\Ca_{\infty}$ is dense in $\B_i$ resp. $\Ca_i$.  We define $\ideal_i$ to be the closed ideal in $\Oi\A_i$ generated by the supercommutators $[\alpha,\beta]$ with $\alpha \in  \Oi\B_i$ and $\beta \in \Oi\Ca_i$. In the following we deal with $\Oi^{\ideal_i}\A_i$.
We write $\di_1$ for the differential on $\Oi\B_i$ and $\di_2$ for the differential on $\Oi\Ca_i$.

Let $N_1$ be an odd-dimensional and $N_2$ an even-dimensional Riemannian manifold and let $N=N_1 \times N_2$. Furthermore let $E_1$ resp. $E_2$ be Clifford modules over $N_1$ resp. $N_2$. We denote the grading operator on $E_2$ by $\Gamma_2$. In the following  $E_2$ will be considered as an ungraded bundle. Set $E_N=E_1 \boxtimes E_2$.

Furthermore let $P_1 \in \C(N_1,M_m(\B_{\infty}))$ and $P_2 \in \C(N_2,M_n(\Ca_{\infty}))$ be projections and set $P=P_1 \ten P_2 \in \C(N,M_{m+n}(\A_{\infty}))$. We define $\E_i^1=E_1 \ten P_1(N_1 \times \B_i^m),~ \E_i^2=E_2 \ten P_2(N_2 \times \Ca_i^n)$ and $\E_i=E_N \ten P(N \times \A_i^{m+n})$. 
Choose connections $P_1\di_1P_1 + \gamma_1$, $P_2 \di_2 P_2 + \gamma_2$ on $\E_i^1$, $\E_i^2$ in direction of $\B_i$, $\Ca_i$, respectively and Clifford connections $\nabla^1,~ \nabla^2$ on $\E^1:=\E_0^1,~\E^2=\E_0^2$. We get induced connections on $\E_i$.
 
Let $\D_{N_1},\D_{N_2}$ be the Dirac operators associated to $\E^1,\E^2$. Then $$\D_N=\Gamma_2\D_{N_1}+\D_{N_2}$$ 
is the Dirac operator associated to $\E=\E_0$.

If $A$ is a trivializing operator for $\D_{N_1}$, then $\hat A:=\Gamma_2 (A \ten 1)$ is a trivializing operator for $\D_N$. We assume that $A$ resp. $\hat A$ is adapted to $\D_{N_1}$ resp. $\D_N$. (This is the case, for example, if $A$ is an integral operator with integral kernel in $\C(N_1 \times N_1,\Ei^1 \boxtimes_{\Bi} (\Ei^1)^*)$. Here we use that we have divided out the ideal $\ideal_i$.)  
  
\begin{prop}
In $\Oi^{\ideal_{\infty}}\Ai$ modulo the closure of $[\Oi^{\ideal_{\infty}}\Ai,\Oi^{\ideal_{\infty}}\Ai] + (\Oi\Bi) \di (\hat \Omega_{od}\Ca_{\infty})$ it holds that
$$\eta(\D_N,\hat A)= \eta(\D_{N_1},A)\ch(\ind \D_{N_2}^+) \ .$$ 
\end{prop}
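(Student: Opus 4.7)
The plan is to decompose $\spc^N(t)_t$ across a graded tensor product arising from the product structures $N=N_1\times N_2$ and $\A=\B\otimes\Ca$, factor the associated heat semigroup, and then identify the resulting contributions with the right-hand side modulo the stated ideal.

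First I would use $P=P_1P_2$, $\gamma=\gamma_1+\gamma_2$, $\hat A=\Gamma_2(A\otimes 1)$, and $\D_N+\psi(t)\hat A=\Gamma_2(\D_{N_1}+\psi(t)A)+\D_{N_2}$ to identify, on the graded tensor product $(C_1\otimes\E^{N_1})\otimes\E^{N_2}$ (with $\sigma$ odd and the second factor graded by $\Gamma_2$), and modulo the ideal $\ideal_\infty$ of graded commutators between $\Oi\B_\infty$ and $\Oi\Ca_\infty$,
\[
\spc^N(t)_t\ =\ \spc^{N_1}(t)_t\otimes 1\ +\ 1\otimes\spc^{N_2}_t,
\]
where $\spc^{N_2}_t:=P_2\di_2 P_2+\gamma_2+\sqrt t\,\D_{N_2}$ is the rescaled Bismut superconnection on the closed even-dimensional $N_2$. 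Both summands are total-odd, so they super-anticommute in the graded tensor product and consequently
\[
e^{-(\spc^N(t)_t)^2}\ =\ e^{-(\spc^{N_1}(t)_t)^2}\otimes e^{-(\spc^{N_2}_t)^2},
\]
and $\tfrac{d\spc^N(t)_t}{dt}$ splits accordingly into an $N_1$- and an $N_2$-piece.

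Plugging into the defining integral of $\eta(\D_N,\hat A)$ and using that the relevant trace factorizes as $\Tr_\sigma\otimes\Tr_s$, the integrand decomposes as $T_1(t)+T_2(t)$, where
\[
T_1(t)=\Tr_\sigma\sigma\,\tfrac{d\spc^{N_1}(t)_t}{dt}e^{-(\spc^{N_1}(t)_t)^2}\cdot\Tr_se^{-(\spc^{N_2}_t)^2}
\]
and
\[
T_2(t)=\Tr_\sigma\sigma\,e^{-(\spc^{N_1}(t)_t)^2}\cdot\Tr_s\tfrac{d\spc^{N_2}_t}{dt}e^{-(\spc^{N_2}_t)^2}.
\]
For $T_1$, the Bismut transgression $\tfrac{d}{dt}\Tr_se^{-(\spc^{N_2}_t)^2}=-\di_\Ca\Tr_s\tfrac{d\spc^{N_2}_t}{dt}e^{-(\spc^{N_2}_t)^2}$ combined with $\Tr_se^{-(\spc^{N_2}_t)^2}\to\ch(\ind\D_{N_2}^+)$ as $t\to\infty$ yields $\Tr_se^{-(\spc^{N_2}_t)^2}=\ch(\ind\D_{N_2}^+)+\di_\Ca\beta(t)$ with $\beta(t)\in\hat\Omega_{od}\Ca_\infty$. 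Integrating $T_1/\sqrt\pi$ in $t$ therefore produces $\eta(\D_{N_1},A)\cdot\ch(\ind\D_{N_2}^+)$ up to a term in $(\Oi\B_\infty)\di(\hat\Omega_{od}\Ca_\infty)$.

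The main obstacle is the cross integrand $T_2$. Here I would integrate by parts in $t$: the factor $\Tr_\sigma\sigma\,e^{-(\spc^{N_1}(t)_t)^2}$ vanishes like $\sqrt t$ as $t\to 0$ (its $\sigma$-odd contribution starts at order $\sqrt t$) and decays exponentially as $t\to\infty$ (by invertibility of $\D_{N_1}+A$), so the boundary contributions drop out. Combining the remaining integral with the transgression identity on $N_2$ and using that $\di_\Ca$ passes, up to a sign, through any factor in $\Oi\B_\infty$, one rewrites $\int_0^\infty T_2(t)\,dt$ as $\di_\Ca$ of an element of the form $\alpha\cdot\gamma$ with $\alpha\in\Oi\B_\infty$ and $\gamma\in\hat\Omega_{od}\Ca_\infty$, hence as an element of $(\Oi\B_\infty)\di(\hat\Omega_{od}\Ca_\infty)$ modulo commutators. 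The delicate points are the rigorous justification of these manipulations in the projective-limit Banach-algebra framework, which relies on the trace-class estimates and heat-semigroup bounds developed in the previous sections.
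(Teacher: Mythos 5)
Your overall skeleton coincides with the paper's: split $\spc^N(t)_t$ into an $N_1$-piece and an $N_2$-piece (modulo $\ideal_\infty$ their curvatures commute, so the heat operator factors), identify the $N_2$-contribution with $\Tr_s e^{-(\spc^{N_2}_t)^2}$, and replace it by $\ch(\ind \D_{N_2}^+)$ via the transgression formula and the closed index theorem, the discrepancy being absorbed into $(\Oi\Bi)\di(\hat\Omega_{od}\Ca_{\infty})$. The place where you deviate, and where your argument has genuine gaps, is the cross term. First, the factorizations of the form ``$\Tr_\sigma=\Tr_\sigma\otimes\Tr_s$'' that you assert for both $T_1$ and $T_2$ are exactly the delicate point: the total functional is $\Tr_\sigma$, not the supertrace of a graded tensor product, and the naive product formula acquires correction terms. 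In the paper this is the actual content of the proof: one decomposes the two factors into their $\sigma$- and $\sigma\Gamma_2$-components $a_i,b_i$ and checks that the extra term $\Tr(\Gamma_2 b_1 b_2)$ vanishes because $\Gamma_2 b_2=-b_2\Gamma_2$; the same parity bookkeeping is what disposes of the cross term pointwise in $t$ (its $\sigma$-trace vanishes by the $\Gamma_2$-(anti)commutation relations), so that no asymptotic analysis of the cross term is needed at all. You cannot simply quote factorization; proving it is the heart of the argument.

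Second, your integration-by-parts treatment of $T_2$ rests on two unproved claims. The assertion that $\Tr_\sigma\sigma e^{-(\spc^{N_1}(t)_t)^2}=O(\sqrt t)$ as $t\to 0$ does not follow from degree counting: the $\sigma$-component of the heat operator carries at least one factor $\sqrt t\,[P_1\di_1 P_1+\gamma_1,\D_{N_1}+\psi A]$, but its trace is of size $t^{1/2}\cdot t^{-\dim N_1/2}$ by the on-diagonal heat kernel bound, so the naive estimate diverges; establishing boundedness or decay would require a Bismut--Cheeger type local cancellation argument of the kind carried out in Prop.~\ref{etasmallt}, which you neither invoke nor supply. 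Moreover, even granting good behaviour at both ends, integrating by parts turns $\int_0^\infty T_2\,dt$ into
$$\int_0^\infty \frac{d}{dt}\bigl(\Tr_\sigma\sigma e^{-(\spc^{N_1}(t)_t)^2}\bigr)\cdot\bigl(\text{a $t$-primitive of }\Tr_s\tfrac{d\spc^{N_2}_t}{dt}e^{-(\spc^{N_2}_t)^2}\bigr)\,dt \ ,$$
and it is not shown why this lies in $(\Oi\Bi)\di(\hat\Omega_{od}\Ca_{\infty})$ modulo commutators: the $t$-derivative now hits the $\B$-valued factor, the differential $\di_2$ no longer stands in front of the $\Ca$-valued factor, and pulling it out of the $t$-integral past a $t$-dependent element of $\Oi\Bi$ is precisely the kind of identity that needs proof. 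So the disposal of the cross term --- the only genuinely new difficulty of the product formula --- is not established in your proposal, whereas the paper eliminates it by a purely algebraic parity argument before any integration in $t$ is performed.
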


The proof is very similar to the proof in \cite[\S 2]{lpetpos}, and is given here for completeness.

There are analogous formulas for other parities of $\dim N_1$ and $\dim N_2$, if $\hat A$ is as defined in \cite{waprod}. See \S \ref{oddind} for the definition of the $\eta$-form in the even-dimensional case. 

Here $\eta(\D_N,\hat A)$ and $\eta(\D_{N_1},A)$ are defined using the same function $\psi$ (see the end of \S \ref{eta}). Otherwise we have to divide out in addition the closure of $(\di\Oi\Bi)\Oi\Ca_{\infty}$.

\begin{proof}
We set $$\spcb^{N_1}(t)_t:=P_1\di_1 P_1 + \gamma_1 +\sqrt t \sigma \Gamma_2(\D_{N_1}+ \psi(t)A) \ ,$$  
$$\spcb^{N_2}_t:=P_2\di_2 P_2 + \gamma_2 + \sqrt t \sigma \D_{N_2} \ .$$ 

Then $\spc^N(t)_t=\spcb^{N_1}(t)_t+ \spcb^{N_2}_t$ and $\spc^N(t)^2_t=(\spcb^{N_1}(t)_t)^2 + (\spcb^{N_2}_t)^2$. 

Furthermore  $\spcb^{N_1}(t)_t$ and $\spcb^{N_2}_t$ anticommute. Hence 
\begin{align*}
\sigma \frac{d\spc^N(t)_t}{dt} e^{-\spc^N(t)_t^2}&=\sigma \frac{d \spcb^{N_1}(t)_t}{dt}e^{-\spcb^{N_1}(t)_t^2} e^{-(\spcb^{N_2}_t)^2} \\
&\quad + \sigma e^{-\spcb^{N_1}(t)_t^2} \frac{d\spcb^{N_2}_t}{dt}e^{-(\spcb^{N_2}_t)^2} \ .
\end{align*}
The second term on the right hand side anticommutes with $\Gamma_2$. Thus
$$\Tr_{\sigma}\sigma e^{-\spcb^{N_1}(t)_t^2} \frac{d\spcb^{N_2}_t}{dt}e^{-(\spcb^{N_2}_t)^2}=0 \ .$$

Now consider the first term. There are integral operators $a_1,b_1$ on $L^2(N_1,\Ol{\mu}\E^1_i)$ and $a_2,b_2$ on $L^2(N_2,\Ol{\mu}\E^2_i)$ such that $$\sigma \Gamma_2 \frac{d\spcb^{N_1}(t)_t}{dt}e^{-\spcb^{N_1}(t)_t^2}=(a_1\ten 1)+\sigma \Gamma_2 (b_1 \ten 1)$$ and
$$\Gamma_2 e^{-(\spcb^{N_2}_t)^2}=1 \ten a_2+\sigma (1 \ten b_2) \ .$$ Note that $a_1,a_2$ are even, whereas $b_1,b_2$ are odd (with respect to grading on the algebra of noncommutative differential forms). 
It follows from $\Gamma_2[P_2 \di_2 P_2+ \gamma_2,\D_{N_2}]=-[P_2 \di_2 P_2+\gamma_2, \D_{N_2}] \Gamma_2$ by Volterra development that $\Gamma_2 b_2=-b_2 \Gamma_2$. Hence $\Tr(\Gamma_2b_1b_2)=0$.
Thus 
\begin{align*}
\Tr_{\sigma} \sigma \frac{d \spcb^{N_1}(t)_t}{dt}e^{-\spcb^{N_1}(t)_t^2} e^{-(\spcb^{N_2}_t)^2}&=\Tr_{\sigma}\bigl((a_1+\sigma \Gamma_2 b_1)(a_2 +\sigma b_2)\bigr)\\
&=\Tr(a_1a_2) +\Tr(\Gamma_2b_1 b_2)\\
&= \Tr(a_1)\Tr(a_2)\\
&=\Tr_{\sigma}(a_1+\sigma b_1) \Tr_{\sigma}(a_2+\sigma b_2) \ .
\end{align*}
It holds that 
$$a_1+ \sigma b_1=\sigma \frac{d\spcb^{N_1}(t)_t}{dt}e^{-\spcb^{N_1}(t)_t^2} \ .$$

Set $\spc^{N_2}_t:= P_2\di_2 P_2 + \gamma_2+ \sqrt t \D_{N_2}$ and define $e^{-(\spc^{N_2}_t)^2}$ by considering $\E_i^2$ a graded vector bundle with grading operator $\Gamma_2$.
One easily checks that $$\Tr_{\sigma} \Gamma_2 e^{-(\spcb^{N_2}_t)^2}=\Tr_s  e^{-(\spc^{N_2}_t)^2} \ .$$
 By the index theorem and Prop. \ref{varfor} applied to $\D_{N_2}$ we get that  $\Tr_s  e^{-(\spc^{N_2}_t)^2}=\ch(\ind \D_{N_2}^+)$ after dividing out the even forms in $\di (\Oi(\Ca_{\infty})/\ov{[\Oi(\Ca_{\infty}),\Oi(\Ca_{\infty})]})$. 

 This implies the assertion.
\end{proof}

\section{The odd index theorem}
\label{oddind}

In the following we prove an analogue of the odd family index for manifolds with boundary \cite{mp2}. The setting is again as in \S \ref{condi}, but now we assume that $M$ is odd-dimensional and $E$ ungraded. On the cylindric end the Dirac operator is then of the form
$$\dira_{\E}=c(dx_1)(\ra_1-\dira_N) \ .$$
The bundle $\E^N$ is $\bbbz/2$-graded with grading operator $\Gamma_N=ic(dx_1)$.

Let $A$ be an adapted trivialising operator for $\dira_N$. We assume that $A$ is odd with respect to $\Gamma_N$. As in \S \ref{DirHilb} one gets that the closure $\D_{\E}(A)$ of $\dira_{\E}-c(dx_1)\chi A$ is Fredholm. Its index is an element in $K_1(\A)$.
 
Define $\spc^N(t)_t=P\di P + \gamma + \sqrt t (\D_N + \psi(t)A)$. The $\eta$-form is defined as 
$$\eta(\dira_N,A):=\frac{1}{\sqrt{\pi}} \int_0^{\infty}\Tr_s \frac{d\spc^N(t)_t}{dt} e^{-\spc^N(t)_t^2}~dt \in \Oi\Ai/\ov{[\Oi\Ai,\Oi\Ai]} \ .$$

The odd Chern character $\ch_{\Ai}:K_1(\A) \to H^{\ideal_{\infty}}_*(\Ai)$ is determined up to sign by the following commuting diagramm:
$$\xymatrix{
K_1(\A) \ten K_1(C(S^1)) \ar[r]^{\ten}\ar[d]^{\ch_{\Ai} \ten \ch^{S^1}}& K_0(C(S^1,\A))\ar[d]^{\ch_{\Ai}^{S^1}} \\
H_*^{\ideal_{\infty}}(\Ai) \ten H^*(S^1) \ar[r]^{=} & H_*^{\ideal_{\infty}}(\Ai) \ten H^*(S^1) } \ .$$
The diagram determines also $\ch^{S^1}$ up to sign: 

Let $u:S^1 \to U(1)$ be a loop with winding number $1$. We identify $S^1$ with $[0,1]/_{0\sim 1}$ and define on $S^1$ the $C(S^1)$-line bundle $$\Li=[0,1] \times C(S^1)/_{(0,uv) \sim (1,v)} \ .$$ 
We denote by $\D_{\Li}$ the closure of the Dirac operator $-i\ra_x$ on $S^1$ twisted by $\Li$.

The Chern character $\ch^{S^1 \times S^1}:K_0(C(S^1 \times S^1)) \to H^*(S^1 \times S^1)$ applied to $\Li$, considered as a complex vector bundle on $S^1 \times S^1$, fulfills 
$$\int_{S^1\times S^1}\ch^{S^1\times S^1}(\Li)=2 \pi i \ .$$ 
The class $\ind \D_{\Li}$ is a generator of $K_1(C(S^1))$. The above commuting diagram implies that
$$\int_{S^1}\ch^{S^1}(\ind \D_{\Li})=\sqrt{2\pi i} \ .$$ 
The sign of the odd Chern character is fixed by demanding that this equation holds for $\sqrt i:=e^{\pi i/4}$.

\begin{theorem}
\label{indtheorodd}
In $H^{\ideal_{\infty}}_*(\Ai)$ it holds that
$$\ch(\ind \D_{\E}(A))=(2 \pi i)^{-n/2} \int_M \hat A(M)\ch(\E/S) - \eta(\dira_N,A) \ .$$
\end{theorem}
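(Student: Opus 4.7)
The plan is to deduce Theorem \ref{indtheorodd} from the even case (Theorem \ref{indtheor}) by suspension, with the product formula for $\eta$-forms of \S \ref{prodeta} absorbing the extra boundary contribution. Concretely, I would form the even-dimensional manifold $M' := M \times S^1$ with cylindric end $N \times S^1$, twist the Clifford module $\E$ on $M$ by the pull-back of the Mishchenko $C(S^1)$-line bundle $\Li$, and work over the algebra $\A' := \A \otimes C(S^1)$. This turns the problem into an even-dimensional APS index problem to which Theorem \ref{indtheor} applies, and whose $K_0(\A')$-index encodes $\ind \D_\E(A) \in K_1(\A)$ via the external product with $\ind \D_\Li \in K_1(C(S^1))$.

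With $\hat A$ chosen as in \cite{waprod} so that the boundary Dirac operator of $M'$ fits the product framework of \S \ref{prodeta} in the mixed-parity extension (with $N_1 = N$ even-dimensional, $N_2 = S^1$ odd-dimensional), the even theorem applied to $(M', \E \boxtimes \Li, \hat A)$ reads
$$\ch(\ind \D_{\E \boxtimes \Li}(\hat A)^+) = (2\pi i)^{-(n+1)/2} \int_{M'} \hat A(M') \ch((\E \boxtimes \Li)/S) - \eta(\dira_{N \times S^1}, \hat A).$$
I would then factor the right-hand side: by multiplicativity of the $\hat A$-genus (using $\hat A(S^1) = 1$) and of the Chern character together with Fubini,
$$(2\pi i)^{-(n+1)/2} \int_{M'} \hat A(M')\ch((\E\boxtimes\Li)/S) = \bigl((2\pi i)^{-n/2}\int_M \hat A(M) \ch(\E/S)\bigr) \cdot \bigl((2\pi i)^{-1/2}\int_{S^1} \ch(\Li)\bigr),$$
and by the extended product formula of \S \ref{prodeta},
$$\eta(\dira_{N \times S^1}, \hat A) = \eta(\dira_N, A) \cdot \ch(\ind \D_\Li).$$

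For the left-hand side, the defining diagram for $\ch^{S^1}_\Ai$ recalled in \S \ref{oddind} gives
$$\ch(\ind \D_{\E \boxtimes \Li}(\hat A)^+) = \ch_\Ai(\ind \D_\E(A)) \cdot \ch^{S^1}(\ind \D_\Li),$$
provided one knows that the APS index on $M'$ realizes the external product of $K$-theory classes. Under the normalization $(2\pi i)^{-1/2}\int_{S^1}\ch(\Li) = \ch^{S^1}(\ind \D_\Li)$, which is forced by the Atiyah-Singer formula on the closed surface $S^1 \times S^1$ twisted by $\Li$ together with the sign convention stated in \S \ref{oddind}, all three terms of the even theorem acquire a common factor $\ch^{S^1}(\ind \D_\Li)$. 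Since this class pairs non-degenerately with the fundamental cyclic cocycle of $C(S^1)$ (yielding $\sqrt{2\pi i}$), one may cancel it to obtain the announced identity in $H^{\ideal_{\infty}}_*(\Ai)$.

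The main obstacle is the $K$-theoretic identification $\ind \D_{\E \boxtimes \Li}(\hat A)^+ = [\D_\E(A)] \cdot [\ind \D_\Li]$: one must verify that APS boundary conditions are compatible with the external product of Dirac operators, and that the particular $\hat A$ from \cite{waprod} realizes this product up to an index-zero adapted perturbation. I would handle it by a homotopy argument through a family of product-type boundary trivializers, combined with the cut-and-paste analysis of \cite{waprod} that is invoked elsewhere in the paper to translate between genuine APS problems and manifolds with cylindric ends.
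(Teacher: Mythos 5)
Your proposal is correct and follows essentially the same route as the paper: the paper proves Theorem \ref{indtheorodd} by suspension over $S^1$ with the Mishchenko line bundle $\Li$ (working with the graded bundle $(\E\oplus\E)\boxtimes\Li$ on $M\times S^1$), deriving the two needed suspension identities from the product formula for Atiyah--Patodi--Singer index classes in \cite{waprod} and the product formula for $\eta$-forms of \S \ref{prodeta}, and then applying Theorem \ref{indtheor} and integrating over $S^1$, where $\int_{S^1\times S^1}\ch(\Li)=2\pi i$ produces exactly the $\sqrt{2\pi i}$ factors you cancel. The step you flag as the main obstacle is precisely what the cited product formula of \cite{waprod} supplies, so your plan matches the paper's proof in both structure and normalization.
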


In \cite{mp2} a different normalization of the odd Chern character has been used, leading to a factor $(2 \pi i)^{-(n+1)/2}$ in front of the integral.

\begin{proof}
The idea of the proof is as in \cite{mp2}, namely to reduce the odd index problem to the even index problem via suspension. A different proof, which should also work here, has been given in \cite[Theorem 2.2.18]{bu}.

There are two main steps: a suspension formula for index classes \cite[Prop. 8]{mp2} and a suspension formula for $\eta$-forms \cite[Lemma 6]{mp2}. Probably the proof of the suspension formulas given in \cite{mp2} works also in the present context. Here we derive them from the more general product formulas for Atiyah--Patodi--Singer index classes (see \cite{waprod}) and $\eta$-forms (see \S \ref{prodeta}), respectively. 

On $M\times S^1$ we define the $C(S^1,\A)$-vector bundle
$$\E^{M\times S^1}:=(\E\oplus \E) \boxtimes \Li \ .$$ 

Let $\Gamma_1=\left(\begin{array}{cc} 1 &  0\\ 0 & -1\end{array}\right)$, $\Gamma_2=\left(\begin{array}{cc} 0 & i\\ -i & 0 \end{array}\right)$ act on it. 

The Dirac operator $$\dira_{M \times S^1}:= \Gamma_1\dira_{\E} + \Gamma_2\dira_{\Li}$$
is odd with respect to the grading  $\Gamma_{M \times S^1}=-i\Gamma_1\Gamma_2$. 

Set $\E^{N\times S^1}=(\E^{M\times S^1})^+$. The space $L^2(N \times S^1,\E^{N\times S^1})$ is spanned by sections of the form $(x,x) \ten y$ with $x \in L^2(N,\E|_N)$ and $y \in L^2(S^1,\Li)$. 

Define the Dirac operator $\dira_{N \times S^1}:=\dira_N + \Gamma_N\dira_{\Li}$, which acts on the sections of $\E^{N\times S^1}$. Then
$$\dira_{M \times S^1}=c(dx_1)(\ra_1-\Gamma_{M \times S^1}\dira_{N \times S^1}) \ .$$

As in \cite[\S 4.3]{waprod} we define a trivialising operator $\hat A$ of $\dira_{N \times S^1}$ by 
$$\hat A((x,x) \ten y) = (Ax,Ax) \ten y \ .$$ 
We get a Fredholm operator $\D_{M\times S^1}(\hat A)$, see \S \ref{DirHilb}. 

By the product formula for Atiyah--Patodi--Singer index classes \cite{waprod} 
$$\ind \D_{M\times S^1}(\hat A)^+=\ind(\D_{\E}(A)) \ten \ind(\D_{\Li}) \in K_0(C(S^1,\A)) \ .$$ 

Thus 
$$\sqrt{2 \pi i} ~\ch_{\Ai}(\ind\D_{\E}(A))=\int_{S^1}\ch^{S^1}_{\Ai}(\ind\D_{M\times S^1}(\hat A)^+) \ .$$ 
Furthermore by the product formula for $\eta$-forms, see \S \ref{prodeta},
$$\sqrt{2 \pi i}~\eta(\dira_N,A)=\eta(\dira_{N\times S^1},\hat A) \ .$$

Now the assertion follows from Theorem \ref{indtheor} since 
\begin{align*}
\lefteqn{\int_{S^1}\int_{M\times S^1}\hat A(M\times S^1)\ch_{\Ai}^{M \times S^1}(\E^{M \times S^1}/S_{M\times S^1})\ch^{S^1 \times S^1}(\Li)}\\
 &=\int_M\hat A(M) \ch_{\Ai}^{M}(\E/S)\int_{S^1 \times S^1}\ch^{S^1 \times S^1}(\Li) \\
&=2 \pi i \int_M\hat A(M)\ch_{\Ai}^M(\E/S) \ .
\end{align*}
\end{proof}

\section{Application: $\rho$-invariants}
\label{rhoinvar}

For the sake of brevity we assume in the following that the manifold $N$ is odd-dimensional. The even-dimensional case can be treated analogously.

\subsection{$\rho$-invariants for the Dirac operator on manifolds with positive scalar curvature}

Let $N$ be a closed spin manifold with positive scalar curvature. Let $\pi:\tilde N \to N$ be a Galois covering of $N$ and let $\Gamma$ be the group of deck transformations.

The higher $\rho$-form of $N$ was introduced by Lott \cite{lo3}. In the higher context the $C^*$-algebra involved is the reduced group $C^*$-algebra $C^*_r\Gamma$. Closely related are the classical $\rho$-invariants, which however in general come from a representation of the maximal group $C^*$-algebra $C^*\Gamma$, see \cite{ps} for a detailed account. The following discussion unifies these concepts and allows for the definition and study of new $\rho$-invariants.

In the following let $\A=C^*\Gamma$. We assume that the projective system $(\A_i)_{i \in \bbbn_0}$ is such that $\bbbc\Gamma\subset \Ai$. 
For the reduced group $C^*$-algebra $\B=C_r^*\Gamma$ a suitable projective system of algebras $(\B_i)_{i\in \bbbn_0}$ can be derived from a construction of Connes--Moscovici \cite{cm}, see for example \cite[\S 4]{wa2} for a detailed account. There is a canonical surjection $p:C^*\Gamma \to C^*_r\Gamma$. It is straightforward to check that  $\A_i:=p^{-1}\B_i$ with norm $\|a\|_{\A_i}:=\|a\|_{\A} + \|p(a)\|_{\B_i}$ is a Banach algebra closed under holomorphic functional calculus in $C^*\Gamma$ and hence $(\A_i)_{i \in \bbbn_0}$ is an appropriate projective system.

Denote by $\Pj=\tilde N \times_{\Gamma} C^*\Gamma$ the Mishenko--Fomenko $C^*\Gamma$-vector bundle endowed with the $C^*\Gamma$-valued scalar product induced by the standard $C^*\Gamma$-valued scalar product on $C^*\Gamma$. Let $\nabla^{\Pj}$ be the flat connection on $\Pj$ induced by the de Rham differential on $\tilde N$ and let $\dirac_N$ be the spin Dirac operator on $N$ twisted by $\Pj$. 

We recall the construction of an embedding of $\Pj$ into a trivial $C^*\Gamma$-vector bundle:
Choose a finite connected open cover $(U_k)_{k=1, \dots, n}$ of $N$ such that for any $k$ there is a cross-section $\psi_k:U_k \to \pi^{-1}U_k$. We write $\tilde U_k=\psi_k(U_k)$. Let $g_{kl}: U_k \cap U_l \to \Gamma$ be the locally constant function whose image at a point $x$ is the deck transformation sending $\tilde U_l\cap \pi^{-1}(x)$ to $\tilde U_k \cap \pi^{-1}(x)$. Note that $g_{kl}g_{lm}=g_{km}$ (whenever the left hand side is defined) and $g_{kk}=1$. Let $(\chi_k^2)_{k=1, \dots n}$ be a partition of unity subordinate to $(U_k)$. Define the projection $P \in \C(M,M_n(\bbbc \Gamma))$ by setting $P_{kl}=\chi_k\chi_l g_{kl}$. Then $\Pj$ is isometrically isomorphic to $P(N \times (C^*\Gamma)^n)$ via 
$$\C(\tilde N,C^*\Gamma)^{\Gamma} \to P(\C(N,(C^*\Gamma)^n)),~s\mapsto (\chi_1 (s\circ \psi_1), \chi_2 (s\circ \psi_2), \dots, \chi_n (s\circ \psi_n)) \ .$$ 
One checks that $\nabla^{\Pj}=P d_NP$. 

Note that $\dirac_N$ is invertible. Thus $\eta(\dirac_N,0)$, which we define using the Grassmannian connection $P\di P$ in direction of $\A_i$, exists.

In the following we deal with the universal differential algebra (thus $\ideal_i=0$).

Let $c:(\Ai/\bbbc)^{\ten_{\pi}^{m+1}} \to V$ be a continuous reduced cyclic cocycle on $\Ai$ with values in a Fr\'echet space $V$ such that $c(g_0,\dots, g_m)=0$ for all $(g_0, \dots, g_m) \in \Gamma^{m+1}$ with $g_0g_1 \dots g_m=1$. There is an induced linear map $c:\Oi\Ai/\ov{[\Oi\Ai,\Oi\Ai]} \to V$ defined by $c(a_0da_1 \dots da_m):=c(a_0,a_1, \dots,a_m)$ and by zero on forms of degree not equal to $m$. The map vanishes on exact forms.
We define the maximal $\rho$-invariant associated to $c$ as 
$$\rho^{max}_c(\dirac_N):=c(\eta(\dirac_N,0)) \in V .$$

\begin{lem}
The $\rho$-invariant $\rho^{max}_c(\dirac_N)$ is independent of the choice of the cover and the partition of unity.
\end{lem}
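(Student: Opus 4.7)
The key observation is that both the bundle $\Pj$ and its $C^*\Gamma$-valued hermitian structure are intrinsic to the covering $\tilde N \to N$, while only the presentation of $\Pj$ as $P(N \times (C^*\Gamma)^n)$, and hence the Grassmannian connection $P \di P$ in the direction of $\Ai$, depends on $\{U_k\}$ and $(\chi_k^2)$. The plan is therefore a transgression argument: interpolate between two such presentations by a smooth path of projections, show that the resulting $\eta$-form changes by a $\di$-exact element of $\Oi \Ai / \ov{[\Oi \Ai, \Oi \Ai]}$, and use the hypothesis that $c$ annihilates exact forms.

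Explicitly, given two choices producing projections $P_0 \in \C(N, M_{n_0}(\bbbc\Gamma))$ and $P_1 \in \C(N, M_{n_1}(\bbbc\Gamma))$, I would first stabilize by appending zero blocks so that both live in $M_n(\C(N, \bbbc\Gamma))$ for a common $n$. Since the stabilized projections describe the same $\A$-hermitian module $\Pj$, they are Murray--von Neumann equivalent and can be joined by a smooth path of projections $P_s \in M_n(\C(N, \bbbc\Gamma))$, $s \in [0,1]$, each of which presents $\Pj$ isometrically. Set $\spc^N(s)_t := P_s \di P_s + \sqrt t\, \sigma \D_N$ and let $\eta_s$ be the $\eta$-form of $\D_N$ computed from this superconnection.

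The core step is the transgression identity $\frac{d}{ds} \eta_s = \di \omega_s$ in $\Oi \Ai / \ov{[\Oi \Ai, \Oi \Ai]}$. Since $\D_N$ is invertible by positive scalar curvature, $e^{-\spc^N(s)_t^2}$ decays exponentially for $t \to \infty$ uniformly in $s$, and the small-$t$ convergence of Prop.~\ref{etasmallt} is also uniform. This allows differentiation under the integral; applying Duhamel's formula to $\frac{d}{ds}e^{-\spc^N(s)_t^2}$, using $\frac{d \spc^N(s)^2_t}{ds} = [\spc^N(s)_t, \frac{d \spc^N(s)_t}{ds}]$, and exploiting cyclicity of $\Tr_\sigma$ modulo $[\Oi\Ai, \Oi\Ai]$, one rewrites the integrand as $\di \omega_s - \frac{d}{dt} \tau_s(t)$. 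The $t$-integration of the second piece gives boundary terms at $t = \infty$, which vanishes by invertibility of $\D_N$, and at $t = 0$, which vanishes because the local heat asymptotics produce a top-form on the odd-dimensional $N$.

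Integrating the transgression identity from $s=0$ to $s=1$ and applying the continuous map $c$, which annihilates $\ov{\Im \di}$ since $c$ is a reduced cyclic cocycle, gives $c(\eta_1) = c(\eta_0)$. The zero blocks used for stabilization contribute only constant sections in $\bbbc \cdot 1 \subset \bbbc \Gamma$; the associated differential forms involve only the identity group element $e$ with $e \cdots e = 1$ and are thus killed by the hypothesis $c(g_0, \dots, g_m) = 0$ whenever $g_0 g_1 \cdots g_m = 1$. The main technical obstacle is the rigorous justification of the transgression identity, that is, establishing uniform-in-$s$ smoothness and trace-norm estimates for the family $\spc^N(s)_t$ and validating the interchange of $\Tr_\sigma$, $d/ds$, and the $t$-integral. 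This parallels the $r$-dependent semigroup analysis already developed in earlier sections (in the spirit of Prop.~\ref{intkersupcon}), so no fundamentally new analytic input should be required beyond careful bookkeeping.
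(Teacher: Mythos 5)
Your plan breaks down at the transgression identity. When you vary the presentation, i.e.\ the connection term $P_s\di P_s$ of the superconnection, the variation of the $\eta$-form is \emph{not} purely exact: integrating the standard double transgression over $t\in(0,\infty)$ leaves, besides $\di\omega_s$ and the vanishing $t\to\infty$ term, a boundary term at $t=0$, namely $\lim_{t\to0}\Tr_\sigma\bigl(\sigma\,\tfrac{d}{ds}(P_s\di P_s)\,e^{-\spc^N(s)_t^2}\bigr)$. By the local (Getzler-rescaling) analysis this limit is a Chern--Simons-type local expression built from $\hat A(N)$ and the transgression of the Chern character between the two Grassmannian connections; it is generically nonzero as an element of $\Oi\Ai/\ov{[\Oi\Ai,\Oi\Ai]}$, and your parity argument (``top-form on the odd-dimensional $N$'') does not make it vanish --- this is exactly the same mechanism by which $\eta$-forms change under a change of connection in the family setting. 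A quick sanity check: if the variation really were exact, then $c(\eta)$ would be independent of the presentation for \emph{every} continuous reduced cyclic cocycle $c$, and the hypothesis $c(g_0,\dots,g_m)=0$ for $g_0g_1\cdots g_m=1$ would be superfluous; but that hypothesis is precisely what the statement needs.

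The paper handles this correction head on: it writes down the explicit unitary $U=(\chi_k' h_{kl}\chi_l)$ between the two presentations, observes $U^*P'\di P'U=P\di P+U^*\di(U)$, interpolates the connections along the cylinder $Z=\bbbr\times N$ via $\gamma(t,x)=\psi(t)U^*\di(U)(x)$, and applies the APS index theorem to the (invertible, hence index-zero) product Dirac operator on $Z$. The difference of the two $\rho$-invariants is then identified with $\int_Z\hat A(Z)\,c(\ch(Pd_ZP+P\di P+\gamma))$, and this is killed by $c$ only because the entries of the Chern character form are words in the transition functions $g_{kl},g'_{kl},h_{kl}$ which telescope (e.g.\ $g_{k_1k_2}(h_{k_3k_2})^{-1}g'_{k_3k_4}h_{k_4k_5}g_{k_5k_6}=g_{k_1k_6}$), so that after tracing one only meets tuples of group elements with product $1$, on which $c$ vanishes. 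In your proposal the delocalization hypothesis on $c$ is invoked only for the stabilization zero-blocks, i.e.\ in the wrong place; moreover, a smooth homotopy of projections obtained from Murray--von Neumann equivalence need not stay in $M_n(\C(N,\bbbc\Gamma))$, so the group-element structure you would need to kill the local term is lost along your path. To repair the argument you would have to compute the $t=0$ boundary term explicitly for the two Grassmannian connections and show that $c$ annihilates it, which is essentially the paper's proof.
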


\begin{proof}
Let $(U_k')_{k=1, \dots, p}$ be a second open cover of $N$ and let $P'=(\chi'_{k}g_{kl}'\chi'_l)_{k,l=1,\dots, p}$ be a projection constructed as above. Define $\tilde U_k'$ as above. For $k=1, \dots, p$ and $l=1, \dots, n$ let $h_{kl}: U_k' \cap U_l \to \Gamma$ be the locally constant function such that $h_{kl}(x)$ is the decktransformation mapping $\tilde U_l\cap \pi^{-1}(x)$ to $\tilde U_k' \cap \pi^{-1}(x)$. Then $U=(\chi'_k h_{kl} \chi_l)_{k=1, \dots, p; l=1 \dots n}$ is a partial isometry with $UU^*=P'$ and $U^*U=P$. Thus $U^* P'\di P'U=P\di P + U^*\di(U)$. Let $\phi: \bbbr \to \bbbr$ be a smooth function that vanishes for  $x < 1/6$ and equals $1$ for $x>5/6$. Set $Z=\bbbr \times N$. Define $\gamma(x_1,x_2)=\phi(x_1)U^*\di (U)(x_2)$ for $(x_1,x_2) \in Z$. By the Atiyah--Patodi--Singer index theorem applied to the Dirac operator on $Z$ twisted by $\Pj^Z=P(Z\times \A^n)$ the difference of the $\rho$-invariants defined using $P$ and $P'$ equals 
$C\int_Z \hat A(Z)c(\ch(Pd_ZP + P\di P+\gamma))$. Using that $g_{k_1k_2}(h_{k_3k_2})^{-1}g_{k_3k_4}'h_{k_4k_5}g_{k_5k_6}$ equals $g_{k_1k_6}$ on its domain, one checks that $c(\ch(Pd_ZP + P\di P+\gamma))$ vanishes. 
\end{proof}

From the Atiyah--Patodi--Singer index theorem we get the following generalization of \cite[Theorem 15.1]{lp1}:

\begin{prop}
Let $M$ be a compact spin manifold with boundary $N$ and product structure near the boundary. Let $\tilde M$ be a Galois covering of $M$ with deck transformation group $\Gamma$. Assume that the metric on $M$ and its restriction to $N$ are of positive scalar curvature. Then $\rho^{max}_c(\dirac_N)=0$.
\end{prop}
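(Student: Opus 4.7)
The strategy is to combine the positive scalar curvature hypotheses with Theorem~\ref{indtheor} and a direct computation exploiting the explicit cocycle $g_{kl}$ used to build $\Pj$.

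I first extend the Mishenko--Fomenko data from $N$ to $M$. Choose a partition of unity on $M$ which near $\partial M = N$ is pulled back from the given partition of unity on $N$, and let $P_M$ be the associated projection; then $P_M|_N = P$ and $\Pj_M := P_M(M \times \A^n)$ extends $\Pj$. Using the canonical flat connection on $\Pj_M = \tilde M \times_\Gamma \A$, the twisting curvature $F^{\E/S}$ vanishes and Lichnerowicz reads $\D_\E^2 = \Delta^\E + r/4$; on the compact part of $\hat M := M_c \cup_N ((-1,\infty)\times N)$ this is bounded below by $r_M/4 > 0$, while on the cylindric end $\D_\E^2 = -\ra_1^2 + \dira_N^2 \ge \dira_N^2 \ge r_N/4 > 0$. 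Hence both $\dira_N$ and $\D_\E$ are invertible on the relevant Hilbert $\A$-modules, $A = 0$ is a trivializing operator for $\dira_N$, and $\ind \D_\E(0)^+ = 0$ in $K_0(\A)$. Since the K-theoretic index is independent of the Clifford connection chosen on $\E$, this vanishing persists when one passes to the Grassmannian connections used in the definition of $\eta(\dira_N,0)$.

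Applying Theorem~\ref{indtheor} then yields, in $H_*^{\ideal_\infty}(\Ai)$,
\[
\eta(\dira_N,0) = (2\pi i)^{-n/2}\int_M \hat A(M)\,\ch(\E/S),
\]
and consequently
\[
\rho^{max}_c(\dira_N) = c\bigl(\eta(\dira_N,0)\bigr) = (2\pi i)^{-n/2}\int_M \hat A(M)\,c\bigl(\ch(\E/S)\bigr).
\]
It therefore suffices to show that $c(\ch(\E/S))$ vanishes pointwise on $M$.

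For this I would use the explicit form of $P_M$. Since each $g_{kl}$ is locally constant, one checks (using $\sum_m \chi_m^2 = 1$) that $(P_M d P_M)_{kl} = \chi_k\,d\chi_l\,g_{kl}$ and $(P_M\di P_M)_{kl} = \chi_k\chi_l\sum_m \chi_m^2\,g_{km}\,\di g_{ml}$; the mixed term $Q$ of \S\ref{supcon} and $F^{\E/S} = (P_M d P_M)^2$ are built from the same data. Expanding $\ch(\E/S) = \tr_s e^{-(F^{\E/S}+Q+(P_M\di P_M)^2)}$ as a sum of cyclic matrix traces of words in $P_M d P_M$ and $P_M \di P_M$, every resulting monomial is a scalar differential form on $M$ multiplied by an element of the form $h_0\,\di h_1 \cdots \di h_m \in \Oi\Ai$ with $h_0,\dots,h_m \in \Gamma$. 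The matrix indices produced by the cyclic trace, together with the cocycle identity $g_{ab}g_{bc} = g_{ac}$, force the product $h_0 h_1\cdots h_m$ to telescope to $g_{k_0 k_0} = 1$. By the defining hypothesis $c(g_0,\dots,g_m) = 0$ whenever $g_0 g_1 \cdots g_m = 1$, every such monomial is annihilated by $c$, so $c(\ch(\E/S)) = 0$ pointwise and the proposition follows.

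The main obstacle I anticipate is the bookkeeping in the final step: one must carefully keep track of which indices are summed over $\Gamma$ via the transition functions and which only over the partition of unity, and verify that the cyclic telescoping is robust against the appearance of the mixed term $Q$ and of higher powers of $P_M\di P_M$ and $F^{\E/S}$ in the exponential expansion. Once this is carried out, the hypothesis on $c$ is tailor-made to deliver the vanishing.
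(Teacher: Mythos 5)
Your argument is correct and is exactly the route the paper intends (the paper states the proposition as an immediate consequence of Theorem \ref{indtheor} without writing out details): positive scalar curvature plus flatness of the Mishenko--Fomenko bundle gives invertibility of $\dirac_N$ and of the operator on the manifold with cylindric end, so the index class and hence its Chern character vanish, leaving $\eta(\dirac_N,0)$ equal to the local term, which is annihilated by $c$ via the cocycle identity $g_{kl}g_{lm}=g_{km}$ — the same telescoping argument the paper uses in the lemma on independence of the cover and partition of unity.
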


By using the Atiyah--Patodi--Singer index theorem and the product formula for $\eta$-forms of the previous sections instead of the corresponding higher versions of Leichtnam--Piazza, the proof of \cite[Prop. 13.9]{ps} works also in the present setting, yielding:

\begin{prop}
Let $\Gamma$ be torsion-free. Assume that the assembly map $\mu:K_*(B\Gamma) \to K_*(C^*\Gamma)$ is an isomorphism. Then $\rho^{max}_c(\dirac_N)=0$.
\end{prop}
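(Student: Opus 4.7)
My plan is to transport the proof of \cite[Prop. 13.9]{ps} into the present framework. In \cite{ps} the analogous statement was established by combining the higher Atiyah-Patodi-Singer index theorem of Leichtnam-Piazza with a higher product formula for $\eta$-forms. Both ingredients are now available in the current $C^*$-algebraic setting with $\A = C^*\Gamma$, namely Theorem \ref{indtheorodd} and the product formula of \S \ref{prodeta}, so the strategy is to follow the argument of \cite[Prop. 13.9]{ps} with these substitutions.

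Concretely, I would first observe that since $N$ carries a metric of positive scalar curvature $r_N > 0$ and the Mishenko-Fomenko bundle $\Pj$ is flat, the curvature $F^{\E/S}$ in the Lichnerowicz formula of \S \ref{condi} has no component of noncommutative degree zero, so $\dirac_N^2 \ge r_N/4 > 0$ as an operator on the Hilbert $C^*\Gamma$-module $L^2(N,\E)$. In particular $\dirac_N$ is invertible, and its index class in $K_1(C^*\Gamma)$ vanishes. Letting $u:N \to B\Gamma$ classify the cover $\tilde N \to N$, this means that $\mu(u_\ast [\dirac_N]) = 0$, and the hypothesis that $\mu$ is an isomorphism forces $u_\ast [\dirac_N] = 0$ in $K_1(B\Gamma)$. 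Next, one invokes the torsion-freeness of $\Gamma$ together with the assembly isomorphism (via the Stolz long exact sequence relating spin bordism over $B\Gamma$, positive scalar curvature bordism, and the assembly target) to produce, after a possible Bott stabilisation, a compact spin manifold $W$ with $\partial W = N$, a classifying map $W \to B\Gamma$ extending $u$, and a positive scalar curvature metric of product type near the boundary whose restriction to $N$ is the given one. The preceding proposition then yields $\rho^{max}_c(\dirac_N) = 0$ immediately.

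The hard part is this bordism step, since a vanishing $K$-homology class need not a priori come from a positive scalar curvature null-bordism on the nose. In \cite[Prop. 13.9]{ps} the difficulty is bypassed not by a direct appeal to Stolz's surgery theorem but by a more hands-on construction: one uses the assembly hypothesis to obtain a spin bordism $W'$ between $N$ and a finite disjoint union of auxiliary manifolds already known to carry psc metrics, and then computes the difference of $\rho$-invariants across $\partial W'$ by applying Theorem \ref{indtheorodd} to $W'$. The interior APS contribution lies in the closure of the image of $\di_{tot}$ and is therefore annihilated by the reduced cyclic cocycle $c$, while the auxiliary $\rho$-invariants vanish by the preceding proposition; the product formula of \S \ref{prodeta} is used to split off the contributions coming from the stabilising Bott factors. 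With Theorem \ref{indtheorodd} replacing Leichtnam-Piazza's higher APS theorem and the product formula of \S \ref{prodeta} replacing its higher counterpart, the argument of \cite[Prop. 13.9]{ps} goes through with only cosmetic changes.
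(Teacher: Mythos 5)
Your headline strategy coincides with the paper's: the paper offers no independent argument for this proposition, but states precisely that the proof of \cite[Prop. 13.9]{ps} goes through once Theorem \ref{indtheorodd} and the product formula of \S \ref{prodeta} are substituted for the Leichtnam--Piazza higher APS theorem and higher product formula, which is your first paragraph; and your opening analytic observations (flatness of $\Pj$ plus Lichnerowicz give $\dirac_N^2\ge r_N/4$, hence invertibility, vanishing of the index class, and then injectivity of $\mu$ forces the $K$-homology class of $(N,u)$ to vanish) are the correct entry point. Your second-paragraph detour through a Stolz-type positive scalar curvature null-bordism is, as you yourself concede, not the route taken, and it is also not needed: the bordism one extracts from the vanishing $K$-homology class is only a spin bordism over $B\Gamma$ (after vector bundle modification), with the Bott/sphere-bundle factors handled by the product formula.

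Two of the details you volunteer would, however, fail as stated. First, the interior contribution $\int \hat A\,\ch(\E/S)$ of the APS formula is not annihilated by $c$ because it ``lies in the closure of the image of $\di_{tot}$'' --- it is in general a nontrivial de Rham homology class (it equals $\ch(\ind)+\eta$). It is annihilated because $c$ is assumed to vanish on tuples $(g_0,\dots,g_m)$ with $g_0g_1\cdots g_m=1$, while the Chern character built from the connection $P\di P$ with $P_{kl}=\chi_k\chi_l g_{kl}$ involves only such tuples; this is exactly the delocalization mechanism the paper uses in the lemma on independence of the cover and in the preceding proposition, and it is the mechanism you should invoke here. Second, your sketch never disposes of the index class of the APS problem on the null-bordism $W'$: applying Theorem \ref{indtheorodd} (or \ref{indtheor}) there produces $\ind\D(A)\in K_*(C^*\Gamma)$, which has no reason to vanish, so you are only entitled to $\rho^{max}_c(\dirac_N)=-c(\ch(\ind\D(A)))$ modulo the terms you have handled. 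Killing $c(\ch(\ind\D(A)))$ is where the surjectivity half of the assembly hypothesis must enter: every class of $K_*(C^*\Gamma)$ is assembled, and by the closed-manifold index theorem the Chern character of an assembled class is again identity-supported, hence annihilated by the delocalized cocycle $c$. Without this step the argument is incomplete.
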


{\bf Example:} Let $\pi_1,\pi_2:\Gamma \to U(l)$ be two unitary representations and let $F_1,F_2$ be the associated flat vector bundles on $N$. For $j=1,2$ let $\tilde \eta_{F_j}(N) \in \Oi\Ai/\ov{[\Oi\Ai,\Oi\Ai]}$ be the noncommutative $\eta$-form of the spin Dirac operator on $N$ twisted by the flat $C^*\Gamma$-bundle $F_j\ten \Pj$. We define the maximal Atiyah--Patodi--Singer $\rho$-form
$$\rho_{\pi_1,\pi_2}(N):=\tilde \eta_{F_1}(N)-\tilde \eta_{F_2}(N) \in \Oi\Ai/\ov{[\Oi\Ai,\Oi\Ai]} \ .$$
Let $b:(\Ai/\bbbc)^{\ten_{\pi}^{m+1}} \to V$ be an arbitrary continuous reduced cyclic cocycle and define the reduced cyclic cocycle
$$c=b \circ \Tr\circ (\pi_1\ten id)_*- b \circ \Tr\circ (\pi_2\ten id)_*:(\Ai/\bbbc)^{\ten_{\pi}^{m+1}} \to V \ .$$ 
Here $(\pi_j\ten id)_*:\Oi\Ai/\ov{[\Oi\Ai,\Oi\Ai]} \to M_l(\Oi\Ai/\ov{[\Oi\Ai,\Oi\Ai]})$ is induced by the homomorphism $\pi_j\ten id:C^*\Gamma \to C^*(U(l)\ten \Gamma)=M_l(\bbbc) \ten C^*\Gamma$.  
Then $$b(\rho_{\pi_1,\pi_2}(N))=\rho^{max}_c(\dirac_N) \ .$$ 
Thus the previous results apply to $b(\rho_{\pi_1,\pi_2}(N))$.

In the case where $\Gamma$ is a group with torsion, $\rho$-invariants can often be used to distinguish $\Gamma$-bordism classes of metrics of positive scalar curvature (see p.\,e. \cite{lpetpos}, also for the terminology, and \cite{pstor}). Higher $\rho$-invariants are useful for the study of Cartesian product. We give an elementary example, based on results and ideas from \cite{lpetpos}\cite{pstor}. 

\begin{prop}
Let $N$ be a closed spin manifold of dimension $4k+3,~ k \in \bbbn,$ admitting a metric of positive scalar curvature. Assume that the fundamental group $\Gamma$ of $N$ has torsion. Then, for any $m \in \bbbn,$ the manifold $N\times T^m$ has infinitely many $\Gamma\times \bbbz^m$-bordism classes of metrics of positive scalar curvature.
\end{prop}
 
\begin{proof}
Let $\tau_e: C^*\Gamma \to \bbbc$ be the trace defined by $\tau_e(g)=\delta_{g,e}$ for $g \in \Gamma$, and let $\tau_1$ be the trace induced by the trivial representation of $\Gamma$.
Set $\tau=\tau_e- \tau_1$. In the situation of \S \ref{prodeta} we have that $\Ca=C^*(\bbbz)$ and $\Ca_i=C^i(T^m)$. It is well-known that there is a cyclic cocycle $c$ on $\C(T^m)$ such that $\int_{T^m} \hat A(T^m) c(\ch \Pj^{T^m}) \neq 0$.

Since $\rho_{\tau}^{max}(\dira_{N})$ is the $L^2$-$\rho$-invariant, by \cite{pstor} there are infinitely many $\Gamma$-bordism classes of metrics of positive scalar curvature on $N$ distinguished by $\rho_{\tau}^{max}(\dira_{N})$. 

For each bordism class choose a representative (i.\,e. a metric of positive scalar curvature on $N$) such that the product metric on $N\times T^m$ is of positive scalar curvature. The $\rho$-invariants $\rho_{\tau \sharp c}^{max}(\dira_{N \times T^m})$ fulfill
$$\rho_{\tau \sharp c}^{max}(\dira_{N \times T^m})= (2 \pi i)^{-m/2} \rho_{\tau}^{max}(\dira_{N})\int_{T^m} \hat A(T^m) c(\ch \Pj^{T^m}) \ ,$$ and thus distinguish the resulting metrics of positive scalar curvature on $N \times T^m$ up to $\Gamma\times \bbbz^m$-bordism.
\end{proof}

\subsection{$\rho$-invariants for the signature operator}

In a similar spirit we study $\rho$-invariants for the signature operator. Our definition is motivated by the definition of higher $\rho$-invariants suggested in \cite{lo3}. It is a straightforward generalization of the definition presented in \cite{lpsign} for groups of polynomial growth. Note that \cite[Prop. 5.1]{lpsign} need not hold in the present situation.

Thus now $N$ is a closed oriented Riemannian manifold of dimension $2m-1$.

Denote by $\Omega^*(N,\Pj)$ the space of smooth forms with values in $\Pj$ and by $\Omega_{(2)}^*(N,\Pj)$ the completion as a Hilbert $C^*\Gamma$-module with respect to the $L^2$-norm. 

Set $d^{sig}:=d\tau + \tau d$. Here $\tau$ is the chirality operator, which agrees with the Hodge star operator in the normalisation of \cite[Def. 3.57]{bgv}. 
 
We assume that the closure of $d:\Omega^{m-1}(N,\Pj) \to \Omega_{(2)}^{m}(N,\Pj)$ has closed range. (This assumption is independent of the choice of the Riemannian metric on $N$.)

It follows that there is a canonical trivializing operator $A_N$, see \cite{waprod}. (In fact, one can use any of the symmetric boundary conditions introduced in \cite{lp5}.) For a reduced continuous cyclic cocycle $c$ as above define $$\rho^{max}_c(N):=c(\eta(d^{sig},A_N)) \ .$$  We leave it to the reader to show that the Atiyah--Patodi--Singer $\rho$-invariant and the $L^2$-$\rho$-invariant of Cheeger--Gromov are special cases (under our assumption).

The Atiyah--Patodi--Singer index theorem for the cylinder implies that  $\rho^{max}_c(N)$ does not depend on the Riemannian metric since the twisted signature class $\sigma(Z,\Pj^Z) \in K_0(C^*\Gamma)$ (see \cite{waprod} for the terminology) vanishes. 

Theorem \ref{indtheor} implies:

\begin{prop}
Assume that $N$ is the boundary of a manifold $M$ and let $\sigma(M,\Pj^M) \in K_0(C^*\Gamma)$ be the twisted signature class. If $c(\ch(\sigma(M,\Pj^M)))=0$, then $\rho_c^{max}(N)=0$.
\end{prop}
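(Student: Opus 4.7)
The plan is to apply Theorem \ref{indtheor} directly to the signature operator $d^{sig}$ on $M$ twisted by the Mishenko--Fomenko bundle $\Pj_M$, with the canonical trivialising boundary operator $A_N$ introduced just before the proposition. By construction the resulting APS-index is the twisted signature class $\sigma(M,\Pj_M) \in K_0(C^*\Gamma)$, so the theorem gives, in $H^{\ideal_{\infty}}_*(\Ai)$,
\[
\ch(\sigma(M,\Pj_M)) \;=\; (2\pi i)^{-n/2}\int_M L(M)\,\ch(\Pj_M) \;-\; \eta(d^{sig},A_N),
\]
where $L(M)$ replaces $\hat A(M)$ in the usual way once the local index computation of \S \ref{supcon} is specialised to the signature complex. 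Evaluating the reduced cyclic cocycle $c$ on both sides and using the hypothesis $c(\ch(\sigma(M,\Pj_M)))=0$, the proposition reduces to the vanishing
\[
c\!\left(\int_M L(M)\,\ch(\Pj_M)\right) \;=\; 0.
\]

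To establish this vanishing I will compute $\ch(\Pj_M)$ using the total Grassmannian connection $P\,d_M P + P\,\di P$ on $\Pj_M$, with $P$ the explicit projection assembled from the transition cocycle $(g_{kl})$ as in the preceding subsection. Expanding powers of the curvature $(P d_M P + P\di P)^2$ in matrix indices and rewriting $\di g$ as $1\ten g$ in $\Oi\bbbc\Gamma$, every summand of the integrand is, after taking the matrix trace that closes the chain of indices $k_0\to k_1\to\cdots\to k_m\to k_0$, a linear combination of tensors of the form
\[
f(x)\, g_{k_0 k_1}\ten g_{k_1 k_2}\ten\cdots\ten g_{k_m k_0},
\]
with $f(x)$ a scalar-valued smooth form on $M$ built from the partition of unity $(\chi_k^2)$ and its exterior derivatives. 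By the cocycle identity $g_{k_0 k_1}g_{k_1 k_2}\cdots g_{k_m k_0}=1$ each such tuple lies in the locus on which $c$ is assumed to vanish, so $c$ kills the integrand pointwise and hence also the interior integral.

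The main obstacle is of a bookkeeping rather than conceptual nature: one must ensure that this pointwise annihilation legitimately passes through the integral over $M$, i.e.\ that the continuity of $c$ on $\Oi\Ai/\ov{[\Oi\Ai,\Oi\Ai]}$ is enough to commute $c$ with $\int_M$ after tensoring with differential forms on $M$, and that the result is independent of the chosen cover and partition of unity. The latter independence is proved by a Stokes/transgression argument formally identical to the one already carried out in this subsection for $\rho^{max}_c(\dirac_N)$: two choices of cover produce Grassmannian connections related by a unitary whose matrix entries $h_{kl}$ satisfy the same type of cocycle relation, so a product-group-element identity of the same form forces the transgression integrand to be annihilated by $c$ as well. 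Once these points are in place, the proposition follows immediately from the displayed equation.
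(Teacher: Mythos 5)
Your proposal is correct and follows essentially the route the paper intends: the paper's proof is just the remark that Theorem \ref{indtheor} applied to the twisted signature operator with the boundary perturbation $A_N$ gives $\eta(d^{sig},A_N)$ as the difference of the local term and $\ch(\sigma(M,\Pj_M))$, the latter killed by hypothesis and the former killed by $c$ exactly through the cocycle identity $g_{k_0k_1}g_{k_1k_2}\cdots g_{k_mk_0}=1$ for the Grassmannian connection built from the flat bundle data, which is the same computation the paper carries out in the Lemma of \S \ref{rhoinvar} on independence of the cover. Your additional remarks on commuting $c$ with $\int_M$ and on cover-independence are consistent with, and only flesh out, that argument.
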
 

From the product formula for $\eta$-forms and the Atiyah--Patodi--Singer index theorem one gets in analogy to \cite[Theorem 7.1]{ps}:

\begin{prop}
Let $\Gamma$ be torsion-free. Assume that the assembly map $\mu:K_*(B\Gamma) \to K_*(C^*\Gamma)$ is an isomorphism. Then $\rho^{max}_c(N)=0$.
\end{prop}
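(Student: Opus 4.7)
The plan is to adapt the argument of \cite[Theorem 7.1]{ps} to the present setting, using the index theorem (Theorem \ref{indtheor}) and the product formula from \S \ref{prodeta} in place of their higher counterparts.

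First, I would reduce to the case that $N$ bounds over $B\Gamma$. Let $u:N\to B\Gamma$ classify the covering. Because $\rho^{max}_c(N)$ takes values in a Fr\'echet space and the $\eta$-form is additive on disjoint unions, it is enough to show vanishing for some nonzero integer multiple $k[N,u]$ in $\Omega^{SO}_*(B\Gamma)$. Rationally, $\Omega^{SO}_*(B\Gamma)\otimes\mathbb{Q}\cong H_*(B\Gamma;\mathbb{Q})\otimes\Omega^{SO}_*\otimes\mathbb{Q}$, so after multiplication one can arrange that $(kN,u)$ bounds a compact oriented manifold $(M,U)$ with $U|_{\partial M}=u$ and with product structure near the boundary. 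Equipping $M$ with a product metric near $\partial M$ and pulling back the universal $C^*\Gamma$-bundle via $U$ yields the bounding data $\Pj_M$ needed to apply the preceding proposition.

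Next, I would invoke the proposition immediately preceding the statement: since $\rho^{max}_c$ is independent of the metric and well-defined under the closed-range hypothesis, it reduces the claim to showing $c(\ch(\sigma(M,\Pj_M)))=0$, where $\sigma(M,\Pj_M)\in K_0(C^*\Gamma)$ is the twisted signature class. The hypothesis that $\mu:K_*(B\Gamma)\to K_*(C^*\Gamma)$ is an isomorphism together with the torsion-free assumption implies the strong Novikov conjecture for $\Gamma$, so that the pairing of $c\circ\ch$ with the $C^*\Gamma$-signature class is a bordism and homotopy invariant of the pair $(M,U)$ over $B\Gamma$. Applying the product formula of \S \ref{prodeta} (in order to compare with the doubled closed manifold $2M$) and using that $c$ vanishes on tuples $(g_0,\dots,g_m)$ with $g_0\cdots g_m=1$, the two copies of $M$ in the double contribute with opposite signs in the relevant cyclic pairing, forcing $c(\ch(\sigma(M,\Pj_M)))=0$.

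The main obstacle will be the last step: showing that $c(\ch(\sigma(M,\Pj_M)))$ vanishes rather than merely being bordism invariant. Concretely, one must verify that the cut-and-paste identity relating $\sigma(2M,\Pj_{2M})$ to $2\sigma(M,\Pj_M)$ modulo the $\eta$-contribution from $\partial M$ (which is exactly the content of the APS theorem combined with the product formula) interacts correctly with the reduced cyclic cocycle $c$. This is where the torsion-free hypothesis on $\Gamma$ and the reducedness of $c$ (vanishing on group elements whose product is $1$) are used in tandem, exactly as in \cite[\S 7, \S 13]{ps}; the adaptation is routine once the right bookkeeping between the universal differential algebra $\Oi\Ai$ and the group cohomology of $\Gamma$ is set up.
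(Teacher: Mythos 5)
Your reduction to the bounding case does not work, and it is precisely where the difficulty of the statement lies. The rational splitting $\Omega^{SO}_*(B\Gamma)\otimes\mathbb{Q}\cong H_*(B\Gamma;\mathbb{Q})\otimes\Omega^{SO}_*\otimes\mathbb{Q}$ does not imply that some multiple of $(N,u)$ bounds over $B\Gamma$: the classes $u_*[N]$ (and $u_*(L(N)\cap[N])$, etc.) can be rationally nontrivial -- think of a torus mapping by the identity to $B\mathbb{Z}^n$ -- so in general no multiple of $(N,u)$ bounds a manifold over $B\Gamma$. If such a reduction were available, the assembly-map hypothesis would be essentially superfluous, because for $N=\partial M$ over $B\Gamma$ the local term in the index theorem is already annihilated by the delocalized cocycle $c$ (the transition functions $g_{kl}$ telescope to $1$, exactly as in the Lemma of \S \ref{rhoinvar}). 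The paper's proof is the adaptation of \cite[Theorem 7.1]{ps}, with Theorem \ref{indtheor} and the product formula of \S \ref{prodeta} replacing the higher Leichtnam--Piazza ingredients; there the bijectivity of $\mu$ and the torsion-freeness of $\Gamma$ are used through the geometric (Baum--Douglas) description of $K_*(B\Gamma)$ to trade the possibly non-bounding cycle $(N,u)$ for closed-manifold cycles, bordisms and vector-bundle modifications inside $K$-homology, not inside $\Omega^{SO}_*(B\Gamma)\otimes\mathbb{Q}$.

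Your final step is also circular. Once $N=\partial M$ over $B\Gamma$, Theorem \ref{indtheor} together with the vanishing of the local term under $c$ gives $c(\ch(\sigma(M,\Pj_M)))=-\rho^{max}_c(N)$, so the quantity you propose to kill is the $\rho$-invariant itself. The doubling comparison yields no information: $c(\ch(\sigma(2M,\Pj)))=0$ for the closed double by the local index theorem, but when one expands this by the gluing/APS formula the two boundary $\eta$-contributions cancel identically and one obtains $0=0$; nothing forces $c(\ch(\sigma(M,\Pj_M)))=0$. Likewise, "strong Novikov implies the pairing is a bordism and homotopy invariant" cannot give vanishing. The genuine input needed at this point is the assembly hypothesis itself -- used, as in Piazza--Schick, to show that the relevant (APS-type) index class lies in the image of $\mu$ and is therefore represented by indices of operators on closed manifolds twisted by Mishenko--Fomenko bundles, whose delocalized Chern character pairings vanish -- and this is the part of the argument your proposal omits.
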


It would be interesting to find a connection to the $L$-theoretic higher $\rho$-invariants introduced in \cite{wb}.

\section{Appendix: Operators on Banach spaces}

\subsection{Projective systems and resolvent sets}
\label{parametrix}

In the following let $M$ be a $\sigma$-finite measure space such that $L^2(M)$ is separable. Choose an orthonormal basis of $L^2(M)$ and let $P_m$ the projection onto the span of the first $m$ vectors. We use the definitions from \S \ref{prosys}.
 
\begin{prop}
\label{invspec}
Let $k \in L^2(M \times M) \ten_{\pi} M_n(\A_i)$ and let $K$ be the corresponding Hilbert Schmidt operator on $L^2(M,\Ol{\mu}\A_i^n)$. 
If $1-K$ is invertible in $B(L^2(M, \A^n))$, then it is invertible in $B(L^2(M, \Ol{\mu}\A_i^n))$ as well.
\end{prop}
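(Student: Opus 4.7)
The plan is to adapt Lott's strategy from \cite[\S 6]{lo2}: approximate $K$ by finite-rank operators, reduce each finite-rank problem to a matrix inversion over $\A_i$, invoke stability of $\A_i$ under holomorphic functional calculus (which passes to matrix algebras), and close with a Neumann-series perturbation.

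First I would observe that the projective tensor norm on $L^2(M\times M)\ten_{\pi} M_n(\A_i)$ dominates the operator norm of the associated integral operator both on $L^2(M,\A^n)$ and on $L^2(M,\Ol{\mu}\A_i^n)$, using submultiplicativity of the action of $\A_i$ on the Fr\'echet algebra $\Ol{\mu}\A_i$. Taking the compressions $K_m:=P_mKP_m$ with $k_m:=(P_m\otimes P_m)k$, a dominated-convergence argument applied to a representation $k=\sum_j\lambda_j\,u_j\otimes v_j\otimes a_j$ of projective-norm type yields $k_m\to k$ in projective norm, and hence $K_m\to K$ in both operator norms simultaneously.

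Next, $K_m$ is supported on the finite-dimensional summand $P_mL^2(M)\otimes\Ol{\mu}\A_i^n\cong(\Ol{\mu}\A_i)^{nm}$ and acts as zero on the orthogonal complement; on the finite-dimensional part, $1-K_m$ is left multiplication by a matrix $1-\tilde K_m\in M_{nm}(\A_i)\subset M_{nm}(\A)$. For $m$ large, invertibility of $1-K_m$ on $L^2(M,\A^n)$ follows by Neumann-series perturbation from the hypothesis on $1-K$, so $1-\tilde K_m\in M_{nm}(\A)$ is invertible. Stability of $\A_i$ in $\A$ under holomorphic functional calculus passes to the matrix algebras, so $(1-\tilde K_m)^{-1}\in M_{nm}(\A_i)$; left multiplication by this matrix, extended by the identity on the orthogonal complement, is a bounded inverse of $1-K_m$ on $L^2(M,\Ol{\mu}\A_i^n)$.

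Finally I would use the identity
$$1-K=(1-K_m)\bigl(1-(1-K_m)^{-1}(K-K_m)\bigr)$$
to conclude invertibility of $1-K$ on $L^2(M,\Ol{\mu}\A_i^n)$ as soon as $\|(1-K_m)^{-1}\|\cdot\|K-K_m\|<1$ there. The second factor vanishes as $m\to\infty$ by the projective-norm convergence of $k_m$ to $k$. The main obstacle is therefore to control the inverse norms on $L^2(M,\Ol{\mu}\A_i^n)$, which are bounded, up to a constant from the action of $\A_i$ on $\Ol{\mu}\A_i$, by $\|(1-\tilde K_m)^{-1}\|_{M_{nm}(\A_i)}$; boundedness of the latter as $m\to\infty$ follows from continuity of inversion in the Fr\'echet subalgebra obtained by consistently embedding the $M_{nm}(\A_i)$ into the algebra of compact operators on $L^2(M,\A_i^n)$ with kernel in $L^2(M\times M)\ten_{\pi} M_n(\A_i)$, once one knows that $\tilde K_m\to K$ there and that this subalgebra is stable under holomorphic functional calculus in its $C^*$-closure.
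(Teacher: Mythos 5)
Your steps (1)--(3) are sound: the projective norm on $L^2(M\times M)\ten_{\pi}M_n(\A_i)$ dominates the operator norms on both $L^2(M,\A^n)$ and $L^2(M,\Ol{\mu}\A_i^n)$, the compressed kernels do converge in projective norm, and for each large $m$ the finite block $1-\tilde K_m$ is invertible in $M_{nm}(\A)$ and hence, by the stability of $M_{nm}(\A_i)$ under holomorphic functional calculus, in $M_{nm}(\A_i)$, so each $1-K_m$ is invertible on $L^2(M,\Ol{\mu}\A_i^n)$. The gap sits exactly where you locate ``the main obstacle'': you need bounds on $\|(1-K_m)^{-1}\|$ on $L^2(M,\Ol{\mu}\A_i^n)$, i.e.\ on $\|(1-\tilde K_m)^{-1}\|_{M_{nm}(\A_i)}$, that are uniform as the matrix size grows, and the justification you sketch presupposes that the unitized algebra of integral operators with kernel in $L^2(M\times M)\ten_{\pi}M_n(\A_i)$ is stable under holomorphic functional calculus in its $C^*$-closure. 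That property is not among the paper's hypotheses (these only give stability of $\A_i$, hence of each fixed-size $M_{nm}(\A_i)$), it is nowhere established, and it is at least as strong as the proposition itself: if that kernel algebra were inverse closed, then $(1-K)^{-1}=1-K'$ with $K'$ again an integral operator with kernel in $L^2(M\times M)\ten_{\pi}M_n(\A_i)$, such operators act boundedly on $L^2(M,\Ol{\mu}\A_i^n)$, and the conclusion would follow in two lines. Note moreover that this algebra is not an ideal in $B(L^2(M,\A^n))$: in $K(1-K)^{-1}$ the coefficients of the kernel get multiplied by elements of $\A$, not of $\A_i$, so inverse-closedness cannot be obtained by the usual Neumann/ideal argument, and proving it would require precisely the kind of argument you are trying to avoid. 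As written, the final step is circular.

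For comparison, the paper's proof avoids any uniformity in $m$: it fixes a single $m$ with $\|(1-P_m)K(1-P_m)\|<\tfrac 12$ on both spaces, writes $1-K$ in $2\times 2$ block form with respect to $P_mL^2\oplus(1-P_m)L^2$, inverts the corner $d$ by a Neumann series, observes that the Schur complement $a-bd^{-1}c$ is one matrix lying in $M_{nm}(\A_i)$ and invertible in $M_{nm}(\A)$ by the hypothesis, applies the holomorphic-functional-calculus stability to that single matrix, and concludes via the triangular factorization
$1-K=\bigl(\begin{smallmatrix}1 & bd^{-1}\\ 0&1\end{smallmatrix}\bigr)\bigl(\begin{smallmatrix}a-bd^{-1}c&0\\ 0&d\end{smallmatrix}\bigr)\bigl(\begin{smallmatrix}1&0\\ d^{-1}c&1\end{smallmatrix}\bigr)$.
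You can repair your argument by replacing the limit $m\to\infty$ with this one fixed-$m$ Schur-complement step; the approximation scheme itself does not need to be salvaged.
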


\begin{proof} 
We adapt a method from \cite[\S 6]{lo2}.

It is enough to find a decomposition $L^2(M, \Ol{\mu}\A_i^n)=X \oplus Y$ with the following property: If we write
$$1-K= \left(\begin{array}{cc} a & b\\ c& d \end{array}\right)$$  
with respect to the decomposition, then $d$ is invertible and $a-bd^{-1}c$ is invertible. Then the assertion follows from
$$\left(\begin{array}{cc} a & b\\
c& d
\end{array} \right)
=\left(\begin{array}{cc} 1 & bd^{-1}\\
0 & 1
\end{array} \right)
\left(\begin{array}{cc} a-bd^{-1}c & 0\\
0& d
\end{array} \right)
\left(\begin{array}{cc} 1 & 0\\
d^{-1}c & 1
\end{array} \right) \ .$$

Acting with respect to one variable on $L^2(M \times M) \ten_{\pi} M_n(\Ai)$, the operator $P_m$ converges strongly to the identity for $m \to \infty$. Hence for $m$ big enough $\|(1-P_m)K(1-P_m)\|< \frac 12$ in $B(L^2(M, \A^n))$ and in $B(L^2(M, \Ol{\mu}\A_i^n))$. Then $d$, defined as above with respect to the decomposition $$L^2(M, \Ol{\mu}\A_i^n)=P_mL^2(M, \Ol{\mu}\A_i^n)\oplus (1-P_m)L^2(M, \Ol{\mu}\A_i^n)  \ ,$$
is invertible.

The assumption implies that $a-bd^{-1}c$ is invertible on $P_m(L^2(M, \A^n))$. Since this is a finitely generated free $\A$-module of rank $mn$ and $M_{mn}(\A_i)$ is closed under holomorphic functional calculus in $M_{mn}(\A)$, the map $a-bd^{-1}c$ is invertible as well on $P_mL^2(M, \Ol{\mu}\A_i^n)$.
\end{proof}

Note that if $M$ is a closed manifold, then $\C(M \times M,M_n(\A_i)) \subset L^2(M \times M) \ten_{\pi} M_n(\A_i)$. If $M$ is a manifold with cylindric ends, then ${\mathcal S}(M \times M,M_n(\A_i)) \subset L^2(M \times M) \ten_{\pi} M_n(\A_i)$.

\begin{prop}
\label{proj}
Let $P \in B(L^2(M, \A^n))$ be a projection onto a projective submodule of
 $L^2(M, \A^n)$. Then $P$ is a Hilbert-Schmidt operator of the form $P=\sum_{j=1}^k f_j h_j^*$ with $f_j,h_j \in \Ran P$.
\end{prop}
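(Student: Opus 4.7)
The plan is to obtain a finite reconstruction formula for the identity on $\Ran P$, extend it to a formula for $P$ on all of $L^2(M,\A^n)$, then read off the Hilbert--Schmidt property from the integral kernels. Since $\Ran P$ is finitely generated projective as an $\A$-module, the algebraic dual basis lemma yields $f_1,\dots,f_k\in \Ran P$ and $\A$-linear functionals $\phi_1,\dots,\phi_k\colon \Ran P\to \A$ with $x=\sum_{j=1}^k f_j\phi_j(x)$ for every $x\in \Ran P$. Because $P=P^*$, the range $\Ran P$ is an orthogonally complemented closed Hilbert submodule of $L^2(M,\A^n)$ and carries the induced $\A$-valued inner product; as a finitely generated projective Hilbert $C^*$-module over the unital algebra $\A$ it is self-dual, so each $\phi_j$ is represented as $\phi_j(x)=\langle h_j,x\rangle$ for a unique $h_j\in \Ran P$.

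Extending the identity $x=\sum_j f_j\langle h_j,x\rangle$ from $\Ran P$ to the whole $L^2(M,\A^n)$ is immediate: for $z\in L^2(M,\A^n)$, using $P^2=P$, $P=P^*$ and $Ph_j=h_j$, one computes $Pz=P(Pz)=\sum_j f_j\langle h_j,Pz\rangle=\sum_j f_j\langle Ph_j,z\rangle=\sum_j f_j\langle h_j,z\rangle$, so $P=\sum_j f_j h_j^*$ with $f_j,h_j\in \Ran P$. Each summand is the integral operator $(f_j h_j^*)(z)(x)=\int_M f_j(x)h_j(y)^*z(y)\,dy$ whose kernel is $(x,y)\mapsto f_j(x)h_j(y)^*\in M_n(\A)$; since $f_j,h_j\in L^2(M,\A^n)$ the kernel lies in $L^2(M\times M)\otimes M_n(\A)$, so by the criterion of \cite[Ch.~5]{wa} each $f_j h_j^*$ is Hilbert--Schmidt, and therefore so is the finite sum $P$.

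The main obstacle is the passage from the abstract dual basis functionals $\phi_j\in\Hom_\A(\Ran P,\A)$ to inner products against elements of $\Ran P$, i.e., the self-duality of $\Ran P$ as a Hilbert $C^*$-module. One way to establish this is via the fact that any finitely generated projective Hilbert $C^*$-module over a unital $C^*$-algebra is isomorphic, as a Hilbert module, to $e\A^m$ for some projection $e\in M_m(\A)$, on which both boundedness of any $\A$-module map to $\A$ and its representability as an inner product are transparent. Alternatively, one argues directly with generators $g_1,\dots,g_k\in \Ran P$: the adjointable map $T\colon\A^k\to \Ran P$, $T(a)=\sum_j g_j a_j$, is surjective, and a positivity-plus-closed-range argument shows that the frame operator $TT^*=\sum_j g_j\langle g_j,\cdot\rangle\in B(\Ran P)$ is invertible, so one can take $h_j:=g_j$ and $f_j:=(TT^*)^{-1}g_j$ explicitly.
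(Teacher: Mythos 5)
Your construction is correct, but it is a genuinely different argument from the one in the paper. You prove the statement by pure Hilbert-$C^*$-module theory: the dual basis lemma plus self-duality of the finitely generated projective module $\Ran P$ (equivalently, your frame-operator argument showing $TT^*$ is invertible) yields the finite decomposition $P=\sum_j f_j h_j^*$ directly. The paper instead exploits the ambient $L^2$-structure: since $\Ran P$ is projective, $P$ is a compact module operator, so $\|P(P_m-1)\|\le \tfrac 12$ for the scalar finite-rank projections $P_m$ built from an orthonormal basis of $L^2(M)$; then $PP_mP$ is an isomorphism of $\Ran P$ with $\Ker PP_mP=\Ker P$, and a Riesz-projection contour integral gives $P=-\frac{1}{2\pi i}\,PP_mP\int_{|\lambda|=r}\lambda^{-1}(\lambda-PP_mP)^{-1}\,d\lambda$, which is a finite sum of rank-one operators because $P_m$ has finite rank. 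What the paper's route buys is uniformity with the rest of the appendix: the same scheme (finite-rank cut-offs $P_m$ plus holomorphic functional calculus, with no adjoints, positivity or self-duality) is exactly what carries over to Prop.~\ref{projker}, where one works over the Banach algebras $\A_i$ and needs $f_j,h_j\in L^2(M)\ten_{\pi}\A_i^n$, a setting in which your $C^*$-specific tools are unavailable; your route is the more standard and economical one for the purely $C^*$ statement at hand. One small imprecision in your last step: for $f_j,h_j$ that are merely elements of the Hilbert module $L^2(M,\A^n)$, the kernel $f_j(x)h_j(y)^*$ need not lie in $L^2(M\times M)\ten_{\pi}M_n(\A)$ (the space appearing in Prop.~\ref{invspec}), so that criterion should not be invoked; the Hilbert--Schmidt property is to be read off directly from the finite-rank form $\sum_{j=1}^k f_jh_j^*$ in the sense of \cite[\S 5.2]{wa} --- which is also all the paper's own proof provides, since its $f_j$ and $h_j$ are again only module elements of $\Ran P$.
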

 
\begin{proof}
Since $P$ is compact, there is $m \in \bbbn$ such that $\|P(P_m-1)\| \le \tfrac 12$.

Then (see \cite[Prop. 5.1.21]{wa})  the map 
$PP_mP:\Ran P \to \Ran P$ is an isomorphism.

It follows that
$\Ker PP_mP=\Ker P$ and
therefore 
$$P =1- P_{\Ker P P_mP} \ ,$$
where  $P_{\Ker P P_mP}$ denotes the projection onto
 $\Ker P P_mP$.

We can find $r>0$ such
that 
$B_r(0)\setminus \{0\}$ is in the resolvent set of $PP_mP$.

The assertion follows from
\begin{eqnarray*}
P &=& 1- P_{\Ker P P_mP} \\
&=& 1- \frac{1}{ 2 \pi i }\int_{|\lambda|=r} (\lambda-PP_mP)^{-1} d \lambda \\
&=& \frac{1}{2 \pi i} \int_{|\lambda|=r} \left(\lambda^{-1}-  (\lambda-PP_mP)^{-1} \right) d \lambda\\
&=& -\frac{1}{2 \pi i}  PP_mP \int_{|\lambda|=r}\lambda^{-1}(\lambda -PP_mP)^{-1}
d \lambda \ .
\end{eqnarray*}
\end{proof}
 
Recall from \cite[\S 5.2]{wa} that there is a well-behaved notion of trace class operators   on $L^2(M,\A_i^n)$ if (for example) $M$ is a complete Riemannian manifold.

\begin{prop}
\label{projker}
Let $P \in B(L^2(M, \A^n))$ be a projection onto a projective submodule of
 $L^2(M, \A^n)$. Assume further that for any $i \in \bbbn$ it restricts to a bounded
 projection on $L^2(M, \A_i^n)$ and that $P(L^2(M,\bbbc^n)) \subset
 L^2(M) \ten_{\pi} \A_i^n$. Let $$\Ran_{\infty} P:=\bigcap\limits_{i \in \bbbn}
 P(L^2(M,\A_i^n))   \ .$$ 

Then:
\begin{enumerate}
\item The projection $P$ is a Hilbert-Schmidt
operator of the form $P=\sum_{j=1}^k f_j h_j^*$ with $f_j,h_j \in \Ran_{\infty} P \cap (L^2(M) \ten_{\pi} \A_i^n)$.
\item The intersection $\Ran_{\infty} P$ is a projective
$\Ai$-module. The classes $[\Ran P] \in K_0(\A)$ and $[\Ran_{\infty} P] \in K_0(\Ai)$
correspond to each other under the canonical isomorphism $K_0(\A) \cong
K_0(\Ai)$. 
\end{enumerate}
Now assume that $M$ is a complete Riemannian manifold and let $\di + \gamma$ with $\gamma \in \C(M,M_n(\hat\Omega_1 \Ai))$ be a connection of $M\times \Ai^n$ in direction of $\Ai$ as in \S \ref{connect}.

Then 
$$\ch[\Ran_{\infty} P]= \sum\limits_{j=0}^{\infty} \frac{(-1)^j}{j!} \Tr \left(P (\di+\gamma) P (\di+\gamma) P\right)^j \in H_*^{\ideal_{\infty}}(\Ai) \ .$$
\end{prop}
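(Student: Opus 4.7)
The plan is to refine Proposition~\ref{proj} uniformly across the projective system $(\A_i)$, read off the projective-module structure, and then match two presentations of the Chern character.

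For (1), the hypothesis that $P(L^2(M))\subset L^2(M)\ten_\pi\A_i^n$ for every $i$ identifies $P$ with an element of $L^2(M\times M)\ten_\pi M_n(\A_i)$, so $P$ is Hilbert--Schmidt on each $L^2(M,\A_i^n)$ and $PP_m\to P$ in operator norm on each of these spaces. I would choose $m$ so that $\|P(1-P_m)\|<\tfrac12$ holds on $L^2(M,\A^n)$ and on $L^2(M,\A_i^n)$. As in the proof of Proposition~\ref{proj}, $PP_mP:\Ran P\to\Ran P$ is then invertible and
$$P=-\tfrac{1}{2\pi i}\,PP_mP\int_{|\lambda|=r}\lambda^{-1}(\lambda-PP_mP)^{-1}\,d\lambda \ ,$$
where the contour integral also converges on $L^2(M,\A_i^n)$ because $M_{mn}(\A_i)$ is closed under holomorphic functional calculus in $M_{mn}(\A)$ (compare Proposition~\ref{invspec}). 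Taking $f_j=Pe_{l_j}$ for appropriate basis vectors of $L^2(M)$ and letting the column $h_j^*$ absorb the resolvent factor, one obtains a finite decomposition $P=\sum_{j=1}^kf_jh_j^*$ whose components lie in $P(L^2(M,\A_i^n))$ for every $i$, i.e.\ in $\Ran_\infty P$.

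For (2), set $Q:=(h_i^*f_j)_{i,j=1}^k\in M_k(\Ai)$. The identity $P^2=P$ translates into $Q^2=Q$ after a minimal enlargement of the family ensuring that the $h_j^*$ can be extracted from $\sum_if_ih_i^*fh_j^*=\sum_jf_jh_j^*$. The map $Q\Ai^k\to\Ran_\infty P$, $(a_1,\ldots,a_k)\mapsto\sum_jf_ja_j$, is an $\Ai$-module isomorphism with inverse $s\mapsto(h_1^*s,\ldots,h_k^*s)$, so $\Ran_\infty P$ is finitely generated projective over $\Ai$. Since each $\A_i$, and therefore $\Ai$, is a dense holomorphically closed subalgebra of $\A$ (see~\cite{bl}), the inclusion induces $K_0(\Ai)\cong K_0(\A)$, and the single idempotent $Q$ represents both $[\Ran_\infty P]$ and $[\Ran P]$ under this isomorphism.

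For (3), the standard Karoubi formula for the Chern character of $Q\Ai^k$ with respect to the Grassmannian connection $Q\di Q$ reads
$$\ch[\Ran_\infty P]=\sum_{j\ge 0}\frac{(-1)^j}{j!}\,\tr_{M_k(\Ai)}(Q\di Q\di Q)^j \ .$$
To replace the matrix trace by the operator trace, I would use the relations $Pf=f$, $h^*P=h^*$ together with $Q=h^*f$ and $P=fh^*$: cycling one factor of the row $f$ past the column $h^*$ under the trace turns $\tr_{M_k(\Ai)}(Q\di Q\di Q)^j$ into $\Tr_{L^2}(P\di P\di P)^j$, with the latter making sense as a trace of a trace-class operator tensored with noncommutative forms by \cite[Ch.~5]{wa}. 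The main technical obstacle is the uniformity in step~(1): one must secure a single choice of $m$ and contour radius $r$ making $PP_mP$ simultaneously invertible on every $L^2(M,\A_i^n)$, so that the representation $P=\sum f_jh_j^*$ really does land in $\Ran_\infty P$; once this is in place, (2) and (3) reduce to algebraic manipulations of the matrix idempotent $Q$ and cyclic-trace identities.
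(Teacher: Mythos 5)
The heart of the proposition is part (1), and as written your argument for it is circular at the start and, by your own admission, open at the end. The opening claim that the hypothesis $P(L^2(M))\subset L^2(M)\ten_{\pi}\A_i^n$ ``identifies $P$ with an element of $L^2(M\times M)\ten_{\pi}M_n(\A_i)$'' is essentially what part (1) is supposed to produce; the hypothesis only controls $P$ on scalar functions, so you cannot invoke Hilbert--Schmidt convergence $PP_m\to P$ on each $L^2(M,\A_i^n)$ to choose $m$, and the ``main technical obstacle'' you then flag (one $m$ and one radius $r$ working simultaneously for all $i$) is left unresolved. In fact no smallness estimate at the $\A_i$-level is needed: choose $m$ and $r$ on the Hilbert module $L^2(M,\A^n)$ alone, exactly as in Prop.~\ref{proj}. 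The operator $PP_mP$ is an integral operator with kernel $\sum_{j\le m}(Pe_j)(x)(Pe_j)(y)^*$, and since the $e_j$ are scalar the hypothesis places this kernel in $L^2(M\times M)\ten_{\pi}M_n(\A_i)$ for every $i$; hence Prop.~\ref{invspec}, applied to $1-\lambda^{-1}PP_mP$, transfers invertibility of $\lambda-PP_mP$ from $L^2(M,\A^n)$ to every $L^2(M,\A_i^n)$ for the same contour $|\lambda|=r$ (equivalently, $P_mPP_m$ is a matrix with entries $\int\langle e_j,Pe_k\rangle\in\A_i$, and holomorphic closedness of matrices over $\A_i$ together with $(\lambda-PP_mP)^{-1}=\lambda^{-1}\bigl(1+PP_m(\lambda-P_mPP_m)^{-1}P_mP\bigr)$ gives the same conclusion). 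With this, $f_j:=Pe_j$ and the $h_j$, obtained by applying the resulting functional calculus of $PP_mP$ --- bounded on each $L^2(M,\A_i^n)$ and commuting with $P$ --- to $Pe_j$, lie in $\Ran_{\infty}P\cap(L^2(M)\ten_{\pi}\A_i^n)$; the statement that $P$ has kernel over $\A_i$ comes out at the end rather than being assumed at the beginning.

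Two further points. In (2), no ``minimal enlargement'' is needed or meaningful: since $f_j,h_j\in\Ran P$ one has, in row/column notation, $fh^*f=f$ and $h^*fh^*=h^*$, so $Q^2=h^*(fh^*)f=h^*f=Q$ directly, and the rest of your argument for (2) is fine. In (3), however, ``cycling one factor of $f$ past $h^*$'' does not turn $\tr(Q\di Q\di Q)^j$ into $\Tr(P\di P\di P)^j$ on the nose: $\di Q=(\di h^*)f+h^*\di f$, and the cross terms $h^*\di f$, $(\di h^*)f$, $\di h^*\di f$ cannot be reassembled term by term into words in $P$ and $\di P$. What is true --- and all the proposition claims --- is equality in $H_*^{\ideal_{\infty}}(\Ai)$: under $a\mapsto fa$ the connection $s\mapsto P\di s$ on $\Ran_{\infty}P$ corresponds to a connection on $Q\Ai^k$ differing from the Grassmannian connection $Q\di Q$ by a matrix-valued one-form, so the two Chern character forms agree up to exact forms by the transgression formula of \S\ref{derham}, while the trace identity $\Tr(f\alpha h^*)\equiv\tr(\alpha h^*f)$ modulo commutators relates $\Tr$ on $L^2$ to the matrix trace. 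This is also exactly where completeness of $M$ must enter --- it guarantees, via the trace-class calculus of \cite[Ch.~5]{wa}, that $(P\di P\di P)^j$ and the transgression terms are trace class with the trace property --- and your proposal never uses this hypothesis, although it is the one point that the paper's own one-line proof, which simply refers to \cite[Prop.~5.3.6]{wa}, singles out.
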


\begin{proof}
For $\gamma=0$, the proof is analogous to the proof of \cite[Prop. 5.3.6]{wa}. Then the assertion follows also for $\gamma \neq 0$ by an analogue of the transgression formula in \S \ref{connect} applied to the connection $P\di P + P\gamma P$.
\end{proof}

\subsection{Parameter dependent parametrices}
\label{regpar}

The following results generalize results from \cite[\S 6]{lo2}.

Let $M,\E$ be as in \S \ref{condi}.

First we make a general observation: If $B$ is a bounded operator on the Hilbert $\A$-module $L^2(M,\E)$, then by restriction and $\Ol{\mu}\A_i$-linear extension one gets an unbounded (not necessarily densely defined) operator on $L^2(M,\Ol{\mu}\E_i)$.

We will also use that there is a continuous map from ${\mathcal S}(M \times M,\E_i\boxtimes_{\A_i} (\E_i)^*)$ to the Banach algebra of bounded operators on $L^2(M,\Ol{\mu}\E_i)$ mapping integral kernels to the corresponding integral operators. The map is also continuous if considered as a a map into the space of bounded operators from $L^2(M,\Ol{\mu}\E_i)$ to ${\mathcal S}(M,\Ol{\mu}\E_i)$ endowed with the strong operator topology. 

Let $I$ be an interval. Let $(D(r))_{r \in I}$ be a family of regular operators on the Hilbert $\A$-module $L^2(M,\E)$. We assume that the following conditions hold:

\begin{enumerate}
\item For each $r \in I$ the space ${\mathcal S}(M, \E_i)$ is a core for $D(r)$. The family $(D(r))_{r \in I}$ defines a bounded operator on the space $C^1(I,{\mathcal S}(M,\E_i))$ and extends to a bounded operator on $C^1(I,{\mathcal S}(M,\Ol{\mu}\E_i))$. 
\item Analogous properties hold for $D(r)^*$ as well.
\end{enumerate}

The closure of $D(r)$ on $L^2(M,\Ol{\mu}\E_i)$ is denoted by $D(r)$ as well.

\begin{ddd}
Let $(Q(r))_{r\in I}$ be a family of bounded operators on $L^2(M,\E)$, which is of class $C^1$ (with respect to the operator norm). Assume that it also defines a family of bounded operator of class $C^1$ on $L^2(M,\Ol{\mu}\E_i)$ and induces a bounded operator on $C^1(I,{\mathcal S}(M,\Ol{\mu}\E_i))$.  

We call $(Q(r))_{r \in I}$ a {\rm parameter dependent regular left parametrix} for $(D(r))_{r \in I}$ if $1-Q(r)D(r)$ is an integral operator whose  integral kernel is in $C^1(I,{\mathcal S}(M \times M, \E_i\boxtimes_{\A_i} \E_i^*))$. 
\end{ddd}

If $M$ is closed, the main examples come from parameter dependent elliptic pseudodifferential operators, and for general $M$ from appropriate generalizations.

\begin{prop}
\label{boundinv}
Assume that $(D(r))_{r\in I}$ has a parameter dependent regular left parametrix $(Q(r))_{r \in I}$ and that $D(r)$ has an inverse in  $B(L^2(M,\E))$ for each $r$, which depends continuously on $r$. Furthermore we assume that the adjoint $(D(r)^*)_{r \in I}$ has a parameter dependent regular left parametrix as well.

Then it holds:
\begin{enumerate}
\item The operator $D(r)$ has a bounded inverse on $L^2(M,\Ol{\mu}\E_i)$, which is of class $C^1$ in $r$.

\item The operator $D(r)^{-1}-Q(r)$ is an integral operator with integral kernel in $C^1(I,{\mathcal S}(M \times M,\E_i\boxtimes_{\A_i} \E_i^*))$.

\item In particular $D(r)^{-1}$ induces a bounded operator on $C^1(I,{\mathcal S}(M,\Ol{\mu}\E_i))$.  
\end{enumerate}
\end{prop}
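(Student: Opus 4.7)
The overall plan is to realise $D(r)^{-1}$ as $(1 + K(r))^{-1}Q(r)$, where $K(r) := Q(r)D(r) - 1$ is smoothing by hypothesis. The adjoint parametrix will symmetrise the picture, and Proposition \ref{invspec} will promote invertibility of $1 + K(r)$ from $L^2(M,\E)$ to $L^2(M,\Ol{\mu}\E_i)$. Statement (2) should be read as concerning $D(r)^{-1} - Q(r)$.

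First I would extract a right parametrix for $D(r)$ from the adjoint hypothesis. Let $\tilde Q(r)$ be a regular left parametrix for $D(r)^*$ with smoothing remainder $\tilde K(r) := \tilde Q(r)D(r)^* - 1$. Taking adjoints yields $D(r)\tilde Q(r)^* = 1 + \tilde K(r)^*$, so $\tilde Q(r)^*$ is a right parametrix for $D(r)$. Multiplying $Q(r)D(r) = 1 + K(r)$ on the right by $\tilde Q(r)^*$ and using this identity rearranges to
\begin{equation*}
Q(r) - \tilde Q(r)^* = K(r)\tilde Q(r)^* - Q(r)\tilde K(r)^* .
\end{equation*}
Via the duality $\int k(x,y)(\tilde Q(r)^*f)(y)\,dy = \int(\tilde Q(r)k(x,\cdot))(y)f(y)\,dy$, the kernel of $K(r)\tilde Q(r)^*$ is $\tilde Q(r)$ applied in the second variable to the Schwartz kernel of $K(r)$, and analogously for $Q(r)\tilde K(r)^*$. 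The ${\mathcal S}$-continuity of $Q(r)$ and $\tilde Q(r)$, together with joint smoothness in $(x,y)$ and $r$, places the right-hand side in $C^1(I,{\mathcal S}(M\times M,\E_i\boxtimes_{\A_i}\E_i^*))$. Consequently $D(r)Q(r) = 1 + L(r)$ with $L(r)$ smoothing, and the intertwining $D(r)(1+K(r)) = (1+L(r))D(r)$ holds.

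Next I would verify invertibility of $1 + K(r)$ on $L^2(M,\E)$. As a compact perturbation of the identity it is Fredholm of index zero, so it suffices to establish injectivity. The factorisations yield $\ker(1+L(r)) = \ker Q(r)$ (using injectivity of $D(r)$) and $\ker(1+K(r)) = D(r)^{-1}(\ker Q(r))$ (via the intertwining); the analogous adjoint identities obtained from $1 + K(r)^* = D(r)^*Q(r)^*$ give $\ker(1+K(r)^*) = \ker Q(r)^*$. Combining these with $\dim \ker(1+K(r)) = \dim \ker(1+K(r)^*)$ and the invertibility of both $D(r)$ and $D(r)^*$, a standard Fredholm argument exploiting both the left and right factorisations forces $\ker Q(r) = 0$. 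Proposition \ref{invspec}, applied to the Hilbert--Schmidt operator $K(r)$ (whose kernel lies in $L^2(M\times M)\pten M_n(\A_i)$), then extends invertibility to $L^2(M,\Ol{\mu}\E_i)$.

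Finally I would assemble the conclusions. The formula $D(r)^{-1} = (1+K(r))^{-1}Q(r)$ is bounded on $L^2(M,\Ol{\mu}\E_i)$; differentiating the standard resolvent identity together with the $C^1$-dependence of $K(r)$ and $Q(r)$ gives (1). Statement (2) follows from
\begin{equation*}
D(r)^{-1} - Q(r) = -(1+K(r))^{-1}K(r)Q(r) ,
\end{equation*}
once one verifies that $K(r)Q(r)$ has kernel in $C^1(I,{\mathcal S}(M\times M,\E_i\boxtimes_{\A_i}\E_i^*))$ (via the dual version of the argument in paragraph two, using the ${\mathcal S}$-continuity of $Q(r)^*$ extracted from the adjoint parametrix), and that the Neumann-type expansion $(1+K(r))^{-1} = 1 - K(r)(1+K(r))^{-1}$ preserves Schwartz-kernel operators. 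Statement (3) is then immediate from (1) and (2) together with the ${\mathcal S}$-continuity of $Q(r)$. The main technical obstacle will be the injectivity step: forcing $\ker Q(r)=0$ from invertibility of $D(r)$ alone depends delicately on the combined left/right parametrix structure of paragraph two, while the Schwartz-kernel propagation in the final computation requires the careful transposition of the adjoint parametrix data onto $Q(r)$.
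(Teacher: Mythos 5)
Your reduction to inverting $1+K(r)$ with $K(r)=Q(r)D(r)-1$ has a genuine gap at its central step. First, the Fredholm alternative you invoke is not available here: $L^2(M,\E)$ is a Hilbert $C^*$-module, and for such modules a compact perturbation of the identity that is injective need not be invertible (e.g.\ over $\A=C([0,1])$ the operator $1+K$ acting on the standard module by multiplying the first coordinate by $t$ is injective, $K$ is compact, yet the range is not closed), so ``index zero plus injective'' does not yield an inverse. Second, and more fundamentally, the injectivity itself fails in general: a regular left parametrix is only determined up to smoothing operators, and adding a rank-one smoothing term $fg^*$ with $\langle g, h\rangle$ suitably normalized produces a left parametrix with $h\in\Ker Q(r)$, while all hypotheses of the proposition (including the existence of a left parametrix for $D(r)^*$) are preserved. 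Since $D(r)$ is invertible, $\Ker(1+K(r))=D(r)^{-1}\Ker Q(r)$, so $1+K(r)$ can genuinely fail to be injective, and no ``standard Fredholm argument'' exploiting the right parametrix $\tilde Q(r)^*$ can force $\Ker Q(r)=0$. Consequently the formula $D(r)^{-1}=(1+K(r))^{-1}Q(r)$, on which everything after your second paragraph rests, is not justified.

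The paper circumvents exactly this point by first \emph{correcting} the parametrix rather than inverting its remainder as given: using that $1-Q(r)D(r)$ has Schwartz kernel and that $D(r)^*$ is invertible on the Hilbert $C(I,\A)$-module $C(I,L^2(M,\E))$, one finds a smoothing $S(r)$ with kernel in $C^1(I,{\mathcal S}(M\times M,\E_i\boxtimes_{\A_i}\E_i^*))$ such that $\|(1-Q(r)D(r))-S(r)D(r)\|\le \tfrac12$; then $Q'(r)=Q(r)+S(r)$ is again a regular left parametrix whose remainder is small in norm, so $Q'(r)D(r)$ is invertible by a Neumann series on the module, and Prop.~\ref{invspec} transfers this invertibility to $L^2(M,\Ol{\mu}\E_i)$ together with the $C^1$-dependence. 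Your remaining manipulations (propagating Schwartz kernels through the second variable via the adjoint parametrix, and deducing (2) and (3) from $D(r)^{-1}-Q(r)=S(r)+K(r)D(r)^{-1}$ applied also to $D(r)^*$) are in the same spirit as the paper and would go through once the invertibility step is repaired along these lines.
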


\begin{proof}
Note that $(D(r))_{r \in I}$ defines a regular invertible operator on the Hilbert $C(I,\A)$-module $C(I,L^2(M,\E))$, which for brevity we also denote by $D(r)$ in the following. 

First we modify $Q(r)$ such that $Q(r)D(r)$ is invertible for each $r$:
Since $1-Q(r)D(r)$ is an integral operator with integral kernel in $C(I,{\mathcal S}(M \times M,\E_i\boxtimes_{\A_i} \E_i^*))$ and $D(r)^*$ is invertible on the Hilbert $C(I,\A)$-module $C(I,L^2(M,\E))$, there is an integral operator $S(r)$ with integral kernel in $C^1(I,{\mathcal S}(M \times M,\E_i\boxtimes_{\A_i}\E_i^*))$ such that in $B(C(I,L^2(M,\E))$
$$\|(1-Q(r)D(r))- S(r)D(r)\| \le \frac 12 \ .$$
Then $Q'(r):=Q(r)+S(r)$ is a regular left parametrix of $D(r)$ as well. Set $K(r):=1-Q'(r)D(r)$. By the estimate $1-K(r)$ has an inverse in $B(L^2(M,\E))$ for each $r\in I$. By Prop. \ref{invspec} this implies that the operator $1-K(r)$ is invertible on $L^2(M,\Ol{\mu}\E_i)$ for each $r$. Since $1-K(r)$ is of class $C^1$ as a bounded operator on $L^2(M,\Ol{\mu}\E_i)$, its inverse is also of class $C^1$ in $r$. Hence $D(r)^{-1}=(1-K(r))^{-1}Q'(r)$ is a bounded operator on $L^2(M,\Ol{\mu}\E_i)$ that is of class $C^1$ in $r$. From 
$$D(r)^{-1}-Q(r)=S(r) + K(r)D(r)^{-1}$$ 
and the fact that $K(r)$ maps  $C^1(I,L^2(M, \Ol{\mu}\E_i))$ continuously to $C^1(I,{\mathcal S}(M, \Ol{\mu}\E_i))$ one gets (3). By applying (3) to the adjoint $D(r)^*$ we conclude that $K(r)D(r)^{-1}$ is an integral operator with integral kernel in $C^1(I,{\mathcal S}(M \times M,\E_i\boxtimes_{\A_i} \E_i^*))$. Then the previous equation implies (2).
\end{proof}

\subsection{Holomorphic semigroups}

In the following we collect some facts about holomorphic semigroups for convenience of the reader. Definitions and proofs can be found in \cite{da} (note however that we do not assume a holomorphic semigroup to be bounded), see also \cite[\S 5.4]{wa} for more details as needed here. 

Let $V$ be a Banach space.

\begin{prop}
\label{exthol}
Let $Z$ be the generator of a strongly continuous semigroup such that $\Ran e^{tZ} \subset \dom Z$ for all $t>0$. The semigroup extends to a holomorphic one if and only if there is $C>0$ such that for $t \neq 0$ small
$$\|Ze^{tZ}\| \le Ct^{-1} \ .$$
\end{prop}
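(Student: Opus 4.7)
The plan is to prove this equivalence by the standard semigroup-theoretic argument as in \cite{da}. For the forward direction, suppose $e^{\zeta Z}$ extends holomorphically to a sector $S_{\theta}=\{\zeta \neq 0 : |\arg \zeta|<\theta\}$. Then for $t>0$ small one has $Ze^{tZ}=\frac{d}{d\zeta}e^{\zeta Z}|_{\zeta=t}$, and Cauchy's integral formula applied to a small circle of radius $t\sin(\theta/2)$ around $t$ gives
$$\|Ze^{tZ}\| \le \frac{1}{t\sin(\theta/2)} \sup_{|\zeta-t|=t\sin(\theta/2)}\|e^{\zeta Z}\| \le Ct^{-1},$$
where the sup is bounded by the continuity and local boundedness of the holomorphic extension.

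For the reverse direction, the key is to bootstrap the estimate $\|Ze^{tZ}\|\le Ct^{-1}$ to all powers of $Z$. The semigroup property $e^{tZ}=(e^{(t/n)Z})^n$ together with the fact that $Z$ commutes with $e^{sZ}$ on $\dom Z$ and the assumption $\Ran e^{sZ}\subset \dom Z$ yields the operator identity $Z^n e^{tZ}=(Z e^{(t/n)Z})^n$ as bounded operators, hence
$$\|Z^n e^{tZ}\| \le \bigl(Cn/t\bigr)^n.$$
Using Stirling's bound $n! \ge (n/e)^n$ this gives $\|Z^n e^{tZ}/n!\| \le (Ce/t)^n$. Consequently the Taylor series
$$e^{\zeta Z}:=\sum_{n=0}^{\infty}\frac{(\zeta-t)^n}{n!}Z^n e^{tZ}$$
converges in operator norm for $|\zeta-t|<t/(Ce)$ and defines an analytic extension of $e^{tZ}$ to the disk of radius $t/(Ce)$ around each $t>0$. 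Since this radius scales linearly in $t$, the union of these disks covers the sector $\{\zeta : |\arg \zeta|<\arcsin(1/(Ce))\}$. On overlapping disks consistency of the extension follows from uniqueness of analytic continuation, and the semigroup law for the extension follows by analytic continuation of the identity $e^{\zeta_1 Z}e^{\zeta_2 Z}=e^{(\zeta_1+\zeta_2) Z}$ from the positive real axis.

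The main obstacle is justifying the identity $Z^n e^{tZ}=(Ze^{(t/n)Z})^n$ when $Z$ is unbounded. Here the hypothesis $\Ran e^{sZ}\subset \dom Z$ is essential: it makes $Ze^{sZ}$ an everywhere-defined operator, which is bounded by the closed graph theorem since $Z$ is closed and $e^{sZ}$ is bounded. The commutation relation $Ze^{sZ}=e^{sZ}Z$ holds on $\dom Z$, and iterating the factorization $e^{tZ}=e^{(t/n)Z}\cdots e^{(t/n)Z}$ pushes each factor $Ze^{(t/n)Z}$ to the right, which is legitimate because each intermediate expression takes values in $\dom Z$. The remaining steps, namely the power-series construction and its consistency on overlapping disks, are purely formal once this estimate is established.
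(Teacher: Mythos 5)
Your argument is correct, and it is essentially the standard proof (the paper itself gives no proof of Prop.~\ref{exthol}, referring instead to Davies' book and to [W1, \S 5.4], where exactly this bootstrap argument is carried out): Cauchy's estimate for the forward direction, and for the converse the identity $Z^n e^{tZ}=(Ze^{(t/n)Z})^n$ — legitimate because $\Ran e^{sZ}\subset\dom Z$ makes $Ze^{sZ}$ bounded by the closed graph theorem — giving $\|Z^ne^{tZ}\|\le (Cn/t)^n$ and hence a norm-convergent power series on disks of radius proportional to $t$, which fill out a sector. The only point you pass over is that the Taylor series centered at $t$ must be shown to sum to $e^{sZ}$ for real $s$ in the disk (e.g. via the integral form of the remainder, $e^{sZ}-\sum_{k<n}\frac{(s-t)^k}{k!}Z^ke^{tZ}=\frac{1}{(n-1)!}\int_t^s(s-u)^{n-1}Z^ne^{uZ}\,du$, bounded using $\|Z^ne^{uZ}\|\le(Cn/u)^n$); only then does the identity theorem give consistency on overlapping disks and the semigroup law by continuation, and, if one's definition of holomorphic semigroup requires it, strong continuity at $0$ in subsectors follows from the resulting local bound together with convergence on the dense set $\dom Z$.
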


In the following let $Z$ be a generator of a holomorphic semigroup $e^{tZ}$ on $V$. 

The following two propositions describe the connection between spectrum of the generator and the behaviour of the semigroup for  $t \to \infty$:

\begin{prop}
\label{semest}
Assume  that there is $\delta >0$ such that the resolvent $(Z-\lambda)^{-1}$ exists for all $\lambda \in \bbbc$ with $\re \lambda > -\delta$.

Then the semigroup $e^{tZ}$ is bounded.
\end{prop}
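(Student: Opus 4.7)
The plan is to prove boundedness via the Dunford--Taylor contour integral representation of the holomorphic semigroup, after deforming the contour into the enlarged resolvent set provided by the hypothesis. Since $Z$ generates a holomorphic semigroup, standard theory (see \cite[Ch.\ 2]{da}) provides constants $M_0>0$, $\omega_0\in\bbbr$ and an angle $\theta\in(0,\pi/2)$ such that the sector
$$\Sigma:=\{\lambda\in\bbbc: |\arg(\lambda-\omega_0)|\ge \pi/2-\theta\}$$
lies in the resolvent set of $Z$, with the bound $\|(\lambda-Z)^{-1}\|\le M_0/|\lambda-\omega_0|$ on $\Sigma$. The hypothesis extends the resolvent set to the union $U:=\Sigma\cup\{\re\lambda>-\delta\}$. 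Fixing $\delta'\in(0,\delta)$ and $\theta'\in(0,\theta)$, one builds an admissible contour $\Gamma\subset U$ from two rays $\Gamma_\pm$ starting at $-\delta'\pm iR$ (for $R$ large) and entering $\Sigma$ at angles $\pm(\pi/2+\theta')$, joined by a vertical segment on $\{\re\lambda=-\delta'\}$, oriented so that $\sigma(Z)$ lies to its left.

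Next, I would use the representation
$$e^{tZ}=\frac{1}{2\pi i}\int_\Gamma e^{t\lambda}(\lambda-Z)^{-1}\,d\lambda,$$
valid for any admissible contour in the resolvent set. The contribution of the middle segment is bounded by $Ce^{-\delta' t}$, the resolvent being continuous on a compact set. On each ray $\Gamma_\pm$, parameterized by arclength $s\ge 0$, one has $\re\lambda=-\delta'-s\sin\theta'$, and for $s$ large enough $\lambda\in\Sigma$, so $\|(\lambda-Z)^{-1}\|\le C/(1+s)$. The ray contribution is therefore majorized by
$$Ce^{-\delta' t}\int_0^\infty \frac{e^{-ts\sin\theta'}}{1+s}\,ds,$$
which stays bounded uniformly in $t\ge 0$. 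Combining the estimates yields $\|e^{tZ}\|\le Ce^{-\delta' t}$ for $t\ge 1$, and strong continuity of the semigroup handles $t\in[0,1]$, giving the desired uniform boundedness.

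The main technical point will be checking that the a priori sector bound on $\|(\lambda-Z)^{-1}\|$ actually controls the resolvent uniformly along the new contour, in particular on the transition between the vertical segment (compactness/continuity) and the tails inside $\Sigma$ (original sector bound). Once the geometry of $\Gamma$ is pinned down, the integral estimates are routine since $e^{-ts\sin\theta'}/(1+s)$ is integrable on $[0,\infty)$ uniformly in $t\ge 0$. As an alternative to this contour-integral approach, one could invoke the equality of the growth bound and the spectral bound for holomorphic semigroups, which immediately yields the conclusion from $\sigma(Z)\subset\{\re\lambda\le -\delta\}$; the contour argument above has the advantage of being self-contained and yielding the quantitative exponential decay for free.
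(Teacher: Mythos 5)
Your argument is essentially correct, and in fact it is the standard proof; note that the paper does not prove this proposition at all — it is stated in the appendix as a known fact about holomorphic semigroups with a reference to \cite{da} (where the relevant result, the equality of spectral bound and growth bound for holomorphic semigroups, is proved by exactly this kind of contour shift in the Dunford--Taylor integral). Three points to tighten. First, your $\Sigma$ is miswritten: for a generator of a holomorphic semigroup the region in the resolvent set carrying the bound $\|(\lambda-Z)^{-1}\|\le M_0/|\lambda-\omega_0|$ is a sector around the \emph{positive} real direction, $\{\lambda\ne\omega_0: |\arg(\lambda-\omega_0)|\le \pi/2+\theta\}$; the set $\{|\arg(\lambda-\omega_0)|\ge \pi/2-\theta\}$ you wrote contains the whole half-plane $\re\lambda<\re\omega_0$ and is in general not contained in the resolvent set. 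Second, the ``transition'' issue you flag disappears once $R$ is chosen large enough: along the ray $\lambda(s)=-\delta'\pm iR+se^{\pm i(\pi/2+\theta')}$ the argument of $\lambda(s)-\omega_0$ stays between roughly $\pi/2$ and $\pi/2+\theta'<\pi/2+\theta$ for every $s\ge0$, so the \emph{entire} rays lie in the good sector with $|\lambda(s)-\omega_0|\ge c(R+s)$; the same observation shows the region swept between your contour and a standard one lies in the resolvent set (points with $|\im\lambda|\le R$ have $\re\lambda\ge-\delta'>-\delta$, the rest sits in the sector), which is what legitimizes the contour deformation you assert. Third, the ray contribution $Ce^{-\delta' t}\int_0^\infty e^{-ts\sin\theta'}(1+s)^{-1}\,ds$ is \emph{not} bounded uniformly in $t\ge0$ (it diverges logarithmically as $t\to0$), but this is harmless since you only invoke it for $t\ge1$ and cover $t\in[0,1]$ by local boundedness of the semigroup; with these repairs your argument even yields the stronger conclusion $\|e^{tZ}\|\le Ce^{-\delta' t}$, consistent with Prop.~\ref{specsem}--\ref{semires}.
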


\begin{prop}
\label{semires}
Let $\omega \in \bbbr$ be such that for all $\ve>0$ there is $C>0$ with $\|e^{tZ}\| \le Ce^{-(\omega- \ve) t}$ for all $t\ge 0$.
If $\re \lambda >- \omega$, then for $k \in \bbbn$
$$(\lambda- Z)^{-k}=\frac{1}{(k-1)!}\int_0^{\infty} t^{k-1}e^{t(Z-\lambda)} ~dt \ .$$
\end{prop}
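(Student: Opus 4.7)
The plan is to reduce the identity to the Laplace transform representation of the resolvent in the case $k=1$ and to obtain the higher powers by differentiating $(k-1)$ times with respect to $\lambda$ under the integral sign.

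First I would verify absolute convergence of the integral. Writing $e^{t(Z-\lambda)} = e^{-\lambda t} e^{tZ}$ and using the hypothesis yields
$$\|t^{k-1} e^{t(Z-\lambda)}\| \le C\, t^{k-1} e^{-t(\re \lambda + \omega - \ve)};$$
choosing $\ve>0$ small enough that the exponent in $t$ is strictly positive (which there is ample room to do under the assumption $\re\lambda>\omega$), the integrand is absolutely integrable. Hence $R_k(\lambda) := \frac{1}{(k-1)!}\int_0^\infty t^{k-1} e^{t(Z-\lambda)}\,dt$ defines a bounded operator depending holomorphically on $\lambda$ in the half-plane $\{\re\lambda>\omega\}$.

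Next I would treat $k=1$. For $x\in \dom Z$, the identity $\frac{d}{dt}(e^{-\lambda t}e^{tZ}x) = -e^{-\lambda t}e^{tZ}(\lambda-Z)x$, combined with $e^{-\lambda t}e^{tZ}x\to 0$ as $t\to\infty$ (again by the exponential bound), gives via the fundamental theorem of calculus $R_1(\lambda)(\lambda-Z)x = x$. For the reverse composition, closedness of $Z$ permits pulling $(\lambda-Z)$ through the integral applied to any $x\in V$, yielding $(\lambda-Z)R_1(\lambda)x = x$. Hence $R_1(\lambda) = (\lambda-Z)^{-1}$.

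Finally, for general $k$, I would invoke analyticity of the resolvent, $\frac{d^{k-1}}{d\lambda^{k-1}}(\lambda-Z)^{-1} = (-1)^{k-1}(k-1)!\,(\lambda-Z)^{-k}$, and differentiate the $k=1$ integral representation $(k-1)$ times under the integral sign. The differentiation is legitimate by dominated convergence, since the bound $t^{j}\|e^{t(Z-\lambda)}\|\le C\, t^{j}e^{-\delta t}$ is uniform on compact subsets of $\{\re\lambda>\omega\}$ for each $j\le k-1$. Matching both expressions produces the claimed formula. The only step with any subtlety is the identification of $R_1(\lambda)$ with the resolvent, which needs either the telescoping calculation on the core $\dom Z$ followed by density, or a careful appeal to closedness of $Z$; the remainder is completely routine.
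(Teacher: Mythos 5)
Your proof is correct. Note that the paper itself offers no argument for this proposition: it is listed in the appendix among standard facts on holomorphic semigroups with proofs deferred to the cited book of Davies, and your argument is exactly the standard one found there — the Laplace-transform representation of the resolvent for $k=1$, then higher powers by differentiating the resolvent identity in $\lambda$ (equivalently one could use $(\lambda-Z)^{-k}=\bigl((\lambda-Z)^{-1}\bigr)^k$ together with Fubini and the convolution identity, which amounts to the same computation). Two small remarks. First, for the identification $R_1(\lambda)=(\lambda-Z)^{-1}$ you indeed need both compositions; the right-inverse identity $(\lambda-Z)R_1(\lambda)x=x$ for arbitrary $x\in V$ is most safely obtained by applying the closed operator $Z$ to $\int_\delta^T e^{-\lambda t}e^{tZ}x\,dt$, integrating by parts, and letting $\delta\to 0$, $T\to\infty$; pulling $Z$ through the integral over all of $(0,\infty)$ in one step is delicate at $t=0$ because $\|Ze^{tZ}\|\sim t^{-1}$ for holomorphic semigroups. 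You flag precisely this point, so there is no gap. Second, your convergence estimate needs the exponent $\re\lambda+\omega-\ve$ to be positive, i.e. $\re\lambda>-\omega+\ve$; the hypothesis $\re\lambda>\omega$ guarantees this only when $\omega\ge 0$. This is an artifact of the sign conventions in the statement (the natural condition is $\re\lambda>-\omega$), and in all of the paper's applications one has $\omega>0$, so your ``ample room'' remark is justified there; for $\omega<0$ the formula as literally printed would require $\re\lambda>-\omega$ instead.
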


The spectrum of the semigroup itself and its behaviour for  $t \to \infty$ are also related to each other:

\begin{prop}
\label{specsem}
Let $r\in \bbbr$ be such that the spectral radius of $e^{tZ}$ is smaller than or equal to $e^{rt}$ for all $t >0$. Then for all $\ve>0$ there is $C\ge 1$ such that
$$\|e^{tZ}\| \le Ce^{(r+\ve)t} \ .$$
\end{prop}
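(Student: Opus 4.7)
The plan is to reduce the continuous-time norm bound to a discrete-time estimate via Gelfand's spectral radius formula, and then patch in local boundedness of a strongly continuous semigroup.

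First, I would note that since $e^{tZ}$ is strongly continuous, the uniform boundedness principle shows that $s \mapsto \|e^{sZ}\|$ is bounded on compact subsets of $[0,\infty)$; set $M_0 := \sup_{s \in [0,1]}\|e^{sZ}\|$. Next, fixing $\ve > 0$, I would apply Gelfand's formula to the bounded operator $e^Z$: the hypothesis gives
$$\lim_{n \to \infty} \|e^{nZ}\|^{1/n} = r(e^Z) \le e^r,$$
so one can choose $N \in \bbbn$ with $\|e^{nZ}\| \le e^{n(r+\ve/2)}$ for every integer $n \ge N$.

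For $t \ge N$, I would then write $t = n+s$ with $n \in \bbbn$, $n \ge N$, and $s \in [0,1)$, and estimate via the semigroup property
$$\|e^{tZ}\| \le \|e^{sZ}\|\,\|e^{nZ}\| \le M_0 \, e^{n(r+\ve/2)} \le C_1\, e^{t(r+\ve)},$$
absorbing the bounded factor $e^{-s(r+\ve/2)}$ (for $s \in [0,1]$) and using $e^{-t\ve/2}\le 1$ for $t\ge 0$ to trade the $\ve/2$ slack for a full $\ve$. The short-time regime $t \in [0,N]$ is covered by local boundedness and is trivially absorbed into the constant upon multiplying and dividing by $e^{(r+\ve)t}$. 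Taking $C \ge 1$ to dominate all the constants involved yields the claim.

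The only non-routine step is the appeal to Gelfand's formula, which applies because $e^Z$ is a bounded operator (holomorphic semigroups take values in $B(V)$ for $t>0$). Everything else is elementary bookkeeping to bridge integer times to continuous times, so there is no substantive obstacle.
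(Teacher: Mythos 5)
Your proof is correct. Note that the paper itself offers no argument for this proposition: it is listed among standard semigroup facts with a pointer to Davies's book and to \cite[\S 5.4]{wa}. Your route --- Gelfand's spectral radius formula applied to the single operator $e^{Z}$ (using only the $t=1$ case of the hypothesis), combined with local boundedness of $t\mapsto\|e^{tZ}\|$ on compacts via strong continuity and the uniform boundedness principle, and then the splitting $t=n+s$ --- is exactly the standard proof behind such a citation, and your bookkeeping with the $\ve/2$ slack and the short-time regime $t\in[0,N]$ is sound. It is worth observing that your argument never uses holomorphy of the semigroup, only strong continuity, so you have in fact proved the statement for arbitrary $C_0$-semigroups, which is slightly more general than what the paper needs.
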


The following proposition addresses the question when a perturbation of a generator is again a generator.

\begin{prop}
\label{relpert}
Let $B$ be a closed operator on $V$ such that $Be^{tZ}$ is well-defined and bounded for all $t>0$ and such that there is $C>0$ with
$$\|Be^{tZ}\| \le Ct^{-1/2}$$
for $0<t<1$.

Then $Z+B$ generates a holomorphic semigroup.
\end{prop}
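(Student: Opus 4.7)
The plan is to construct the semigroup generated by $Z+B$ via Volterra development. I would set
$$T_n(t):=\int_{\Delta_n^t} e^{s_0Z}Be^{s_1Z}B\cdots Be^{s_nZ}\,ds_0\cdots ds_n,\qquad T(t):=\sum_{n=0}^{\infty}T_n(t),$$
where $\Delta_n^t=\{(s_0,\dots,s_n)\in\bbbr_+^{n+1}:\sum s_i=t\}$, and then identify $Z+B$ as the generator.

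First I would upgrade the hypothesis to a bound on every bounded interval. Since $e^{tZ}$ is a strongly continuous semigroup there exist $M,\omega>0$ with $\|e^{tZ}\|\le Me^{\omega t}$; writing $Be^{tZ}=(Be^{Z/2})e^{(t-1/2)Z}$ for $t\ge 1/2$ together with the given bound for $t<1$ yields $\|Be^{tZ}\|\le C'(T)\,t^{-1/2}$ for all $0<t\le T$. Estimating $T_n(t)$ term by term then gives
$$\|T_n(t)\|\le C_T^{\,n+1}\,t^{n/2}\,\frac{\Gamma(1/2)^{n+1}}{\Gamma((n+1)/2+1)},$$
so the series converges in operator norm, uniformly on bounded intervals. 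The semigroup law $T(t+s)=T(t)T(s)$ follows by regrouping the series according to whether each factor sits to the left or right of the split point, and strong continuity at $0$ is immediate from $T_0(t)=e^{tZ}$ and $\|T_n(t)\|\to 0$ for $n\ge 1$.

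Next I would identify the generator. For $v\in\dom Z$ one computes $\frac{d}{dt}T_0(t)v=Ze^{tZ}v$ and $\frac{d}{dt}T_n(t)v=ZT_n(t)v+BT_{n-1}(t)v$ (the latter by pulling the $t$-derivative inside the outermost integral and using closedness of $B$), so formally
$$\frac{d}{dt}T(t)v=(Z+B)T(t)v,$$
which identifies $Z+B$ (on a natural core) as the generator of $T(t)$.

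To upgrade to a holomorphic semigroup via Prop.~\ref{exthol}, I need $\|(Z+B)T(t)\|\le Ct^{-1}$ for small $t$. The term $\|BT(t)\|$ is handled by the same Volterra sum (now with a $B$ at the left), contributing $O(t^{-1/2})$. For $\|ZT(t)\|$ one uses $\|Ze^{tZ}\|\le Ct^{-1}$ (which holds because $Z$ already generates a holomorphic semigroup) to bound $\|Z T_n(t)\|$ by pulling a factor $Ze^{s_0 Z}$ out of the leftmost slot and estimating the remaining integrand as before; splitting the $s_0$-integration at $s_0=t/(n+1)$ keeps the $s_0^{-1}$ singularity integrable after telescoping with the $s_i^{-1/2}$ from the other factors, yielding $\|ZT_n(t)\|\le C^{n+1}t^{n/2-1}/\Gamma((n+1)/2)$ and hence $\|ZT(t)\|\le Ct^{-1}$ for small $t$. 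The main obstacle is precisely this combinatorial/analytic estimate: one must simultaneously handle the non-integrable singularity $s_0^{-1}$ coming from the new $Z$-factor and the $s_i^{-1/2}$ singularities from the $B$-factors, uniformly in $n$, to get a summable bound. Once this is in place, Prop.~\ref{exthol} concludes that $T(t)$ extends to a holomorphic semigroup, and the computation of the generator on a core together with standard closure arguments shows the generator is $Z+B$.
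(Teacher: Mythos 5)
Your Volterra-series strategy is workable in principle, but the step you yourself identify as the crux is not repaired by the device you propose, and this is a genuine gap. To bound $\|ZT_n(t)\|$ termwise you can only use $\|Ze^{s_0Z}\|\le Cs_0^{-1}$ and $\|Be^{s_iZ}\|\le Cs_i^{-1/2}$, so the integrand is controlled by $s_0^{-1}\prod_{i\ge1}s_i^{-1/2}$, which is not integrable near $s_0=0$. Splitting the $s_0$-integration at $s_0=t/(n+1)$ only helps on the region $s_0>t/(n+1)$; on $\{s_0<t/(n+1)\}$ the remaining factors do not involve $s_0$ (up to the harmless constraint $\sum_{i\ge1}s_i=t-s_0$), so there is nothing to ``telescope'' against and the upper bound still diverges logarithmically. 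No regrouping cures this, because the hypothesis controls $Be^{sZ}$ but not $e^{sZ}B$, so the unbounded factor $Z$ cannot be moved past the first $B$; hence the claimed estimate $\|ZT_n(t)\|\le C^{n+1}t^{n/2-1}/\Gamma((n+1)/2)$ is not obtained by the argument sketched. The same non-integrable singularity appears when you pull $d/dt$ inside the outermost integral to identify the generator, so that step is also not justified as written.

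Two standard ways to close the gap. (i) Run your series in complex time: $e^{zZ}$ is holomorphic in a sector, and writing $e^{zZ}=e^{(\re z/2)Z}e^{(z-\re z/2)Z}$ gives $\|Be^{zZ}\|\le C|z|^{-1/2}$ locally uniformly on subsectors; then each $T_n(z)$ is holomorphic and the series converges locally uniformly on a sector, so $T$ is a holomorphic semigroup directly (Prop.~\ref{exthol} is then unnecessary), and the generator is identified via the Laplace transform, $\int_0^{\infty}e^{-\lambda t}T(t)\,dt=(\lambda-Z)^{-1}\sum_{n\ge0}\bigl(B(\lambda-Z)^{-1}\bigr)^n=(\lambda-Z-B)^{-1}$. (ii) Dispense with the series: the hypothesis gives $\|B(\lambda-Z)^{-1}\|\le C(\re\lambda)^{-1/2}$ for $\re\lambda$ large (in particular $\dom Z\subset\dom B$, so $Z+B$ is defined on $\dom Z$), hence $\lambda-Z-B=\bigl(1-B(\lambda-Z)^{-1}\bigr)(\lambda-Z)$ is invertible with $\|(\lambda-Z-B)^{-1}\|\le C'/|\lambda-\omega|$ on a half-plane, which is the standard sectoriality criterion for generating a holomorphic semigroup. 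Note that the paper itself gives no proof of this proposition but refers to \cite{da} and \cite[\S 5.4]{wa}, where the argument is essentially the resolvent one in (ii); so your proposal is not an alternative complete proof but needs one of these repairs at its central estimate.
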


We recall Duhamel's principle:

\begin{prop}
Let $Z$ be the generator of a strongly continuous semigroup on $V$ and let $u \in C^1([0,\infty),V)$ be such that $u(t) \in \dom Z$ and $(\frac{d}{dt}-Z)u(t) \in \dom Z$ for all $t\ge 0$. Then
$$e^{tZ}u(0)-u(t)=-\int_0^t e^{sZ}(\frac{d}{dt} - Z)u(t-s)~ds \ .$$
\end{prop}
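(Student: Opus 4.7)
The plan is to reduce the identity to the fundamental theorem of calculus applied to the auxiliary curve $f\colon [0,t]\to V$ defined by $f(s):=e^{sZ}u(t-s)$. At the endpoints one has $f(0)=u(t)$ and $f(t)=e^{tZ}u(0)$, so once I establish that $f$ is continuously differentiable on $[0,t]$ with
$$f'(s)\;=\;-e^{sZ}\bigl(\tfrac{d}{dt}-Z\bigr)u(t-s),$$
integrating from $0$ to $t$ yields exactly the stated formula (reading the upper limit as $t$, the $\int_0^s$ in the display being a typographical slip since $s$ is the integration variable).

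To compute $f'(s)$ I would split the difference quotient in the natural way,
$$\frac{f(s+h)-f(s)}{h}\;=\;e^{(s+h)Z}\cdot\frac{u(t-s-h)-u(t-s)}{h}\;+\;\frac{e^{(s+h)Z}-e^{sZ}}{h}\,u(t-s).$$
As $h\to 0$ the first summand tends to $-e^{sZ}u'(t-s)$, using that $u\in C^1([0,\infty),V)$ and that $\{e^{\sigma Z}\}$ is locally bounded on compact intervals by strong continuity. The second summand tends to $Ze^{sZ}u(t-s)=e^{sZ}Zu(t-s)$, which is legitimate because $u(t-s)\in\dom Z$ and the orbit of a domain element under a strongly continuous semigroup is strongly differentiable with derivative $Ze^{\sigma Z}v$ (equivalently $e^{\sigma Z}Zv$, since the semigroup commutes with its generator on $\dom Z$). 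Adding the two contributions gives $f'(s)=e^{sZ}\bigl(Zu(t-s)-u'(t-s)\bigr)$, as desired.

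Continuity of $f'$ in $s$, needed to invoke the fundamental theorem of calculus in a Banach space, follows from the continuity of $s\mapsto u'(t-s)$ and $s\mapsto Zu(t-s)=u'(t-s)-\bigl((\tfrac{d}{dt}-Z)u\bigr)(t-s)$ (both continuous by the hypotheses, the latter even landing in $\dom Z$), combined with the strong continuity of the semigroup on bounded subsets. The only genuine subtlety is the joint dependence of $e^{sZ}u(t-s)$ on $s$ through both the semigroup and the argument $u(t-s)$; the splitting above isolates these two effects into independent elementary limits and is the crux of the argument.
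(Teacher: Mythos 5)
The paper does not actually prove this proposition: it is recalled as a standard fact (with Davies' book as the background reference), so your argument can only be judged on its own terms, and on those terms it is the standard variation-of-constants proof and is essentially correct. The auxiliary curve $f(s)=e^{sZ}u(t-s)$, the splitting of the difference quotient, local boundedness of the semigroup together with $u\in C^1$ for the first term, differentiability of $\sigma\mapsto e^{\sigma Z}v$ for $v\in\dom Z$ for the second, and the reading of the misprinted upper limit as $t$ are all right. The one place where you overstate what the hypotheses give you is the claim that $s\mapsto Zu(t-s)$, equivalently $s\mapsto(\frac{d}{dt}-Z)u(t-s)$, is ``continuous by the hypotheses'': $u\in C^1([0,\infty),V)$ with $u(t)\in\dom Z$ does not imply continuity of $Zu$, because $Z$ is only closed, and the additional hypothesis $(\frac{d}{dt}-Z)u(t)\in\dom Z$ is a pointwise membership condition that yields no continuity either (your proof in fact never uses it). Some continuity, or at least integrability, of $s\mapsto(\frac{d}{dt}-Z)u(s)$ must be assumed anyway for the right-hand side of the identity to be defined as an integral, so the honest fix is to state this as an implicit hypothesis of the proposition rather than attribute it to the stated ones; it holds in the only place the proposition is used, namely the proof of Prop.~\ref{duhform}, where $(\frac{d}{dt}-Z-B)e^{tZ}f=-Be^{tZ}f$ depends continuously on $t>0$. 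With that caveat made explicit, your proof is complete.
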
 

From Duhamel's principle we derive Duhamel's formula as needed here:

\begin{prop}
\label{duhform}
Let $Z$ be the generator of a holomorphic semigroup on $V$. Let $\ve>0$ and let $(B(r))_{r \in [-\ve,\ve]}$ be a family of closed operators with $B(0)=0$ such that $B(r)e^{tZ}$ is well-defined and bounded for all $t>0$ and $r \in [-\ve,\ve]$ and depends continuously on $r$. We assume that $\frac{d}{dr} (B(r) e^{tZ})|_{r=0}$ exists for all $t>0$ and that there is $C>0$ such that for $r \in [-\ve,\ve] \setminus \{0\}$ and $0<t<1$  
$$\|\frac 1rB(r)e^{tZ}\| \le Ct^{-1/2} \ .$$

Furthermore we assume that the set of all $f \in \dom Z$ such that $B(r)e^{tZ}f \in \dom Z$ for all $t \ge 0$ and $r \in [-\ve,\ve]$ is dense in $V$.  

Then $$\frac{d}{dr}e^{t(Z+B(r))}|_{r=0}=\int_0^te^{sZ}\frac{d}{dr}B(r)e^{(t-s)Z}|_{r=0}~ds \ .$$
\end{prop}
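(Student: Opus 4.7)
The plan is to derive the formula from Duhamel's principle (the preceding proposition) together with a careful analysis of the resulting integral as $r \to 0$.

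First I would apply Duhamel's principle to $u(s):=e^{s(Z+B(r))}f$ for $f$ in the dense subspace $\mathcal{F}\subset V$ consisting of vectors with $B(r)e^{tZ}f\in\dom Z$ for all $t\ge 0$ and $r\in[-\ve,\ve]$. By Prop.~\ref{relpert}, $Z+B(r)$ generates a holomorphic semigroup (the bound $\|B(r)e^{tZ}\|\le C|r|t^{-1/2}$ for $t<1$ follows from the hypothesis), and the conditions $u(s)\in\dom Z$ and $(\tfrac{d}{ds}-Z)u(s)=B(r)u(s)\in\dom Z$ required by Duhamel's principle can be checked for $f\in\mathcal{F}$ from the Dyson expansion of $e^{s(Z+B(r))}$. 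This produces the identity
\begin{equation*}
e^{t(Z+B(r))}f - e^{tZ}f = \int_0^t e^{sZ}B(r)e^{(t-s)(Z+B(r))}f\,ds.
\end{equation*}

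Dividing by $r$ and writing the integrand as $e^{sZ}\tfrac{B(r)}{r}e^{(t-s)Z}f + e^{sZ}\tfrac{1}{r}B(r)[e^{(t-s)(Z+B(r))}-e^{(t-s)Z}]f$, the first piece contributes the claimed right-hand side: pointwise convergence follows from the assumed existence of $\tfrac{d}{dr}(B(r)e^{uZ})|_{r=0}$, while the bound $\|\tfrac{B(r)}{r}e^{uZ}\|\le Cu^{-1/2}$ for $u<1$ (extended for $u\ge 1$ by continuity in $r$ and boundedness of $e^{uZ}$) provides an integrable dominant. Dominated convergence then yields
\begin{equation*}
\lim_{r\to 0}\int_0^t e^{sZ}\tfrac{B(r)}{r}e^{(t-s)Z}f\,ds = \int_0^t e^{sZ}\tfrac{d}{dr}(B(r)e^{(t-s)Z})|_{r=0}f\,ds.
\end{equation*}

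The main obstacle will be to show the second piece vanishes as $r\to 0$. Here I would apply the Duhamel identity once more to $e^{(t-s)(Z+B(r))}f-e^{(t-s)Z}f$ and interchange $B(r)$ with the inner integration (justified since $\|B(r)e^{uZ}\|\le C|r|u^{-1/2}$ is integrable), obtaining
\begin{equation*}
\tfrac{1}{r}B(r)\bigl[e^{(t-s)(Z+B(r))}-e^{(t-s)Z}\bigr]f = \int_0^{t-s}\tfrac{B(r)}{r}e^{uZ}B(r)e^{(t-s-u)(Z+B(r))}f\,du.
\end{equation*}
Together with an auxiliary bound $\|B(r)e^{v(Z+B(r))}f\|\le C|r|v^{-1/2}\|f\|$ for small $v$ (obtained by iterating the Duhamel identity once and estimating with the Beta function), this is $O(|r|)$ uniformly in $s\in[0,t]$, so its $s$-integral is $O(|r|)$ and vanishes. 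Density of $\mathcal{F}$ in $V$ combined with the uniform bounds extends the identity from $\mathcal{F}$ to all of $V$. The technical heart of the argument is establishing this auxiliary bound via a Dyson-style expansion — standard for bounded perturbations but requiring care here because $B(r)$ is unbounded, with $B(r)e^{uZ}$ only bounded modulo a $u^{-1/2}$ singularity at $u=0$.
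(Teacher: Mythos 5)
Your overall strategy (Duhamel's principle plus a limiting argument) is the right one, but you have oriented the Duhamel identity the wrong way round, and this creates a genuine gap. You apply the preceding proposition with generator $Z$ and $u(s)=e^{s(Z+B(r))}f$, which requires $(\tfrac{d}{ds}-Z)u(s)=B(r)e^{s(Z+B(r))}f\in\dom Z$. The hypotheses give you no control of this: the density assumption concerns $B(r)e^{tZ}f$, i.e.\ the \emph{unperturbed} semigroup, and your appeal to the Dyson expansion does not repair it, since the terms of that expansion involve $B(r)e^{uZ}g$ for vectors $g$ outside the dense set, which need not lie in $\dom Z$. Everything downstream of that identity (the splitting into two pieces, the second Duhamel iteration, the interchange of the closed operator $B(r)$ with the inner integral, and the auxiliary bound $\|B(r)e^{v(Z+B(r))}f\|\le C|r|v^{-1/2}\|f\|$) is extra machinery forced on you by this choice, and while it could probably be pushed through once the first step is fixed, it is not needed at all.

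The paper's proof reverses the roles: it applies Duhamel's principle with the generator $Z+B(r)$ (which generates a holomorphic semigroup by Prop.~\ref{relpert}, as you note) and with $u(t)=e^{tZ}f$. Then $(\tfrac{d}{dt}-Z-B(r))u(t)=-B(r)e^{tZ}f$, so the domain condition required by Duhamel's principle is \emph{exactly} the stated density hypothesis, and one obtains
$$e^{t(Z+B(r))}f-e^{tZ}f=\int_0^t e^{s(Z+B(r))}B(r)e^{(t-s)Z}f~ds \ ,$$
with the perturbed semigroup outside and $B(r)e^{(t-s)Z}$ inside. Dividing by $r$, the integrand converges pointwise (existence of $\tfrac{d}{dr}(B(r)e^{uZ})|_{r=0}$ together with strong continuity of $e^{s(Z+B(r))}$ in $r$, which comes from realizing $Z+B$ as a generator on $C([-\ve,\ve],V)$) and is dominated by $C(t-s)^{-1/2}$ times a uniform bound on $e^{s(Z+B(r))}$, so dominated convergence finishes the proof in one step — no second iteration, no bound on $B(r)e^{v(Z+B(r))}$, and density of the good set of $f$'s plus the uniform bounds extends the identity to all of $V$ exactly as you propose at the end. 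I recommend you simply swap the orientation of your first Duhamel application; the rest of your argument then collapses to the short proof above.
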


Note that the right hand side is well-defined as a bounded operator, whereas the existence of the left hand side is part of the assertion.

\begin{proof}
We write $B$ for the operator induced by the family $(B(r))_{r \in [-\ve,\ve]}$ on $C([-\ve, \ve],V)$ and consider $Z$ also as an operator on $C([-\ve,\ve],V)$ in the obvious way. Thus $\dom Z \subset C([-\ve,\ve],V)$ in the following. Note that $\dom (Z+B)=\dom Z$. By the estimate and the previous proposition $Z+B$ generates a holomorphic semigroup on $C([-\ve,\ve],V)$.

If $f \in C([-\ve,\ve],V)$ such that $f \in \dom Z$ and $Be^{tZ}f \in \dom Z$ for all $t \ge 0$, then $e^{tZ}f \in \dom Z$ and $(\frac{d}{dt}-Z-B)e^{tZ}f=-Be^{tZ}f \in \dom Z$ for all $t\ge 0$. Duhamel's principle implies that
$$e^{t(Z+B)}f-e^{tZ}f=\int_0^te^{s(Z+B)}Be^{(t-s)Z}f~ds \ .$$
Using Lebesgue Lemma we get that
\begin{align*}
\lim_{r \to 0}\frac 1r (e^{t(Z+B(r))}-e^{tZ}))&=\lim_{r\to 0} \frac 1r \int_0^te^{s(Z+B(r))}B(r)e^{(t-s)Z}~ds\\
&=\int_0^te^{sZ}\frac{d}{dr} B(r)e^{(t-s)Z}|_{r=0}~ds \ .
\end{align*}
This proves the assertion.

\end{proof}

\textsc{Leibniz-Archiv\\Waterloostr. 8 \\30169 Hannover \\ Germany} 

\textsc{wahlcharlotte@googlemail.com}

\end{document}